\numberwithin{equation}{section}
\numberwithin{figure}{section}
\theoremstyle{plain}
\newtheorem{thm}{\protect\theoremname}
  \theoremstyle{remark}
  \newtheorem*{rem*}{\protect\remarkname}
  \theoremstyle{plain}
  \newtheorem*{thm*}{\protect\theoremname}
 \theoremstyle{definition}
 \newtheorem*{defn*}{\protect\definitionname}
  \theoremstyle{definition}
  \newtheorem{defn}[thm]{\protect\definitionname}
  \theoremstyle{plain}
  \newtheorem{lem}[thm]{\protect\lemmaname}
  \theoremstyle{remark}
  \newtheorem{claim}[thm]{\protect\claimname}
  \theoremstyle{definition}
  \newtheorem{problem}[thm]{\protect\problemname}
  \theoremstyle{plain}
  \newtheorem{prop}[thm]{\protect\propositionname}
  \theoremstyle{plain}
  \newtheorem{cor}[thm]{\protect\corollaryname}
  \theoremstyle{plain}
  \newtheorem{conjecture}[thm]{\protect\conjecturename}
  \providecommand{\claimname}{Claim}
  \providecommand{\conjecturename}{Conjecture}
  \providecommand{\corollaryname}{Corollary}
  \providecommand{\definitionname}{Definition}
  \providecommand{\lemmaname}{Lemma}
  \providecommand{\problemname}{Problem}
  \providecommand{\propositionname}{Proposition}
  \providecommand{\remarkname}{Remark}
  \providecommand{\theoremname}{Theorem}
\providecommand{\theoremname}{Theorem}
\begin{document}
\global\long\def\bbc{\mathbb{C}}

\global\long\def\bbd{\mathbb{D}}

\global\long\def\bbn{\mathbb{N}}

\global\long\def\bbq{\mathbb{Q}}

\global\long\def\bbr{\mathbb{R}}

\global\long\def\bbt{\mathbb{T}}

\global\long\def\bbz{\mathbb{Z}}

\global\long\def\calD{\mathcal{D}}

\global\long\def\calE{\mathcal{E}}

\global\long\def\calH{\mathcal{H}}

\global\long\def\calO{\mathcal{O}}

\global\long\def\calP{\mathcal{P}}

\global\long\def\epsym{\varepsilon}


\global\long\def\intsy{\int}

\global\long\def\O{O}

\global\long\def\eqdef{\overset{{\rm {def}}}{=}}

\global\long\def\indf{{1\hskip-2.5pt{\rm l}}}

\global\long\def\opspan{\operatorname{span}}

\global\long\def\d#1{\,\mathrm{d}#1}

\global\long\def\dd{\mathrm{d}}

\newcommandx\norm[2][usedefault, addprefix=\global, 1=]{\left\Vert #1\right\Vert _{#2}}

\newcommandx\normp[3][usedefault, addprefix=\global, 1=, 2=]{\left\Vert #1\right\Vert _{#2}^{#3}}


\global\long\def\Pr{\calP}


\global\long\def\pr#1{\Pr\left(#1\right)}

\global\long\def\Ex{\calE}


\global\long\def\disk#1{#1 \bbd}

\global\long\def\cir#1{#1 \bbt}


\global\long\def\probspace{\Omega}

\global\long\def\shiftspace{\bbt}

\global\long\def\measspace{Q}

\global\long\def\measfuncs{L^{2}\left(\measspace\right)}

\global\long\def\RFfuncs{L_{{\tt RF}}^{2}}

\global\long\def\HLMaxF{\mathfrak{M}}

\global\long\def\Exploc{\operatorname{Exp}_{{\tt loc}}}

\global\long\def\errfunc{\varepsilon}

\global\long\def\zrf{s}


\global\long\def\excpset{E}

\global\long\def\dprime{{\prime\prime}}

\global\long\def\vol#1{\mathrm{vol}\left(#1\right)}

\global\long\def\volcn#1{\mathrm{vol_{\bbc^{n}}}\left(#1\right)}

\global\long\def\volrn#1#2{\mathrm{vol_{\bbr^{#1}}}\left(#2\right)}

\global\long\def\Var#1{\mathrm{Var}\left(#1\right)}

\global\long\def\Cov#1{\mathrm{Cov}\left(#1\right)}

\global\long\def\ind#1#2{\mathbf{1}_{#1}\left(#2\right)}

\global\long\def\Nalt#1{\widetilde{N}\left(#1\right)}

\global\long\def\Naltd#1{\widetilde{N}_{\delta}\left(#1\right)}

\global\long\def\Salt#1{\widetilde{S}\left(#1\right)}

\thispagestyle{empty}

\begin{center}
\begin{figure}
\centering{}\hspace*{-2cm}\includegraphics[width=19cm]{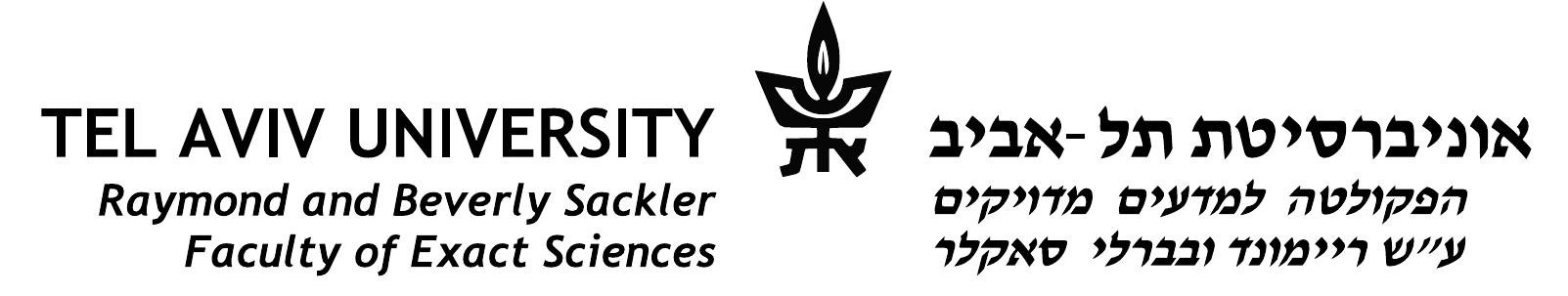}
\end{figure}
$\,$\linebreak{}
\linebreak{}
\linebreak{}
\linebreak{}
\linebreak{}

\par\end{center}

\begin{center}
\textbf{\huge{Topics in the Value Distribution}}\\
\textbf{\huge{of Random Analytic Functions}}{\LARGE{}}\linebreak{}
{\LARGE{}}\linebreak{}
{\LARGE{}}\linebreak{}
{\LARGE{}}\linebreak{}
{\LARGE{}}\linebreak{}

\par\end{center}{\LARGE \par}

\begin{center}
\textbf{Thesis submitted for the degree \textquotedblleft{}Doctor
of Philosophy\textquotedblright{}}\\

\par\end{center}

\begin{center}
by\\

\par\end{center}

\begin{center}
\textbf{\large{Alon Nishry}}\linebreak{}
\linebreak{}
\linebreak{}
\linebreak{}

\par\end{center}

\begin{center}
This Study was Supervised by
\par\end{center}

\begin{center}
\textbf{\large{Professor Mikhail Sodin}}\linebreak{}
\linebreak{}
\linebreak{}
Submitted to the senate of Tel Aviv University
\par\end{center}

\begin{center}
\textbf{September 2013}
\par\end{center}

\newpage{}

\thispagestyle{empty}
\[
\,
\]
\newpage{}

\thispagestyle{empty}\textbf{\large{Abstract}}\\
\textbf{\large{}}\\
This thesis deals with the behavior of random analytic functions.
Our approach to this subject is the classical one, that is, we study
random Taylor series with independent coefficients. In particular,
we are interested in the value distribution of these functions, this
basically means counting the number of certain values that these functions
take, in some domains. Special attention is devoted to studying the
relation between the number of zeros of these functions and the growth
of their coefficients.

The contents of this thesis can be partitioned into two parts: value
distribution of Rademacher Taylor series and asymptotics of the hole
probability for Gaussian entire functions. A certain philosophy of
this work, which is relevant to both parts, is to avoid making any
assumptions on the magnitude (variance) of the Taylor coefficients,
while studying a particular distribution of the phases. In the first
part, we study Taylor series with random signs and in the second we
study coefficients with a complex Gaussian distribution. As it turns
out, in both cases it is possible to prove general results without
making any assumptions on the `regularity' of the growth of the coefficients.

We start this thesis with a study of the properties of Rademacher
Fourier series, the main result states that the logarithm of such
series is integrable (to any power). The methods that are used are
related to those that were originally developed by F. Nazarov to tackle
similar problems for lacunary Fourier series. This result then enables
us to study in detail the behavior of Rademacher Taylor series. First
we give an answer to an old question of J.-P. Kahane, concerning the
range of Rademacher Taylor series in the unit disk. Then we prove
a theorem that relates the number of zeros of Rademacher entire functions
in large disks to the rate of growth of their coefficients. Finally,
we prove that the zeros of these entire functions are equidistributed
in sectors centered at the origin. This improves on some classical
works of Littlewood and Offord.

The main motivation of the second part is to study `exceptional' behavior
of Gaussian entire functions (given by a Taylor series). It is well
known that these functions are expected to have many zeros in large
disks around the origin. We are interested in characterizing the logarithmic
asymptotics of the probability of a `hole event', that is, when the
function has no zeros at all in such disks. We give a full solution
for this problem, by introducing a new function which measures the
relation between the rate of decay of the `hole probability' and the
magnitude of the coefficients of the Gaussian Taylor series.

\newpage{}

\thispagestyle{empty}$\,$\newpage{}

\thispagestyle{empty}\textbf{\large{Acknowledgments}}\\
\textbf{\large{}}\\
First of all I would like to thank my advisor Professor Mikhail Sodin.
I was very fortunate to have Misha as my advisor, and during my studies
I began to fully appreciate it. Our many conversations, whether they
were mathematical or not, were essential for my progress. His direction,
suggestions and constant encouragement were very important for my
success. His attention for details, especially for my writing, has
definitely contributed to this work. I would also like to mention
his willingness to help me in all kinds of problems and questions
as well as my appreciation for the very generous support that I was
given. It was by chance that I became Misha's student, and I truly
feel lucky that things turned out the way they did.

Professor Fedor Nazarov had many essential contributions to this work
and I would also like to thank him for his wonderful hospitality,
while I was visiting him in Kent State University. Doctor Andrei Iacob
carefully read this manuscript and made useful remarks.

I would like to thank (soon to be Doctor) Zemer Kosloff for being
the great friend that he is, ever since we met during our master's
degree. He definitely made these years much more interesting, fun
and memorable. The rest of the ``Room 005 gang'' (and outside members)
also deserve many thanks, we had great time together. Finally, I would
like to thank my family, for all the support, their help and care.\newpage{}

\thispagestyle{empty}
\[
\,
\]
\newpage{}

\begin{center}
\textbf{\huge{Topics in the Value Distribution of Random Analytic
Functions}}
\par\end{center}{\huge \par}

\thispagestyle{empty}

\setcounter{page}{1}

\markright{Value Distribution of Random Analytic Functions}

\tableofcontents{}

\newpage{}

\part{Introduction}

This work is concerned with `typical' and `exceptional' behavior of
analytic functions in general, and entire functions in particular.
Here the words typical and exceptional are interpreted in the probabilistic
sense. Our main objects of study are random analytic functions, more
precisely, random Taylor series. This kind of functions have a long
history, as they first appear at the end of the 19th century. It could
be argued that this notion first began with the comments of Borel
in 1896 on ``General Taylor series'', and it concerned the natural
boundary of Taylor series. The precise statement of this result is
due to Steinhaus in 1929. Further work on random series has been taken
in the 1930s, by Paley and Zygmund, Wiener, Lévy and others, see the
excellent historical survey by \cite{Ka1}, and also the book \cite{Ka2}.

In additional to random Taylor series we are also interested in this
work in random Fourier series, as the two are evidently closely related.
There are several natural reasons for studying these two types of
random series. First, one can use random Fourier series to give examples
of functions with desired and undesired properties. In addition, random
Fourier series have strong connections to Brownian motion. Furthermore,
Fourier series are a useful tool for the study of entire functions,
a fact which we use in this work. Second, random polynomials were
studied for a large variety of reasons, including some motivation
from mathematical physics, and random analytic functions are their
natural generalization. A third reason is the relation between lacunary
series and random series. It is well known since the works of Khintchin
and Kolmogorov and of Paley and Zygmund that there are many similarities
between them. Some of these similarities will be explored in this
work. As it turns out, methods that were originally developed for
lacunary series can be applied to random series as well (we will not
present any new results relating to lacunary series).

One of the main interests of this work is in the behavior of the zero
set of random analytic functions. The study of zeros of random polynomials
and random analytic functions started with the pioneering works of
Wiener, Littlewood and Offord, Kac, and Rice, and was later continued
by Erd\H{o}s, Ibragimov, Hammersley, Kahane, Maslova, Offord, and
many others. During the last two decades, the subject was revived
by several groups of researchers who came from very different areas
and established new links to mathematical physics, probability theory,
complex analysis, and complex geometry. Some of the recent works were
surveyed in various places \cite{NS3,Zel}; see also the introductory
article \cite{NS2} and the recent book \cite{BKPV}. The field of
random entire functions is still evolving, and contains many different
and interesting open problems. In addition, the problems usually involve
a unique combination of analytic and probabilistic techniques.

With only few exceptions, almost all of the interesting results proven
in the last two decades deal with Gaussian analytic functions. The
special properties of the Gaussian distribution usually play an absolutely
indispensable role in these works. At the same time, there are several
basic long-standing questions pertaining to zeros of random Taylor
series and random polynomials with independent coefficients having
non-Gaussian distribution, which remain open for about half a century,
sometimes even more. For some of these questions `the Rademacher case'
(or Bernoulli case), when the random coefficients take the values
$\pm1$ with equal probability already contains all principal difficulties%
\footnote{To quote M. Kac: ``Upon closer examination it turns out that the
proof I had in mind {[}in a previous paper{]} . . . is inapplicable
to the {[}case when the coefficients have a discrete distribution{]}
. . . This situation tends to emphasize the particular interest of
the discrete case, which surprisingly enough turns out to be the most
difficult.''%
}. Moreover, in many instances, the Rademacher case yields similar
results for more or less arbitrary random symmetric coefficients.

\section{Contents of this work}

In the first part of this work we discuss Rademacher Fourier series
and prove that the logarithm of such a series is integrable (to any
positive power). This will be in turn the main technical tool for
the second part.

In the second part of the work we study the value distribution of
random analytic functions. The first result is an answer to an old
question from J.-P. Kahane book, about the range of Rademacher Taylor
series in the unit disk. We then use the results of the first part
to prove the almost sure convergence of the number of zeros for Rademacher
entire functions. We also prove the equidistribution of the zeros
of Rademacher entire functions, under a certain growth condition on
the function. We then show that this condition is close to being necessary,
by giving an example of Rademacher entire function with `too many'
real zeros. If one takes for granted the main results of the first
part, then this part can be read indepedently from it.

The third part of this work is concerned with the exceptional behavior
of the zero set of Gaussian entire functions. We find the logarithmic
asymptotics for the probability of the event when such a function
has no zeros in a large disk around the origin. We also show that
this result depends in a crucial way on the nature of the random variables,
by giving an example of a random entire function that always has zeros
outside of some fixed disk. This part is independent from the first
two.

In each part we give some historical background as well as a list
of some open problems. We end this thesis with a list of references
(which is joint for all the parts). We now give a formal description
of the theorems which we are going to prove.

\subsection{Random Taylor series}

As was already mentioned, the main questions of interest in this work
involve the statistical properties of the zero set of random analytic
functions, given by a Taylor series of the form\index{random Taylor series}
\[
\tag{{\ensuremath{\star}}}f\left(z\right)=\sum_{n\ge0}\xi_{n}a_{n}z^{n}.
\]
Here $\left\{ \xi_{n}\right\} $ is a sequence of independent and
identically distributed random variables, and the coefficients $a_{n}$
are deterministic (i.e. non-random). The classes of functions that
are studied will be either entire functions or analytic functions
in the unit disk. It is known that the domain of convergence for functions
of the form $\left(\star\right)$ is the disk where the (non-random)
series $\sum a_{n}z^{n}$ converges%
\footnote{Some minimal assumptions on the distribution of the random variables
$\xi_{n}$ are required. For example, $\Ex\left\{ \log^{+}\left|\xi_{n}\right|\right\} <\infty$
is sufficient. %
} (\cite[Lemma 2.2.3]{BKPV}). Note that when dealing with random Taylor
series of the form $\left(\star\right)$, one usually arrives at the
following `trade off': either to impose regularity assumptions on
$a_{k}$'s trying to achieve as much generality as possible with the
$\xi_{k}$'s, or to consider general non-random coefficients $a_{k}$'s,
imposing certain assumptions on the i.i.d. factors $\xi_{k}$'s. In
general, in this work we chose the latter. For an example of papers
that deal with the first alternative, see \cite{IZ,KZ}.

In order to count the number of zeros of a random analytic function
$f\left(z\right)$, we define the zero counting measure\index{zero counting measure}
$\nu_{f}$, which has an atom for each zero of $f$ (weighted according
to the multiplicity of the zero). By Green's formula, we have the
following expression for this measure in terms of $f$:
\[
\nu_{f}\left(z\right)=\frac{1}{2\pi}\Delta\log\left|f\left(z\right)\right|,
\]
where the Laplacian on the RHS is taken in the sense of distributions.
In principle, it is difficult to analyze this measure directly, unless
one makes some further assumptions (see \cite{IZ,KZ}). We will now
explain briefly the most well-studied case, where the random variables
$\xi_{n}$ are standard complex Gaussians. Such functions are called
Gaussian analytic functions (GAFs) and a lot is known about their
zero counting measure $\nu_{f}$. Part of the motivation behind this
work is to extend this knowledge to other types of random variables.

\subsection{Gaussian analytic functions (GAFs)\index{Gaussian analytic functions}\index{GAF}}

A random variable $\xi$ is a standard complex Gaussian\index{standard complex Gaussian distribution},
if $\xi$ has density $\frac{1}{\pi}e^{-\left|z\right|^{2}}$ with
respect to Lebesgue measure $m$ in the complex plane. Therefore $\Ex\left\{ \xi\right\} =0$
and $\Ex\left\{ \left|\xi\right|^{2}\right\} =1$, where here and
in the rest of this work $\Ex$ stands for the expected value.

As a Gaussian random process, the behavior of a GAF, and thus its
zero set, is determined by its covariance kernel\index{covariance kernel}
\[
K_{f}\left(z,w\right)=\Ex\left\{ f\left(z\right)\overline{f\left(w\right)}\right\} =\sum_{n\ge0}\left|a_{n}\right|^{2}\left(z\overline{w}\right)^{n};
\]
for details, see for example \cite[p. 16]{BKPV}. A particularly nice
example is the classical Edelman-Kostlan formula (\cite[p. 25]{BKPV}),
which tells us that\index{Edelman-Kostlan formula}
\begin{equation}
\Ex\left\{ \nu_{f}\left(z\right)\right\} =\frac{1}{2\pi}\Delta\log\sqrt{K_{f}\left(z,\overline{z}\right)}\dd m\left(z\right).\label{eq:Edel-Kos_for}
\end{equation}
Now, if we write $K_{f}\left(r\right)=K_{f}\left(r,r\right)$ and
denote by $n_{f}\left(r\right)$ the number of zeros of $f$ inside
the disk $\disk r=\left\{ \left|z\right|\le r\right\} $, we conclude
that
\begin{equation}
\Ex\left\{ n_{f}\left(r\right)\right\} =\intsy_{\disk r}\Ex\left\{ \nu_{f}\left(z\right)\right\} \,\mathrm{d}m\left(z\right)=\frac{\dd\log\sqrt{K_{f}\left(r\right)}}{\dd\log r}=\frac{r}{2}\cdot\frac{K^{\prime}\left(r\right)}{K\left(r\right)}.\label{eq:Ex_n_f(r)}
\end{equation}
One of the goals of this work is to prove a similar result for other
types of random variables.

\subsection{Rademacher Fourier series - Logarithmic integrability}

The extension of (\ref{eq:Ex_n_f(r)}) for Bernoulli random variables
is reduced, using Jensen's formula, to the study of the behavior of
Rademacher Fourier series. These are series of the form\index{Rademacher Fourier series}
\[
g\left(\theta\right)=\sum_{n\in\bbz}b_{n}\xi_{n}e^{2\pi in\theta},\quad\theta\in\left[0,1\right],
\]
where the coefficients $b_{n}$ satisfy $\sum\left|b_{n}\right|^{2}<\infty$,
and the $\xi_{n}$s are i.i.d. random variables taking the values
$\pm1$ with probability $\tfrac{1}{2}$ each. We denote by $\shiftspace$
the circle (or $\bbr\backslash\bbz$) with normalized Lebesgue measure
$m$. It is not difficult to check that
\[
\normp[g][2]2\eqdef\Ex\left\{ \intsy_{\bbt}\left|g\left(\theta\right)\right|^{2}\d{\theta}\right\} =\sum\limits _{n\in\bbz}\left|b_{n}\right|^{2}=\normp[\left\{ b_{n}\right\} ][\ell^{2}\left(\bbz\right)]2.
\]
Intuitively one should expect that a random Fourier series will not
have any `local' structure, and, in particular, it should not be too
small on relatively large sets. The main theorem of the first part
of this work is the following quantitative version of this idea.\index{Rademacher Fourier series!logarithmic integrability}
\begin{thm}
\textup{\cite{NNS1}}\label{thm:log_int_for_Rade_Four_series} Let
$g$ be a Rademacher Fourier series with $\norm[g]2=1$. For every
$q\ge1$ we have
\[
\Ex\left\{ \intsy_{\bbt}\left|\log\left|g\left(\theta\right)\right|\right|^{q}\d{\theta}\right\} \le C^{q}q^{6q},
\]
where $C>0$ is some numerical constant.
\end{thm}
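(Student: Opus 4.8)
The strategy follows F.~Nazarov's approach to the analogous question for lacunary series: reduce the claim to a quantitative statement saying that $g$ is rarely small, and prove that statement by a multi-scale argument combining an \emph{independence} (anti-concentration) mechanism with a \emph{low-complexity} (Tur\'an--Nazarov) mechanism.

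\textbf{Step 1 (reduction).} Put $\log^{+}x=\max(\log x,0)$ and $\log^{-}x=\max(-\log x,0)$; since these have disjoint supports, $\bigl|\log|g|\bigr|^{q}=(\log^{+}|g|)^{q}+(\log^{-}|g|)^{q}$. It is convenient to work on the product probability space $(\probspace\times\bbt,\Pr\otimes m)$ and treat $|g|$ as one nonnegative random variable there. For the positive part, Khintchine's inequality gives $\Ex\bigl\{|g(\theta)|^{2p}\bigr\}\le(Cp)^{p}$ for every $\theta\in\bbt$ and every $p\ge1$; combined with the elementary bound $(\log^{+}x)^{q}\le\bigl(\tfrac{q}{2ep}\bigr)^{q}x^{2p}$, integration in $\theta$ and the choice $p=q$ give $\intsy_{\bbt}\Ex\{(\log^{+}|g|)^{q}\}\d\theta\le C^{q}q^{q}$, far below the target. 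For the negative part, the layer-cake formula on $\probspace\times\bbt$ yields
\[
\Ex\intsy_{\bbt}(\log^{-}|g(\theta)|)^{q}\d\theta = q\intsy_{0}^{\infty}\lambda^{q-1}\,(\Pr\otimes m)\bigl\{|g|<e^{-\lambda}\bigr\}\,\d\lambda ,
\]
and a routine $\Gamma$-integral computation shows that the theorem follows from the single anti-concentration estimate
\begin{equation}
(\Pr\otimes m)\bigl\{(\omega,\theta):|g_{\omega}(\theta)|<t\bigr\}\ \le\ C\exp\!\bigl(-c\,(\log(1/t))^{1/6}\bigr),\qquad 0<t<\tfrac12, \tag{$\ast$}
\end{equation}
the exponent $1/6$ here being exactly what produces the factor $q^{6q}$.

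\textbf{Step 2 (the two mechanisms).} To prove $(\ast)$, fix $t$ and split the spectrum of $g$ by coefficient size: call $b_{n}$ \emph{large} if $|b_{n}|>t$, keep each large coefficient as a singleton, and group the remaining coefficients greedily into consecutive blocks of $\ell^{2}$-mass in $[8t^{2},16t^{2}]$. This writes $g=\sum_{i}h_{i}$ as a sum of \emph{independent} random functions, and a short computation (Paley--Zygmund and the symmetry of Rademacher sums; for singletons, simply that two points at distance $>2t$ cannot both lie in a disk of radius $t$) shows that every block is spread at scale $t$, i.e.\ $\sup_{x\in\bbc,\ \theta\in\bbt}\pr{|h_{i}(\theta)-x|\le t}\le 1-c_{0}$ for an absolute $c_{0}>0$. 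A (two-dimensional) Kolmogorov--Rogozin concentration bound then gives $\sup_{\theta\in\bbt}\pr{|g(\theta)|<t}\le C\,M_{t}^{-1/2}$, where $M_{t}$ is the number of blocks, which already proves $(\ast)$ whenever $M_{t}$ exceeds a fixed positive power of $1/t$ --- in particular whenever a fixed proportion of the $\ell^{2}$-mass sits on coefficients of modulus $\le t$, or whenever there are at least $t^{-\varepsilon}$ large coefficients. In the complementary situation almost all the mass is carried by a set of $K$ large coefficients with $K$ sub-polynomial in $1/t$; then $g=p+r$ with $p$ a random trigonometric polynomial with $K$ terms and $\norm[p]{L^{2}(\bbt)}\ge\tfrac12$ deterministically, and $r$ of negligible $\ell^{2}$-mass. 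Here the \emph{low-complexity} mechanism applies: the Tur\'an--Nazarov (Remez-type) inequality for trigonometric polynomials gives, for every $\omega$, $m\{\theta:|p_{\omega}(\theta)|<u\}\le C\,(Cu)^{1/(K-1)}$, which together with Chebyshev applied to $r$ gives $(\ast)$ as long as $K\lesssim(\log(1/t))^{5/6}$.

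\textbf{Step 3 (the obstacle).} What is left is the \emph{intermediate regime}: almost all the mass on $K$ large coefficients with $(\log(1/t))^{5/6}\lesssim K\lesssim\exp\!\bigl((\log(1/t))^{1/6}\bigr)$. Here the single-scale independence bound $M_{t}^{-1/2}\le K^{-1/2}$ decays too slowly, while the single-scale Tur\'an bound $u^{1/(K-1)}$ is too close to $1$; neither mechanism by itself is enough, and this is where Nazarov's technique is genuinely needed. The plan is to run the argument through a geometric ladder of scales $t=t_{0}>t_{1}>t_{2}>\cdots$ at once, peeling off at each scale $t_{j}$ the layer of coefficients of modulus $\approx t_{j}$ and treating that layer by independence when it is populous and by Tur\'an when it is sparse (using that the accumulated number of ``sparse-layer'' terms down to any given scale remains controlled), and then balancing the number of layers against the per-layer loss and the cross-layer interaction. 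This balancing is the main difficulty, and it is what forces the exponent $1/6$ in $(\ast)$ and hence the factor $q^{6q}$; the resulting bound is presumably not sharp, but identifying the optimal growth rate is a separate matter we do not pursue.
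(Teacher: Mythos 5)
Your Step 1 is sound, and it is in fact the same reduction the paper uses: the paper proves the distributional inequality $\|f\|_{2}^{2}\le\exp\bigl(C\log^{6}(2/\mu(E))\bigr)\int_{E}|f-b|^{2}\,\mathrm{d}\mu$ (Theorem \ref{thm:Zygmund-type-final}), which, applied to the sub-level set $E_{t}=\{|g|<t\}$, is exactly your estimate $(\ast)$ with exponent $1/6$, and the layer-cake/Gamma computation then gives $C^{q}q^{6q}$. The two single-scale mechanisms of Step 2 (independent blocks plus a Kolmogorov--Rogozin bound, and a Tur\'an/Remez bound for trigonometric polynomials with few terms) are plausible as stated. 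The genuine gap is Step 3: it is an announcement, not an argument. The intermediate regime --- essentially all of the $\ell^{2}$-mass on $K$ coefficients of modulus $>t$ with $(\log(1/t))^{5/6}\lesssim K\lesssim\exp\bigl((\log(1/t))^{1/6}\bigr)$ --- is precisely the heart of the theorem. Your ``geometric ladder of scales'' is unquantified: in this regime every relevant coefficient already exceeds $t$, so there is no layer structure by size relative to $t$ to peel; you do not say what is removed at each scale $t_{j}$, do not estimate the per-layer loss, and do not control the interaction between a Tur\'an bound at one scale and an independence bound at another. This cannot be waved through: the example in Section \ref{sect_log_int_examples} (the randomized $(\sin 2\pi\theta)^{2N}$) shows the sub-level measure really can be as large as $\exp\bigl(-C(\log(1/t))^{1/2}\bigr)$, so any proof of $(\ast)$ must produce a stretched-exponential loss of exactly this kind, and producing it is the whole difficulty.

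For comparison, the paper bridges this regime by a structurally different iteration: it iterates on the \emph{measure} of the sub-level set, not on the threshold. Using the operator $A_{E}$ with matrix entries $\langle\indf_{E},\phi_{k}\bar{\phi}_{\ell}\rangle$ and the bilinear Khinchin inequality, one shows that $f$ has almost linearly dependent small shifts (the $\Exploc$ property with order $n\approx p^{2}\mu(E)^{-1/p}$); this yields an approximate spectrum of controlled cardinality and a local approximation of $f$ by random exponential polynomials, and the Tur\'an-type lemma then ``spreads'' the set: $E$ of measure $\mu$ is enlarged to $\widetilde{E}$ of measure at least $\mu+c\,p^{-2}\mu^{1+1/p}$ at the cost of a multiplicative factor $(Cp/\mu)^{Cp^{2}\mu^{-1/p}}$ on $\int|f-b|^{2}$, while sets of large measure and sets with many long sections are handled separately; solving the resulting difference inequality and choosing $p\approx\log(2/\mu)$ produces the $\log^{6}$. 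Your two mechanisms correspond loosely to two ingredients of this scheme (Khinchin-type bounds and Tur\'an), but the spreading procedure and the difference-inequality iteration --- the part that actually handles your intermediate regime --- have no counterpart in your sketch, so as it stands the proposal does not prove the theorem.
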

Notice that this theorem is general, in the sense that the constant
$C$ does not depend on the specific values of the coefficients $b_{n}$,
a fact that will be important for our applications.

\subsection{Solution to a question of Kahane}

One of the consequences of Theorem \ref{thm:log_int_for_Rade_Four_series},
is the answer to a well-known old question from Kahane's book \cite[p. xii]{Ka2}:
Suppose that $F\left(z\right)=\sum_{n\ge0}a_{n}\xi_{n}z^{n}$ is a
Rademacher Taylor series with radius of convergence one and such that
$\sum_{n\ge0}\left|a_{n}\right|^{2}=+\infty$. Is it true that, almost
surely (a.s.), the range $F\left(\left\{ \left|z\right|<1\right\} \right)$
is the whole complex plane?\index{Kahane's question}

In the second part of the work we will answer this question, and in
fact prove even more. Given $b\in\bbc$, denote by $\Lambda_{F}\left(b\right)$
the collection of all solutions to the equation $F\left(z\right)=b$,
repeated according to their multiplicity. Let $\left\{ \zeta_{n}\right\} _{n\ge0}$
be a sequence of independent complex-valued symmetric random variables,
and write $F\left(z\right)=\sum_{n\ge0}\zeta_{n}z^{n}$. We prove
\begin{thm}
\textup{\cite{NNS1}}\label{thm:Kahane_prob} If the sequence $\left\{ \zeta_{n}\right\} $
satisfies
\[
\limsup_{n\to\infty}\left|\zeta_{n}\right|^{1/n}=1\quad\mbox{and}\quad\sum_{n\ge0}\left|\zeta_{n}\right|^{2}=+\infty,\quad\mbox{a.s.},
\]
then, almost surely,
\[
\forall b\in\bbc,\,\,\sum_{w\in\Lambda_{F}\left(b\right)}\left(1-\left|w\right|\right)=+\infty.
\]

\end{thm}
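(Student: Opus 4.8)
The plan is to reduce the statement to an estimate on the growth of $\log|F(z)|$ along circles combined with Theorem~\ref{thm:log_int_for_Rade_Four_series}, using Jensen's formula to convert the divergence of $\sum_{w\in\Lambda_F(b)}(1-|w|)$ into a divergence of a logarithmic integral. First I would fix $b\in\bbc$ and consider $G(z)=F(z)-b=\sum_{n\ge0}\zeta_n'z^n$, where $\zeta_0'=\zeta_0-b$ and $\zeta_n'=\zeta_n$ for $n\ge1$; this is still a series with independent symmetric coefficients except for the constant term, which is a harmless perturbation. The Blaschke-type sum $\sum_{w\in\Lambda_F(b)}(1-|w|)$ is comparable (up to constants and the behavior near $|z|=1$) to $\int_0^1 n_G(r)(1-r)\,\frac{\dd r}{r}$, where $n_G(r)$ counts zeros of $G$ in $\disk r$; and by Jensen's formula $\int_0^1 n_G(r)\frac{\dd r}{r}$ differs from $\int_{\bbt}\log|G(r\theta)|\,\dd m(\theta)$ (as $r\to1$) by a bounded quantity coming from $\log|G(0)|$. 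So the claim $\sum_{w}(1-|w|)=+\infty$ will follow once I show that
\[
\int_0^1\!\Big(\int_{\bbt}\log\big|G(r e^{2\pi i\theta})\big|\,\dd m(\theta)\Big)\d{r}=+\infty,\quad\text{a.s.}
\]
Equivalently, writing $T(r)=\int_\bbt\log^+|G(re^{2\pi i\theta})|\,\dd m(\theta)$ for the Nevanlinna characteristic and $m_-(r)=\int_\bbt\log^-|G(re^{2\pi i\theta})|\,\dd m(\theta)$, it suffices to show that $\int_0^1 m_-(r)\,\dd r=+\infty$ a.s., since $\int_0^1 T(r)\,\dd r$ is finite only under strong restrictions that fail here.

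The heart of the matter is a lower bound for $m_-(r)$, i.e., controlling how negative $\log|G|$ can be on the circle of radius $r$. Here is where Theorem~\ref{thm:log_int_for_Rade_Four_series} enters. On the circle $|z|=r$, the function $\theta\mapsto G(re^{2\pi i\theta})=\sum_n \zeta_n r^n e^{2\pi in\theta}$ is, conditionally on the moduli $|\zeta_n|$ (or after extracting Rademacher signs from symmetric $\zeta_n$), a Rademacher Fourier series with $\ell^2$-norm $M(r)^2=\sum_n|\zeta_n|^2 r^{2n}$. Normalizing by $M(r)$ and applying Theorem~\ref{thm:log_int_for_Rade_Four_series} with, say, $q=1$ gives
\[
\Ex\Big\{\int_\bbt\big|\log|G(re^{2\pi i\theta})|-\log M(r)\big|\,\dd m(\theta)\Big\}\le C,
\]
uniformly in $r$ and in the coefficient sequence. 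In particular $\Ex\{m_-(r)\}\le \log^+(1/M(r))+C$, but more usefully $\int_\bbt\log|G(re^{2\pi i\theta})|\,\dd m(\theta)\ge \log M(r)-\epsym(r)$ where $\epsym(r)\ge0$ has bounded expectation. Now under the hypothesis $\sum|\zeta_n|^2=+\infty$ a.s.\ we get $M(r)\to\infty$ as $r\to1^-$, so $\log M(r)\to+\infty$; meanwhile $\int_0^1\epsym(r)\,\dd r$ has finite expectation, hence is finite a.s. Therefore the integrand $\int_\bbt\log|G|\,\dd m$ tends to $+\infty$ along a sequence of radii (in fact after removing an integrable error), forcing $\int_0^1 \int_\bbt \log|G|\,\dd m\,\dd r=+\infty$, which by the Jensen reduction above yields $\sum_{w\in\Lambda_F(b)}(1-|w|)=+\infty$ a.s.\ for each fixed $b$.

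Two remaining points. First, the quantifier ``$\forall b\in\bbc$'' must be pulled inside the almost-sure statement; I would handle this by proving the result simultaneously for all $b$ in a countable dense set $D\subset\bbc$ (a countable union of null sets is null), and then upgrading to all $b\in\bbc$ by a continuity/monotonicity argument: if $b_k\to b$ then $\Lambda_F(b_k)$ accumulates on $\Lambda_F(b)$ on compacts, and one shows the Blaschke sum cannot suddenly become finite at a single $b$ — e.g.\ using Rouch\'e's theorem on a suitable exhaustion, or the fact that $z\mapsto \frac{1}{2\pi}\Delta\log|F(z)-b|$ varies lower-semicontinuously, so that divergence of $\sum(1-|w|)$ is inherited in the limit. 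Second, the hypothesis $\limsup|\zeta_n|^{1/n}=1$ is exactly what guarantees $F$ has radius of convergence $1$ (so that $F$ is genuinely analytic in $\bbd$ and the Jensen/Nevanlinna machinery applies), and it is used only to set the stage, not in the quantitative estimate. I expect the main obstacle to be the passage from a fixed $b$ to all $b$ simultaneously — the estimate for fixed $b$ is a fairly direct consequence of Theorem~\ref{thm:log_int_for_Rade_Four_series} plus Jensen, but making the exceptional null set independent of $b$ requires care, because a priori the ``bad'' event could depend on $b$ in an uncountable way; the continuity-in-$b$ argument sketched above, carried out quantitatively (e.g.\ with uniform-in-$b$ bounds on $\Ex\{\epsym(r)\}$, which Theorem~\ref{thm:log_int_for_Rade_Four_series} does provide since $C$ is independent of the coefficients), is what closes this gap.
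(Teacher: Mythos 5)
Your skeleton (Jensen plus logarithmic integrability, then handle all $b$) is the right one, but two steps fail as written. First, the fixed-$b$ argument: from the log-integrability theorem you only get $\Ex\{\epsym(r)\}\le C$ uniformly in $r$, hence a.s. $\int_0^1\epsym(r)\,\mathrm{d}r<\infty$, and this is too weak, because the hypotheses allow $\log M(r)=\log\sigma_F(r)$ to grow arbitrarily slowly: for $|\zeta_n|^2=1/n$ one has $\log\sigma_F(r)\asymp\log\log\tfrac1{1-r}$, which is integrable on $(0,1)$. Then your lower bound for the double integral, $\int_0^1\log M(r)\,\mathrm{d}r-\int_0^1\epsym(r)\,\mathrm{d}r$, is finite and proves nothing, and integrability of $\epsym$ does not produce radii $r_k\to1$ with $\epsym(r_k)=o(\log M(r_k))$: an integrable error can exceed $\log\log\tfrac1{1-r}$ at \emph{every} $r$ near $1$. (Also, $\int_0^1 n_G(r)(1-r)\tfrac{\mathrm{d}r}{r}$ is comparable to $\sum_w(1-|w|)^2$, not to the Blaschke sum; what is actually needed is $N_G(r)\to\infty$, and it is the monotonicity of $N_G$ that makes your sufficient condition sufficient.) The paper repairs exactly this point by sampling radii adapted to the growth of $\sigma_F$: choose $r_m$ with $\log\sigma_F(r_m)=2m^2$, apply Chebyshev with threshold $m^2$ so the exceptional probabilities $C/m^2$ are summable, use Borel--Cantelli, and then monotonicity of $r\mapsto N_F(r,b)$.

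Second, the passage to all $b$ cannot be done by a dense-set-plus-continuity argument. The function $b\mapsto\sup_{r<1}N_F(r,b)$ is a supremum of continuous functions, hence lower semicontinuous, which is the wrong direction: it bounds the value at $b$ \emph{above} by $\liminf$ over nearby $b'$, so divergence on a dense set gives nothing at the remaining points; and Rouch\'e/Hurwitz does not help because zeros of $F-b_k$ may escape to the boundary as $b_k\to b$, which is precisely what the Blaschke sum measures. (A deterministic warning that no soft argument exists: for an infinite Blaschke product $B$, Frostman's theorem gives that $\sum_{w\in\Lambda_B(b)}(1-|w|)$ is finite for quasi-every $b$, while it may be infinite for exceptional $b$.) The paper's mechanism is the reason its remark insists on the stronger, random-constant version of Theorem \ref{thm:Zygmund-type-final}: for each $M\in\bbn$ pick a measurable near-minimizer $b^*(\omega)$ of $N_F(r,\cdot)$ over $\{|b|\le M\}$ and apply the distributional inequality to $\widehat F(re(\theta))-\sigma_F(r)^{-1}b^*$, which is legitimate once $\sigma_F(r)\ge 20M$; this bounds $\inf_{|b|\le M}N_F(r_m,b)$ from below outside a single null set depending only on $M$, and intersecting over $M$ finishes. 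Your application of the plain Theorem \ref{thm:log_int_for_Rade_Four_series} to $F-b$, calling the shifted constant term a harmless perturbation, silently assumes this extension; it is not harmless (the degenerate case $f=b$ is what the $\tfrac1{20}$-condition excludes). Your reduction of general symmetric coefficients to the Rademacher case by randomizing signs is essentially the paper's argument (product space, symmetry, Fubini) and is fine.
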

The special case of Rademacher analytic functions is clearly implied
by setting $\zeta_{n}=a_{n}\xi_{n}$. The case of Gaussian coefficients
also follows (for another proof of this case see \cite[chap. 13]{Ka2}).
\begin{rem*}
It should be mentioned that the proof of Theorem \ref{thm:Kahane_prob}
requires a stronger version of Theorem \ref{thm:log_int_for_Rade_Four_series}.
We will give the statement and the proof of this version in Part \ref{part:log_int_of_Rad_Four}
Section \ref{subsect:result}.
\end{rem*}

\subsection{Equidistribution of the zeros of random entire functions}

Next, we apply the tools developed in the first part to show that
the zeros of Rademacher entire functions are equidistributed in sectors
(for GAFs this is suggested by the Edelman-Kostlan formula (\ref{eq:Edel-Kos_for})).

Let\index{Rademacher Taylor series} 
\[
F(z)=\sum_{n\ge0}\xi_{n}a_{n}z^{n}
\]
be a Rademacher Taylor series with an infinite radius of convergence
(i.e., $\sum a_{n}z^{n}$ is an entire function). We will assume that
$|F(0)|=1;$ the general case is reduced to this one by letting $F(z)=a_{\nu}z^{\nu}F_{1}(z)$,
where $\nu\in\bbn$ is the multiplicity of the zero of $F$ at the
origin. Similarly to the case of GAFs, we use the notation\index{1sigmaF@$\sigma_{F}$}
\[
\sigma_{F}^{2}(r)=\Ex\bigl\{|F(re^{{\rm i}\theta}|^{2})\bigr\}=\sum_{n\ge0}|a_{n}|^{2}r^{2n},
\]
and also put\index{1sF@$s_{F}$} 
\[
s_{F}\left(r\right)=\frac{\mathrm{d}\log\sigma_{F}(r)}{\mathrm{d}\log r}=r\cdot\frac{\dd\log\sigma_{F}\left(r\right)}{\dd r}.
\]
By $n_{F}(r,\alpha,\beta)$ \index{1nF@$n_{F}$!n_{F}(r,alpha,beta)
@$n_{F}(r,\alpha,\beta)$}\index{zero counting function!angular}we denote the number of zeros
(counted with multiplicities) of $F$ in the sector $\left\{ z\colon\alpha\le\arg z<\beta,\,\left|z\right|\le r\right\} $.
The \textit{integrated zero counting function\index{zero counting function!angular, integrated}}
is given by\index{1NF@$N_{F}$!N_{F}(r,alpha,beta)
@$N_{F}(r,\alpha,\beta)$} 
\[
N_{F}(r,\alpha,\beta)=\intsy_{0}^{r}\frac{n_{F}(r,\alpha,\beta)}{t}\d t\,.
\]
A set $E\subset\left[1,\infty\right)$ is a set of \textit{finite
logarithmic measure\index{finite logarithmic measure}} if
\[
\intsy_{E}\,\frac{\mathrm{d}t}{t}<\infty.
\]
The main result of this section shows that under certain growth condtions
on the function $\sigma_{F}\left(r\right)$ the zeros of $F$ are
equidistributed in sectors.\index{Rademacher Taylor series!equidistribution of the zeros}
\begin{thm}
\label{thm:Rade_ent_funcs_ang_dist_of_zero}Let $F$ be a Rademacher
Taylor series with an infinite radius of convergence.

(i) Suppose that $a>2$ and $\gamma\in\bigl(\tfrac{1}{2}+\tfrac{1}{a},1\bigr)$
. Then, a.s. and in mean,
\[
\sup_{0\le\alpha<\beta\le2\pi}\,\Bigl|N_{F}(r,\alpha,\beta)-\frac{\beta-\alpha}{2\pi}\,\log\sigma_{F}(r)\Bigr|=\O\left(\log^{\gamma}\left(\sigma_{F}\left(r\right)\right)\right)
\]
 when $r\to\infty$ and $\log\sigma_{F}(r)>\log^{a}r$ .

(ii) Suppose that $a>4$ and $\gamma\in\bigl(\tfrac{3}{4}+\tfrac{1}{a},1\bigr)$
. Then there exists a set $E\subset[1,\infty)$ of a finite logarithmic
measure such that, a.s. and in mean, 
\[
\sup_{0\le\alpha<\beta\le2\pi}\,\Bigl|n_{F}(r,\alpha,\beta)-\frac{\beta-\alpha}{2\pi}\, s_{F}\left(r\right)\Bigr|=\O\left(\left(s_{F}\left(r\right)\right)^{\gamma}\right)
\]
when $r\to\infty$ , $r\notin E$, and $\log\sigma_{F}(r)>\log^{a}r$.
\end{thm}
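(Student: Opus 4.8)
The plan is to reduce the angular equidistribution statement to a one-dimensional statement about the Rademacher Fourier series obtained by restricting $F$ to circles, and then to invoke Theorem \ref{thm:log_int_for_Rade_Four_series} to control the error. Fix $r$ and write $g_r(\theta) = F(re^{2\pi i\theta})/\sigma_F(r)$; this is a Rademacher Fourier series in $\theta$ with $\norm[g_r]2 = 1$ (up to the normalization $|F(0)|=1$, which forces $n=0$ to contribute). The Jensen/Green representation gives, for any sector $[\alpha,\beta)$,
\[
N_F(r,\alpha,\beta) = \frac{\beta-\alpha}{2\pi}\log\sigma_F(r) + \frac{1}{2\pi}\intsy_\alpha^\beta \log|g_r(\varphi/2\pi)|\d\varphi + (\text{boundary terms in }r),
\]
more precisely one writes the angular counting function via a double integral of $\Delta\log|F|$ and integrates by parts, so that the deviation $N_F(r,\alpha,\beta) - \tfrac{\beta-\alpha}{2\pi}\log\sigma_F(r)$ is controlled by the oscillation of the primitive $\Phi_r(\alpha) = \intsy_0^\alpha \log|g_r(\varphi/2\pi)|\d\varphi$ of the boundary log-modulus. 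Thus the supremum over sectors is bounded (up to constants) by $2\sup_\alpha |\Phi_r(\alpha)|$, which is at most $\intsy_\bbt |\log|g_r|| \d{\theta}$, exactly the quantity Theorem \ref{thm:log_int_for_Rade_Four_series} estimates.

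First I would make the deterministic reduction rigorous: express $n_F(r,\alpha,\beta)$ and $N_F(r,\alpha,\beta)$ through the boundary values of $\log|F|$ on the circle of radius $r$ (for $n_F$ this is the argument principle applied to a sector, which brings in the radial integrals $\intsy_0^r \frac{\partial}{\partial r}\log|F|$ along the two rays $\arg z = \alpha, \beta$, plus the angular integral on the arc). The key point is that all error terms are of the shape $\intsy_\bbt \bigl|\log|g_r|\bigr| \d{\theta}$ on the circle $|z|=r$, possibly together with analogous integrals on nearby circles. Then I would apply Theorem \ref{thm:log_int_for_Rade_Four_series} with a well-chosen $q = q(r)$: since $\Ex\{\intsy_\bbt |\log|g_r||^q\} \le C^q q^{6q}$, Markov's inequality gives $\intsy_\bbt |\log|g_r|| \le C q^6 \cdot t^{1/q}$ outside an event of probability $t^{-1}$. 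Taking $q \asymp \log\log\sigma_F(r)$ and $t$ polynomial in $\sigma_F(r)$, the right-hand side becomes $\O(\log^\gamma \sigma_F(r))$ for any $\gamma > 0$ once $\log\sigma_F(r)$ grows (the constraint $\log\sigma_F(r) > \log^a r$ with $a>2$ is what guarantees $\sigma_F$ is large enough, along a sufficiently dense set of scales, for the quantitative gain to kick in). A Borel–Cantelli argument over a discretized sequence of radii $r_k$ (geometrically spaced, with the probabilities $r_k^{-1}$ summable after reparametrization by $\log\sigma_F$) upgrades this to an almost sure statement, and monotonicity of $N_F$ in $r$ fills in the gaps between the $r_k$; convergence in mean follows from the $L^q$ bound directly.

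For part (ii) the extra difficulty is that $n_F(r,\alpha,\beta)$ — unlike its logarithmic average $N_F$ — is not monotone and is genuinely a difference of two nearby values of $N_F$, so one must differentiate in $r$. The standard device is a Cauchy-estimate / Borel–Carathéodory type argument: write $s_F(r)$ and $n_F(r,\cdot)$ as increments of $N_F$ over a short interval $[r, r(1+\delta)]$ and optimize $\delta$; this is where one loses a bit, and the exponent degrades from $\gamma > \tfrac12 + \tfrac1a$ to $\gamma > \tfrac34 + \tfrac1a$ and the conclusion is forced to hold only off an exceptional set $E$ of finite logarithmic measure (the radii where the increment of $N_F$ happens to be atypically large relative to its derivative). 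I would control $E$ by a Chebyshev-in-$\log r$ argument: the set where $N_F(r(1+\delta)) - N_F(r)$ exceeds $\delta \cdot s_F(r) \cdot (\text{large factor})$ has small logarithmic measure because $\intsy N_F \, \d(\log r)$ is itself comparable to $\intsy s_F \, \d(\log r)$.

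The main obstacle I expect is not the Fourier-analytic input — Theorem \ref{thm:log_int_for_Rade_Four_series} does the heavy lifting and is uniform in the coefficients, which is exactly what lets us avoid regularity hypotheses on $\{a_n\}$ — but rather the bookkeeping that makes the whole argument \emph{uniform over all sectors simultaneously}. One cannot union-bound over a continuum of pairs $(\alpha,\beta)$; the trick is that, because $N_F(r,\alpha,\beta) = \Phi_r(\beta) - \Phi_r(\alpha)$ is an increment of a \emph{single} function $\Phi_r$, controlling $\sup_\alpha|\Phi_r(\alpha)|$ controls all sectors at once, and $\sup_\alpha |\Phi_r(\alpha)| \le \intsy_\bbt|\log|g_r||$. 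Getting this packaging right — especially in part (ii), where $n_F$ rather than $N_F$ appears and the sector-uniformity must survive the differentiation in $r$ — and correctly coupling the choice of $q$, the discretization of radii, and the exceptional set $E$ so that the stated thresholds ($a > 2$ resp. $a > 4$) come out, is the delicate part of the proof.
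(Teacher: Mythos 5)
Your deterministic reduction contains a genuine gap that invalidates the proposed route to part~(i). You claim that the deviation $\sup_{\alpha,\beta}\bigl|N_F(r,\alpha,\beta)-\tfrac{\beta-\alpha}{2\pi}\log\sigma_F(r)\bigr|$ is controlled by $2\sup_\alpha|\Phi_r(\alpha)|$, where $\Phi_r$ is a primitive of $\log|\widehat F(re^{{\rm i}\theta})|$ on the circle $|z|=r$, hence by $\intsy_{\bbt}|\log|\widehat F(re^{{\rm i}\theta})||\,{\rm d}m(\theta)$. This would make the sector error as small as the error for the \emph{total} count $N_F(r)=N_F(r,0,2\pi)$, i.e.\ an arbitrarily small power (indeed polylogarithmic in $\log\sigma_F$, cf.\ \eqref{eq:a.s._est_for_N_f(r)}). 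The theorem actually proves only $\gamma>\tfrac12+\tfrac1a$, which is a signal that a much weaker control is available. The issue is that the Jensen-type formula for a sector (Lemma~\ref{lem:Jensen-type-formula}) has two terms: an arc integral of $\log|F|$ on $|z|=r$, and a \emph{radial} contribution $\tfrac1{2\pi}\intsy_0^r\frac{v(t,\alpha)-v(t,\beta)}{t}\,{\rm d}t$ coming from the argument of $F$ along the two rays. The radial term is not an increment of $\Phi_r$ and is not bounded by the $L^1$-norm of $\log|\widehat F|$ on the single circle $|z|=r$: via the Cauchy--Riemann equations it is governed by the $\theta$-\emph{derivative} of $\log|F|$ on every intermediate circle $1\le t\le r$. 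Your sentence ``all error terms are of the shape $\int_{\bbt}|\log|g_r||$ on the circle $|z|=r$, possibly together with analogous integrals on nearby circles'' waves at this, but the derivative-in-$\theta$ is not an ``analogous integral'': bounding it is the entire difficulty.

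What the paper does to tame that derivative is a double averaging over slightly dilated sectors $[\alpha-\phi_1-\phi,\,\beta+\phi_1+\phi)$ with $\phi,\phi_1\in[0,\delta]$. This converts $\frac{{\rm d}}{{\rm d}\theta}\intsy_1^r L_G(t,\theta)\log(r/t)\,\frac{{\rm d}t}{t}$ into differences of arc integrals of length $\delta$, controlled by the arc-maximal function $L_F^*(t,\delta)$ (Lemma~\ref{lem:Jensen-type-ineq}). The $L^p$-bound $\|L_F^*(t,\delta)\|_{L^p(\Omega)}\le Cp^6\delta^{1-1/p}$ (Claim~\ref{claim:max-function}) then produces the competing terms $\delta\log\sigma_F(r)+\delta^{-1-1/p}p^6\log^2 r$. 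It is exactly this optimization in $\delta$—made possible only because of the $\delta^{1-1/p}$ gain in the maximal-function estimate—that generates the exponent $\tfrac12+\tfrac1a$ and the constraint $a>2$ (the factor $\log^2 r$ is absorbed by $\log\sigma_F(r)\ge\log^a r$). Your proposal has no $\delta$-averaging step and no maximal function, so it cannot produce the stated thresholds; the thresholds are not ``bookkeeping'' but a structural consequence of the radial-argument contribution you dropped. Part~(ii), which you correctly recognize as a Borel--Nevanlinna type differentiation in $r$ (matching the paper's growth Lemma~\ref{lemma:B-N-type}), inherits the exponent $\gamma$ from part~(i), so the gap propagates.
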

In this theorem, some lower bound on the growth of $\sigma_{F}$ is
necessary. We will show in Section \ref{sec:Rade_Func_with_many_real_zeros}
that if $\beta>\log3$, then all the zeros of the Rademacher Taylor
series $F(z)=\sum_{k\ge0}\xi_{k}e^{-\beta k^{2}}z^{k}$ are real (we
have that $\log\sigma_{F}(r)\sim C_{\beta}\cdot\log^{2}r$ as $r\to\infty$,
for this function).
\begin{rem*}
It should be mentioned that the exceptional set\index{exceptional set}
$E$ in the theorem is unavoidable, since we impose no regularity
assumptions on the sequence $\left\{ a_{n}\right\} $. On the other
hand, if the sequence $\left\{ a_{n}\right\} $ satisfies some very
mild regularity condition, the exceptional set $E$ is not needed.
This exceptional set will reappear, for similar reasons, in the third
part of this work.
\end{rem*}

\subsection{The hole probability for Gaussian entire functions}

The third part of this work is devoted to the study of a special kind
of `exceptional' behavior of the zero set. The `hole' event\index{hole event}
that we are interested in is when the function $f\left(z\right)$
has no zeros inside some domain $D$. We consider this problem for
Gaussian entire functions with general coefficients $a_{n}$. We are
interested in the logarithmic asymptotics of the probability of the
hole event, in the case where the domain $D$ is the disk around zero
with radius $r$, as $r$ tends to infinity. The following theorem
gives a rather precise answer.
\begin{thm}
\textup{\cite{Ni3}}\label{thm:hole_prob_GAFs} Let $\epsilon\in\left(0,\frac{1}{2}\right)$
and let
\[
f\left(z\right)=\sum_{n\ge0}a_{n}\xi_{n}z^{n},
\]
where $\xi_{n}$ are i.i.d. standard complex Gaussians and $a_{0}\ne0$.
Assume that $\sum_{n\ge0}a_{n}z^{n}$ is a non-constant entire function.
We have that
\[
\log^{-}\pr{f\left(z\right)\ne0\mbox{ in }\left|z\right|\le r}=S\left(r\right)+\O\left(S\left(r\right)^{\tfrac{1}{2}+\epsym}\right),\quad r\notin E,
\]
as $r\to\infty.$ Here 
\[
S\left(r\right)=2\cdot\sum_{n\ge0}\log^{+}\left(a_{n}r^{n}\right)
\]
and $E\subset\left[1,\infty\right)$ is a non-random exceptional set
of finite logarithmic measure.\end{thm}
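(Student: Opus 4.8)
The plan is to reduce the hole probability to a statement about the Taylor coefficients via a two-sided estimate, and then to handle the two sides by rather different methods. For the \emph{lower bound} on $\log^{-}\pr{f\neq 0 \text{ in }|z|\le r}$ (i.e.\ the hole is not too likely), the key observation is that if $f$ has no zeros in $\disk r$, then $\log|f|$ is harmonic there, so $\log|f(z)|=\log|f(0)|+$ (harmonic function vanishing at $0$), and in particular the mean of $\log|f|$ on each circle $\cir\rho$, $\rho\le r$, equals $\log|f(0)|=\log|a_0\xi_0|$. On the other hand, for \emph{each} fixed $\rho$ one has the deterministic bound $\frac{1}{2\pi}\int_0^{2\pi}\log|f(\rho e^{i\theta})|\,\dd\theta \ge \log|a_n|\rho^n + \log|\xi_n| - (\text{a correction})$ coming from the coefficient $a_n$ together with a control of the Rademacher-type Fourier series obtained by dividing out the dominant term; here is where the logarithmic integrability machinery (Theorem~\ref{thm:log_int_for_Rade_Four_series} and its strengthening, now in the Gaussian phase setting) enters, to show that the ``correction'' is typically of size $o(S(r))$ and to control its tail. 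Forcing $\log|a_0\xi_0| \ge \log|a_n|\rho^n + (\text{small})$ simultaneously over a suitable range of $n$ (those $n$ for which $a_n r^n \ge 1$, i.e.\ those contributing to $S(r)$) then costs probability roughly $\exp(-c\sum_{n}\log^{+}(a_n r^n)) = \exp(-\tfrac c2 S(r))$, since each Gaussian $\xi_n$ must be abnormally small. Chaining these constraints carefully over a net of radii, and optimizing, should give $\log^{-}\pr{\text{hole}} \ge S(r) - \O(S(r)^{1/2+\epsym})$ off an exceptional set.

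For the \emph{upper bound} (the hole is at least this likely), the strategy is constructive: exhibit an event of probability at least $\exp(-S(r) - \O(S(r)^{1/2+\epsym}))$ on which $f$ has no zeros in $\disk r$. The natural choice is to demand that $|\xi_n|$ be small for all $n$ with $a_n r^n$ large --- specifically $|\xi_n a_n \rho^n|$ should be dominated by $|\xi_0 a_0|$ with room to spare --- so that on $\cir r$ the $n=0$ term dominates the entire tail $\sum_{n\ge1}\xi_n a_n z^n$, whence by Rouch\'e's theorem $f$ has the same number of zeros in $\disk r$ as the constant $a_0\xi_0$, namely none. Computing the probability that a standard complex Gaussian satisfies $|\xi_n|\le t_n$ gives $1-e^{-t_n^2}\approx t_n^2$ for small $t_n$, and choosing $t_n \approx c\,(a_0 r^n /(a_n r^n \cdot n^2))$ or similar, the product over the relevant $n$ is $\exp(-\sum 2\log^{+}(a_n r^n) + (\text{lower order}))$; the lower-order terms, together with the geometric factors needed for summability of the tail on $\cir r$, contribute the $\O(S(r)^{1/2+\epsym})$. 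One must also ensure $S(r)\to\infty$ (which follows from $f$ non-constant) so that the error term is genuinely lower order.

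The main obstacle will be the passage from a \emph{fixed} radius $r$ to all large $r$ while keeping the error term $\O(S(r)^{1/2+\epsym})$ and isolating the exceptional set $E$ of finite logarithmic measure. Because no regularity is assumed on $\{a_n\}$, the function $S(r)$ (and the index set $\{n: a_n r^n\ge 1\}$) can behave erratically --- $S$ can jump, or grow very slowly along a sparse sequence of radii --- so a uniform-in-$r$ argument is impossible and one is forced to discard a set of radii. The technical heart is a covering/discretization argument: split $[1,\infty)$ into blocks on which $\log\sigma_f(r)$, and hence $S(r)$, roughly doubles, apply the fixed-$r$ estimates at the endpoints (or at a logarithmically spaced net inside each block), and use monotonicity of $\log\sigma_f$ together with the deterministic inequality $s_F(r)\le$ (number of zeros in a slightly larger disk) to interpolate; the radii where the interpolation loss exceeds $S(r)^{1/2+\epsym}$ are thrown into $E$, and one checks $\int_E \dd t/t <\infty$ by a Borel--Cantelli-type counting over the blocks. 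This is precisely the mechanism by which the exceptional set reappears here as promised in the remark following Theorem~\ref{thm:Rade_ent_funcs_ang_dist_of_zero}, and matching the exponent $\tfrac12+\epsym$ on both sides of the resulting two-sided bound --- rather than a cruder power --- is where the sharper version of the logarithmic-integrability theorem is indispensable.
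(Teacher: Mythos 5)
The half of your proposal that exhibits the hole event with probability at least $\exp\left(-S\left(r\right)-\O\left(S\left(r\right)^{1/2+\epsym}\right)\right)$ --- forcing $|\xi_{n}|\lesssim\left(a_{n}r^{n}\right)^{-1}$ for the indices with $a_{n}r^{n}\ge1$ so that the constant term dominates, and paying $\prod\left(a_{n}r^{n}\right)^{-2}=e^{-S\left(r\right)}$ up to lower order --- is essentially the paper's Proposition~\ref{prp:p_H_upp_bnd}. The genuine gap is in the other, harder half, the bound $p_{H}\left(f;r\right)\ge S\left(r\right)-\O\left(S\left(r\right)^{1/2+\epsym}\right)$. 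Your reduction rests on two claims that fail. First, the ``deterministic bound'' $\frac{1}{2\pi}\intsy_{0}^{2\pi}\log\left|f\left(\rho e^{i\theta}\right)\right|\d{\theta}\ge\log\left(\left|a_{n}\xi_{n}\right|\rho^{n}\right)-\left(\mbox{correction}\right)$, i.e.\ the assertion that absence of zeros forces every coefficient with $a_{n}r^{n}\ge1$ to be dominated by $\left|f\left(0\right)\right|$, is false: $f\left(z\right)=e^{Mz}$ has no zeros at all, $f\left(0\right)=1$, its circle average of $\log\left|f\right|$ is $0$, and yet its first Taylor coefficient equals $M$. So the hole event does \emph{not} imply that each $\xi_{n}$ is abnormally small, and the product-of-small-Gaussians computation giving $\exp\left(-\tfrac{c}{2}S\left(r\right)\right)$ is unjustified (and with an unspecified constant $c$ it would not even yield the leading term $S\left(r\right)$ with constant exactly $1$). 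Second, the logarithmic-integrability machinery cannot supply the missing tail control: the moment bounds $\left(Cq\right)^{6q}$ of Theorem~\ref{thm:log_int_for_Rade_Four_series} give only stretched-exponential tails of order $\exp\left(-cT^{1/6}\right)$, hopelessly far from the precision $\exp\left(-S\left(r\right)\right)$ required here; indeed this part of the paper is independent of Theorem~\ref{thm:log_int_for_Rade_Four_series} and never uses it.

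What is missing is the paper's actual mechanism for the lower bound (Proposition~\ref{prp:p_H_low_bnd}): from harmonicity one finds a rotation $\alpha^{*}$ for which the average of $\log\left|f\right|$ over $n\left(r\right)$ points $z_{j}e^{i\alpha^{*}}$ on $\cir{\rho}$, $\rho=r\left(1-\gamma\right)$, equals $\log\left|f\left(0\right)\right|$; the bound on the angular logarithmic derivative (Lemma~\ref{lem:upp_bnd_log_deriv}) extends this to an interval of $\alpha$'s of length $\asymp\gamma^{5}/S\left(r\right)$, which decouples $\alpha$ from the hole event; and the probability of the resulting event for the Gaussian vector $\left(f\left(z_{j}e^{i\alpha}\right)\right)_{j\le n}$ is then bounded by $\volcn{\Omega}/\left(\pi^{n}\det\Sigma\right)$. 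The sharp constant comes precisely from the covariance determinant bound $\log\det\Sigma\ge S\left(r\right)$ (Corollary~\ref{cor:low_bnd_det_sigma}), obtained by choosing the points so that a generalized Vandermonde determinant is large; nothing in your sketch produces an estimate of this strength. Finally, the exceptional set is not generated by a chaining or Borel--Cantelli argument over blocks of radii --- the statement is a per-radius, non-random estimate, not an almost sure one --- but by Hayman's growth lemma (Lemma~\ref{lem:H_Growth_Lemma}) applied to the deterministic function $m\left(r\right)$, which is what permits comparison of $S$, $m$, $n$ at nearby radii and shows that the error terms $n\left(r\right)\log S\left(r\right)$ and $\sqrt{m\left(r\right)}\log m\left(r\right)$ are $\O\left(S\left(r\right)^{1/2+\epsym}\right)$ off a set of finite logarithmic measure.
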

\begin{rem*}
The set $E$ depends only on the sequence $\left\{ a_{n}\right\} $
and on $\epsym$.\index{exceptional set}
\end{rem*}

\subsection{Notation}

Throughout this work the letters $C$ and $c$ denote various positive
numerical constants; usually the first is reserved for `large' ($\ge1$)
numbers, while the second is used for `small' numbers. We use $C_{\alpha}$
for constants that depend on another quantity $\alpha$.

Several types of symbols are used to indicate the asymptotic relations
between different quantities. For positive quantities $\Phi,\Psi$,
the notation $\Phi\lesssim\Psi$ means that there exists a positive
constant $C$ so that $\Phi\le C\cdot\Psi$. Another notation for
the same relation is $\Phi=\O\left(\Psi\right)$ (the usage depends
on the convenience at each point). The notation $\Phi=\O_{\alpha}\left(\Psi\right)$
means that the implied constant depends only on $\alpha$. For positive
sequences or functions the notation $\Phi=o\left(\Psi\right)$ indicates
that the (appropriate) limit $\Phi/\Psi$ is zero.

We use $\Pr\left\{ E\right\} $or $\Pr\left(E\right)$ to denote the
probability of an event $E$. The notation $\Ex\left\{ X\right\} $
or $\Ex\left(X\right)$ is used for the expected value of the random
variable $X$.

We use $\disk r$ for the closed disk $\left\{ \left|z\right|\le r\right\} $,
while $\cir r$ is used for the circle $\left\{ \left|z\right|=r\right\} $.
Finally, the log-derivative $\frac{\dd f\left(r\right)}{\dd\log r}$\index{1ddlogr@$\frac{\dd}{\dd\log r}$}
is the same as $r\cdot\frac{\dd f\left(r\right)}{\dd r}$.

\newpage{}

\part{Logarithmic Integrability of Rademacher Fourier series\label{part:log_int_of_Rad_Four}}

We start by explaining the central role played by the logarithmic
integrability of Rademacher Fourier series. Consider a Rademacher
Taylor series\index{Rademacher Taylor series}
\[
f\left(z\right)=\sum_{n\ge0}a_{n}\xi_{n}z^{n},
\]
with radius of convergence $R$, $0<R\le\infty$, and for $r=\left|z\right|$,
put
\[
\sigma_{f}^{2}\left(r\right)=\Ex\left\{ \left|f\left(z\right)\right|^{2}\right\} =\sum_{n\ge0}\left|a_{n}\right|^{2}r^{2n}.
\]
We will assume that $\sigma_{f}\left(r\right)\to\infty$ as $r\to R$.
We are interested in the asymptotics of the random counting function
$n_{f}\left(r\right)$,\index{zero counting function} which counts
the number of zeros of $f$ in the disk $\disk r=\left\{ \left|z\right|\le r\right\} $,
as $r\to R$. To simplify the notation, assume that $a_{0}=1$, and
thus $\left|f\left(0\right)\right|=1$. Denote by
\[
N_{f}\left(r\right)\eqdef\intsy_{0}^{r}\,\frac{n_{f}\left(t\right)}{t}\, dt
\]
the integrated counting function\index{zero counting function!integrated}.
Then, by Jensen's formula\index{Jensen's formula},
\[
N_{f}\left(r\right)=\intsy_{\bbt}\log\left|f\left(re^{2\pi i\theta}\right)\right|\d{\theta}-\log\left|f\left(0\right)\right|=\log\sigma_{f}\left(r\right)+\intsy_{\bbt}\log\left|\widehat{f}\left(re^{2\pi i\theta}\right)\right|\d{\theta},
\]
where
\[
\widehat{f}\left(re^{2\pi i\theta}\right)\eqdef\frac{f\left(re^{2\pi i\theta}\right)}{\sigma_{f}\left(r\right)}.
\]
Note that for a fixed $r$ the function $\widehat{f}\left(re^{2\pi i\theta}\right)=\sum_{n\ge0}\widehat{a}_{n}\left(r\right)\xi_{n}e^{2\pi i\cdot n\theta}$
is a random Fourier series normalized by the condition $\sum_{n\ge0}\left|\widehat{a}_{n}\left(r\right)\right|^{2}=1$.

In order to explain the reduction to logarithmic integrability we
start by considering a simpler case, where $\xi_{n}$ are standard
complex Gaussian random variables. Then, for every $\theta\in\bbt$,
the random variable $\widehat{f}\left(re^{2\pi i\theta}\right)$ is
again a standard complex Gaussian. Thus $\Ex\left\{ \left|\log\left|\widehat{f}\right|\right|\right\} $
is a positive numerical constant and therefore, $\sup_{r<R}\Ex\left\{ \left|N_{f}\left(r\right)-\log\sigma_{f}\left(r\right)\right|\right\} \le C$
. We can then `differentiate' this asymptotic relation, and get
\[
n_{f}\left(r\right)=\frac{\mathrm{d}\log\sigma_{f}\left(r\right)}{\mathrm{d}\log r}+e\left(r\right),\,\mbox{with\,\,}\Ex\left|e\left(r\right)\right|=o\left(1\right)\cdot\frac{\mathrm{d}\log\sigma_{f}\left(r\right)}{\mathrm{d}\log r},\, r\to R,\, r\notin E,
\]
where $E$ is an exceptional set\index{exceptional set} of $r$'s
of finite logarithmic measure, which is unavoidable when the non-random
coefficients $a_{k}$ have an irregular behaviour. Similarly, one
gets $\sup_{r<R}\Ex\left\{ \left|N_{f}\left(r\right)-\log\sigma_{f}\left(r\right)\right|^{p}\right\} \le\left(Cp\right)^{cp}$,
for $p\ge1$. Combined with the classical Borel-Cantelli lemma, this
leads to a good almost sure estimate for the error term $e\left(r\right)$.

The same approach works, albeit with more technical difficulties,
for the Steinhaus coefficients $\xi_{k}=e^{2\pi i\gamma_{k}}$, where
$\gamma_{k}$ are independent and uniformly distributed on $\left[0,1\right]$.
In this case, one needs to estimate the $L^{p}$ norms of the logarithm
of the absolute value of a normalized linear combination of independent
Steinhaus variables. This was done by Offord in \cite{Of2}; twenty
years later, Favorov and Ullrich independently rediscovered this idea
and gave new applications (see \cite{Fa1,Fa2,Ul1,Ul2}, and also the
recent paper of Mahola and Filevich \cite{MF}). However, these techniques
fail for Rademacher random variables (in principle because linear
combinations of Rademacher random variables can vanish with positive
probability). Still, not everything is lost: note that in order to
estimate the error term in Jensen's formula we do not need a pointwise
estimate for $\Ex\left\{ \left|\log\left|\widehat{f}\left(re^{2\pi i\theta}\right)\right|\right|\right\} $
that is uniform in $\theta\in\bbt$. For our purposes, the integral
estimate $\Ex\left\{ \intsy_{\bbt}\left|\log\left|\widehat{f}\left(re^{2\pi i\theta}\right)\right|\right|^{p}\d{\theta}\right\} $
is not worse than the uniform one.

In the first section we give some background and an outline of the
proof. We also state the main tools used in the proof and some important
notations. In the second and third sections we give the proof of the
logarithmic integrability. In the last two section we give an example
that illustrates the sharpness of the result and discuss some further
related problems. In the next part we will apply this theorem to the
study of the value distribution of random analytic functions.

\section{Logarithmic Integrability - Background and the Main Result}

In this section we describe the main result of this part of the work.
It shows that an arbitrary Rademacher Fourier series cannot be too
close to a random constant. The version we gave in the introduction
corresponds to the case when the random constant is the zero function.
The extension that we prove is needed for the proof of Theorem \ref{thm:Kahane_prob}
on the range of random Taylor series in the unit disk.

\subsection{Notation\label{subsect:notation}}

We denote by $\bbt$ the interval $[0,1)\subset\bbr$, which we treat
as $\bbt=\bbr/\bbz$. $m$ will be either the Lebesgue measure on
$\bbt$ normalized by $m(\bbt)=1$, or the Lebesgue measure on $\bbr$,
depending on the context. We also write $e(\theta)=e^{2\pi{\rm i}\theta}$,
$\theta\in\bbt$.

Let $(\Omega,\Pr)$ be a standard probability space, and let $\xi_{k}\colon\Omega\to\{\pm1\}$,
$k\in\bbz$ be an independent Rademacher (or Bernoulli) sequence,
that is, each $\xi_{k}$ takes the values $\pm1$ with probability
$\tfrac{1}{2}$ each. Thus, we have a natural product measure space
$Q=\Omega\times\bbt$, with square integrable functions $L^{2}(Q)=L^{2}(Q,\mu)$.\index{1Ltwo@$L^{2}(Q)$}

Our main interest here is in the (closed) subspace $\RFfuncs\subset L^{2}(Q)$
of\\
Rademacher Fourier series\index{1Ltworf@$\RFfuncs$}\index{1Ltworf@$\RFfuncs$|see{Rademacher Fourier series}}
\[
{\displaystyle f\!=\!\sum_{k\in\bbz}a_{k}\phi_{k}},\qquad{\displaystyle \sum_{k\in\bbz}|a_{k}|^{2}\!<\!\infty},
\]
where $\phi_{k}(\omega,\theta)=\xi_{k}(\omega)e(k\theta)$, $k\in\bbz$,
$(\omega,\theta)\in Q$. We note that the system $\{\phi_{k}\}$ is
an orthonormal basis for the space $\RFfuncs$, and for $f\in\RFfuncs$
one can easily check that 
\begin{eqnarray*}
\|f\|_{2}^{2} & \eqdef & \intsy_{Q}|f|^{2}\,{\rm d}\mu=\intsy_{\Omega}|f(\omega,\cdot)|^{2}\,{\rm d}\Pr(\omega)\\
 & = & \intsy_{\bbt}|f(\cdot,\theta)|^{2}\,{\rm d}m(\theta)=\sum_{k\in\bbz}|a_{k}|^{2}=\|\{a_{k}\}\|_{\ell^{2}(\bbz)}^{2}\,.
\end{eqnarray*}
A measurable function $b$ on $Q$ that does not depend on $\theta$
will be called a \textit{random constant\index{random constant}}
(note that $b$ is not necessarily in $\RFfuncs$).

\subsection{The result\label{subsect:result}}

The goal of this part of the work is to prove that for any $f\in\RFfuncs$
with $\|f\|_{2}=1$, any $b\in L^{\infty}(\Omega)$ with $\|b\|_{\infty}<\tfrac{1}{20}$,
and every $p\ge1$, we have\index{Rademacher Fourier series!logarithmic integrability}

\begin{equation}
\intsy_{Q}\bigl|\log|f-b|\bigr|^{p}\,{\rm d}\mu\le\left(Cp\right){}^{6p}\,,\label{eq:log_int_Rad_Four_gen_case}
\end{equation}
where $C$ is some positive numerical constant. We note that the condition
on the function $b$ is a technical one. Its purpose is to avoid some
degenerate cases, for example, the case when the functions $f$ and
$b$ are both equal to $\xi_{0}$. The logarithmic integrability is
a straightforward corollary%
\footnote{It follows by applying Theorem \ref{thm:Zygmund-type-final} below
for the sub-level sets $E_{\epsym}=\left\{ \left(\omega,\theta\right)\in Q\,:\,\left|f-b\right|<\epsym\right\} $
and then integrating the LHS of (\ref{eq:log_int_Rad_Four_gen_case})
by parts.%
} of the following distribution inequality for $\RFfuncs$ functions.\index{Rademacher Fourier series!distribution inequality}
\begin{thm}
\label{thm:Zygmund-type-final} For any $f\in\RFfuncs$, any random
constant $b\in L^{\infty}(\Omega)$ with $\|b\|_{\infty}<\tfrac{1}{20}\|f\|_{2}$,
and any set $E\subset Q$ of positive measure, 
\[
\normp[f][2]2=\intsy_{Q}|f|^{2}\d{\mu}\le\exp\left(C\log^{6}\left(\frac{2}{\mu(E)}\right)\right)\intsy_{E}|f-b|^{2}\d{\mu}\,,
\]
where $C$ is some positive numerical constant.
\end{thm}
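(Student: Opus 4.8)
Two preliminary remarks shape the plan. First, by Chebyshev's inequality and integration over the sublevel sets $\{|f-b|<\delta\}$ --- precisely as in the footnote to the statement --- the asserted inequality is equivalent to the small-ball bound $\mu\{(\omega,\theta)\in Q:|f(\omega,\theta)-b(\omega)|<\delta\}\le 2\exp(-c(\log\tfrac1\delta)^{1/6})$, uniform over $f\in\RFfuncs$ with $\|f\|_2=1$ and $\|b\|_\infty<\tfrac1{20}$; either form may be taken as the target, and the $\{|f-b|<\delta\}$ form is the one that feeds into the logarithmic integrability of Theorem~\ref{thm:log_int_for_Rade_Four_series}. Second, replacing $f$ by its truncation $f_{\le n}=\sum_{|k|\le n}a_k\phi_k$ changes the left side by a tail of $L^2(Q)$-norm $\sum_{|k|>n}|a_k|^2\to0$, which by Chebyshev is below any prescribed height off a set of arbitrarily small $\mu$-measure; since the bound sought must be independent of $n$, it suffices to prove it when $f$ is a trigonometric polynomial of arbitrary degree.

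For a polynomial $f$ with exactly $N$ frequencies the estimate is immediate, and it is the only place where harmonic analysis on the circle is used. Fix $\omega$ and put $h_\omega(\theta)=f(\omega,\theta)-b(\omega)$, a trigonometric polynomial in $\theta$ with at most $N+1$ frequencies. Since $|\xi_k|=1$ one has $\int_\bbt|f(\omega,\theta)|^2\,dm=\sum_k|a_k|^2=1$, and the coefficient of $e(0)$ in $f(\omega,\cdot)$ has modulus $\le\|f\|_2=1$; hence $\|h_\omega\|_{L^2(\bbt)}^2\ge(1-\|b\|_\infty)^2>\tfrac9{10}$ --- this is exactly where $\|b\|_\infty<\tfrac1{20}$ enters, to forbid $f(\omega,\cdot)$ and $b(\omega)$ from (almost) cancelling. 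The Tur\'an--Nazarov inequality then gives, with $E_\omega=\{\theta:(\omega,\theta)\in E\}$ and an absolute $C_0$, that $\|h_\omega\|_{L^2(E_\omega)}^2\ge\tfrac9{10}\,(m(E_\omega)/C_0)^{C_0(N+1)}$, also (trivially) when $m(E_\omega)=0$. Integrating in $\omega$ and applying Jensen's inequality to the convex power $t\mapsto t^{\,C_0(N+1)}$, together with $\int_\Omega m(E_\omega)\,d\Pr(\omega)=\mu(E)$, produces $\int_E|f-b|^2\,d\mu\ge\tfrac9{10}\,(\mu(E)/C_0)^{C_0(N+1)}$. This is the theorem for $N$-frequency polynomials, but with an $N$-dependent constant; the remaining task is to bound the \emph{effective} number of frequencies of an arbitrary polynomial in terms of $\mu(E)$ alone.

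I would split according to the profile of $\{|a_k|\}$, with thresholds tuned to $\epsilon=\mu(E)$. If the $N:=\lceil c\log^5(1/\epsilon)\rceil$ largest coefficients carry all but $\exp(-2\log^6(1/\epsilon))$ of the $\ell^2$-mass, write $f=f_\Lambda+g$ with $f_\Lambda$ the corresponding $N$-frequency polynomial; by Chebyshev, $|g|<\exp(-\tfrac12\log^6(1/\epsilon))$ off a set of $\mu$-measure $\ll\epsilon$, so on a subset of $E$ of measure $\ge\epsilon/2$ one has $|f_\Lambda-b|\le|f-b|+\exp(-\tfrac12\log^6(1/\epsilon))$; inserting the polynomial estimate for $f_\Lambda$ and rearranging gives $\int_E|f-b|^2\ge c\,(\epsilon/C_0)^{C_0(N+1)}-\exp(-\log^6(1/\epsilon))\ge\exp(-C\log^6(1/\epsilon))$, the power $6$ forced by $N\cdot\log(1/\epsilon)\approx\log^6(1/\epsilon)$. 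In the complementary situation the coefficients do not decay fast, and I would use a probabilistic anti-concentration estimate: for each fixed $\theta$, projecting the $\bbc$-valued Rademacher sum $f(\cdot,\theta)-b$ onto a real direction in which its conditional variance is $\ge\tfrac12$ produces a real combination $\sum_k\xi_k c_k(\theta)$ with $\sum_k c_k(\theta)^2\ge\tfrac12$ and $|c_k(\theta)|\le|a_k|$, to which a Berry--Esseen / Littlewood--Offord (Kolmogorov--Rogozin) estimate applies, giving after averaging in $\theta$ that $\mu\{|f-b|<\delta\}\lesssim\delta+\max_k|a_k|$ (and, more finely, $\lesssim\delta+(\#\{k:|a_k|\ge C\delta\})^{-1/2}$). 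In this regime one of these bounds yields $\mu\{|f-b|<\delta_0\}\le\epsilon/2$ for some $\delta_0$ bounded below by a fixed power $\epsilon^{c'}$, hence $\int_E|f-b|^2\ge\delta_0^2\,\epsilon/2$, a polynomial-in-$\epsilon$ lower bound, far more than is needed. A last degeneracy --- one coefficient carrying almost all the mass, the rest small in $L^\infty$ --- is handled directly, since then $f-b$ is bounded below pointwise (using $\|b\|_\infty<\tfrac1{20}$ once more).

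The principal obstacle is that these cases, drawn naively, are not exhaustive: there are coefficient sequences with too many frequencies for the Tur\'an--Jensen bound to be cheap (one would need $N$ below a power of $\log(1/\epsilon)$) yet too few comparably-sized coefficients for Littlewood--Offord to bite (that wants of order $\epsilon^{-2}$ of them). Reconciling these is where the real work lies: it requires interleaving the peeling of dominant coefficients with a decomposition of $\{|a_k|\}$ into dyadic scales, so that at the scale actually carrying the mass one is in a favourable regime, and a bootstrap that upgrades a crude preliminary estimate --- say $\mu\{|f-b|<\delta_1\}\le 1-c$, obtained from hypercontractivity / Paley--Zygmund plus one application of Tur\'an--Nazarov --- into the stated stretched-exponential bound. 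A second, genuinely Rademacher difficulty --- absent for Steinhaus coefficients (Offord; Favorov; Ullrich) --- is that a linear combination of signs can vanish with positive probability, so there is no pointwise-in-$\theta$ anti-concentration and one is forced to average over the circle; the projection-to-a-good-direction argument then fails on a small set of $\theta$ near rationals of small height, which must be excised and dealt with by Tur\'an--Nazarov yet again. Finally, one must carry the whole argument through with $b$ a genuine random constant, as the strengthened form is what Theorem~\ref{thm:Kahane_prob} requires.
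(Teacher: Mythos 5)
Your Tur\'an--Jensen argument for an $N$-frequency polynomial is correct as far as it goes, and the reduction to polynomials is sound, but --- as you candidly note --- the case analysis on the profile of $\{|a_k|\}$ does not close, and this is a genuine gap rather than a loose end. The intermediate regime is fatal: take $N$ comparably-sized coefficients with $\log^{5}(1/\epsilon)\ll N\ll\epsilon^{-2}$, where $\epsilon=\mu(E)$. Tur\'an--Jensen then gives only $\exp\left(-CN\log(1/\epsilon)\right)$, worse than the target by an unbounded power of $\log(1/\epsilon)$, while Littlewood--Offord anti-concentration gives $\mu\{|f-b|<\delta\}\lesssim N^{-1/2}$, which exceeds $\epsilon$. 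The suggested remedies (dyadic decomposition of scales, bootstrap from a crude Paley--Zygmund bound) are precisely where the difficulty lives and are not carried out. There is also a quieter flaw in the anti-concentration step: since $b$ is an arbitrary bounded measurable function on $\Omega$, not independent of the $\xi_k$, the random variable $f(\cdot,\theta)-b$ at a fixed $\theta$ is \emph{not} a Rademacher sum, so the Kolmogorov--Rogozin/Littlewood--Offord machinery does not apply pointwise; the hypothesis $\|b\|_\infty<\tfrac1{20}\|f\|_2$ forbids the grossest degeneracy, but that has to enter through a structural argument, not through pointwise-in-$\theta$ anti-concentration.

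The paper's proof avoids any classification of coefficient profiles. Its engine is an iterative spreading procedure. Zygmund's identity $\int_E|f|^2\,\mathrm{d}\mu=\mu(E)\|f\|_2^2+\langle A_E f,f\rangle$ reduces the task to controlling the compact self-adjoint operator $A_E$ on $\RFfuncs$; the bilinear Khinchin inequality bounds $\|A_E\|_{\mathrm{HS}}\le Cp\,\mu(E)^{1-1/(2p)}$, so only a finite-dimensional subspace $V_{E,b}$ (dimension $\lesssim p^2\mu(E)^{-1/p}$, including a direction that absorbs $b$) is dangerous, and a suitable linear combination $\sum_{k=0}^{n}a_k f_{kt}$ of small shifts of $f$ escapes it. This shows that \emph{every} $f\in\RFfuncs$ has the $\Exploc(n,\tau,\varkappa,L^2(\Omega))$ property with $n$ and $\varkappa$ controlled by $\mu(E)$ and $\int_E|f-b|^2\,\mathrm{d}\mu$ alone, independently of the shape of $\{|a_k|\}$. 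The Tur\'an-type lemma is then used not directly on $f$ but through the local approximation of $\Exploc$ functions by random exponential polynomials, producing a Spreading Lemma that enlarges $E$ to $\widetilde E\supset E$ with $\mu(\widetilde E)\ge\mu(E)+\tfrac12\Delta_{n\tau}(E)$ at a quantified cost; the dichotomy you were missing is replaced by condition $(C_E)$, which decides whether one spreads or (if $E$ has many near-full sections) invokes the scalar Khinchin inequality directly. Iterating the resulting difference inequality and optimizing in $p$ delivers the $\log^6$ exponent. In short, what your proposal lacks is not a sharper split of the coefficient sequence but the idea that one can always spread $E$, regardless of the spectrum of $f$.
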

We note that the power $6$ on the RHS is (probably) not the best
possible, but in Section \ref{sect_log_int_examples} we will give
an example which shows that it cannot be replaced by any number less
than~$2$. Since the proof of Theorem \ref{thm:Zygmund-type-final}
is long and consists of several parts, we give a general description
of some of the ideas behind it in this section. The next two sections
contain the proof, and in the last section we discuss some further
problems (as well as give the aforementioned example).

\subsection{Background}

The proof of Theorem \ref{thm:Zygmund-type-final} is based on ideas
from harmonic analysis developed by Nazarov \cite{Na1,Na2} to treat
lacunary Fourier series. It uses a Turán-type lemma from \cite[Chapter~1]{Na1},
and the technique of small shifts introduced in \cite[Chapter~3]{Na1}.
It is worthwhile mentioning that the study of lacunary series has
a long history. Already in 1872, Weierstrass gave a famous example
of a continuous functions which is not differentiable at any point,
in the form of a lacunary trigonometric series. Kolmogorov in the
1920s and Zygmund in the 1930s studied the convergence and integrability
properties of lacunary series before they gave the counterpart (and
more famous) results for random series.

Let $\Lambda=\left\{ m_{k}\right\} _{k\in\bbz}\subset\bbz$ be the
`lacunary' spectrum of some Fourier series, that is, we consider series
of the form\index{lacunary Fourier series}
\begin{equation}
g\left(\theta\right)=\sum_{k\in\bbz}a_{m_{k}}e\left(m_{k}\theta\right),\quad\theta\in\bbt,\quad\sum_{k\in\bbz}\left|a_{m_{k}}\right|^{2}<\infty,\label{eq:lacunary_Four_ser}
\end{equation}
where the set $\Lambda$ is `small' in some sense. In 1948 Zygmund
proved the following uniqueness result
\begin{thm*}
Suppose that the set $\Lambda=\left\{ m_{k}\right\} _{k\in\bbz}\subset\bbz$
satisfies
\[
R\left(\Lambda\right)\eqdef\sup_{r\ne0}\#\left\{ \left(k^{\prime},k^{\dprime}\right)\,:\, m_{k^{\prime}}-m_{k^{\dprime}}=r\right\} <\infty.
\]
For every measurable set $E\subset\bbt$ of positive measure there
exists a constant $C\left(\Lambda,E\right)$ such that, for every
$g\in L^{2}\left(\bbt\right)$ of the form \textup{(\ref{eq:lacunary_Four_ser}),}
\[
\normp[g][L^{2}\left(\bbt\right)]2\le C\left(\Lambda,E\right)\intsy_{E}\left|g\right|^{2}\d m.
\]

\end{thm*}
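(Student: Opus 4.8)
The plan is to reduce the statement to two ingredients: a reverse Hölder inequality that encodes the hypothesis $R(\Lambda)<\infty$, and a localization argument that upgrades it to an arbitrary set $E$ of positive measure. First I would prove that every $g$ of the form \eqref{eq:lacunary_Four_ser} satisfies $\normp[g][L^{4}(\bbt)]{4}\le\bigl(1+R(\Lambda)\bigr)\normp[g][L^{2}(\bbt)]{4}$: expanding $|g|^{2}=\sum_{r}\widehat{|g|^{2}}(r)e(r\theta)$ with $\widehat{|g|^{2}}(r)=\sum_{m_{k^{\prime}}-m_{k^{\dprime}}=r}a_{m_{k^{\prime}}}\overline{a_{m_{k^{\dprime}}}}$, Parseval gives $\normp[g][4]{4}=\sum_{r}|\widehat{|g|^{2}}(r)|^{2}$; for $r\ne 0$ the inner sum has at most $R(\Lambda)$ terms, so Cauchy--Schwarz bounds its off-diagonal contribution by $R(\Lambda)\bigl(\sum_{k}|a_{m_{k}}|^{2}\bigr)^{2}$, while the $r=0$ term is $\normp[g][2]{4}$. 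Two consequences I would record at once: Hölder interpolation $\norm[g]{2}\le\normp[g][1]{1/3}\normp[g][4]{2/3}$ yields the reverse Hölder estimate $\norm[g]{2}\le(1+R(\Lambda))^{1/2}\norm[g]{1}$, and Paley--Zygmund yields $m\{\,|g|\ge\tfrac12\norm[g]{2}\,\}\ge c/(1+R(\Lambda))$ with $c$ absolute. This already settles the theorem when $\bbt\setminus E$ is small, since $\int_{E}|g|^{2}=\normp[g][2]{2}-\int_{\bbt\setminus E}|g|^{2}\ge\bigl(1-m(\bbt\setminus E)^{1/2}(1+R(\Lambda))^{1/2}\bigr)\normp[g][2]{2}$, which is a positive multiple of $\normp[g][2]{2}$ once $m(\bbt\setminus E)<(1+R(\Lambda))^{-1}$.

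For an arbitrary $E$ the crude estimate above is useless, and I would use a further structural feature of $R(\Lambda)<\infty$ sets: at most $R(\Lambda)$ of the consecutive gaps $m_{k+1}-m_{k}$ can equal any prescribed value, hence $m_{k+1}-m_{k}\to\infty$. Therefore, with $\Lambda_{>K}=\{m_{k}:k>K\}$, every nonzero element of $\Lambda_{>K}-\Lambda_{>K}$ has modulus $\ge\rho_{K}$ where $\rho_{K}\to\infty$, and since $\sum_{r}|\widehat{\mathbf 1_{E}}(r)|^{2}=m(E)<\infty$, the identity $\int_{E}|h|^{2}=m(E)\normp[h][2]{2}+\sum_{r\ne 0}\widehat{|h|^{2}}(r)\widehat{\mathbf 1_{E}}(-r)$ combined with the $L^{4}$ estimate gives, for any $h$ with spectrum in $\Lambda_{>K}$, that $\bigl|\int_{E}|h|^{2}-m(E)\normp[h][2]{2}\bigr|\le R(\Lambda)^{1/2}\tau_{K}\normp[h][2]{2}$ with $\tau_{K}=\bigl(\sum_{|r|\ge\rho_{K}}|\widehat{\mathbf 1_{E}}(r)|^{2}\bigr)^{1/2}\to 0$. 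In other words, the high-frequency tail $P_{>K}g$ of $g$ is automatically nearly equidistributed with respect to $E$, whereas the head $P_{\le K}g$ lives in the finite-dimensional space $\opspan\{e(m_{0}\theta),\dots,e(m_{K}\theta)\}$, on which $g\mapsto\int_{E}|g|^{2}$ is a strictly positive quadratic form --- a nonzero trigonometric polynomial cannot vanish on a set of positive measure --- with least eigenvalue $\lambda_{K}>0$.

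To finish I would decompose $g=P_{\le K}g+P_{>K}g$, bound the cross term $\int_{E}P_{\le K}g\cdot\overline{P_{>K}g}$ by $R(\Lambda)^{1/2}\tau_{K}\norm[P_{\le K}g]{2}\norm[P_{>K}g]{2}$ (again via bounded multiplicity, now of $\Lambda_{\le K}-\Lambda_{>K}$), and read off $\int_{E}|g|^{2}\ge c(\Lambda,E)\normp[g][2]{2}$ from positive-definiteness of the resulting $2\times 2$ quadratic form in $\norm[P_{\le K}g]{2}$ and $\norm[P_{>K}g]{2}$. The delicate point, and the one I expect to be the main obstacle, is the choice of $K$: the head-eigenvalue $\lambda_{K}$ may decay as $K\to\infty$, so $K$ must be large enough that $\tau_{K}$ is small yet small enough that $\lambda_{K}$ still dominates $\tau_{K}^{2}$. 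Guaranteeing this is precisely where I would expect to need a Tur\'an-type lemma together with the technique of small shifts in the spirit of Nazarov: split $\Lambda$ into dyadic frequency blocks, use a Tur\'an-type estimate on each block to push $L^{2}$-mass from $E$ out to a full period, and decouple the blocks by small translations of $g$; in the classical Hadamard-lacunary case the same assembly can instead be carried out with Riesz products.
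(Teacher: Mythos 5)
The paper does not actually prove this theorem: it is stated in the Background section as Zygmund's 1948 result, and the text then moves on to Nazarov's quantitative sharpening and to the Rademacher analogue (Theorem \ref{thm:Zygmund-type-final}), which the paper does prove via Tur\'an-type estimates, small shifts, and the iterative spreading lemma. So there is no in-paper proof of this statement to compare against; what follows is an assessment of the proposal on its own terms and against the Nazarov machinery the paper uses for the Rademacher case.

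Your preliminary steps are correct and are indeed the standard $\Lambda(4)$ toolkit. The expansion of $\normp[g][4]{4}$ via $\widehat{|g|^{2}}$, the Cauchy--Schwarz bound on the off-diagonal sum, the reverse H\"older inequality, Paley--Zygmund, the large-$E$ case, and the observation that $R(\Lambda)<\infty$ forces $m_{k+1}-m_{k}\to\infty$ (any bounded gap value recurring infinitely often produces infinitely many equal differences) are all sound. The tail estimate $\bigl|\intsy_{E}|h|^{2}-m(E)\normp[h][2]{2}\bigr|\le R(\Lambda)^{1/2}\tau_{K}\normp[h][2]{2}$ is also correct, and in operator language it says exactly that the operator $A_{E}$ of Section \ref{subsect:Zygmund-idea} is Hilbert--Schmidt on $H_{\Lambda}$ with small norm on the high-frequency tail.

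The genuine gap is exactly where you put your finger: the two-by-two quadratic form in $\norm[P_{\le K}g]{2}$ and $\norm[P_{>K}g]{2}$ is positive definite only when $\lambda_{K}\,m(E)\gtrsim R(\Lambda)\tau_{K}'^{2}$, and there is no a priori reason for the head-eigenvalue $\lambda_{K}$ (which decays as $K$ grows) to dominate $\tau_{K}'^{2}$. This is not a bookkeeping issue: it is equivalent to the uniqueness statement that a nonzero $g$ with spectrum in $\Lambda$ cannot vanish a.e. on a set of positive measure, which is the actual content of the theorem. Compactness of $A_{E}$ reduces the norm estimate to this uniqueness, but does not prove it; your decomposition gives the relation $\norm[P_{>K}g]{2}\lesssim\tau_{K}'\norm[P_{\le K}g]{2}$ for a hypothetical $g$ vanishing on $E$, which constrains $g$ but yields no contradiction. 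The extra ingredient is, as you say, a Tur\'an-- or Remez--type inequality applied to the finite-dimensional heads together with a small-shift decoupling of the frequency blocks (the mechanism of the spreading Lemma \ref{lemma:spreading} and Section \ref{sec:Rad_Four_in_Exp_loc} in the Rademacher setting), or Riesz products in the Hadamard-lacunary special case. So the proposal is a correct and well-motivated skeleton that stops precisely at the hard step; supplying that step is not a refinement of what you wrote but the core of the argument.
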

In the 1990s (see \cite{Na1}), Nazarov significantly improved the
previous theorem by giving an effective bound for the constant $C\left(\Lambda,E\right)$.
More precisely, he proved that for any $\epsym>0$, there exists a
constant $D\left(\epsym,R\left(\Lambda\right)\right)$ such that
\[
C\left(\Lambda,E\right)\le\exp\left(\frac{D\left(\epsym,R\left(\Lambda\right)\right)}{m\left(E\right)^{2+\epsym}}\right);
\]
in particular, this implies the logarithmic integrability of such
lacunary series for every power less than $2$.\index{lacunary Fourier series!logarithmic integrability}
\begin{rem*}
Notice that in Nazarov's result the constant $C\left(\Lambda,E\right)$
depends only on the measure of $E$.

In a later work (\cite{Na2}), Nazarov considered the case of Hadamard
lacunary series\index{lacunary Fourier series!Hadamard lacunary series},
that is, series whose spectrum $\Lambda$ satisfies
\[
\liminf_{\left|k\right|\to\infty}\frac{m_{k+{\rm sign}\left(k\right)}}{m_{k}}>1.
\]
Using an improved version of the results in \cite{Na1}, he proved
that
\[
\normp[g][L^{2}\left(\bbt\right)]2\le\exp\left(C\log^{10}\left(\frac{2}{m\left(E\right)}\right)\right)\intsy_{E}\left|g\right|^{2}\d m,
\]
where $C$ is some positive numerical constant. In particular, this
result implies the logarithmic integrability of such series for every
positive power. Our proof is motivated by the ideas contained in both
of these papers. It exemplifies --- perhaps in a spirit similar to
Kolmogorov and Zygmund's results --- the close connection between
lacunary and random series. Just to give a particular example, the
$\Lambda_{p}$ property of Hadamard lacunary series (which is central
to the results of \cite{Na2}) is replaced by the Khinchin and bilinear
Khinchin inequalities.
\end{rem*}

\subsection{The basic tools and some futher notations\label{subsect:basic-tools}}

Here is the list of the tools we will be using in the proof of Theorem~\ref{thm:Zygmund-type-final}.

\subsubsection{\label{subsubsect:Turan} Turán-type lemma\index{Turán-type lemma|ii}\index{exponential polynomial}}

Let 
\[
p(z)=\sum_{k=0}^{n}a_{k}e^{i\lambda_{k}t},\qquad a_{k}\in\bbc,\quad\lambda_{0}<\,\cdots\,<\lambda_{n}\in\bbr,
\]
be an exponential polynomial. Then for any interval $J\subset\bbr$
and any measurable subset $E\subset J$ of positive measure,
\[
\sup_{J}|p|\le\left(\frac{Cm(J)}{m(E)}\right)^{n}\sup_{E}|p|\,.
\]
For the proof, see \cite[Chapter~I]{Na1}. We will also use the $L^{2}$-bound
that follows from this estimate (see ~\cite[Chapter~III, Lemma~3.3]{Na1}).
It states that, in the same setting,

\begin{equation}
\|p\|_{L^{2}(J)}\le\left(\frac{Cm(J)}{m(E)}\right)^{n+\frac{1}{2}}\|p\|_{L^{2}(E)}\,.\label{eq:turan_lt_est}
\end{equation}

\subsubsection{\label{subsubsect:Khinchin} Khinchin's inequality\index{Khinchin's inequality|ii}}

Let $\left\{ \xi_{k}\right\} $ be independent Rademacher random variables,
and let $\left\{ a_{k}\right\} $ be complex numbers. Then for each
$p\ge2$, we have 
\[
\left(\Ex\left|\sum_{k}a_{k}\xi_{k}\right|^{p}\right)^{1/p}\le C\sqrt{p}\cdot\sqrt{\sum_{k}\left|a_{k}\right|^{2}}.
\]

\subsubsection{\label{subsubsect:bilinear-Khinchin} Bilinear Khinchin's inequality\index{Khinchin's inequality!Bilinear|ii}}

Let $\left\{ \xi_{k}\right\} $ be independent Rademacher random variables,
and let $\left\{ a_{k,\ell}\right\} $ be complex numbers. Then for
each $p\ge2$, we have 
\[
\left(\Ex\left|\sum_{k\ne\ell}a_{k,\ell}\xi_{k}\xi_{\ell}\right|^{p}\right)^{1/p}\le Cp\cdot\sqrt{\sum_{k\ne\ell}\left|a_{k,\ell}\right|^{2}}.
\]
A simple and elegant proof of this inequality can be found in a recent
preprint by L.~Erd\H{o}s, A.~Knowles, H.-T.~Yau, and J.~Yin~\cite[Appendix~B]{EKYY}.

\subsubsection{Notations}

For a set $E\subset Q$, we denote its sections by $E_{\omega}\eqdef\left\{ \theta\in\bbt\,\colon(\omega,\theta)\in E\right\} $,
$\omega\in\Omega$\index{1Eomega@$E_{\omega}$}. The set $E\subset Q$
`shifted' by $t\in\bbt$ is denoted by $E+t\eqdef\left\{ (\omega,\theta)\,\colon(\omega,\theta-t)\in E\right\} $\index{1Eplust@$E+t$}.
Then 
\[
E_{\omega}+t=\left\{ \theta\,\colon\theta-t\in E_{\omega}\right\} =\left(E+t\right)_{\omega}.
\]
The function $g\in L^{2}(Q)$ shifted by $t$ is denoted by $g_{t}$:
$g_{t}(\omega,\theta)=g(\omega,\theta+t)$\index{1gt@$g_{t}$}. Note
that for the indicator function of $E$, we have that $\left(\indf_{E}\right)_{t}=\indf_{E-t}$.

We write $[x]$ for the integral part of $x$. Also, $\Delta_{t}(E)\eqdef\mu\left((E+t)\setminus E\right)$\index{1DeltatE@$\Delta_{t}(E)$},
a notation that will appear many times during the proof.

\subsection{An overview of the proof of Theorem~\ref{thm:Zygmund-type-final}}

For the sake of simplicity we will assume here that $b\equiv0$; the
proof of the general case requires some technical modifications.

Let $f\in\RFfuncs$ be a Rademacher Fourier series, and let $E\subset Q$
be a set of positive measure. We begin with the following observation,
due to Zygmund: using the expansion of $\left|f\right|^{2}$ as a
diagonal and an off-diagonal sums, we can write
\begin{equation}
\intsy_{E}\left|f\right|^{2}\d{\mu}=\mu\left(E\right)\normp[f][2]2+\left\langle A_{E}f,f\right\rangle ,\label{eq:Zyg_met_op_A_E}
\end{equation}
where $A_{E}$ is a certain (compact and self-adjoint) operator (on
the subspace $\RFfuncs$). Now if the operator norm of $A_{E}$ is
small, say $\norm[A_{E}]{\mbox{op}}<\frac{\mu\left(E\right)}{2}$,
then this immediately yields the conclusion of Theorem \ref{thm:Zygmund-type-final}
(with a better dependence on $\mu\left(E\right)$). We will find upper
bounds of this form in two special cases: the first and simpler case
is when the measure of the set $E$ is close to $1$, the second case
is when $E$ is a set with many `long' sections. We say that $E$
has many `long' sections, if there are sufficiently many $\omega$'s
for which the $m$-measure of $E_{\omega}$ is close to $1$. It should
be mentioned that these bounds use the random nature of the function
$f,$ as non-random Fourier series can be norm-concentrated on an
arbitrarily small set. Handling other types of sets $E$ is much more
difficult, and constitutes the main part of the proof; we will call
such sets `spreadable'\index{spreadable set}, for a reason that will
soon become clear. The function $\Delta_{t}(E)$ \index{1DeltatE@$\Delta_{t}(E)$}
gives a quantitative measure for the spreadability of a set $E$.

\subsubsection{The iterative procedure}

The proof of Theorem \ref{thm:Zygmund-type-final} for spreadable
sets is iterative in nature. Given such a set $E\subset Q$, we wish
to find a larger set $\widetilde{E}$, which contains $E$, and for
which we still have a good estimate for $\left|f\right|^{2}$. We
call this procedure \textit{spreading} \index{spreading procedure}\index{spreading procedure|see{spreading lemma}}

We continue this procedure until the new set $\widetilde{E}$ is either
of measure close to $1$, or contains many long sections. At this
point, the procedure is stopped, by the methods that we have previously
mentioned. In order to show that this can be done, it is essential
to prove that we never `get stuck' during this procedure. In mathematical
terms, we show that there exist two positive functions $\delta\left(s\right)$
and $D\left(s\right)$, with $\delta\left(s\right)\downarrow0$, $D\left(s\right)\uparrow\infty$
as $s\downarrow0$, such that for every measurable set $E$ of measure
at least $s$, one can find a set $\widetilde{E}\supset E$ of measure
at least $s+\delta\left(s\right)$ for which
\begin{equation}
\intsy_{\widetilde{E}}\left|f\right|^{2}\d{\mu}\le D\left(s\right)\intsy_{E}\left|f\right|^{2}\d{\mu}.\label{eq:spread_lemma_abst}
\end{equation}
If we have good control on the decay of $\delta\left(s\right)$ and
the growth of $D\left(s\right)$ near $0$, then this procedure will
give Theorem \ref{thm:Zygmund-type-final} as the conclusion. As it
turns out, for spreadable sets\index{spreadable set} we indeed have
good control over these functions.
\begin{rem*}
During the proof we will show that if a set is not spreadable, then
it contains many long sections.
\end{rem*}

\subsubsection{The Spreading Lemma\index{spreading lemma}}

The heart of the iterative procedure is the Spreading Lemma, which
guarantees the existence of the set $\widetilde{E}$ above. The basic
`mechanism' is the\textbf{ }Turán-type inequality (\ref{eq:turan_lt_est})\index{Turán-type lemma}
stated above. For exponential polynomials it gives an estimate of
the type (\ref{eq:spread_lemma_abst}). It is reasonable to assume
that if a function $f$ is `close' (say in $L^{2}$ norm) to an exponential
polynomial, then one can use this inequality to get such an estimate
for $f$ as well. We will show that a function $f\in\RFfuncs$ has
a good \textit{local} approximation by (random) exponential polynomials,
which is sufficient for our purpose.\index{Rademacher Fourier series!local approximation}

For an effective use of the Turán-type inequality we want that, first,
the degree of the approximating polynomial (the number of frequencies)
is sufficiently small, and second, that the ratio $m\left(J\right)/m\left(E\right)$
in (\ref{eq:turan_lt_est}) is not too large. The proof shows that
$f$ can be approximated by (random) exponential polynomials of fixed
degree, which depends only on the measure of the set $E$. We then
make use of the fact that spreadable sets\index{spreadable set} contain
many `good' sections. Good sections are those sections $E_{\omega}$
that contain small intervals $J$, where the ratio $m\left(J\right)/m\left(J\cap E_{\omega}\right)$
is not too large, while there is still `room' for spreading (notice
that the local approximation should be sufficiently accurate on the
scale of the interval $J$). Thereby, the set $\widetilde{E}$ is
made up of a union of such small intervals. The existence of good
sections follows from the fact that the quantity $\Delta_{t}(E)$
is not too small for spreadable sets, for some value of $t$.

\subsubsection{Local approximation by exponential polynomials\index{Rademacher Fourier series!local approximation|ii}}

Exponential polynomials\index{exponential polynomial} of the form
$\sum_{j=1}^{n}c_{j}e(\lambda_{j}x)$, $\lambda_{1}<\cdots<\lambda_{n}\in\bbr$,
are the (homogenous) solutions of the linear differential equation
(of order $n$) $Dg=0$, where $D=\prod_{j=1}^{n}e(\lambda_{j}x)\,\frac{{\rm d}}{{\rm d}x}\, e(-\lambda_{j}x)$.%
\footnote{This notation means: $\left(e(\lambda x)\,\frac{{\rm d}}{{\rm d}x}\, e(-\lambda x)\right)\left(g\left(x\right)\right)=e(\lambda x)\left[\frac{{\rm d}}{{\rm d}x}\left(e(-\lambda x)g\left(x\right)\right)\right]$.%
} In order to obtain the approximation of $f$ by exponential polynomials
we proceed as follows: We write $f$ as a sum $f=f_{0}+f_{1}+\cdots+f_{m}$,
where $f_{1},\ldots,f_{m}$ are certain trigonometric polynomials
(with many frequencies), and $f_{0}$ is a (small) error term. Each
of these trigometric polynomials is a non-homogeneous solution to
an equation $D_{k}f_{k}=g_{k}$, where the number of frequencies in
$D_{k}$ is relatively small, and the norm of $g_{k}$ is small. Since
the norm of $g_{k}$ is small, the homogeneous solution $h_{k}$ of
$D_{k}h_{k}=0$ is a good approximation for $f_{k}$, and is of small
degree.

Our method is similar to the description above, but since $f$ is
random we approximate it by random exponential polynomials, with fixed
(non-random) frequencies and random coefficients $c_{j}$. The key
for the success of this method is to show the the spectrum of $f$
is concentrated on some (finite and `small') set of intervals, depending
on the set $E$.%
\footnote{This is somewhat counterintuitive, since the Fourier coefficients
of $f$ are arbitrary. It means that for each set $E$ there are some
`critical' frequencies specific for this set.%
} This set of intervals will determine the partition of $f$ as mentioned
above.

\subsubsection{Spectral description of $\RFfuncs$ functions - The $\Exploc$ property\index{Rademacher Fourier series!spectral description}}

An exponential polynomial has the important property that a finite
linear combination of its translates (which we call `shifts') vanishes.
Actually, it is readily seen that we only need a linear combination
of $m+1$ different shifts, where $m$ is the degree of the polynomial. 

A crucial ingredient of the proof is an analogous property for $L^{2}\left(Q\right)$
functions. To keep the formulation simple, we give here a non-random
version of the\index{Exploc
 property@$\Exploc$ property}
\begin{defn*}
{[}$\Exploc$ property{]} Let $g\in L^{2}\left(\bbt\right)$ and fix
$n\in\bbn$, $\tau>0$ and $\varkappa>0$. We say that $g$ has the
$\Exploc\left(n,\tau,\varkappa\right)$ property (or $g\in\Exploc\left(n,\tau,\varkappa\right)$)
if for every $t\in\left(0,\tau\right)$ there exists complex numbers
$a_{k}=a_{k}\left(t\right)$, $k\in\{0,\dots,n\}$, with $\sum_{k=0}^{n}\left|a_{k}\right|^{2}=1$,
such that $\norm[\sum_{k=0}^{n}a_{k}g_{kt}]{L^{2}\left(\bbt\right)}<\varkappa$.
\end{defn*}
Functions $f$ with the $\Exploc\left(n,\tau,\varkappa\right)$ property
are in a sense a generalization of exponential polynomials. The above
definition measures this in a quantitative way: $n$ is the degree
of exponential polynomials that are `close' to $f$, $\tau$ is the
`resolution' of the local approximation, and $\varkappa$ is the error
in the approximation.

In order to explain why this definition implies some special spectral
properties, we first note that if $h=\sum a_{k}g_{kt}\in L^{2}\left(\bbt\right)$,
then by Parseval's theorem we have
\[
\intsy_{\bbt}\left|h\right|^{2}\d x=\intsy_{\bbt}\left|\sum a_{k}g_{kt}\right|^{2}\d x=\sum_{m\in\bbz}\left|\widehat{g}\left(m\right)\right|^{2}\bigl|q_{t}\left(e(tm)\right)\bigr|^{2},
\]
where $\widehat{g}$ are the Fourier coefficients of $g$, and $q_{t}$
is a certain polynomial that depends on the numbers $a_{k}\left(t\right)$.
By the definition of property $\Exploc\left(n,\tau,\varkappa\right)$,
the sum on the LHS is smaller than $\varkappa^{2}$ for $t\in\left(0,\tau\right)$.
The proof uses averaging over $t$, to show that $\left|\widehat{g}\right|$
is small outside a certain collection of intervals. The two main observations
are that the polynomial $q_{t}$ is large outside some small exceptional
set, and that there is some point where there is a good lower bound
estimate for this polynomial inside this exceptional set.

The proofs of the spectral description, the local approximation, and
the Spreading Lemma are stated in terms of functions with the $\Exploc$
property. Thus, the main goal of the rest of the proof is to show
that $\RFfuncs$ functions have this property (the parameters will
depend on the function and the set $E$).

\subsubsection{Almost linear dependence for $\RFfuncs$ functions}

Let $\widehat{E}\subset Q$ be some set of positive measure, and recall
the operator $A_{\widehat{E}}$ defined by the equation (\ref{eq:Zyg_met_op_A_E}).
Using the billinear Khinchine inequality we can get an upper bound
for the Hilbert-Schmidt norm of $A_{\widehat{E}}$ of the form
\[
\norm[A_{\widehat{E}}]{\mathrm{HS}}\le B\left(\mu(\widehat{E})\right),
\]
where $B\left(t\right)\downarrow0$ as $t\downarrow0$ (actually we
use an additional parameter $p$, which gives us some flexibililty
later in the proof). Now, let $V_{\widehat{E}}\subset\RFfuncs$ be
a subspace of finite dimension, which is the span of the eigenspaces
corresponding to the largest eigenvalues of $A_{\widehat{E}}$. If
the dimension of $V_{\widehat{E}}$ is large enough, so that 
\[
\norm[A_{\widehat{E}}\mid_{V_{\widehat{E}}^{\perp}}]{\mathrm{HS}}\le\frac{\mu(\widehat{E})}{2},
\]
then for any $g\in\RFfuncs\ominus V_{\widehat{E}}$ 
\[
\intsy_{\widehat{E}}\left|g\right|^{2}\d{\mu}\ge\frac{\mu(\widehat{E})}{2}\normp[g][2]2.
\]
Now if $f\in\RFfuncs$, then we can find numbers $a_{k}=a_{k}\left(t\right)$
such that $g=\sum_{k=0}^{n}a_{k}f_{kt}\in\RFfuncs\ominus V_{E}$.
The problem in this case is how to relate the integrals $\intsy_{E}\left|g\right|^{2}\d{\mu}$
and $\intsy_{E}\left|f\right|^{2}\d{\mu}$. In order to circumvent
this problem we introduce the set
\[
E^{\prime}=E^{\prime}\left(t\right)=\bigcap_{k=0}^{n}\left(E-kt\right)\subset E.
\]
The fact that $E$ is a spreadable set guarantees that there is some
$\tau>0$ such that $\mu\left(E^{\prime}\right)\ge\tfrac{1}{2}\mu\left(E\right)$
for $t\in\left(0,\tau\right)$. Here we use that $\Delta_{t}(E)$
is not too large for these sets, for $t<n\tau$. Now, using the Cauchy-Schwarz
inequality and the invariance of the integral under translations,
we get that for $g\in\RFfuncs\ominus V_{E^{\prime}}$,
\[
\normp[g][2]2\le\frac{2}{\mu(E^{\prime})}\intsy_{E'}\sum_{k=0}^{n}\left|f_{kt}\right|^{2}\d{\mu}\le D\left(\mu\left(E\right)\right)\intsy_{E}\left|f\right|^{2}\d{\mu},
\]
where $D\left(t\right)\uparrow\infty$ as $t\downarrow0$. In particular,
this implies that $f\in\Exploc\left(n,\tau,\varkappa\right)$ with
a certain $\varkappa$ depending on $\mu\left(E\right)$ and $\intsy_{E}\left|f\right|^{2}\d{\mu}$.
For the method to succeed, it is important that we achieve good control
over the size of the parameters $n$, $\varkappa$, and $\Delta_{n\tau}(E)$.

\subsection{The proof of Theorem~\ref{thm:Zygmund-type-final}}

We start with a formal definition of the $\Exploc$ property, in a
general setting. In Section \ref{sec:log_int_spread_lemma} we prove
the spectral description, the local approximation and the Spreading
Lemma for functions with the $\Exploc$ property. In Section \ref{sec:Rad_Four_in_Exp_loc}
we prove that Rademacher Fourier functions have this property (with
effective control of the parameters), and finish the proof by analyzing
the iterative procedure. In this section we also analyze the cases
of set of large measure as well as sets with `long' sections.

\subsubsection{The $\Exploc$ property - functions with almost linearly dependent
small shifts \label{subsect:Exp_loc}}

Let $\calH$ be a Hilbert space. By $L^{2}(\bbt,\calH)$ we denote
the Hilbert space of square integrable $\calH$-valued functions on
$\bbt$ (in the sense of Bochner). Note that the space $L^{2}(\bbt,L^{2}(\Omega))$
can be identified with $L^{2}(Q)$. To define the property of functions
in $L^{2}(\bbt,\calH)$ having almost linearly dependent small shifts,
we introduce the following set of parameters:\index{Exploc
 property@$\Exploc$ property}
\begin{itemize}
\item the order $n\in\bbn$ (a large parameter);
\item the localization parameter $\tau>0$ (a small parameter);
\item the error $\varkappa>0$ (a small parameter).\end{itemize}
\begin{defn}
\noindent {[}$\Exploc${]}\label{def:Exp} We say that $g\in L^{2}(\bbt,\calH)$
has the ${\rm Exp}_{{\tt loc}}(n,\tau,\varkappa,\calH)$ property
if for each $t\in(0,\tau)$ there exist complex numbers $a_{k}=a_{k}(t),\, k\in\{0,\dots,n\}$,
with $\sum_{k=0}^{n}|a_{k}|^{2}=1$, such that 
\[
\left\Vert \sum_{k=0}^{n}a_{k}g_{kt}\right\Vert {}_{L^{2}(\bbt,\calH)}<\varkappa\,.
\]

\end{defn}
In the case $\calH=\bbc$, this property was introduced in~\cite[Chapter~III]{Na1}.
If $g$ has the ${\rm Exp}_{{\tt loc}}(n,\tau,\varkappa,\calH)$ property
we will write $g\in{\rm Exp}_{{\tt loc}}(n,\tau,\varkappa,\calH)$.
`In small' (i.e., on intervals of length comparable with $\tau$),
the functions with this property behave similarly to exponential sums
with $n$ frequencies and with coefficients in $\calH$. On the other
hand, since the translations act continuously in $L^{2}(\bbt,\calH)$,
for any given $g\in L^{2}(\bbt,\calH)$, $n\in\bbn$, $\varkappa>0$,
one can choose the parameter $\tau>0$ so small that $g\in{\rm Exp}_{{\tt loc}}(n,\tau,\varkappa,\calH)$.

\section{Logarithmic Integrability - The Spreading Lemma\label{sec:log_int_spread_lemma}}

In this section, we extend the main results about functions with the
$\Exploc$ property (the spectral description, the local approximability
by exponential sums, and the spreading lemma) from the scalar case
(as proved in \cite[Chapter~III]{Na1}) to the case considered here.


\subsection{Spectral description - The Approximate Spectrum Lemma \label{subsect:approx-spectrum}}

The first lemma shows that each function $g\in{\rm Exp}_{{\tt loc}}(n,\tau,\varkappa,\calH)$
has an `approximate spectrum' $\Lambda_{g}$, which consists of $n$
frequencies, so that the Fourier transform of $g$ is small in the
$\ell^{2}$-norm away from these frequencies.

For $m\in\bbz$, $\Lambda\subset\bbr$, let 
\[
\theta_{\tau}(m)=\min(1,\tau|m|),\quad\Theta_{\tau,\Lambda}(m)=\prod_{\lambda\in\Lambda}\theta_{\tau}(m-\lambda)\,.
\]

\begin{lem}
\label{lemma:approx-spectrum} Given $g\in{\rm Exp}_{{\tt loc}}(n,\tau,\varkappa,\calH)$,
there exists a set $\Lambda=\Lambda_{g}\subset\bbr$ of $n$ distinct
frequencies such that 
\[
\sum_{m\in\bbz}\|\,\widehat{g}(m)\,\|_{\calH}^{2}\,\Theta_{\tau,\Lambda}^{2}(m)\le\left(Cn\right)^{4n}\varkappa^{2}\,.
\]

\end{lem}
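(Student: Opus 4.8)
The plan is to extract the approximate spectrum $\Lambda_g$ from the single polynomial-type inequality that the $\Exploc$ property gives us, by averaging the defining inequality over the localization parameter $t$. Fix $g\in\Exploc(n,\tau,\varkappa,\calH)$. For each $t\in(0,\tau)$ we have numbers $a_k(t)$ with $\sum_k|a_k(t)|^2=1$ and $\|\sum_{k=0}^n a_k(t)\,g_{kt}\|_{L^2(\bbt,\calH)}<\varkappa$. Expanding the shift $g_{kt}$ on the Fourier side (each Fourier coefficient $\widehat g(m)\in\calH$ picks up a factor $e(ktm)$), Parseval in $L^2(\bbt,\calH)$ gives
\[
\sum_{m\in\bbz}\|\widehat g(m)\|_\calH^2\,\bigl|q_t(e(tm))\bigr|^2<\varkappa^2,\qquad q_t(w)=\sum_{k=0}^n a_k(t)\,w^k .
\]
So for every $m$ and every $t\in(0,\tau)$, the weight $\|\widehat g(m)\|_\calH^2\,|q_t(e(tm))|^2$ is dominated by $\varkappa^2$; the whole game is to combine these for varying $t$ into a single lower bound on a product of the form $\prod_{\lambda\in\Lambda}\theta_\tau(m-\lambda)^2$.

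The key step is a Turán/averaging estimate for the polynomial $q_t$. Here $q_t$ has degree $\le n$, its coefficients have unit $\ell^2$-norm, so $q_t$ is not identically small; a Turán-type inequality (the scalar version in \cite[Chapter III]{Na1}, whose analogue is exactly the mechanism described in §\ref{subsubsect:Turan}) tells us $|q_t(w)|$ can be small only near at most $n$ of its roots, and the measure of the bad set of $w\in\bbt$ is controlled. The strategy, following Nazarov, is: for a fixed frequency $m$, integrate $|q_t(e(tm))|^2$ over $t\in(0,\tau)$. When $|m|$ is large the point $e(tm)$ winds many times around the circle as $t$ ranges over $(0,\tau)$, so the average of $|q_t(e(tm))|^2$ picks up a definite lower bound unless $m$ is ``resonant'' — i.e. lies within distance $\sim 1/\tau$ of one of $n$ special frequencies. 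These $n$ special frequencies are declared to be $\Lambda_g$. Concretely one shows
\[
\int_0^\tau \bigl|q_t(e(tm))\bigr|^2\,\dd t \;\gtrsim\; \tau\,(Cn)^{-4n}\,\Theta_{\tau,\Lambda_g}^2(m)
\]
for a suitable choice of $\Lambda_g$ depending on where the ``critical'' behaviour of the family $\{q_t\}$ concentrates; integrating the Parseval bound over $t$ and swapping sum and integral then yields
\[
(Cn)^{-4n}\sum_{m\in\bbz}\|\widehat g(m)\|_\calH^2\,\Theta_{\tau,\Lambda_g}^2(m)\;\lesssim\;\frac1\tau\int_0^\tau\!\!\sum_m\|\widehat g(m)\|_\calH^2|q_t(e(tm))|^2\,\dd t\;<\;\varkappa^2,
\]
which is the claimed inequality after absorbing constants.

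I expect the main obstacle to be the selection of the $n$ frequencies $\Lambda_g$ and the quantitative lower bound on $\int_0^\tau|q_t(e(tm))|^2\dd t$ away from them: the coefficients $a_k(t)$ depend on $t$ in an uncontrolled (merely measurable) way, so one cannot treat $\{q_t\}$ as a single polynomial. The resolution — and this is the technical heart borrowed from \cite[Chapter III]{Na1} — is to run a Turán-type argument uniformly in $t$: the ``critical frequencies'' are those $m$ for which $t\mapsto e(tm)$ fails to equidistribute well enough on $\bbt$ relative to the scale $1/\tau$ on which $q_t$ can dip, and a counting/pigeonhole argument shows there are at most $n$ such resonant clusters because each $q_t$ has at most $n$ zeros. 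The $\calH$-valued setting costs nothing here: the coefficients $\widehat g(m)$ enter only through their norms $\|\widehat g(m)\|_\calH^2$ as nonnegative weights, so Parseval in $L^2(\bbt,\calH)$ and the scalar averaging estimate for $q_t$ combine exactly as in the case $\calH=\bbc$. Tracking constants through the Turán inequality is what produces the $(Cn)^{4n}$ factor.
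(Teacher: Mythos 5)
Your Parseval reduction is correct, and the broad philosophy (average over $t$, Tur\'an-type control of the unit-coefficient polynomials $q_t$, at most $n$ resonant frequencies) is the right one. But the single inequality your whole argument rests on, namely that for some $n$-point set $\Lambda_g$ one has $\intsy_0^\tau|q_t(e(tm))|^2\,\dd t\gtrsim\tau\,(Cn)^{-4n}\,\Theta_{\tau,\Lambda_g}^2(m)$ for \emph{every} $m\in\bbz$, is asserted rather than proved, and your recipe for extracting $\Lambda_g$ does not work as stated: you describe the critical frequencies as those $m$ for which $t\mapsto e(tm)$ ``fails to equidistribute'' on the scale $1/\tau$, but equidistribution of $e(tm)$ over $t\in(0,\tau)$ depends only on the size of $m$ and has nothing to do with $q_t$; the resonance you need is between $m$ and the \emph{zeros} of $q_t$, and these zeros move with $t$ in an arbitrary, merely measurable way. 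Saying ``each $q_t$ has at most $n$ zeros'' does not produce a single fixed set of $n$ frequencies that works for all $t$ simultaneously --- that is precisely the difficulty you flag in your last paragraph, and it is never actually resolved.

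The paper does not prove (and deliberately avoids needing) your averaged pointwise bound; it runs two different mechanisms. Off an exceptional union of $n$ intervals of length $2\delta/\tau$, $\delta=\tfrac{1}{8n(n+1)}$, the $t$-average $\rho^2(m)$ is bounded below by the constant $\tfrac{1}{4(n+1)}(\delta/A)^{2n}$; this is where averaging is used, through the good/bad-$t$ dichotomy, the covering of the sub-level set of $q_t$ by at most $n$ arcs of length $\delta$ (Claim \ref{claim:U}), the Dirichlet box principle applied to the differences $m_{j''}-m_{j'}$, and the arithmetic-progressions Lemma \ref{lemma:arithm} (this is Lemma \ref{lem:crit_intervals}); since $\Theta\le1$, no fine weight is needed there. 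On the exceptional intervals, where the $t$-average can be uninformative, the proof switches to a \emph{single} well-chosen $t_0\in(\tfrac12\tau,\tau)$ (chosen again via Lemma \ref{lemma:arithm}, applied to the difference set $\widetilde S-\widetilde S$ of the doubled intervals) and establishes the pointwise bound $|q_{t_0}(e(t_0m))|\ge\delta^{n}\,\Theta_{\tau,\Lambda}(m)$ there, with $\Lambda$ defined from the zeros of $q_{t_0}$ after radial projection to the unit circle (Claim \ref{clm:zero_normalization}) and the product-of-sines estimate; Parseval at the single time $t_0$ then finishes the interval part. So the set $\Lambda_g$ and the weight $\Theta_{\tau,\Lambda}$ come from the zeros of one polynomial $q_{t_0}$, not from any averaged statement. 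Your sketch collapses these two mechanisms into one uniform averaged estimate whose proof would require exactly the missing uniform-in-$t$ control of the moving zeros; as written, the construction of $\Lambda_g$ and the central lower bound constitute a genuine gap.
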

The proof of Lemma~\ref{lemma:approx-spectrum}, with small modifications,
follows \cite[Section~3.1]{Na1}. We start with the following observation:
if $g\in L^{2}(\bbt,\mathcal{H})$ and $a_{0}(t),\ldots,a_{n}\left(t\right)$
are complex numbers, then the $m$-th Fourier coefficient of the function
\[
x\mapsto\sum_{k=0}^{n}a_{k}(t)g_{kt}(x)=\sum_{k=0}^{n}a_{k}(t)g(x+kt)
\]
equals 
\[
\widehat{g}(m)\cdot\sum_{k=0}^{n}a_{k}(t)e(ktm)=\widehat{g}(m)\cdot q_{t}\left(e(tm)\right),
\]
where $q_{t}(z)=\sum_{k=0}^{n}{\displaystyle a_{k}(t)z^{k}}$. Slightly
perturbing the coefficients $a_{k}(t)$, we may assume, without loss
of generality, that the coefficients $a_{0}(t)$ and $a_{n}(t)$ do
not vanish for $0<t<\tau$ (so that, for every $t$ in this range,
the polynomial $q_{t}$ is exactly of degree $n$ and does not vanish
at the origin), and that the arguments of the roots of $q_{t}$ are
all distinct.

By Parseval's theorem, 
\begin{equation}
\intsy_{\bbt}\,\left\Vert \sum_{k=0}^{n}a_{k}(t)g_{kt}(x)\right\Vert _{\mathcal{H}}^{2}\d x=\sum_{m\in\bbz}\bigl\|\widehat{g}(m)\bigr\|_{\mathcal{H}}^{2}\,\bigl|q_{t}\left(e(tm)\right)\bigr|^{2}.\label{eq:g_t_dual}
\end{equation}
If $g\in{\rm Exp}_{{\tt loc}}(n,\tau,\varkappa,\mathcal{H})$, then
we can choose $a_{0},\ldots,a_{k}$ so that the LHS of \eqref{eq:g_t_dual}
will be small for each $t\in(0,\tau)$. On the other hand, whenever
the norm of $\widehat{g}(m)$ is large, the RHS of \eqref{eq:g_t_dual}
can be small only when $q_{t}(e(tm))$ is small. The proof of Lemma~\ref{lemma:approx-spectrum}
will be based on two facts. The first is that, on average, $|q_{t}(e(tm))|$
is relatively large outside some exceptional set, which can be covered
by at most $n$ intervals of length $\tfrac{1}{4n(n+1)\tau}$. The
second is that there exists a $t_{0}$ such that $q_{t_{0}}(e(tm))$
can be effectively bounded from below on this exceptional set.

We start with a lemma on arithmetic progressions.
\begin{lem}
\label{lemma:arithm} Given a measurable set $G\subset\bbr_{+}$,
put 
\[
V_{G}=\left\{ t\in\bigl(\tfrac{1}{2}\tau,\tau\bigr)\,\colon\exists k\in\bbn{\rm \ s.t.\ }\tfrac{k}{t}\in G\right\} .
\]
Then $m(V_{G})<\tau^{2}m(G)$.

\end{lem}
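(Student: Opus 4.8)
The plan is a one-step union bound over $k$ followed by a change of variables, and then a pointwise estimate of the resulting integrand. First I would write $V_{G}=\bigcup_{k\ge1}V_{G}^{(k)}$, where $V_{G}^{(k)}=\{t\in(\tfrac12\tau,\tau)\colon k/t\in G\}$, so that $m(V_{G})\le\sum_{k\ge1}m(V_{G}^{(k)})$ by countable subadditivity (each $V_{G}^{(k)}$ is the preimage of the measurable set $G$ under the map $t\mapsto k/t$, which is a $C^{1}$ diffeomorphism of $(0,\infty)$, hence Lebesgue measurable — a one-line remark). For fixed $k$, the substitution $s=k/t$, with $\d t=-\tfrac{k}{s^{2}}\,\d s$, carries $t\in(\tfrac12\tau,\tau)$ to $s\in(\tfrac{k}{\tau},\tfrac{2k}{\tau})$ and yields
\[
m(V_{G}^{(k)})=\intsy_{G\cap(k/\tau,\,2k/\tau)}\frac{k}{s^{2}}\,\d s .
\]
Summing over $k$ and interchanging sum and integral by Tonelli (everything is nonnegative), and noting that $s\in(k/\tau,2k/\tau)$ is the same as $\tfrac12 s\tau<k<s\tau$, I get
\[
m(V_{G})\le\intsy_{G}\Bigl(\sum_{k\,:\,s\tau/2<k<s\tau}\frac{k}{s^{2}}\Bigr)\d s .
\]

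The next step is to bound the inner sum pointwise in $s$. The constraint $\tfrac12 s\tau<k<s\tau$ forces $1\le k\le\lfloor s\tau\rfloor$, so in particular the sum is empty unless $s\tau>1$. On the (sub)set of $s$ that actually contributes,
\[
\sum_{s\tau/2<k<s\tau}\frac{k}{s^{2}}\ \le\ \frac{1}{s^{2}}\sum_{k=1}^{\lfloor s\tau\rfloor}k\ =\ \frac{\lfloor s\tau\rfloor(\lfloor s\tau\rfloor+1)}{2s^{2}}\ \le\ \frac{s\tau(s\tau+1)}{2s^{2}}\ =\ \frac{\tau^{2}}{2}+\frac{\tau}{2s}.
\]
Since $s\tau>1$ there, we have $\tfrac{\tau}{2s}<\tfrac{\tau^{2}}{2}$, so the inner sum is \emph{strictly} less than $\tau^{2}$ on the contributing set and equals $0$ off it. Integrating over $G$ gives $m(V_{G})<\tau^{2}\,m(G)$, which is exactly the assertion (the trivial case $m(G)=0$ aside, in which $V_{G}$ is null as well; only $m(G)>0$ is needed later).

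The argument is essentially routine, so there is no real obstacle; the only point needing a bit of care is extracting the sharp statement — strict inequality with constant exactly $\tau^{2}$, rather than merely $m(V_{G})=\O(\tau^{2}m(G))$ — and that is handled by the observation that the contributing values of $s$ are confined to $s>1/\tau$, which controls the $\tfrac{\tau}{2s}$ tail. I would present the change of variables and the Tonelli interchange explicitly, since those are the steps where one could slip, and keep the measurability discussion to a single sentence.
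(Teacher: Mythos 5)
Your proof is correct and follows essentially the same route as the paper: a union (indicator) bound over $k$, the change of variables $s=k/t$ with Tonelli, and a pointwise estimate of $\sum_{s\tau/2<k<s\tau}k/s^{2}$ by $\tau^{2}$ (the paper bounds the sum directly by $\tau^{2}s^{2}$, you bound it via $\lfloor s\tau\rfloor$ and the observation $s\tau>1$ on the contributing set). Your explicit handling of the $m(G)=0$ edge case is a reasonable extra remark but changes nothing of substance.
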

This lemma shows that if $m(G)<\tfrac{1}{2\tau}$, then there are
significantly many points $t\in(\tfrac{1}{2}\tau,\tau)$ such that
no point $k/t$, $k\in\bbn$, belongs to $G$.

\noindent \medskip{}
\textit{Proof of Lemma~\ref{lemma:arithm}}: We have 
\[
\sum_{k\in\bbn}\indf_{G}\bigl(\tfrac{k}{t}\bigr)\ge\indf_{V_{G}}(t)\,.
\]
Integrating over $t\in\bigl(\tfrac{1}{2}\tau,\tau\bigr)$, we get
\begin{multline*}
m(V_{G})\le\intsy_{\tau/2}^{\tau}\sum_{k\in\bbn}\indf_{G}\bigl(\tfrac{k}{t}\bigr)\d t=\sum_{k\in\bbn}k\,\intsy_{k/\tau}^{2k/\tau}\indf_{G}(s)\,\frac{{\rm d}s}{s^{2}}\\
=\intsy_{0}^{\infty}\indf_{G}(s)\left(\sum_{s\tau/2<k<s\tau}k\right)\,\frac{{\rm d}s}{s^{2}}<\tau^{2}\intsy_{0}^{\infty}\indf_{G}(s)\d s=\tau^{2}m(G)\,,
\end{multline*}
because $\sum_{s\tau/2<k<s\tau}k<\tau^{2}s^{2}$. \hfill{}$\Box$

\noindent %

\medskip{}

\noindent The following lemma shows that the Fourier coefficients
$\widehat{g}(m)$ are small outside $n$ intervals of controlled length.
Put 
\[
\delta=\frac{1}{8n(n+1)}\,.
\]
This choice of $\delta$ will stay fixed till the end of the proof
of Lemma~\ref{lemma:approx-spectrum}.
\begin{lem}
\noindent \label{lem:crit_intervals} There exist $n$ intervals $I_{1},\ldots,I_{n}$
of length $\frac{2\delta}{\tau}$ such that 
\[
\sum_{m\in\bbz\backslash\bigcup I_{j}}\bigl\|\widehat{g}(m)\bigr\|_{\mathcal{H}}^{2}<\Bigl(\frac{C}{\delta}\Bigr)^{2n}\varkappa^{2}.
\]

\end{lem}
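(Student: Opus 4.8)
The plan is to derive Lemma~\ref{lem:crit_intervals} from the Parseval identity \eqref{eq:g_t_dual} by choosing one good value $t=t_{*}\in(\tfrac12\tau,\tau)$ for which the polynomial $q_{t_{*}}$ is bounded below away from $n$ controlled frequency intervals; this follows \cite[Section~3.1]{Na1}, with the scalar estimates replaced by their $\calH$-valued analogues (which here is purely formal, since the $q_{t}$ are scalar polynomials and the vectors $\widehat{g}(m)\in\calH$ enter only through the weights $\|\widehat{g}(m)\|_{\calH}^{2}$).

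First the dictionary: for every $t\in(0,\tau)$ fix, as discussed above, a polynomial $q_{t}(z)=\sum_{k=0}^{n}a_{k}(t)z^{k}$ of degree exactly $n$, with $q_{t}(0)\neq0$, $\sum_{k}|a_{k}(t)|^{2}=1$ and distinct root arguments, supplied by the $\Exploc$ property, so that \eqref{eq:g_t_dual} yields
\[
\sum_{m\in\bbz}\|\widehat{g}(m)\|_{\calH}^{2}\,\bigl|q_{t}(e(tm))\bigr|^{2}<\varkappa^{2}.
\]
The key ingredient is a pointwise lower bound for $q_{t}$ on the unit circle. Writing $q_{t}(z)=a_{n}(t)\prod_{j=1}^{n}(z-w_{j}(t))$, only the roots with $\tfrac12\le|w_{j}(t)|\le2$ can make $|q_{t}|$ small on $\bbt$, whereas $|a_{n}(t)|\prod_{|w_{j}|\ge1}|w_{j}(t)|$ equals $\exp\bigl(\intsy_{\bbt}\log|q_{t}(e(\phi))|\d{\phi}\bigr)$ by Jensen's formula, and this is $\ge e^{-Cn}$ since the largest Taylor coefficient of $q_{t}$, of modulus $\ge(n+1)^{-1/2}$, is at most $\binom{n}{k}$ times that quantity (a classical coefficient bound). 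Hence, with $\psi_{j}(t)$ the argument of $w_{j}(t)$ and $J^{*}(t)=\{j:\tfrac12\le|w_{j}(t)|\le2\}$,
\[
|q_{t}(e(\phi))|\ \ge\ e^{-Cn}\prod_{j\in J^{*}(t)}\bigl\|\phi-\psi_{j}(t)\bigr\|_{\bbr/\bbz}\,.
\]
Consequently, with the threshold $\eta=(\delta/C)^{n}$ for a suitable numerical $C>1$, the sublevel set $\{\phi:|q_{t}(e(\phi))|<\eta\}$ is contained in at most $n$ arcs of length $\le\delta$ centred at the $\psi_{j}(t)$, and off those arcs $|q_{t}(e(\phi))|\ge\eta$; note that $\eta^{-2}=(C/\delta)^{2n}$ is exactly the constant appearing in the statement.

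The remaining, and decisive, point is to select $t=t_{*}$ so that the frequencies $m\in\bbz$ with $e(t_{*}m)$ in one of those arcs fall into only $n$ intervals of length $\tfrac{2\delta}{\tau}$. For a fixed $t$ this ``bad'' set is the union of the arc-preimages $\tfrac1t(\psi_{j}(t)+\bbz)+[-\tfrac{\delta}{2t},\tfrac{\delta}{2t}]$, hence periodic in $m$ with period $\tfrac1t\in(\tfrac1\tau,\tfrac2\tau)$ — far larger than $\tfrac{2\delta}{\tau}$ — so no single generic $t$ works, and this is precisely where Lemma~\ref{lemma:arithm} is used, to kill all but the $n$ principal preimages (those near the $\psi_{j}(t_{*})$ themselves). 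One applies it to the set $G\subset\bbr_{+}$ of frequencies $s$ that, for a non-negligible set of $t\in(\tfrac12\tau,\tau)$, lie within $O(\delta)$ of $\tfrac1t(\psi_{j}(t)+\bbz)$ for some $j$ and some nonzero shift; after checking $m(G)<\tfrac1{2\tau}$ — this is where $\delta=\tfrac1{8n(n+1)}$ is used — Lemma~\ref{lemma:arithm} produces $t_{*}\in(\tfrac12\tau,\tau)$ with $k/t_{*}\notin G$ for all $k\in\bbn$, so no non-principal resonance of $q_{t_{*}}$ is hit at an integer frequency. The $n$ surviving arcs pull back under $m\mapsto t_{*}m\bmod1$ to $n$ intervals $I_{1},\dots,I_{n}$ of length $\le\delta/(\tfrac12\tau)=\tfrac{2\delta}{\tau}$, outside which $|q_{t_{*}}(e(t_{*}m))|\ge\eta$, so the displayed Parseval inequality at $t=t_{*}$ gives
\[
\sum_{m\in\bbz\setminus\bigcup_{j}I_{j}}\|\widehat{g}(m)\|_{\calH}^{2}\ \le\ \eta^{-2}\sum_{m\in\bbz}\|\widehat{g}(m)\|_{\calH}^{2}\bigl|q_{t_{*}}(e(t_{*}m))\bigr|^{2}\ <\ \Bigl(\frac{C}{\delta}\Bigr)^{2n}\varkappa^{2},
\]
as claimed.

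The Parseval reduction and the pointwise polynomial estimate are routine (the latter is standard: the Turán-type / coefficient bounds together with the factorisation of $q_{t}$). The main obstacle is the $t$-selection in the third paragraph: for a single $t$ the bad frequencies are periodic and so cannot be confined to $n$ intervals, and it is exactly Lemma~\ref{lemma:arithm} that breaks this periodicity. Making the set $G$ precise — verifying $m(G)<\tfrac1{2\tau}$ with the right constants and tracking how $\delta=\tfrac1{8n(n+1)}$ reconciles the arc lengths with the exceptional measure — is the delicate step, and is the portion of \cite[Section~3.1]{Na1} that must be transcribed with care.
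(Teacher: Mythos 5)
The first two steps of your plan (the Parseval identity at a fixed $t$, and the structure of the sublevel set of $q_{t}$ on the circle, which you prove via Jensen/Mahler-measure bounds instead of the Tur\'an/Remez argument of Claim~\ref{claim:U}) are fine. The gap is exactly where you flag it: the selection of a single $t_{*}$ for which $\bigl|q_{t_{*}}(e(t_{*}m))\bigr|\ge\eta$ for \emph{every} $m\in\bbz\setminus\bigcup_{j}I_{j}$. No such $t_{*}$ exists in the relevant cases. If $q_{t_{*}}$ has a root on (or very near) the unit circle --- which is precisely the situation of interest, since the polynomial must nearly vanish at the frequencies carrying the mass of $\widehat{g}$ --- then the sublevel set $\{s\colon|q_{t_{*}}(e(s))|<\eta\}$ contains an arc of positive length, and for irrational $t_{*}$ the points $t_{*}m\bmod1$ visit that arc for infinitely many $m$ spread over all of $\bbz$; these bad frequencies recur roughly with period $1/t_{*}$ and cannot be covered by $n$ intervals of total length $2n\delta/\tau$. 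Lemma~\ref{lemma:arithm} cannot repair this: it requires a \emph{fixed} set $G$ of measure less than $\tfrac{1}{2\tau}$, whereas the resonance set you propose (frequencies lying within $O(\delta)$ of $\tfrac1t(\psi_{j}(t)+\bbz)$ for a non-negligible set of $t$) contains essentially every $s\gtrsim1/\tau$ and has infinite measure, so the hypothesis $m(G)<\tfrac1{2\tau}$ can never be verified. Since $\widehat{g}$ may be supported anywhere in $\bbz$, a pointwise lower bound failing on an unbounded set destroys the Parseval estimate.

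The paper's proof avoids this by never working with a single $t$ at this stage. It integrates \eqref{eq:g_t_dual} over $t\in(0,\tau)$, obtaining $\sum_{m}\|\widehat{g}(m)\|_{\calH}^{2}\rho^{2}(m)<\varkappa^{2}$ with $\rho^{2}(m)=\tfrac1\tau\intsy_{0}^{\tau}|q_{t}(e(tm))|^{2}\,\mathrm{d}t$; the averaging kills the periodic recurrences, because a frequency far from the critical ones is bad only for a small fraction of $t$'s. One then shows the set $S$ where $\rho^{2}(m)$ is small cannot contain $n+1$ integers pairwise separated by more than $2\delta/\tau$: assuming it does, the pigeonhole applied to the sublevel arcs forces, for each bad $t$, some difference $m_{j''}-m_{j'}$ to be within $2\delta/\tau$ of a point $k/t$, and \emph{only then} is Lemma~\ref{lemma:arithm} invoked --- applied to the finite set $G$ of intervals of length $4\delta/\tau$ around these $\tfrac12 n(n+1)$ differences, whose measure is exactly $\tfrac1{4\tau}$ by the choice $\delta=\tfrac1{8n(n+1)}$ --- yielding a measure contradiction on $(\tfrac12\tau,\tau)$. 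The single-good-$t_{0}$ selection you have in mind does occur in the paper, but only later in the proof of Lemma~\ref{lemma:approx-spectrum}, where lower bounds on $|q_{t_{0}}(e(t_{0}m))|$ are needed only for $m$ in the \emph{bounded} set $\bigcup_{j}\widetilde{I}_{j}$ already produced by the present lemma, so each progression $\{\lambda_{j}+kt_{0}^{-1}\}$ meets it at most once; transplanting that device to the unbounded complement $\bbz\setminus\bigcup_{j}I_{j}$ is where your argument breaks down.
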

\noindent \textit{Proof of Lemma~\ref{lem:crit_intervals}}: By the
continuity of the shift in $L^{2}(\bbt,\mathcal{H})$, we can assume
that the coefficients $a_{k}\left(t\right)$ are piecewise constant
functions of $t$, and hence measurable. Then we can integrate Parseval's
formula~\eqref{eq:g_t_dual} over the interval $\left(0,\tau\right)$.
Recalling that the LHS of~\eqref{eq:g_t_dual} is less than $\varkappa^{2}$,
we get 
\[
\sum_{m\in\bbz}\bigl\|\widehat{g}(m)\bigr\|_{\mathcal{H}}^{2}\,\rho^{2}(m)<\varkappa^{2}\,,
\]
where 
\[
\rho^{2}(m)=\frac{1}{\tau}\,\intsy_{0}^{\tau}|q_{t}(e(tm))|^{2}\d t\,.
\]
Introduce the set 
\[
S=\left\{ m\in\bbz\,\colon\,\rho^{2}(m)<\frac{1}{4(n+1)}\Bigl(\frac{\delta}{A}\Bigr)^{2n}\right\} .
\]
Here and elsewhere in this section, $A$ is the positive numerical
constant from the RHS of the Turán-type Lemma~\ref{subsubsect:Turan}.
Then Lemma~\ref{lem:crit_intervals} will follow from the following
claim: 
\begin{multline}
S{\rm \ cannot\ contain\ }n+1{\rm \ integers\ }m_{1}<\cdots<m_{n+1}\ \mathrm{such\ that}\\
m_{j+1}-m_{j}>\frac{2\delta}{\tau}\,,\quad\forall j\in\{1,\ldots,n\}\,.\label{eq:cond_S}
\end{multline}
Indeed, this condition ensures that the set $S$ can be covered by
at most $n$ intervals $I_{1}$, \ldots{}, $I_{n}$ of length $2\delta/\tau$
and 
\[
\rho^{2}(m)\ge\frac{1}{4(n+1)}\Bigl(\frac{\delta}{A}\Bigr)^{2n},\qquad m\in\bbz\setminus\bigcup_{j}I_{j}\,,
\]
whence 
\[
\sum_{m\in\bbz\backslash\bigcup I_{j}}\bigl\|\widehat{g}(m)\bigr\|_{\mathcal{H}}^{2}\le4(n+1)\Bigl(\frac{A}{\delta}\Bigr)^{2n}\varkappa^{2}<\Bigl(\frac{C}{\delta}\Bigr)^{2n}\varkappa^{2}
\]
with some numerical constant $C$. Thus, we need to prove the claim~\eqref{eq:cond_S}.

\medskip{}
Suppose that \eqref{eq:cond_S} does not hold, i.e., there are $n+1$
integers $m_{1}<\cdots<m_{n+1}$ with $m_{j+1}-m_{j}>2\delta/\tau$
that belong to the set $S$. Then 
\begin{equation}
\intsy_{\tau/2}^{\tau}\sum_{j=1}^{n+1}\bigl|q_{t}(e(tm_{j}))\bigr|^{2}\,{\rm d}t<\frac{\tau}{4}\Bigl(\frac{\delta}{A}\Bigr)^{2n}\,.\label{eq:aver}
\end{equation}
We call the value $t\in\bigl(\tfrac{1}{2}\tau,\tau\bigr)$ \textit{bad}
if 
\[
\sum_{j=1}^{n+1}\bigl|q_{t}(e(tm_{j}))\bigr|^{2}<\Bigl(\frac{\delta}{A}\Bigr)^{2n}\,.
\]
Otherwise, the value $t$ is called \textit{good}. By~\eqref{eq:aver},
the measure of good $t$'s is less than $\tau/4$. In the rest of
the proof we will show that the measure of bad $t$'s is also less
than $\tau/4$, and this will lead us to a contradiction, which will
prove Lemma~\ref{lem:crit_intervals}.

\medskip{}
We will use the following
\begin{claim}
\label{claim:U} Let $q(z)=\sum_{k=0}^{n}a_{k}z^{k}$ with $\sum_{k=0}^{n}|a_{k}|^{2}=1$.
Given $\Delta\in(0,1)$, let 
\[
U=\Bigl\{ s\in\bbt\colon\bigl|q(e(s))\bigr|<\Bigl(\frac{\Delta}{A}\Bigr)^{n}\Bigr\}\,.
\]
Then the set $U$ is a union of at most $n$ intervals of length at
most $\Delta$ each.\end{claim}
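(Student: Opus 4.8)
The plan is to extract both halves of the claim from the Tur\'an-type Lemma~\ref{subsubsect:Turan}, applied to the exponential polynomial $s\mapsto q(e(s))=\sum_{k=0}^{n}a_{k}e(ks)$, which has frequencies $2\pi k$, $k=0,\dots,n$, hence ``degree'' at most $n$. Two preliminary observations set things up. Since the characters $e(k\cdot)$ are orthonormal over a full period, Parseval gives $\int_{0}^{1}|q(e(s))|^{2}\,\mathrm{d}s=\sum_{k=0}^{n}|a_{k}|^{2}=1$, so $\sup_{\bbt}|q(e(\cdot))|\ge1$. Applying the Tur\'an-type Lemma with $E=J$ shows $A\ge1$, so $\Delta/A\in(0,1)$ and $(\Delta/A)^{n}<1$; thus the threshold defining $U$ lies strictly below the supremum of $|q(e(\cdot))|$.

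For the length bound, note first that if $U=\emptyset$ there is nothing to prove, so assume $U\ne\emptyset$; being open, $U$ then has positive measure. Viewing $U\subset[0,1)$ and applying the Tur\'an-type Lemma with $J=[0,1]$ (one full period) and $E=U$, I would obtain
\[
1\;\le\;\sup_{J}|q(e(\cdot))|\;\le\;\Bigl(\frac{A\,m(J)}{m(U)}\Bigr)^{n}\sup_{U}|q(e(\cdot))|\;\le\;\Bigl(\frac{A}{m(U)}\Bigr)^{n}\Bigl(\frac{\Delta}{A}\Bigr)^{n}\;=\;\Bigl(\frac{\Delta}{m(U)}\Bigr)^{n},
\]
hence $m(U)\le\Delta$. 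In particular every connected component of $U$ is an arc of length at most $\Delta$, which already gives the ``length at most $\Delta$'' half of the claim (indeed it bounds the \emph{total} length of $U$).

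For the number of components, observe that on the unit circle $|q(e(s))|^{2}=q(e(s))\,\overline{q(e(s))}$ is a real trigonometric polynomial in $s$ of degree at most $n$ (equivalently, $e(ns)\,|q(e(s))|^{2}$ is an algebraic polynomial of degree at most $2n$ in $z=e(s)$), and it is non-constant --- if it were constant it would equal $1$ by the normalization $\sum|a_{k}|^{2}=1$, and then $U=\emptyset$. Consequently the level set $\{s\in\bbt:|q(e(s))|^{2}=(\Delta/A)^{2n}\}$ is finite, with at most $2n$ points counted with multiplicity. Since $U$ is a proper open subset of $\bbt$, each of its (finitely many) components is an arc with two distinct endpoints lying on this level set; a short bookkeeping --- an endpoint shared by two components is a strict local maximum of $|q(e(\cdot))|^{2}$ at the value $(\Delta/A)^{2n}$, hence a zero of $|q(e(\cdot))|^{2}-(\Delta/A)^{2n}$ of multiplicity at least $2$, while an unshared endpoint is a zero of multiplicity at least $1$ --- shows that twice the number of components is at most the total number of zeros counted with multiplicity, i.e.\ at most $2n$. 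Therefore $U$ has at most $n$ components, and the claim follows.

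I do not expect a genuine obstacle here: the argument is short once the Tur\'an-type Lemma is available. The two places calling for attention are the transfer of that lemma --- which is stated for a genuine interval of $\bbr$ --- to the $1$-periodic function $q(e(\cdot))$, handled simply by working over one fundamental period, and the combinatorial passage from ``at most $2n$ zeros counted with multiplicity'' of the auxiliary trigonometric polynomial to ``at most $n$ components'' of its sublevel set, where the degenerate configurations (components abutting at a point, tangential zeros) must be accounted for correctly.
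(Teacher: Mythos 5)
Your proof is correct and follows essentially the same route as the paper: the Tur\'an-type lemma (applied over one full period) forces the sublevel set to be small, and its boundary points, being solutions of an algebraic equation of degree at most $2n$ on the unit circle, number at most $2n$, giving at most $n$ component arcs. The only harmless variations are that you take $E=U$ in the Tur\'an lemma, obtaining the slightly stronger bound $m(U)\le\Delta$ instead of bounding each component $J_i$ separately, and that you make explicit the multiplicity bookkeeping at endpoints shared by two components, which the paper leaves implicit.
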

\begin{proof}
$U$ is an open subset of $\bbt$ which consists of open intervals
(since $\Delta<1$ and $A\ge1$, we have that $U\ne\bbt$ ). The boundary
points of these intervals satisfy the equation $\bigl|q(e(s))\bigr|^{2}=\left(\frac{\Delta}{A}\right)^{2n}$,
which can be rewritten as 
\[
\left(\sum_{k=0}^{n}a_{k}z^{k}\right)\left(\sum_{k=0}^{n}\overline{a_{k}}z^{-k}\right)=\left(\frac{\Delta}{A}\right)^{2n},\qquad z=e(s)\,.
\]
The LHS of this equation is a rational function of degree at most
$2n$, and therefore the number of solutions is at most $2n$. Hence
$U$ consists of $l\le n$ intervals $J_{1}$, \ldots{}, $J_{l}$.

Next, note that since the sum of squares of the absolute values of
the coefficients of $q$ equals $1$, we have ${\displaystyle \max_{s\in\bbt}|q(e(s)|\ge1}$.
Then, applying Lemma~\ref{subsubsect:Turan}\index{Turán-type lemma}
to the exponential polynomial $s\mapsto q(e(s))$, we get 
\[
1\le\sup_{s\in\bbt}\bigl|q\left(e(s)\right)\bigr|\le\left(\frac{A}{m(J_{i})}\right)^{n}\cdot\sup_{s\in J_{i}}\bigl|q(e(s))\bigr|\le\left(\frac{\Delta}{m(J_{i})}\right)^{n}\,.
\]
Hence, $m(J_{i})\le\Delta$, proving the claim.
\end{proof}
\medskip{}
Note that in the proof of this claim we did not use the full strength
of Turán's lemma. For instance, we could have used the much simpler
Remez' inequality.

\medskip{}
Now for $t\in\bigl(\tfrac{1}{2}\tau,\tau\bigr)$ consider the set
\[
S_{t}=\Bigl\{ m\in\bbz\colon\left|q_{t}\left(e\left(tm\right)\right)\right|<\left(\frac{\delta}{A}\right)^{n}\Bigr\}\,.
\]
By the previous claim (applied with $\Delta=\delta$), there are points
$\xi_{1},\ldots,\xi_{n}\in\bbr$ (centers of the intervals $J_{i}$)
such that, for each $m\in S_{t}$, there exist $i\in\{1,\ldots,n\}$
and $l\in\bbz$ such that 
\begin{equation}
|tm-l-\xi_{i}|<\tfrac{1}{2}\,\delta\,.\label{eq:S_t}
\end{equation}
Suppose that the value $t$ is bad. Then the $n+1$ integers $m_{1},\ldots,m_{n+1}$
belong to the set $S_{t}$, and by the Dirichlet box principle, there
are two of these integers, say $m_{j'}$ and $m_{j''}$ with $j'<j''$,
which satisfy~\eqref{eq:S_t} with the same value $i$. Then for
this pair $|t(m_{j''}-m_{j'})-k|<\delta$, with some non-negative
integer $k$. Thus, 
\[
\Bigl|\frac{k}{t}-(m_{j''}-m_{j'})\Bigr|<\frac{\delta}{t}<\frac{2\delta}{\tau}\,.
\]
Note that since $m_{j''}-m_{j'}>\tfrac{2\delta}{\tau}$, the integer
$k$ must be positive. We conclude that the set of bad values $t$
is contained in the set $V_{G}$, where $G$ is the union of $\tfrac{1}{2}n(n+1)$
intervals of length $\tfrac{4\delta}{\tau}$ centered at all possible
differences $m_{j''}-m_{j'}$ with $j''>j'$. The measure of the set
$G$ is $\tfrac{n(n+1)}{2}\cdot\tfrac{4\delta}{\tau}$, which, due
to the choice of $\delta$, equals $\tfrac{1}{4\tau}$. By Lemma~\ref{lemma:arithm},
$m(V_{G})<\tau^{2}m(G)\le\tfrac{1}{4}\tau$. Thus, the measure of
the set of bad $t$'s is also less than $\tfrac{1}{4}\tau$, which
completes the proof of Lemma~\ref{lem:crit_intervals}.

\hfill{}$\Box$

\medskip{}
\textit{Proof of Lemma \ref{lemma:approx-spectrum}}: We need to find
a set $\Lambda=\Lambda_{g}\subset\bbr$ of $n$ frequencies such that
\[
\sum_{m\in\bbz}\|\,\widehat{g}(m)\,\|_{\mathcal{H}}^{2}\,\Theta_{\tau,\Lambda}^{2}(m)\le\bigl(Cn\bigr)^{4n}\varkappa^{2}\,,
\]
where 
\[
\Theta_{\tau,\Lambda}(m)=\prod_{\lambda\in\Lambda}\theta_{\tau}(m-\lambda)\,,\qquad\theta_{\tau}(m)=\min(1,\tau|m|).
\]
By Lemma~\ref{lem:crit_intervals}, there exists a collection of
$n$ intervals $\{I_{j}\}$, each of length $\tfrac{2\delta}{\tau}$,
such that 
\[
\sum_{m\in\bbz\setminus\bigcup_{j}I_{j}}\|\,\widehat{g}(m)\,\|_{\mathcal{H}}^{2}\,\Theta_{\tau,\Lambda}^{2}(m)\overset{\Theta\le1}{\le}\sum_{m\in\bbz\setminus\bigcup_{j}I_{j}}\|\,\widehat{g}(m)\,\|_{\mathcal{H}}^{2}\le\left(Cn\right)^{4n}\varkappa^{2}\,.
\]
Therefore, it remains to estimate the sum 
\[
\sum_{m\in\bigcup_{j}I_{j}}\|\,\widehat{g}(m)\,\|_{\mathcal{H}}^{2}\,\Theta_{\tau,\Lambda}^{2}(m)\,.
\]
By Parseval's identity~\eqref{eq:g_t_dual}, for every $t\in(0,\tau)$,
\[
\sum_{m\in\bigcup_{j}I_{j}}\|\,\widehat{g}(m)\,\|_{\mathcal{H}}^{2}\,|q_{t}(e(tm))|^{2}<\varkappa^{2}\,.
\]
Hence, it suffices to show that \textit{there exist a value} $t_{0}\in(0,\tau)$
\textit{and a set} $\Lambda$ \textit{of} $n$ \textit{real numbers
such that} $|q_{t_{0}}(e(t_{0}m))|\ge\delta^{n}\Theta_{\tau,\Lambda}(m)$
\textit{for every} $m\in\bigcup_{j}I_{j}$.

\noindent First, we bound the absolute value of the polynomial $q_{t}$
from below by the absolute value of another polynomial $p$ whose
zeros are obtained from the zeros of $q_{t}$ by radial projection
to the unit circle.
\begin{claim}
\noindent \label{clm:zero_normalization} Let $z_{j}\ne0$ for $1\le j\le n$,
and let $g(z)=c\cdot\prod_{j=1}^{n}{\displaystyle \left(z-z_{j}\right)}$
be a polynomial of degree $n$ such that ${\displaystyle \sup_{|z|=1}\left|g\left(z\right)\right|\ge1}$.
Let $h\left(z\right)=\prod_{j=1}^{n}{\displaystyle \left(z-\zeta_{j}\right)}$,
where $\zeta_{j}=z_{j}/|z_{j}|$. Then, for every $z\in\bbt$, 
\[
\left|h\left(z\right)\right|\le2^{n}\left|g\left(z\right)\right|.
\]
\end{claim}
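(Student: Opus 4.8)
The plan is to reduce the claim to a comparison of the individual linear factors of $g$ and $h$ and then to recover the constant $2^{n}$ from the normalization $\sup_{|z|=1}|g(z)|\ge1$. Throughout write $r_{j}=|z_{j}|>0$, so that $\zeta_{j}=z_{j}/r_{j}$ and $|g(z)|=|c|\prod_{j=1}^{n}|z-z_{j}|$.

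The first and only substantive step is the elementary pointwise inequality: for $|z|=1$ and any $w\ne0$, writing $r=|w|$ and $\zeta=w/|w|$,
\[
|z-\zeta|\le\frac{2}{1+r}\,|z-w|\,.
\]
Setting $u=\operatorname{Re}(z\overline{\zeta})\in[-1,1]$ one has $|z-\zeta|^{2}=2(1-u)$ and $|z-w|^{2}=1-2ru+r^{2}$, so after squaring, the claimed inequality becomes $(1+r)^{2}(1-u)\le2(1-2ru+r^{2})$; the difference of the right and left sides equals $(1-r)^{2}(1+u)$, which is $\ge0$ because $u\ge-1$.

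Applying this to each zero $z_{j}$ of $g$ and multiplying, I get, for every $z$ with $|z|=1$,
\[
|h(z)|=\prod_{j=1}^{n}|z-\zeta_{j}|\le\prod_{j=1}^{n}\frac{2}{1+r_{j}}\,|z-z_{j}|=\frac{2^{n}}{|c|\prod_{j=1}^{n}(1+r_{j})}\,|g(z)|\,.
\]
It remains to bound the denominator below: by the triangle inequality $|w-z_{j}|\le1+r_{j}$ whenever $|w|=1$, so $\sup_{|w|=1}|g(w)|\le|c|\prod_{j=1}^{n}(1+r_{j})$; since the left side is $\ge1$ by hypothesis, $|c|\prod_{j}(1+r_{j})\ge1$, and the displayed estimate collapses to $|h(z)|\le2^{n}|g(z)|$, which is the claim.

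The point to watch is the sharp constant $\tfrac{2}{1+r}$ in the factorwise estimate: the cruder bound $|z-\zeta|\le2|z-w|$ is also true and just as easy, but it would leave a stray factor $1/|c|$, and the hypothesis $\sup_{|z|=1}|g|\ge1$ says nothing about $|c|$ alone (in the intended application $|c|=|a_{n}(t)|$ can be far smaller than $1$). It is exactly the denominator $\prod_{j}(1+r_{j})$ produced by the sharp constant that cancels against the elementary upper bound for $\sup_{|z|=1}|g|$. No stronger input (Tur\'an, Remez, etc.) is needed for this particular claim.
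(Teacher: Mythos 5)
Your proof is correct and follows essentially the same route as the paper: compare each linear factor of $h$ to the corresponding factor of $g$ via the sharp bound $|z-\zeta_j|\le\frac{2}{1+|z_j|}|z-z_j|$ on $|z|=1$, multiply, and cancel the accumulated denominator $|c|\prod_j(1+|z_j|)$ against the hypothesis $\sup_{|z|=1}|g|\ge1$. The only cosmetic difference is that you verify the factorwise inequality by an explicit algebraic computation, whereas the paper simply observes that the ratio $|z-\zeta_j|/|z-z_j|$ is maximized at $z=-\zeta_j$.
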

\begin{proof}
The ratio $\left|\frac{z-\zeta_{j}}{z-z_{j}}\right|$ attains its
maximum on $\{|z|=1\}$ at the point $z=-\zeta_{j}$, where it is
equal to $\frac{2}{1+\left|z_{j}\right|}$. Therefore, 
\[
\left|\frac{h\left(z\right)}{g\left(z\right)}\right|\le\frac{1}{\left|c\right|}\prod_{j=1}^{n}\frac{2}{1+\left|z_{j}\right|}.
\]
By our assumption, there is some $z^{\prime}$, $|z'|=1$, such that
$\left|g\left(z^{\prime}\right)\right|\ge1$. Hence, 
\[
1\le\left|c\right|\prod_{j=1}^{n}\left|z^{\prime}+z_{j}\right|\le\left|c\right|\prod_{j=1}^{n}\left(1+\left|z_{j}\right|\right).
\]
Overall, we have 
\[
\left|h\left(z\right)\right|\le2^{n}\left|g\left(z\right)\right|\cdot\frac{1}{\left|c\right|}\cdot\prod_{j=1}^{n}\frac{1}{1+\left|z_{j}\right|}\le2^{n}\left|g\left(z\right)\right|,
\]
proving the claim.
\end{proof}
\medskip{}
Recall that $\sup_{\left|z\right|=1}\left|q_{t}\left(z\right)\right|\ge1$.
Hence, applying Claim~\ref{clm:zero_normalization}, we conclude
that $|q_{t}(z)|\ge2^{-n}\,|p_{t}(z)|$ for $|z|=1$, where $p_{t}$
is a monic polynomial of degree $n$ with all its zeros on the unit
circle.

\medskip{}
To choose $t_{0}$, we consider $n$ intervals $\widetilde{I}_{j}$
of length $4\delta\tau^{-1}$ with the same centers as the intervals
$I_{j}$ of Lemma~\ref{lem:crit_intervals}, and put $\widetilde{S}=\bigcup_{j}\widetilde{I}_{j}$.
Let $\widetilde{G}=\widetilde{S}-\widetilde{S}$ be the difference
set, with $m(\widetilde{G})\le8\delta\tau^{-1}\cdot n^{2}$. We call
the value $t\in\bigl(\tfrac{1}{2}\tau,\tau\bigr)$ \textit{bad} if
there exists an integer $k\ne0$ such that $k/t\in\widetilde{G}$.
Since the set $\widetilde{G}$ is symmetric with respect to $0$,
we can estimate the measure of bad $t$'s by applying Lemma~\ref{lemma:arithm}
to the set $\widetilde{G}\cap\bbr_{+}$. Then the measure of bad values
of $t$ is less than $\tau^{2}\cdot\tfrac{1}{2}\, m(\widetilde{G})\le4\delta\tau\cdot n^{2}<\tfrac{1}{2}\tau$,
since $\delta\cdot8n^{2}<1$. Therefore, there exists at least one
\textit{good} value $t_{0}\in\bigl(\tfrac{1}{2}\tau,\tau\bigr)$ for
which \textit{every arithmetic progression with difference $t_{0}^{-1}$
has at most one point in $\widetilde{S}$}. We fix this value $t_{0}$
till the end of the proof.

\medskip{}
To simplify notation, we put $p=p_{t_{0}}$. The zero set of the function
$x\mapsto p(e(t_{0}x))$ consists of $n$ arithmetic progressions
with difference $t_{0}^{-1}$. By the choice of $t_{0}$, at most
$n$ zeros of this function belong to the set $\widetilde{S}$. We
denote these zeros by $\lambda_{1},\ldots,\lambda_{l}$, $\ell\le n$.
If $\ell<n$, we choose $n-\ell$ zeros $\lambda_{\ell+1},\ldots,\lambda_{n}$
in $\bbr\setminus\widetilde{S}$ so that $\left\{ e(t_{0}\lambda_{j})\right\} _{1\le j\le n}$
is a complete set of zeros of the algebraic polynomial $p$; we recall
that these zeros are all distinct.

It remains to define a set $\Lambda$ of $n$ numbers, and to estimate
from below $|p(e(t_{0}m))|$ when $m\in\bigcup_{j}I_{j}$. Denote
by $d_{j}(m)$ the distance from the integer $m$ to the nearest point
in the arithmetic progression $\left\{ \lambda_{j}+kt_{0}^{-1}\right\} _{k\in\bbz}$.
We have 
\[
\bigl|p(e(t_{0}m))\bigr|=2^{n}\prod_{j=1}^{n}\bigl|\sin(\pi t_{0}(m-\lambda_{j})\bigr|\ge2^{n}\prod_{j=1}^{n}\bigl(2t_{0}\, d_{j}(m)\bigr)\ge2^{n}\tau^{n}\prod_{j=1}^{n}d_{j}(m)\,.
\]
We put $\Lambda=\left\{ \lambda_{j}\right\} _{1\le j\le n}$. Recall
that here $m\in\bigcup_{j}I_{j}$, $\widetilde{S}=\bigcup_{j}\widetilde{I}_{j}$,
and that the arithmetic progression $\left\{ \lambda_{j}+kt_{0}^{-1}\right\} _{k\in\bbz}$
either misses the set $\widetilde{S}$, or has at most one element
in $\widetilde{S}$. In the first case, we get $d_{j}(m)\ge\delta\tau^{-1}$,
while in the second case, $d_{j}(m)\ge\min\bigl\{\tfrac{\delta}{\tau},\left|m-\lambda_{j}\right|\bigr\}$.
Therefore, in both cases, 
\[
d_{j}\left(m\right)\ge\min\Bigl\{\frac{\delta}{\tau},\left|m-\lambda_{j}\right|\Bigr\}\stackrel{\delta\le\frac{1}{2}}{\ge}\frac{\delta}{\tau}\min\bigl\{1,\tau\left|m-\lambda_{j}\right|\bigr\}=\frac{\delta}{\tau}\cdot\theta_{\tau}(m-\lambda_{j})\,.
\]

Tying the ends together, we get 
\begin{multline*}
\bigl|q_{t_{0}}(e(t_{0}m))\bigr|\ge2^{-n}\bigl|p(e(t_{0}m))\bigr|\ge2^{-n}\cdot2^{n}\tau^{n}\prod_{j=1}^{n}d_{j}(m)\\
\ge\tau^{n}\cdot\Bigl(\frac{\delta}{\tau}\Bigr)^{n}\Theta_{\tau,\Lambda}(m)=\delta^{n}\Theta_{\tau,\Lambda}(m)\,.
\end{multline*}
This completes the proof of Lemma~\ref{lemma:approx-spectrum}. \hfill{}$\Box$

\subsection{Local approximation by exponential polynomials with $n$ terms \label{subsect:local-approx}}

Henceforth we will assume that $\calH=L^{2}(\Omega)$. Then $\Exploc(n,\tau,\varkappa,L^{2}(\Omega))\subset L^{2}(Q)$.

For a finite set $\Lambda\subset\bbr$, denote by ${\rm Exp}(\Lambda,\Omega)$
the linear space of exponential polynomials with frequencies in $\Lambda$
and with coefficients depending on $\omega$. %
The next lemma shows that, for a.e. $\omega\in\Omega$, the function
$\theta\mapsto g(\omega,\theta)$, $g\in\Exploc(n,\tau,\varkappa,L^{2}(\Omega))$,
can be well approximated by exponential polynomials from ${\rm Exp}(\Lambda,\Omega)$,
on intervals $J\subset[0,1)$ of length comparable with $\tau$.

Suppose that $M>1$ satisfies 
\[
\ell=\frac{1}{M\tau}\in\bbn\,,
\]
and partition $\bbt$ into $l$ intervals of length $M\tau$: 
\[
\bbt=\bigcup_{k=0}^{\ell-1}\left[\frac{k}{\ell},\frac{k+1}{\ell}\right)\,.
\]

\begin{lem}
\label{lemma:loc-approx} Let $M$ be as above and let $g\in\Exploc(n,\tau,\varkappa,L^{2}(\Omega))$.
There exists a non-negative function $\Phi\in L^{2}(Q)$ with 
\[
\|\Phi\|_{2}\le\bigl(Cn\bigr)^{2n}\varkappa,
\]
and with the following property: \smallskip{}
for every interval $J\subset\bbt$ in the above partition there exists
an exponential polynomial $p^{J}\in{\rm Exp}(\Lambda_{g},\Omega)$
such that, for a.e. $\omega\in\Omega$ and a.e. $\theta\in J$, 
\[
\bigl|g(\omega,\theta)-p^{J}(\omega,\theta)\bigr|\le M^{n}\,\Phi(\omega,\theta)\,.
\]

\end{lem}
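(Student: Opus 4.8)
The plan is to adapt to the $\calH$-valued setting ($\calH=L^2(\Omega)$) the scalar argument of \cite[Chapter~III]{Na1}. Write $\Lambda=\Lambda_{g}=\{\lambda_{1},\dots,\lambda_{n}\}$ for the approximate spectrum supplied by Lemma~\ref{lemma:approx-spectrum}, so that $\sum_{m\in\bbz}\|\widehat{g}(m)\|_{\calH}^{2}\,\Theta_{\tau,\Lambda}^{2}(m)\le(Cn)^{4n}\varkappa^{2}$. First I would introduce the order-$n$ finite-difference operator with step $\tau$,
\[
\mathcal{D}g=\prod_{j=1}^{n}\bigl(I-e(\lambda_{j}\tau)\,\mathrm{T}_{\tau}\bigr)g,\qquad(\mathrm{T}_{\tau}g)(\omega,\theta)=g(\omega,\theta-\tau),
\]
and expand it as $(\mathcal{D}g)(\theta)=\sum_{k=0}^{n}b_{k}\,g(\theta-k\tau)$, where $\sum_{k}b_{k}z^{k}=\prod_{j}(1-e(\lambda_{j}\tau)z)$, so $b_{0}=1$ and $b_{n}=(-1)^{n}\prod_{j}e(\lambda_{j}\tau)\ne0$. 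Two features are used: $\mathcal{D}$ annihilates ${\rm Exp}(\Lambda,\Omega)$ (each factor kills $e(\lambda_{j}\theta)$), and its Fourier multiplier has modulus $\prod_{j}|1-e((\lambda_{j}-m)\tau)|=\prod_{j}2|\sin\pi\tau(m-\lambda_{j})|\le(2\pi)^{n}\Theta_{\tau,\Lambda}(m)$. Hence by Parseval and Lemma~\ref{lemma:approx-spectrum} the function $r:=\mathcal{D}g$ satisfies $\|r\|_{L^{2}(Q)}\le(Cn)^{2n}\varkappa$. (Note that $r$ is an honest $L^{2}$ function; in what follows only translates of $r$ and sums of translates of $r$ enter, never point values of $g$.)

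Fix an interval $J=[a,a+M\tau)$ of the partition; I may assume $M\ge n$, the range $1<M<n$ reducing to this one by running the argument with step $M\tau/n$ in place of $\tau$ (which only costs an $n$-dependent constant). Consider the causal fundamental solution $\Gamma$ of $\mathcal{D}$ on $\tau\bbz$: $\sum_{k}b_{k}\Gamma((l-k)\tau)=\delta_{l,0}$ and $\Gamma(l\tau)=0$ for $l<0$. A partial-fraction computation gives
\[
\Gamma(l\tau)=\sum_{j=1}^{n}\frac{e(\lambda_{j}\tau)^{\,l+n-1}}{\prod_{i\ne j}\bigl(e(\lambda_{j}\tau)-e(\lambda_{i}\tau)\bigr)},
\]
which is the $(n-1)$st divided difference of $w\mapsto w^{\,l+n-1}$ at the nodes $e(\lambda_{j}\tau)$. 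Since these nodes lie on the unit circle, the Hermite--Genocchi formula gives the clean bound $|\Gamma(l\tau)|\le\binom{l+n-1}{n-1}\le(l+n)^{n-1}$, \emph{uniformly in the positions of the $\lambda_{j}$}; thus $|\Gamma(l\tau)|\le(2M)^{n-1}$ for $0\le l<M$. Convolving $\mathcal{D}g=r$ with $\Gamma$, truncated to $J$, writes $g=v^{J}+h^{J}$ on $J$, where $v^{J}(\theta)=\sum_{l\ge0:\ \theta-l\tau\in J}\Gamma(l\tau)\,r(\theta-l\tau)$ is a sum of at most $M$ translates of $r$, hence defined for a.e.\ $\theta$, with $|v^{J}(\omega,\theta)|\le(2M)^{n-1}\sum_{l:\ \theta-l\tau\in J}|r(\omega,\theta-l\tau)|$; and $h^{J}=g-v^{J}$ lies in the kernel of $\mathcal{D}$ on the interior of $J$.

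It remains to replace the homogeneous part $h^{J}$ by a genuine exponential polynomial. I would take $p^{J}\in{\rm Exp}(\Lambda,\Omega)$ to be the $L^{2}(J)$-best approximation to $g-v^{J}$ from ${\rm Exp}(\Lambda,\Omega)$ (equivalently, obtained by projecting the Newton/divided-difference data of $h^{J}$ onto the exponential directions), so that $|g-p^{J}|\le|v^{J}|+|h^{J}-p^{J}|$ on $J$. For the first term Minkowski's inequality, applied to the sum of translates, gives $\|v^{J}\|_{L^{2}(Q)}\le(2M)^{n-1}M\,\|r\|_{L^{2}(Q)}$ together with the pointwise bound displayed above. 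For the second term --- and this is where Lemma~\ref{lemma:approx-spectrum} is used in an essential way --- the point is that the smallness of $\sum_{m}\|\widehat{g}(m)\|_{\calH}^{2}\,\Theta_{\tau,\Lambda}^{2}(m)$ forces $g$ to coincide, \emph{locally} on the scale of $J$, with an element of ${\rm Exp}(\Lambda,\Omega)$ up to an $L^{2}$-error of order $\varkappa$; consequently $|h^{J}-p^{J}|$ admits a pointwise majorant whose $L^{2}(Q)$-norm is $\lesssim_{n}\varkappa$. Putting $\Phi$ equal to a fixed $n$-dependent multiple of $M^{-1}\sum_{l:\ \theta-l\tau\in J(\theta)}|r(\omega,\theta-l\tau)|$ plus $M^{-n}$ times the majorant of $|h^{J}-p^{J}|$ (with $J(\theta)$ the partition interval containing $\theta$), a change of variables then yields $\|\Phi\|_{L^{2}(Q)}\le(Cn)^{2n}\varkappa$ and $|g-p^{J}|\le M^{n}\Phi$ on every $J$.

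The main difficulty is this last step. On any interval the finite-difference operator $\mathcal{D}$ has an infinite-dimensional kernel --- it annihilates $e(\mu\theta)$ for every $\mu\equiv\lambda_{j}\ (\mathrm{mod}\ \tau^{-1})$ --- so merely ``inverting $\mathcal{D}$ on $J$'' produces a homogeneous solution $h^{J}$ lying in this large kernel rather than in the $n$-dimensional space ${\rm Exp}(\Lambda_{g},\Omega)$. Extracting from $h^{J}$ an honest exponential polynomial and showing that the residual still has size $\lesssim_{n}\varkappa$, amplified only by the harmless factor $M^{n}$ coming from the polynomial growth of $\Gamma$, is the technical heart of the proof; one also has to be sure that all constants depend on $n$ alone --- in particular, that they do not deteriorate when two of the frequencies $\lambda_{j}$ happen to be close, which is guaranteed by carrying out every interpolation step through divided differences and the Hermite--Genocchi formula rather than through Lagrange or Vandermonde formulas.
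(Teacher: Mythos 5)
Your approach — a finite-difference operator $\mathcal{D}=\prod_{j}(I-e(\lambda_{j}\tau)\mathrm{T}_{\tau})$ followed by local inversion on $J$ — is genuinely different from the paper's, and you yourself put your finger on the place where it runs into trouble: the kernel of $\mathcal{D}$ on an interval is the span of $\{e(\mu\theta):\mu\equiv\lambda_{j}\ (\mathrm{mod}\ \tau^{-1})\}$, which is far larger than ${\rm Exp}(\Lambda_{g},\Omega)$. The sentence ``the smallness of $\sum_{m}\|\widehat{g}(m)\|^{2}\Theta_{\tau,\Lambda}^{2}(m)$ forces $g$ to coincide, locally on the scale of $J$, with an element of ${\rm Exp}(\Lambda,\Omega)$ up to an $L^{2}$-error of order $\varkappa$'' is not a proof step; it is essentially a restatement of the conclusion of the lemma. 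You have identified the ``technical heart'' but not carried it out: nothing in the proposal actually controls the spurious frequencies $\lambda_{j}+k\tau^{-1}$, $k\ne0$, that sit in $\ker\mathcal{D}$ yet lie outside $\Lambda_{g}$, nor is it shown that the best $L^{2}(J)$-projection onto ${\rm Exp}(\Lambda,\Omega)$ leaves a residual whose pointwise majorant has $L^{2}(Q)$-norm $\lesssim_{n}\varkappa$. That is the entire content of the lemma, so as written there is a real gap.

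The paper avoids this difficulty from the start by two structural choices you do not make. First, $g$ is decomposed spectrally as $g=g_{0}+g_{1}+\cdots+g_{n}$, where each $g_{k}$ ($k\ge1$) is a trigonometric polynomial with frequencies confined to the band $E_{k}\subset I_{k}=(\lambda_{k}-\tau^{-1},\lambda_{k}+\tau^{-1})$ and $g_{0}$ is a small error; by~\eqref{eq:g_k} the weighted tail estimate from Lemma~\ref{lemma:approx-spectrum} controls each piece separately. Second, to each band one attaches the \emph{differential} operator $D_{k}=\prod_{\lambda_{j}\in\widetilde{I}_{k}}e(\lambda_{j}x)\frac{\dd}{\dd x}e(-\lambda_{j}x)$ of order $n_{k}$. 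The kernel of $D_{k}$ is \emph{exactly} $\opspan\{e(\lambda_{j}x):\lambda_{j}\in\widetilde{I}_{k}\}$ — finite-dimensional, and already inside ${\rm Exp}(\Lambda_{g},\Omega)$ — so the homogeneous part obtained from Lemma~\ref{lem:sol_of_diff_eq_in_L2_Q} is automatically the required exponential polynomial, with no further projection or ``extraction'' needed. The non-homogeneous term is handled by the elementary pointwise estimate $\bigl|g_{k}-p_{k}^{J}\bigr|\le(M\tau)^{n_{k}}\cdot\frac{1}{M\tau}\intsy_{J}|h_{k}|$, and the pointwise majorant for the lemma is then assembled from $|g_{0}|$ and the Hardy--Littlewood maximal functions $\HLMaxF\widetilde{h}_{k}$, whose $L^{2}(Q)$-norms are controlled by the $L^{2}$-boundedness of $\HLMaxF$. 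The differential-operator plus band-decomposition setup is what makes the large-kernel problem never appear; your discrete operator reintroduces it, and the proposal does not supply the argument that would remove it.
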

The proof of Lemma~\ref{lemma:loc-approx} is very close to the proof
of the corresponding result in~\cite[Section~3.2]{Na1}. We start
with a lemma on solutions of ordinary differential equations (cf.
Lemma~3.2 in~\cite{Na1}).
\begin{lem}
\label{lem:sol_of_diff_eq_in_L2_Q} Let 
\[
D=\prod_{j=1}^{n}e(\lambda_{j}x)\,\frac{{\rm d}}{{\rm d}x}\, e(-\lambda_{j}x)\,\qquad\lambda_{1},\ldots,\lambda_{n}\in\bbr\,,\quad\lambda_{i}\ne\lambda_{j}\mbox{ for }i\ne j\,,
\]
be a differential operator of order $n\ge1$, and let $J\subset[0,1]$
be an interval. Suppose that $f\in L^{2}(\Omega\times J)$ and, for
a.e. $\omega\in\Omega$, $x\mapsto f(\omega,x)$ is a $C^{n}(J)$-function
satisfying the differential equation $Df=h$ with $h\in L^{2}(\Omega\times J)$.
Then there exists an exponential polynomial $p$ with spectrum $\lambda_{1},\ldots,\lambda_{n}$,
such that, for a.e. $\omega\in\Omega$, 
\[
\sup_{x\in J}|f(\omega,x)-p(\omega,x)|\le m(J)^{n}\,\frac{1}{m(J)}\,\intsy_{J}|h(\omega,x)|\,{\rm d}x\,.
\]
\end{lem}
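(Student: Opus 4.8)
The plan is to reduce to the classical one-dimensional argument of \cite[Lemma~3.2]{Na1}, carried out $\omega$ by $\omega$. The starting point is the identity $e(\lambda x)\tfrac{\mathrm{d}}{\mathrm{d}x}\bigl(e(-\lambda x)u(x)\bigr)=u'(x)-2\pi\mathrm{i}\lambda\,u(x)$, so that $D=\prod_{j=1}^{n}\bigl(\tfrac{\mathrm{d}}{\mathrm{d}x}-2\pi\mathrm{i}\lambda_j\bigr)$ is a constant--coefficient linear operator of order $n$. Since the $\lambda_j$ are distinct, the kernel of $D$ on any interval is exactly the $n$-dimensional space $\opspan\{e(\lambda_1 x),\dots,e(\lambda_n x)\}$ of exponential polynomials with spectrum $\lambda_1,\dots,\lambda_n$.

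Fix an $\omega$ for which $x\mapsto f(\omega,x)$ is $C^n(J)$ and $h(\omega,\cdot)\in L^2(J)\subset L^1(J)$, and fix a base point $x_0\in J$. I would construct a \emph{particular} solution $r(\omega,\cdot)$ of $Dr=h(\omega,\cdot)$ by $n$ successive integrations, killing the constant of integration at each step: set $r_n=h(\omega,\cdot)$ and, for $k=n,\dots,1$,
\[
r_{k-1}(x)=e(\lambda_k x)\intsy_{x_0}^{x}e(-\lambda_k s)\,r_k(s)\,\mathrm{d}s,\qquad r:=r_0 .
\]
Then $r(\omega,\cdot)$ is $C^{n-1}$ with absolutely continuous $(n-1)$-st derivative, and $Dr=h(\omega,\cdot)$ a.e.\ on $J$. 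Because $|e(\lambda_k\cdot)|\equiv 1$, the first integration gives $\|r_{n-1}\|_{L^\infty(J)}\le\int_J|h(\omega,s)|\,\mathrm{d}s$, and each of the remaining $n-1$ integrations costs a factor at most $m(J)$, so
\[
\sup_{x\in J}|r(\omega,x)|\le m(J)^{n-1}\intsy_J|h(\omega,s)|\,\mathrm{d}s=m(J)^{n}\cdot\frac{1}{m(J)}\intsy_J|h(\omega,x)|\,\mathrm{d}x .
\]

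Now put $p(\omega,\cdot)=f(\omega,\cdot)-r(\omega,\cdot)$; then $Dp=Df-Dr=h-h=0$ a.e.\ on $J$, so by uniqueness for the equation $Dp=0$ (equivalently, by the description of $\ker D$ above) $p(\omega,x)=\sum_{j=1}^{n}c_j(\omega)e(\lambda_j x)$ is an exponential polynomial with spectrum in $\{\lambda_1,\dots,\lambda_n\}$, and the last display is precisely the asserted estimate. It remains to check that $p$ is a legitimate element of ${\rm Exp}(\Lambda_g,\Omega)$: the coefficients $c_j(\omega)$ are recovered from the values of $p(\omega,\cdot)$ at $n$ fixed distinct points of $J$ by inverting one fixed invertible (Vandermonde--type) matrix, hence are measurable in $\omega$ since $r$ is jointly measurable by Fubini; and $\|h(\omega,\cdot)\|_{L^1(J)}^2\le m(J)\,\|h(\omega,\cdot)\|_{L^2(J)}^2$ together with the pointwise bound shows that $r$, and hence $p=f-r$, lies in $L^2(\Omega\times J)$.

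I do not anticipate a genuine obstacle here; the content is the elementary ODE computation. The two points requiring care are (a) running the first integration in $L^1$ and the subsequent ones in $L^\infty$, so that the constant comes out as $m(J)^n\cdot\tfrac{1}{m(J)}\int_J|h|$ rather than the cruder $m(J)^n\|h\|_{L^\infty(J)}$; and (b) the routine measurability bookkeeping in $\omega$ sketched above.
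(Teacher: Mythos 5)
Your proposal is correct and follows essentially the same route as the paper: a particular solution built by the $n$ successive integrations $e(\lambda_j x)\mathcal{J}e(-\lambda_j x)$ from an endpoint of $J$, the bound $m(J)^{n}\cdot\frac{1}{m(J)}\int_J|h(\omega,x)|\,\mathrm{d}x$ for that solution, and then $p=f-\phi$ solving the homogeneous equation, hence an exponential polynomial with $\omega$-dependent coefficients. The extra measurability and $L^2$ bookkeeping you add is harmless and consistent with the paper's (implicit) treatment.
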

\begin{proof}
Let $\phi$ be a particular solution of the equation $D\phi=h$ constructed
by repeated integration: 
\[
\phi=\left(\prod_{j=1}^{n}e(\lambda_{j}x)\,\mathcal{J}e(-\lambda_{j}x)\right)h
\]
where $\mathcal{J}$ is the integral operator 
\[
\bigl(\mathcal{J}\psi\bigr)(\omega,x)=\intsy_{a}^{x}\psi(\omega,t)\,{\rm d}t
\]
and $a$ is the left end-point of the interval $J$. Then, for a.e.
$\omega$ 
\[
|\phi(\omega,x)|\le m(J)^{n}\,\frac{1}{m(J)}\intsy_{J}|h(\omega,x)|\,{\rm d}x\,.
\]
The function $f-\phi$ satisfies the homogeneous equation $D(f-\phi)=0$.
Hence, $p=f-\phi$ is an exponential polynomial\index{exponential polynomial!random}
with coefficients depending on $\omega$: 
\[
p(\omega,x)=\sum_{j=1}^{n}c_{j}(\omega)e(\lambda_{j}x)\,.
\]

\end{proof}
\bigskip{}
Now we turn to the proof of~Lemma~\ref{lemma:loc-approx}. Fix $g\in\Exploc(n,\tau,\varkappa,L^{2}(\Omega))$.
By Lemma~\ref{lemma:approx-spectrum}, the function $g$ has an `approximate
spectrum' $\Lambda=\Lambda_{g}=\left\{ \lambda_{j}\right\} _{1\le j\le n}$
so that 
\[
\sum_{m\in\bbz}\|\,\widehat{g}(m)\,\|_{L^{2}(\Omega)}^{2}\,\Theta_{\tau,\Lambda}^{2}(m)\le\bigl(Cn\bigr)^{4n}\varkappa^{2}\,,
\]
with 
\[
\Theta_{\tau,\Lambda}(m)=\prod_{\lambda\in\Lambda}\theta_{\tau}(m-\lambda)\,,\qquad\theta_{\tau}(m)=\min(1,\tau|m|)\,.
\]
We fix $M>1$ so that $1/(M\tau)$ is a positive integer, and partition
$\bbt$ into intervals $J$ of length $M\tau$.

Put 
\[
I_{k}=\left(\lambda_{k}-\tfrac{1}{\tau},\lambda_{k}+\tfrac{1}{\tau}\right),\ \widetilde{I}_{k}=\left(\lambda_{k}-\tfrac{2}{\tau},\lambda_{k}+\tfrac{2}{\tau}\right),\ E_{0}=\bbr\setminus\bigcup_{k=1}^{n}I_{k}\,,\ E_{k}=I_{k}\setminus\bigcup_{j=1}^{k-1}I_{j}\,.
\]
The sets $E_{k}$, $0\le k\le n$, form a partition of the real line.
Accordingly, we decompose $g$ into the sum $g=\sum_{k=0}^{n}g_{k}$,
where $g_{k}$ is the projection of $g$ onto the closed subspace
of $L^{2}(Q)$ that consists of functions with spectrum contained
in $E_{k}$. For each $k=0,\ldots,n$, we have 
\begin{equation}
\sum_{m\in\bbz}\left\Vert \widehat{g_{k}}(m)\right\Vert _{L^{2}(\Omega)}^{2}\Theta_{\tau,\Lambda}^{2}(m)<(Cn)^{4n}\varkappa^{2}\eqdef\widetilde{\varkappa}^{2}\,.\label{eq:g_k}
\end{equation}
Since, $\Theta_{\tau,\Lambda}^{2}(m)\equiv1$ for $m\in E_{0}$, we
get $\|g_{0}\|_{L^{2}(Q)}\le\widetilde{\varkappa}$.

Now let $1\le k\le n$. Let $n_{k}$ denote the number of points $\lambda_{j}$
lying in $\widetilde{I}_{k}$. We define a differential operator $D_{k}$
of order $n_{k}$ by 
\[
D_{k}\eqdef\prod_{\lambda_{j}\in\widetilde{I}_{k}}e(\lambda_{j}x)\frac{{\rm d}}{{\rm d}x}\, e(-\lambda_{j}x).
\]
The function $g_{k}(x)$ is a trigonometric polynomial with coefficients
depending on $\omega$, hence, for a.e. $\omega$, it is an infinitely
differentiable function of $x$. We set $h_{k}\eqdef D_{k}g_{k}$.
Note that this is a trigonometric polynomial with the same frequencies
as $g_{k}$: 
\[
\widehat{h}_{k}(\omega,m)=\left(2\pi{\rm i}\right)^{n_{k}}\widehat{g}_{k}(\omega,m)\prod_{\lambda_{j}\in\widetilde{I}_{k}}\left(m-\lambda_{j}\right)\,.
\]
Consequently, 
\[
\bigl|\widehat{h}_{k}(\omega,m)\bigr|=(2\pi)^{n_{k}}\,\bigl|\widehat{g}_{k}(\omega,m)\bigr|\prod_{\lambda_{j}\in\widetilde{I}_{k}}|m-\lambda_{j}|\,.
\]
In the product on the RHS, $m\in E_{k}\subset I_{k}$ and $\lambda_{j}\in\widetilde{I}_{k}$.
Recalling the definition of the function $\theta_{\tau}$, we see
that 
\[
|m-\lambda_{j}|\le\tfrac{3}{\tau}\,\theta_{\tau}(m-\lambda_{j})\qquad{\rm for\ }m\in I_{k},\ \lambda_{j}\in\widetilde{I}_{k}\,.
\]
Therefore, 
\[
\bigl|\widehat{h}_{k}(\omega,m)\bigr|\le\Bigl(\frac{6\pi}{\tau}\Bigr)^{n_{k}}\bigl|\widehat{g}_{k}(\omega,m)\bigr|\prod_{\lambda_{j}\in\widetilde{I}_{k}}\theta_{\tau}(m-\lambda_{j})\,.
\]
Note that for $m\in E_{k}$ and for $\lambda_{j}\in\bbz\setminus\widetilde{I}_{k}$,
we have $\theta_{\tau}(m-\lambda_{j})=1$. Thus, 
\[
\bigl|\widehat{h}_{k}(\omega,m)\bigr|\le\Bigl(\frac{6\pi}{\tau}\Bigr)^{n_{k}}\bigl|\widehat{g}_{k}(\omega,m)\bigr|\Theta_{\tau,\Lambda}(m)\,,\qquad\omega\in\Omega\,,
\]
whence, recalling estimate~\eqref{eq:g_k}, we obtain 
\[
\|h_{k}\|_{L^{2}(Q)}\le\Bigl(\frac{6\pi}{\tau}\Bigr)^{n_{k}}\,\widetilde{\varkappa}\,.
\]

Applying Lemma \ref{lem:sol_of_diff_eq_in_L2_Q} to an interval $J$
of length $M\tau$, we obtain an exponential polynomial $p_{k}^{J}$
with spectrum consisting of frequencies $\lambda_{j}\in\widetilde{I}_{k}$
and with coefficients depending on $\omega$, such that, for every
$x\in J$ and almost every $\omega\in\Omega$, 
\[
\bigl|g_{k}(\omega,x)-p_{k}^{J}(\omega,x)\bigr|\le(M\tau)^{n_{k}}\cdot\frac{1}{M\tau}\intsy_{J}|h_{k}(\omega,t)|\,{\rm d}t\,.
\]
We denote by 
\[
\HLMaxF f(\omega,x)=\sup_{L\colon x\in L}\frac{1}{m(L)}\intsy_{L}\left|f(\omega,t)\right|\d t
\]
the Hardy-Littlewood maximal function. The supremum is taken over
all intervals $L\subset[0,1]$ containing $x$, but it is easy to
see that it is enough to restrict ourselves to the intervals with
rational endpoints, which allows us to rewrite $\HLMaxF f$ as $\sup\bigl\{ F_{\alpha,\beta}\colon\alpha,\beta\in\bbq\bigr\}$,
where 
\[
F_{\alpha,\beta}(\omega,x)=\indf_{[\alpha,\beta]}(x)G_{\alpha,\beta}(\omega)\quad{\rm and}\quad G_{\alpha,\beta}(\omega)=\frac{1}{\beta-\alpha}\intsy_{\alpha}^{\beta}|f(t,\omega)|\d t\,.
\]
By the Fubini theorem, $G_{\alpha,\beta}$ are measurable functions
on $\Omega$, so $F_{\alpha,\beta}$ are measurable functions on $Q$,
and consequently, $\HLMaxF$ is measurable on $Q$ as well.

Let $\widetilde{h}_{k}=\tau^{n_{k}}h_{k}$. Then 
\[
\left|g_{k}\left(\omega,x\right)-p_{k}^{J}\left(\omega,x\right)\right|\le M^{n_{k}}\cdot\HLMaxF\widetilde{h}_{k}(\omega,x)\stackrel{M>1}{\le}M^{n}\cdot\HLMaxF\widetilde{h}_{k}(\omega,x)\,.
\]
Using the classical estimate for the $L^{2}$-norm of the maximal
function, we get, for a.e. $\omega$, 
\[
\intsy_{\bbt}\left[\HLMaxF\widetilde{h}_{k}(\omega,x)\right]^{2}\d x\le C\,\intsy_{\bbt}\left|\widetilde{h}_{k}(\omega,x)\right|^{2}\d x\,.
\]
Recalling that $\|\widetilde{h}_{k}\|_{L^{2}(Q)}<C^{n_{k}}\widetilde{\varkappa}$,
we obtain 
\[
\left\Vert \HLMaxF\widetilde{h}_{k}\right\Vert _{L^{2}(Q)}^{2}=\intsy_{\Omega\times\bbt}\left[\HLMaxF\widetilde{h}_{k}(\omega,x)\right]^{2}\,{\rm d}x\,{\rm d}\mathcal{P}(\omega)\le C\,\|\widetilde{h}_{k}\|_{L^{2}(Q)}^{2}\le C^{2n_{k}}\widetilde{\varkappa}^{2}\,.
\]

We now set $p^{J}\eqdef\sum_{k=1}^{n}p_{k}^{J}$. Notice that all
the frequencies of the polynomial $p^{J}$ belong to the set $\Lambda_{g}$.
Then, for every $x\in J$, 
\begin{eqnarray*}
\left|g\left(\omega,x\right)-p^{J}\left(\omega,x\right)\right| & \le & \left|g_{0}\left(\omega,x\right)\right|+\sum_{k=1}^{n}\left|g_{k}\left(\omega,x\right)-p_{k}^{J}\left(\omega,x\right)\right|\\
 & \le & \left|g_{0}\left(\omega,x\right)\right|+M^{n}\sum_{k=1}^{n}\HLMaxF\widetilde{h}_{k}(\omega,\theta)\\
 & \le & M^{n}\left(\left|g_{0}\left(\omega,x\right)\right|+\sum_{k=1}^{n}\HLMaxF\widetilde{h}_{k}(\omega,x)\right)\eqdef M^{n}\Phi\left(\omega,x\right).
\end{eqnarray*}
It remains to bound the norm of the `error function' $\Phi$: 
\[
\bigl\|\Phi\bigr\|_{L^{2}(Q)}\le\bigl\| g_{0}\bigr\|_{L^{2}(Q)}+\sum_{k=1}^{n}\left\Vert \HLMaxF\widetilde{h}_{k}\right\Vert _{L^{2}(Q)}\le\widetilde{\varkappa}+\sum_{k=1}^{n}C^{n_{k}}\widetilde{\varkappa}\le C^{n}\widetilde{\varkappa}\le\left(Cn\right)^{2n}\varkappa\,.
\]
This proves the desired result. \hfill{}$\Box$

\subsection{The Spreading Lemma \label{subsect:spreading}\index{spreading lemma}}

The next lemma is the crux of the proof of Theorem~\ref{thm:Zygmund-type-final}.
Given a set $E\subset Q$ of positive measure, we put $\Delta_{t}(E)=\mu\bigl((E+t)\setminus E\bigr)$.
\begin{lem}
\label{lemma:spreading} Suppose $g\in{\rm Exp}_{{\tt loc}}(n,\tau,\varkappa,L^{2}(\Omega))$
and $E\subset Q$ is a set of positive measure. There exists a set
$\widetilde{E}\supset E$ of measure $\mu(\widetilde{E})\ge\mu(E)+\tfrac{1}{2}\Delta_{n\tau}(E)$
such that, for each $b\in L^{2}(\Omega)$, 
\[
\intsy_{\widetilde{E}}|g-b|^{2}\,{\rm d}\mu\le\left(\frac{Cn^{3}}{\Delta_{n\tau}^{2}(E)}\right)^{2n+1}\left(\intsy_{E}|g-b|^{2}\,{\rm d}\mu+\varkappa^{2}\right)\,.
\]

\end{lem}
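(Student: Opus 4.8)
The plan is to combine the local approximation of $g$ by exponential polynomials (Lemma~\ref{lemma:loc-approx}) with the $L^{2}$ Tur\'an-type inequality \eqref{eq:turan_lt_est}, exploiting the fact that a set $E$ with $\Delta_{n\tau}(E)$ not too small must contain, for many $\omega$, a scale-$\tau$ interval on which the section $E_\omega$ is a substantial proportion of the interval, yet there is still ``room'' outside $E_\omega$ inside a slightly larger interval. Concretely, I would first fix the scale: choose $M>1$ with $1/(M\tau)\in\bbn$ and $M$ comparable to $n^{3}/\Delta_{n\tau}^{2}(E)$ (this is where the final exponent will come from), and apply Lemma~\ref{lemma:loc-approx} to get the error function $\Phi$ with $\|\Phi\|_{2}\le (Cn)^{2n}\varkappa$ and, on each partition interval $J$ of length $M\tau$, an exponential polynomial $p^{J}\in\mathrm{Exp}(\Lambda_g,\Omega)$ with $|g-p^{J}|\le M^{n}\Phi$ pointwise a.e. on $J$. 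Since $b$ is a random constant (independent of $\theta$), $p^{J}-b$ is still an exponential polynomial of degree $n$ in $\theta$ for each fixed $\omega$, so the Tur\'an inequality applies to it on sub-intervals of $J$.

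Next I would define the enlarged set $\widetilde{E}$. The natural construction: partition each $J$ further into intervals $\ell$ of length, say, $\tau$ (so $M$ of them), call a pair $(\omega,\ell)$ \emph{good} if $m(E_\omega\cap \ell)$ is at least some fixed fraction, say $\ge \tfrac12 m(\ell)$, of the small interval, and let $\widetilde{E}$ be the union of $E$ together with all good intervals $\{\omega\}\times\ell$. Then $\widetilde{E}\supset E$, and the measure gain $\mu(\widetilde{E})-\mu(E)$ is controlled from below by $\tfrac12\Delta_{n\tau}(E)$: this is the combinatorial heart and I'd argue it as in \cite[Chapter~III]{Na1} — if $\Delta_{n\tau}(E)=\mu((E+n\tau)\setminus E)$ is large, then translating by $n\tau$ moves a lot of mass out of $E$, which forces many sections $E_\omega$ to have boundary-type behaviour at scale $\tau$, hence many small intervals $\ell$ that meet $E_\omega$ substantially but are not contained in it, so adding the good ones genuinely enlarges $E$ by the claimed amount. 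On a good interval $\ell\subset J$ we have $m(\ell)/m(E_\omega\cap\ell)\le 2$, so \eqref{eq:turan_lt_est} gives
\[
\intsy_{\ell}|p^{J}(\omega,\cdot)-b(\omega)|^{2}\,\dd m \le (2C)^{2n+1}\intsy_{E_\omega\cap\ell}|p^{J}(\omega,\cdot)-b(\omega)|^{2}\,\dd m\,.
\]

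Then I would run the triangle inequality in $L^{2}$ in both directions: on $\ell$, $\|g-b\|_{L^{2}(\ell)}\le \|p^{J}-b\|_{L^{2}(\ell)}+M^{n}\|\Phi\|_{L^{2}(\ell)}$, bound the first term by the Tur\'an estimate over $E_\omega\cap\ell\subset E_\omega\cap J$, and then on $E_\omega\cap J$ bound $\|p^{J}-b\|$ back by $\|g-b\|+M^{n}\|\Phi\|$. Summing over the good intervals $\ell$ in $J$, then over the partition intervals $J$, and then integrating over $\omega\in\Omega$, the $\Phi$ terms assemble into $M^{2n}\|\Phi\|_{2}^{2}\le M^{2n}(Cn)^{4n}\varkappa^{2}$ and the main terms into a constant times $\int_{E}|g-b|^{2}\,\dd\mu$. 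Collecting the accumulated constants — $(2C)^{2n+1}$ from Tur\'an, $M^{n}$ from the local approximation raised to the second power, $(Cn)^{4n}$ from $\|\Phi\|_2$, and the choice $M\asymp n^{3}/\Delta_{n\tau}^{2}(E)$ — all fit inside a single factor of the shape $\bigl(Cn^{3}/\Delta_{n\tau}^{2}(E)\bigr)^{2n+1}$, which is exactly the claimed bound. The main obstacle I expect is the combinatorial measure-gain estimate $\mu(\widetilde{E})\ge\mu(E)+\tfrac12\Delta_{n\tau}(E)$: one must carefully relate the crude quantity $\Delta_{n\tau}(E)$ (a bound on mass transported by a single large shift) to the fine-scale structure of the sections at scale $\tau$, and simultaneously make sure that the ``good'' intervals one adds are both numerous enough to account for $\tfrac12\Delta_{n\tau}(E)$ and short enough (length $\asymp\tau$) that the local approximation from Lemma~\ref{lemma:loc-approx} is still valid on them with the stated constants.
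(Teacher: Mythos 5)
Your overall architecture (local approximation by $p^{J}$ plus the $L^{2}$ Tur\'an inequality on ``good'' intervals, with the gain in measure extracted from $\Delta_{n\tau}(E)$) is indeed the paper's route, but the step you yourself flag as the combinatorial heart is not just delicate --- as you have set it up, it is false. With a \emph{fixed} goodness threshold (your $m(E_{\omega}\cap\ell)\ge\tfrac12 m(\ell)$) and test intervals of length $\tau$, take $E=\Omega\times A$ where in the $k$-th interval of length $\tau$ the set $A$ occupies a subinterval of length $0.49\tau$, at offset $0$ when $\lfloor k/n\rfloor$ is even and offset $0.5\tau$ when it is odd. Then no interval of length $\tau$ is good, so $\widetilde{E}=E$ and the gain is zero, while the shift by $n\tau$ moves each subinterval onto a disjointly placed one, so $\Delta_{n\tau}(E)$ is bounded below by an absolute constant and the required gain $\tfrac12\Delta_{n\tau}(E)$ is not. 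The same happens for any fixed threshold. The cure, and the actual mechanism in the paper, is to tie both parameters to $\Delta=\Delta_{n\tau}(E)$: the whiteness threshold is $\gamma\asymp\Delta$ (the paper takes $\gamma=\Delta/8$) and the partition intervals have length $M\tau$ with $M\asymp n/\Delta$. The length $M\tau\gg n\tau$ is essential: on a black interval $J$ one bounds the transported mass by $m\bigl(J\setminus(J+n\tau)\bigr)+m(E_{\omega}\cap J)\le\bigl(\tfrac{n}{M}+\gamma\bigr)m(J)$, and only because $n/M+\gamma\le\Delta/4$ does at least $\Delta/2$ of the transported mass land in white intervals outside $E$. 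At scale $\tau$ the term $m(\ell\setminus(\ell+n\tau))$ is all of $m(\ell)$, so the charging argument collapses; your heuristic about ``boundary-type behaviour at scale $\tau$'' cannot be made quantitative without these choices.

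Two further consequences. First, the base $n^{3}/\Delta^{2}$ in the final bound does not come from choosing $M\asymp n^{3}/\Delta^{2}$; it comes from the product of the Tur\'an factor $(C/\gamma)^{2n+1}=(C/\Delta)^{2n+1}$ (small threshold!) with $M^{2n}\le(Cn/\Delta)^{2n}$ and $\|\Phi\|_{2}^{2}\le(Cn)^{4n}\varkappa^{2}$, which indeed assemble into $(Cn^{3}/\Delta^{2})^{2n+1}$. With your $M\asymp n^{3}/\Delta^{2}$ the error term is at least of order $M^{2n}(Cn)^{4n}\varkappa^{2}\asymp C^{n}n^{10n}\Delta^{-4n}\varkappa^{2}$, which is \emph{not} dominated by $(Cn^{3}/\Delta^{2})^{2n+1}\varkappa^{2}$ for large $n$, so even the constant accounting does not close. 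Second, you also need the degenerate case in which no $M\ge 8n/\Delta$ with $M\tau\le1$ exists: the paper then takes $M=1/\tau$ (a single ``partition'' interval), $\gamma=\Delta/2$, and shows directly that the sections with $m\bigl((E_{\omega}+n\tau)\setminus E_{\omega}\bigr)>\Delta/2$ already contribute white measure at least $\Delta/2$ outside $E$; your proposal does not address this case.
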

This lemma follows from Lemma \ref{lemma:loc-approx} combined with
the Turán-type estimate \eqref{eq:turan_lt_est}. We now turn to the
proofs of the lemmas. Till the end of this section, we fix the function
$g\in\Exploc(n,\tau,\varkappa,L^{2}(\Omega))$, the set $E\subset Q$
of positive measure, and the `random constant' $b\in L^{2}(\Omega)$.

We will use two parameters, $M>1$, with $\tfrac{1}{M\tau}\in\bbn$,
and $\gamma\in(0,1)$; their specific values will be chosen later
in the proof.

\medskip{}

\begin{defn*}
Let $J$ be an interval of length $M\tau$ in the partition of $\bbt$.
The interval $J$ is called \textit{$\omega$-white} if $m(J\cap E_{\omega})\ge\gamma m(J)$;
otherwise, it is called \textit{$\omega$-black}.
\end{defn*}
\noindent \medskip{}
Given $\omega$, the union of all $\omega$-white intervals will be
denoted by $W_{\omega}$. By $W\subset Q$ we denote the union of
all sets $W_{\omega}$. Similarly, we denote by $B_{\omega}$ the
union of all $\omega$-black intervals and by $B\subset Q$ the union
of all sets $B_{\omega}$. Since we can write the set $W$ as 
\[
\bigcup_{J}\bigl\{\omega\colon m(J\cap E_{\omega})\ge\gamma m(J)\bigr\}\times J
\]
and the function $\omega\mapsto m(J\cap E_{\omega})$ is measurable
on $\Omega$ for every interval $J$ in the partition, we see that
$W$ and $B=Q\setminus W$ are measurable subsets of $Q$.

\medskip{}
Let $\Phi$ be the error function given by the Local Approximation
Lemma. The next lemma enables us to extend our estimates for $g-b$
from the set $E$ to the set $W$. 
\begin{lem}
\label{lem:est_for_l2_f_in_white_parts} We have 
\[
\intsy_{W}\left|g-b\right|^{2}\d{\mu}\le\left(\frac{C}{\gamma}\right)^{2n+1}\left[\intsy_{W\cap E}|g-b|^{2}\d{\mu}+M^{2n+1}\intsy_{W}\Phi^{2}\d{\mu}\right].
\]
\end{lem}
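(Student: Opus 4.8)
The plan is to work one white interval $J$ at a time and then sum. Fix $\omega$ and an $\omega$-white interval $J$, so that $m(J\cap E_\omega)\ge\gamma m(J)$. On $J$, the Local Approximation Lemma (Lemma~\ref{lemma:loc-approx}) gives an exponential polynomial $p^J\in\mathrm{Exp}(\Lambda_g,\Omega)$ of degree $n$ with $|g(\omega,\theta)-p^J(\omega,\theta)|\le M^n\Phi(\omega,\theta)$ for a.e. $\theta\in J$. Hence on $J$ the function $g-b$ agrees, up to the error $M^n\Phi$, with the exponential polynomial $p^J-b$ (note $b$ does not depend on $\theta$, so $p^J-b$ is still an exponential polynomial in $\theta$ of degree $n$, with an extra constant term — degree at most $n$ after absorbing $b$, or $n+1$; either way the Tur\'an exponent is $O(n)$). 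Now apply the $L^2$ Tur\'an-type estimate \eqref{eq:turan_lt_est} with the ambient interval $J$ and the subset $J\cap E_\omega$, whose measure is at least $\gamma m(J)$:
\[
\|p^J-b\|_{L^2(J)}\le\Bigl(\frac{Cm(J)}{m(J\cap E_\omega)}\Bigr)^{n+1}\|p^J-b\|_{L^2(J\cap E_\omega)}\le\Bigl(\frac{C}{\gamma}\Bigr)^{n+1}\|p^J-b\|_{L^2(J\cap E_\omega)}.
\]

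Next I would convert this bound on $p^J-b$ into a bound on $g-b$ by the triangle inequality in $L^2(J)$ and $L^2(J\cap E_\omega)$, using $|g-b-(p^J-b)|=|g-p^J|\le M^n\Phi$ throughout $J$. This gives
\[
\|g-b\|_{L^2(J)}\le\|p^J-b\|_{L^2(J)}+M^n\|\Phi\|_{L^2(J)}\le\Bigl(\frac{C}{\gamma}\Bigr)^{n+1}\bigl(\|g-b\|_{L^2(J\cap E_\omega)}+M^n\|\Phi\|_{L^2(J\cap E_\omega)}\bigr)+M^n\|\Phi\|_{L^2(J)},
\]
and after squaring (using $(a+b)^2\le 2a^2+2b^2$) and absorbing the $M^n\|\Phi\|_{L^2(J\cap E_\omega)}$ term into $M^n\|\Phi\|_{L^2(J)}$ (since $J\cap E_\omega\subset J$), one obtains, with a slightly larger constant,
\[
\intsy_J|g-b|^2\,\d\theta\le\Bigl(\frac{C}{\gamma}\Bigr)^{2n+1}\Bigl[\intsy_{J\cap E_\omega}|g-b|^2\,\d\theta+M^{2n+1}\intsy_J\Phi^2\,\d\theta\Bigr].
\]
(The factor $M^{2n+1}$ rather than $M^{2n}$ comes from bookkeeping the constants; it is harmless since we only need an upper bound.)

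Finally I would sum over all $\omega$-white intervals $J$ (they are disjoint and their union is $W_\omega$), integrate over $\omega\in\Omega$ with respect to $\Pr$, and use the Fubini/Tonelli theorem to recognize $\int_\Omega\sum_{J\subset W_\omega}\int_J(\cdots)$ as the integral over $W$ (and similarly $W\cap E$ on the right). Measurability of $W$ and of the relevant sections was already checked just before the lemma, so this is legitimate. This yields exactly
\[
\intsy_W|g-b|^2\,\d\mu\le\Bigl(\frac{C}{\gamma}\Bigr)^{2n+1}\Bigl[\intsy_{W\cap E}|g-b|^2\,\d\mu+M^{2n+1}\intsy_W\Phi^2\,\d\mu\Bigr].
\]
The only point that needs a little care — and the one I would flag as the main obstacle — is the uniformity of the exponential-polynomial approximation: I must make sure the same $\Phi$ (with its global $L^2(Q)$ bound from Lemma~\ref{lemma:loc-approx}) controls the error on every interval of the partition simultaneously, and that the degree of $p^J-b$ entering Tur\'an's lemma is bounded by $n$ (or $n+1$) independently of $J$, $\omega$, and $b$, so that the constant $(C/\gamma)^{2n+1}$ depends only on $n$ and $\gamma$. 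Both are guaranteed by the construction in Lemma~\ref{lemma:loc-approx} (all frequencies of $p^J$ lie in the fixed set $\Lambda_g$ of cardinality $n$), so once this is invoked correctly the rest is the routine Tur\'an-plus-triangle-inequality-plus-Fubini computation sketched above.
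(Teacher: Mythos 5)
Your proposal is correct and follows essentially the same route as the paper: approximate $g$ on each $\omega$-white interval $J$ by $p^{J}$ via Lemma~\ref{lemma:loc-approx}, apply the $L^{2}$ Tur\'an-type estimate \eqref{eq:turan_lt_est} to $p^{J}-b$ (at most $n+1$ frequencies), pass back to $g-b$ by the triangle inequality using $|p^{J}-b|\le|g-b|+M^{n}\Phi$, and sum over intervals and integrate in $\omega$. The only nitpick is bookkeeping: \eqref{eq:turan_lt_est} for $n+1$ frequencies gives the exponent $n+\tfrac12$ (hence $2n+1$ after squaring), whereas your $n+1$ would yield $2n+2$, which cannot be absorbed into $\left(C/\gamma\right)^{2n+1}$ uniformly in $\gamma$ --- but quoting the estimate with its correct exponent removes this issue entirely.
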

\begin{proof}
Let $J$ be one of the $\omega$-white intervals of length $M\tau$.
By~Lemma \ref{lemma:loc-approx}, for almost every $\omega\in\Omega$
and every $\theta\in J$, we have 
\[
\left|(g(\omega,\theta)-b(\omega))-(p^{J}(\omega,\theta)-b(\omega))\right|=\left|g\left(\omega,\theta\right)-p^{J}\left(\omega,\theta\right)\right|\le M^{n}\Phi(\omega,\theta),
\]
where $p^{J}$ is an exponential polynomial with $n$ frequencies
and coefficients depending on $\omega$. Therefore, 
\begin{equation}
\intsy_{J}\left|g-b\right|^{2}\d{\theta}\le2\left(\intsy_{J}\left|p^{J}-b\right|^{2}\d{\theta}+M^{2n}\intsy_{J}\Phi^{2}\d{\theta}\right).\label{eq:approx_for_f_min_b_on_J}
\end{equation}
Applying the $L^{2}$-version of the Turán-type lemma to the exponential
polynomial $p^{J}-b$, which has at most $n+1$ frequencies, we get
\begin{align*}
\intsy_{J}\left|p^{J}-b\right|^{2}\d{\theta} & \le\left(\frac{C\, m(J)}{m(J\cap E_{\omega})}\right)^{2n+1}\intsy_{J\cap E_{\omega}}\left|p^{J}-b\right|^{2}\d{\theta}\\
 & \le\left(\frac{C}{\gamma}\right)^{2n+1}\intsy_{J\cap E_{\omega}}\left|p^{J}-b\right|^{2}\d{\theta}\,.
\end{align*}
Plugging this into \eqref{eq:approx_for_f_min_b_on_J}, we find that
\[
\intsy_{J}\left|g-b\right|^{2}\d{\theta}\le\left(\frac{C}{\gamma}\right)^{2n+1}\intsy_{J\cap E_{\omega}}\left|p^{J}-b\right|^{2}\d{\theta}+2M^{2n}\intsy_{J}\Phi^{2}\d{\theta}\,.
\]
Summing these estimates over all $\omega$-white intervals $J$, and
using that 
\[
|p^{J}-b|\le|g-b|+|g-p_{J}|\le|g-b|+M^{n}\Phi\,,
\]
we get 
\[
\intsy_{W_{\omega}}\left|g-b\right|^{2}\d{\theta}\le\left(\frac{C}{\gamma}\right)^{2n+1}\left[\intsy_{W_{\omega}\cap E_{\omega}}\left|g-b\right|^{2}\d{\theta}+M^{2n}\intsy_{W_{\omega}}\Phi^{2}\d{\theta}\right]
\]
Integrating over $\omega$, we get the result.
\end{proof}
\medskip{}
The effectiveness of this lemma depends on the size of the set $W\cap E^{{\rm c}}$.
The following lemma is very similar to Lemma~3.4 from~\cite{Na1}.
For the reader's convenience, we reproduce its proof. Recall that
$\Delta_{n\tau}(E)=\mu\left((E+n\tau)\setminus E\right)$.
\begin{lem}
\label{lem:white_part_low_bnd} For $\gamma<\tfrac{1}{2}$, 
\[
\mu(W\cap E^{{\rm c}})\ge\Delta_{n\tau}(E)-\left(\gamma+\frac{n}{M}\right).
\]
\end{lem}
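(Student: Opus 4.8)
The plan is to reduce to an estimate on the $\omega$-sections and then integrate over $\Omega$, using $\Pr(\Omega)=1$. First I would write $Q=W\sqcup B$ and split
\[
(E+n\tau)\setminus E \;=\; \bigl[((E+n\tau)\setminus E)\cap W\bigr]\;\sqcup\;\bigl[((E+n\tau)\setminus E)\cap B\bigr].
\]
The first piece sits inside $W\setminus E=W\cap E^{{\rm c}}$, so its $\mu$-measure is at most $\mu(W\cap E^{{\rm c}})$. Since $\Delta_{n\tau}(E)$ is the measure of the left-hand side, it then remains to bound the second piece, and it suffices to prove the (stronger) estimate $\mu\bigl((E+n\tau)\cap B\bigr)\le\gamma+n/M$. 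By Fubini this will follow from the section-wise bound $m\bigl((E_\omega+n\tau)\cap B_\omega\bigr)\le\gamma+n/M$ for a.e.\ $\omega\in\Omega$.

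To prove the section-wise bound I would fix such an $\omega$, assuming $n<M$ (otherwise $n/M\ge1\ge\Delta_{n\tau}(E)$ and the lemma is trivial), and use that $B_\omega$ is a disjoint union of $\omega$-black intervals $J$ of length $M\tau$; so it is enough to estimate $m\bigl((E_\omega+n\tau)\cap J\bigr)$ for a single black $J$. The key identity, obtained by the measure-preserving translation $\theta\mapsto\theta-n\tau$ on $\bbt$, is
\[
m\bigl((E_\omega+n\tau)\cap J\bigr)=m\bigl(E_\omega\cap(J-n\tau)\bigr).
\]
Since $0<n\tau<M\tau$, the shifted interval $J-n\tau$ is covered by $J$ together with the partition interval $J^{-}$ immediately preceding $J$ (read cyclically on $\bbt=\bbr/\bbz$), and $m\bigl((J-n\tau)\cap J^{-}\bigr)=n\tau$. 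Hence $m\bigl(E_\omega\cap(J-n\tau)\bigr)\le m(E_\omega\cap J)+n\tau<\gamma M\tau+n\tau$ by the definition of ``$\omega$-black''. Summing over the at most $\ell=1/(M\tau)$ black intervals gives $m\bigl((E_\omega+n\tau)\cap B_\omega\bigr)<\ell(\gamma M\tau+n\tau)=\gamma+n/M$. Integrating in $\omega$ and recombining with the first step yields $\Delta_{n\tau}(E)\le\mu(W\cap E^{{\rm c}})+\gamma+n/M$, which is the assertion.

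I do not anticipate a genuine obstacle: this is essentially Lemma~3.4 of \cite{Na1}. The only points needing care are the translation identity on the sections and the cyclic bookkeeping for the partition of $\bbt$ (the ``preceding interval'' of the first partition interval wraps around to $[1-M\tau,1)$, which is harmless since $m$ is translation-invariant on $\bbt$). Note that the hypothesis $\gamma<\tfrac12$ is not actually used in this estimate; only the harmless reduction $n<M$ enters.
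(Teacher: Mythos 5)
Your proof is correct and follows essentially the same route as the paper: reduce to a section-wise bound, split $(E_\omega+n\tau)\setminus E_\omega$ over $W_\omega$ and $B_\omega$, bound the white piece by $m(W_\omega\cap E_\omega^{\mathrm c})$, and for each black interval use translation invariance plus the defining inequality $m(J\cap E_\omega)<\gamma m(J)$. The only cosmetic difference is that the paper decomposes $J=(J\setminus(J+n\tau))\sqcup(J\cap(J+n\tau))$ (a right shift) while you translate $J$ left by $n\tau$ and compare to $J\cup J^-$; these are equivalent, and your explicit handling of the case $n\ge M$ and your observation that $\gamma<\tfrac12$ is not needed here are both accurate.
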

\begin{proof}
We have 
\begin{align*}
m\bigl((E_{\omega}+n\tau)\setminus E_{\omega}\bigr) & =m\bigl((E_{\omega}+n\tau)\cap E_{\omega}^{{\rm c}}\bigr)\\
 & =m\bigl((E_{\omega}+n\tau)\cap E_{\omega}^{{\rm c}}\cap W_{\omega}\bigr)+m\bigl((E_{\omega}+n\tau)\cap E_{\omega}^{{\rm c}}\cap B_{\omega}\bigr)\\
 & \le m\bigl(W_{\omega}\cap E_{\omega}^{{\rm c}}\bigr)+m\bigl((E_{\omega}+n\tau)\cap E_{\omega}^{{\rm c}}\cap B_{\omega}\bigr).
\end{align*}
We need to estimate the second term on the RHS. If the interval $J$
is $\omega$-black, then 
\begin{align*}
m\bigl(J\cap E_{\omega}^{{\rm c}}\cap(E_{\omega}+n\tau)\bigr) & \le m\bigl(J\cap(E_{\omega}+n\tau)\bigr)\\
 & \le m\bigl(J\setminus(J+n\tau)\bigr)+m\bigl((E_{\omega}+n\tau)\cap(J+n\tau)\bigr)\\
 & \le n\tau+m\bigl(E_{\omega}\cap J\bigr)\\
 & <n\tau+\gamma m(J)=\Bigl(\frac{n\tau}{m(J)}+\gamma\Bigr)m(J)\,.
\end{align*}
Summing this inequality over all $\omega$-black intervals $J$, and
recalling that $m(J)=M\tau$, we obtain 
\[
m\bigl((E_{\omega}+n\tau)\cap E_{\omega}^{{\rm c}}\cap B_{\omega}\bigr)\le\Bigl(\frac{n\tau}{M\tau}+\gamma\Bigr)\cdot m(B_{\omega})\le\frac{n}{M}+\gamma\,.
\]
Integrating over $\Omega$ we get the required result.
\end{proof}
\medskip{}

\noindent \textit{Proof of Lemma~\ref{lemma:spreading}}: We write
$\Delta=\Delta_{n\tau}(E)$ and put 
\[
M_{1}=\frac{8n}{\Delta}.
\]
We consider two cases, according to whether $M_{1}\tau\le1$ or not.

In the first case, we choose $M\in\left[M_{1},2M_{1}\right]$ so that
$1/(M\tau)$ is an integer. Notice that $M>1$. We set $\gamma=\tfrac{1}{8}\Delta<\tfrac{1}{2}$
and let $\widetilde{E}=E\cup\left(W\cap E^{{\rm c}}\right)=E\cup W$,
where $W$ is the union of the corresponding white intervals. By Lemma~\ref{lem:white_part_low_bnd},
\[
\mu(W\cap E^{{\rm c}})\ge\Delta-\Bigl(\gamma+\frac{n}{M}\Bigr)\ge\Delta-\left(\frac{\Delta}{8}+\frac{\Delta}{8}\right)>\frac{\Delta}{2}\,.
\]
Furthermore, using Lemma~\ref{lem:est_for_l2_f_in_white_parts},
we get 
\[
\intsy_{W}\left|g-b\right|^{2}\d{\mu}\le\left(\frac{C}{\gamma}\right)^{2n+1}\left[\intsy_{W\cap E}\left|g-b\right|^{2}\d{\mu}+M^{2n}\intsy_{W}\,\Phi^{2}\d{\mu}\right].
\]
Inserting here the values of the parameters $\gamma$ and $M$ and
taking into account the bound on the norm of $\Phi$, we find that
the RHS is 
\begin{eqnarray*}
 & \le & \left(\frac{C}{\Delta}\right)^{2n+1}\left[\intsy_{W\cap E}\left|g-b\right|^{2}\d{\mu}+\left(\frac{C\, n}{\Delta}\right)^{2n}\,\intsy_{W}\Phi^{2}\d{\mu}\right]\\
 & \le & \left(\frac{C}{\Delta}\right)^{2n+1}\left[\intsy_{E}\left|g-b\right|^{2}\d{\mu}+\left(\frac{C\, n^{3}}{\Delta}\right)^{2n}\,\varkappa^{2}\right]\\
 & \le & \left(\frac{C\, n^{3}}{\Delta^{2}}\right)^{2n+1}\left[\intsy_{E}\left|g-b\right|^{2}\d{\mu}+\,\varkappa^{2}\right].
\end{eqnarray*}

Now we consider the second case, when $M_{1}\tau>1$. We set $M=\tfrac{1}{\tau}$
(that is, there is only one interval in the `partition') and note
that 
\[
M=\frac{1}{\tau}<M_{1}=\frac{8n}{\Delta}\,.
\]
We set $\gamma=\frac{\Delta}{2}$, and once again $\widetilde{E}=E\cup\left(W\cap E^{{\rm c}}\right)=E\cup W$.
Similarly to the first case, Lemma~\ref{lem:est_for_l2_f_in_white_parts}
gives us 
\[
\intsy_{W}\left|g-b\right|^{2}\d{\mu}\le\left(\frac{C\, n^{3}}{\Delta^{2}}\right)^{2n+1}\left[\intsy_{E}\left|g-b\right|^{2}\d{\mu}+\,\varkappa^{2}\right].
\]
We now show that there are sufficiently many $\omega$-white intervals
that contain a noticeable portion of $E^{{\rm c}}$. We define the
function $\delta(\omega)=m\left((E_{\omega}+n\tau)\setminus E_{\omega}\right)$
and notice that 
\[
\intsy_{\Omega}\delta(\omega)\,{\rm d}\mathcal{P}(\omega)=\Delta\,.
\]
Let $L=\bigl\{\omega\in\Omega\colon\delta(\omega)>\tfrac{1}{2}\Delta\bigr\}$.
It is clear that 
\[
\intsy_{L}\delta(\omega)\,{\rm d}\mathcal{P}(\omega)\ge\frac{\Delta}{2}\,.
\]
For $\omega\in L$ we have that $m(E_{\omega}^{{\rm c}}),m(E_{\omega})\ge\delta(\omega)>\tfrac{\Delta}{2}=\gamma$,
and therefore $L\times\bbt\subset W$. Thus $\left(L\times\bbt\right)\cap E^{{\rm c}}\subset W\cap E^{{\rm c}}$
and 
\[
m(W\cap E^{{\rm c}})\ge m(\left(L\times\bbt\right)\cap E^{{\rm c}})=\intsy_{L}m(E_{\omega}^{{\rm c}})\,{\rm d}\mathcal{P}(\omega)\ge\intsy_{L}\delta(\omega)\,{\rm d}\mathcal{P}(\omega)\ge\frac{\Delta}{2}\,,
\]
proving the lemma. \hfill{}$\Box$

\section{Logarithmic Integrability - $\Exploc$ property for Rademacher Fourier
series \label{sec:Rad_Four_in_Exp_loc}}

In this section we explain how to effectively show that functions
from the subspace $\RFfuncs$ have the $\Exploc$ property. We finish
the proof by analyzing the iterative process.


\subsection{Zygmund's premise and the operator $A_{E}$ \label{subsect:Zygmund-idea}}

Suppose that 
\[
f=\sum_{k\in\bbz}a_{k}\phi_{k}\,,\qquad\phi_{k}(\omega,\theta)=\xi_{k}(\omega)e(k\theta)\,,\quad\{a_{k}\}\in\ell^{2}(\bbz)\,,
\]
and that $b\in L^{2}(\Omega)$. Let $E\subset Q$ be a measurable
set of positive measure. Then 
\begin{align*}
\intsy_{E}|f-b|^{2}\,{\rm d}\mu & =\intsy_{E}\Bigl[\sum_{k,\ell}a_{k}\bar{a}_{\ell}\,\phi_{k}\bar{\phi}_{\ell}-2\,\Re(f\bar{b})+|b|^{2}\Bigr]\,{\rm d}\mu\\
 & \ge\intsy_{E}\Bigl[\sum_{k}|a_{k}|^{2}|\phi_{k}|^{2}\Bigr]\,{\rm d}\mu+\intsy_{E}\Bigl[\sum_{k\ne\ell}a_{k}\bar{a}_{\ell}\,\phi_{k}\bar{\phi}_{\ell}\Bigr]\,{\rm d}\mu-2\,\Re\langle f,\indf_{E}b\rangle\\
 & =\mu(E)\|f\|_{2}^{2}+\langle A_{E}f,f\rangle-2\,\Re\langle f,\indf_{E}b\rangle,
\end{align*}
where $A_{E}$ is a bounded self-adjoint operator on $\RFfuncs$,
whose matrix $\left(A_{E}(k,\ell)\right)_{k,\ell\in\bbz}$ in the
orthonormal basis $\left\{ \phi_{k}\right\} $ is given by 
\[
A_{E}(k,\ell)=\begin{cases}
\langle\indf_{E},\phi_{k}\bar{\phi}_{\ell}\rangle\,, & k\ne\ell;\\
0\,, & k=\ell\,.
\end{cases}
\]
To estimate the Hilbert-Schmidt norm of $A_{E}$, we observe that
the functions $\left\{ \phi_{k}\bar{\phi}_{\ell}\right\} _{k\ne\ell}$
form an orthonormal system in $L^{2}(Q)$, and that each function
from this system is orthogonal to the function $\indf$. Then 
\[
\sum_{k\ne\ell}|A_{E}(k,\ell)|^{2}+\bigl|\langle\indf_{E},\indf\rangle\bigr|^{2}\le\|\indf_{E}\|_{2}^{2}=\mu(E)\,,
\]
and therefore, 
\[
\|A_{E}\|_{\mathrm{HS}}=\sqrt{\sum_{k\ne\ell}|A_{E}(k,\ell)|^{2}}\le\sqrt{\mu(E)-\mu(E)^{2}}\,.
\]
This estimate is useful for sets $E$ of large measure.

\subsection{The sets $E$ of large measure \label{subsect:large-meas}}

For each $\mu\in(0,1)$, let $D(\mu)\in(1,+\infty]$ be the smallest
value such that the inequality 
\[
\intsy_{Q}|f|^{2}\,{\rm d}\mu\le D(\mu)\intsy_{E}|f-b|^{2}\,{\rm d}\mu
\]
holds for every $E\subset Q$ with $\mu(E)\ge\mu$, every $f\in\RFfuncs$,
and every random constant $b\in L^{\infty}(\Omega)$ with $\|b\|_{\infty}<\tfrac{1}{20}\|f\|_{2}$.

Using the estimates from the previous section, we get 
\begin{align*}
\intsy_{E}|f-b|^{2}\,{\rm d}\mu & \ge\bigl(\mu(E)-\|A\|\bigr)\|f\|_{2}^{2}-2\,\|\indf_{E}b\|_{2}\,\|f\|_{2}\\
 & \ge\bigl(\mu(E)-\sqrt{\mu(E)-\mu(E)^{2}}-\tfrac{1}{10}\bigr)\|f\|_{2}^{2}\ge\tfrac{1}{2}\,\|f\|_{2}^{2}\,,
\end{align*}
provided that $\mu(E)\ge\tfrac{9}{10}$. That is, $D(\mu)\le2$ for
$\mu\ge\tfrac{9}{10}$.

\medskip{}
In order to get an upper bound for $D(\mu)$ for smaller values of
$\mu$, we first of all need to get a better bound for the Hilbert-Schmidt
norm of the operator $A_{E}$.

\subsection{A better bound for the Hilbert-Schmidt norm of $A_{E}$ \label{subsect:better-bound-HSnorm}}

Here, using the bilinear Khinchin inequalities~\ref{subsubsect:bilinear-Khinchin},
we show that \textit{for each $p\ge1$,} 
\[
\|A_{E}\|_{\mathrm{HS}}\le Cp\cdot\mu(E)^{1-\frac{1}{2p}}\,.
\]
For sets $E$ of small measure, this bound is better than the one
we gave in~\ref{subsect:Zygmund-idea}.

\medskip{}

\begin{proof}
First, using duality and then Hölder's inequality, we get 
\begin{align*}
\sqrt{\sum_{k\ne\ell}|A_{E}(k,\ell)|^{2}} & =\sup\left\{ \Bigl|\sum_{k\ne\ell}A_{E}(k,\ell)g_{k,\ell}\Bigr|\colon\sum_{k\ne\ell}|g_{k,\ell}|^{2}\le1\right\} \\
 & =\sup\left\{ \Bigl|\intsy_{Q}\indf_{E}\bar{g}\,{\rm d}\mu\Bigr|\colon g\in\opspan\left\{ \phi_{k}\bar{\phi}_{\ell}\right\} _{k\ne\ell}\,,\ \|g\|_{2}\le1\right\} \\
 & \le\mu(E)^{1-\frac{1}{2p}}\cdot\sup\left\{ \|g\|_{2p}\colon g\in\opspan\left\{ \phi_{k}\bar{\phi}_{\ell}\right\} _{k\ne\ell}\,,\ \|g\|_{2}\le1\right\} \,.
\end{align*}
Now, using the bilinear Khinchin inequality, we will bound $\|g\|_{2p}$
by $Cp\|g\|_{2}$. Since $g\in\opspan\left\{ \phi_{k}\bar{\phi}_{\ell}\right\} _{k\ne\ell}$,
\[
g(\omega,\theta)=\sum_{k\ne\ell}g_{k,\ell}\xi_{k}(\omega)\xi_{\ell}(\omega)e((k-l)\theta)\,,
\]
whence, 
\begin{align*}
\intsy_{Q}|g|^{2p}\,{\rm d}\mu & =\intsy_{\bbt}{\rm d}m(\theta)\,\intsy_{\Omega}{\rm d}\Pr(\omega)\left|\sum_{k\ne\ell}g_{k,\ell}\xi_{k}(\omega)\xi_{\ell}(\omega)e((k-l)\theta)\right|^{2p}\\
 & \le\intsy_{\bbt}{\rm d}m(\theta)\,(Cp)^{2p}\left(\sum_{k\ne\ell}\bigl|g_{k,\ell}e((k-\ell)\theta)\bigr|^{2}\right)^{p}\\
 & =(Cp)^{2p}\left(\sum_{k\ne\ell}|g_{k,\ell}|^{2}\right)^{p}=(Cp)^{2p}\|g\|_{2}^{2p}\,,
\end{align*}
completing the proof.
\end{proof}

\subsection{The subspace $V_{E,b}$ \label{subsect:V_E}}

\noindent Let $p\ge1$. We now show that there exists a positive numerical
constant $C'$ with the following property. \textit{If $E\subset Q$
is a set of positive measure and $b\in L^{2}(Q)$, then there exists
a subspace $V_{E,b}\subset\RFfuncs$ of dimension at most 
\[
n=\Bigl[\,\frac{C'p^{2}}{\mu(E)^{1/p}}\,\Bigr]
\]
such that for each function $g\in\RFfuncs\ominus V_{E,b}$ and each
$b_{1}=c\cdot b$ with $c\in\bbc$, we have} 
\[
\intsy_{Q}|g|^{2}\,{\rm d}\mu\le\frac{2}{\mu(E)}\,\intsy_{E}|g-b_{1}|^{2}\,{\rm d}\mu\,.
\]

\medskip{}

\begin{proof}
This result is a rather straightforward consequence of the estimates
from Subsections~\ref{subsect:Zygmund-idea} and~\ref{subsect:better-bound-HSnorm}.
We enumerate the eigenvalues of the operator $A_{E}$ so that their
absolute values form a non-increasing sequence: $|\sigma_{1}|\ge|\sigma_{2}|\ge\cdots$.
Let $h_{1},h_{2},\ldots$ be the corresponding eigenvectors. Let $m\in\bbz$
and denote by $\widetilde{V}_{E}$ the linear span of $h_{1},h_{2},\ldots,h_{m}$.
Then the norm of the restriction $A_{E}$ to $L_{{\tt RF}}^{2}\ominus\widetilde{V}_{E}$
equals $|\sigma_{m+1}|$. Therefore, if the function $g\in\RFfuncs\ominus\widetilde{V}_{E}$,
then $\bigl|\langle A_{E}g,g\rangle\bigr|\le|\sigma_{m+1}|\cdot\|g\|_{2}^{2}$.

\noindent Next, 
\begin{align*}
\sigma_{m+1}^{2} & \le\frac{1}{m+1}\sum_{j=1}^{m+1}\sigma_{j}^{2}\le\frac{1}{m+1}\sum_{j=1}^{\infty}\sigma_{j}^{2}\\
 & =\frac{1}{m+1}\,\|A_{E}\|_{\mathrm{HS}}^{2}\le\frac{Cp^{2}}{m+1}\cdot\mu(E)^{2-\frac{1}{p}}<\frac{1}{4}\,\mu(E)^{2}\,,
\end{align*}
provided that 
\[
m\ge\Bigl[\,\frac{C^{\prime}p^{2}}{\mu(E)^{1/p}}\,\Bigr]-1
\]
and $C^{\prime}$ is chosen large enough.

Denote by $U_{E,b}$ the one-dimensional space spanned by the projection
of the function $\indf_{E}\cdot b$ to $\RFfuncs$, and put $V_{E,b}=\widetilde{V}_{E}+U_{E,b}$.
Then, assuming that $g\in\RFfuncs\ominus V_{E,b}\subset\RFfuncs\ominus\widetilde{V}_{E}$
and applying the estimate from~\ref{subsect:Zygmund-idea}, we get
\begin{align*}
\intsy_{E}|g-b_{1}|^{2}\,{\rm d}\mu & \ge\mu(E)\|g\|_{2}^{2}+\langle A_{E}g,g\rangle-2\,\Re{\langle g,\indf_{E}b_{1}\rangle}\\
 & \ge\mu(E)\|g\|_{2}^{2}-\frac{1}{2}\mu(E)\|g\|_{2}^{2}=\frac{\mu(E)}{2}\,\|g\|_{2}^{2}.
\end{align*}
Since $\dim V_{E,b}$ is at most $\Bigl[C^{\prime}p^{2}\mu(E)^{-1/p}\Bigr]$,
the proof is complete.
\end{proof}
\medskip{}
Note that it suffices to take $C'=4C^{2}+1$, where $C$ is the constant
that appears in the bilinear Khinchin inequality~\ref{subsubsect:bilinear-Khinchin},
though this is not essential for our purposes.

\subsection{Functions $f\in\RFfuncs$ have the $\Exploc$ property. Condition
$(C_{\tau})$ \label{subsect:putting-f-into-Exp_loc}}

Introduce the function\index{Exploc
 property@$\Exploc$ property} 
\[
n(p,\mu)\eqdef\bigl[\, C''p^{2}\cdot\mu^{-\frac{1}{p}}\,\bigr]
\]
where $C''>C'$ is a sufficiently large numerical constant. Fix $p\ge1$
and let $E\subset Q$ be a given set of positive measure. Put $n=n\bigl(p,\tfrac{1}{2}\mu(E)\bigr)$
and choose the small parameter $\tau$ so that, for every $t\in(0,\tau]$,
\[
\mu\left(\bigcap_{k=0}^{n}(E-kt)\right)\ge\frac{1}{2}\mu(E)\,.\eqno(C_{\tau})
\]
This is possible since the function $t\mapsto\mu\left((E-t)\bigcap E\right)$
is continuous and equals $\mu(E)$ at $0$.

\medskip{}
Now we prove that \textit{given a set $E\subset Q$ of positive measure,
$b\in L^{2}(Q)$, and $p\ge1$, each function $f\in\RFfuncs$ has
the $\Exploc(n,\tau,\varkappa,L^{2}(\Omega))$ property with 
\[
n=n(p,\tfrac{1}{2}\mu(E))\,,\qquad\varkappa^{2}=\frac{4(n+1)}{\mu(E)}\,\intsy_{E}|f-b|^{2}\,{\rm d}\mu\,,
\]
and arbitrary $\tau$ satisfying condition $(C_{\tau})$.}

\medskip{}

\begin{proof}
To shorten the notation, we put 
\[
E'=E'_{t}=\bigcap_{k=0}^{n}(E-kt)\,.
\]
Then for every $k\in\{0,\ldots,n\}\,,$ 
\[
\intsy_{E'}|f_{kt}-b|^{2}\,{\rm d}\mu\le\intsy_{E-kt}|f_{kt}-b|^{2}\,{\rm d}\mu=\intsy_{E}|f-b|^{2}\,{\rm d}\mu,
\]
since $b$ depends only on $\omega$, and so, $b_{kt}=b$.

\noindent Given $t\in(0,\tau]$, we choose $a_{0},\ldots,a_{n}\in\bbc$
with $\sum_{k=0}^{n}|a_{k}|^{2}=1$ so that the function $g=\sum_{k=0}^{n}a_{k}f_{kt}$
belongs to the linear space $L_{{\tt RF}}^{2}\ominus V_{E',\, b}$.
This is possible since 
\[
\dim V_{E',\, b}\le n(p,\mu(E'))\le n(p,\tfrac{1}{2}\mu(E))=n\,.
\]
Since the function $g$ is orthogonal to the subspace $V_{E',\, b}$,
we can control its norm applying the estimate from~\ref{subsect:V_E}
with $b_{1}=b\cdot\sum_{k}a_{k}$: 
\begin{align*}
\intsy_{Q}|g|^{2}\,{\rm d}\mu & \le\frac{2}{\mu(E')}\intsy_{E'}|g-b_{1}|^{2}\,{\rm d}\mu\\
 & \le\frac{4}{\mu(E)}\intsy_{E'}\Bigl|\sum_{k=0}^{n}a_{k}\bigl(f_{kt}-b\bigr)\Bigr|^{2}\,{\rm d}\mu\\
 & \le\frac{4}{\mu(E)}\intsy_{E'}\sum_{k=0}^{n}\bigl|f_{kt}-b\bigr|^{2}\,{\rm d}\mu\le\frac{4(n+1)}{\mu(E)}\,\intsy_{E}|f-b|^{2}\,{\rm d}\mu\,.
\end{align*}
That is, 
\[
\Bigl\|\sum_{k=0}^{n}a_{k}f_{kt}\Bigr\|_{2}\le\varkappa\,,
\]
and we are done.
\end{proof}

\subsection{Spreading the $L^{2}$-bound. Condition $(C_{E})$ \label{subsect:spreading-C_E}}

We apply the spreading Lemma~\ref{lemma:spreading} to the function
$f$ and the set $E$. It provides us with a set $\widetilde{E}\supset E$,
such that $\mu(\widetilde{E})\ge\mu(E)+\tfrac{1}{2}\Delta_{n\tau}(E)$
and 
\begin{align*}
\intsy_{\widetilde{E}}|f-b|^{2}\,{\rm d}\mu & \le\Bigl(\frac{Cn^{3}}{\Delta_{n\tau}^{2}(E)}\Bigr)^{2n+1}\,\left(\intsy_{E}|f-b|^{2}\,{\rm d}\mu+\varkappa^{2}\right)\\
 & \le\Bigl(\frac{Cn^{3}}{\Delta_{n\tau}^{2}(E)}\Bigr)^{2n+1}\cdot\frac{C(n+1)}{\mu(E)}\,\intsy_{E}|f-b|^{2}\,{\rm d}\mu\,,
\end{align*}
where $n=n\bigl(p,\tfrac{1}{2}\mu(E)\bigr)\le2C''p^{2}\cdot\mu(E)^{-\frac{1}{p}}$.
There is not much value in this spreading until we learn how to control
the parameter $\Delta_{n\tau}(E)$ in terms of our main parameters
$\mu(E)$ and $p$. Clearly, the bigger $\Delta_{n\tau}(E)$, the
better our spreading estimate is. Recall that till this moment, our
only assumption on the value of $\tau$ has been condition $(C_{\tau})$
at the beginning of Section~\ref{subsect:putting-f-into-Exp_loc}.

Now we will need the following condition on our set $E$: 
\[
\max_{t}\Delta_{t}(E)\ge\frac{1}{2n}\mu(E).\eqno(C_{E})
\]
If condition $(C_{E})$ holds, then we can find $\tau>0$ such that
$\Delta_{n\tau}(E)=\tfrac{1}{2n}\,\mu(E)$, while for all $t\in(0,n\tau)$,
$\Delta_{t}(E)<\tfrac{1}{2n}\,\mu(E)$.

\medskip{}
Such $\tau$ will automatically satisfy condition $(C_{\tau})$ used
in the derivation of the spreading estimate. Indeed, 
\begin{align*}
\mu\Bigl(\bigcap_{k=0}^{n}(E-kt)\Bigr) & =\mu\Bigl(E\setminus\bigcup_{k=1}^{n}\bigl(E\setminus(E-kt)\bigr)\Bigr)\\
 & \ge\mu(E)-\sum_{k=1}^{n}\mu\bigl(E\setminus(E-kt)\bigr)\\
 & =\mu(E)-\sum_{k=1}^{n}\mu\bigl((E+kt)\setminus E\bigr)\\
 & =\mu(E)-\sum_{k=1}^{n}\Delta_{kt}(E)\ge\mu(E)-\sum_{k=1}^{n}\frac{\mu(E)}{2n}=\frac{1}{2}\mu(E).
\end{align*}

It is easy to see that there are sets $E\subset Q$ of arbitrary small
positive measure that do not satisfy condition $(C_{E})$. We assume
now that condition $(C_{E})$ is satisfied, putting aside the question
``\textit{What to do with the sets $E$ for which $(C_{E})$ does
not hold}?'' till the next section.

Substituting the value $\Delta_{n\tau}=\tfrac{1}{2n}\,\mu(E)$ into
the spreading estimate and taking into account that $n\le2C''p^{2}\,\mu(E)^{-\frac{1}{p}}$,
we finally get 
\begin{align*}
\intsy_{\widetilde{E}}|f-b|^{2}\,{\rm d}\mu & \le\Bigl(\frac{Cn^{5}}{\mu(E)^{2}}\Bigr)^{2n+1}\cdot\frac{n+1}{\mu(E)}\,\intsy_{E}|f-b|^{2}\,{\rm d}\mu\\
 & \le\Bigl(\frac{Cp}{\mu(E)}\Bigr)^{Cp^{2}\mu(E)^{-1/p}}\,\intsy_{E}|f-b|^{2}\,{\rm d}\mu\,,
\end{align*}
while 
\[
\mu(\widetilde{E})\ge\mu(E)+\frac{c}{p^{2}}\,\mu(E)^{1+\frac{1}{p}}\,.
\]
This is the spreading estimate that we will use for the sets $E$
satisfying condition $(C_{E})$.

\subsection{The case of sets $E$ that do not satisfy condition $(C_{E})$ \label{subsect:sets-without-C_E}}

Now, let us assume that $E\subset Q$ is a set of positive measure
that does not satisfy condition $(C_{E})$, that is, for each $t\in[0,1]$,
$\Delta_{t}(E)<\tfrac{1}{2n}\,\mu(E)$. The simplest example is any
set of the form $E=\Omega_{1}\times\bbt$, $\Omega_{1}\subset\Omega$.
For these sets, $\Delta_{t}(E)=0$ for every $t$. We will show that
this example is typical, i.e., the sets $E$ that do not satisfy condition
$(C_{E})$ must have sufficiently many `long' sections $E_{\omega}$.
More precisely, let 
\[
\Omega_{1}=\bigl\{\omega\in\Omega\colon m(E_{\omega})>1-\tfrac{1}{n}\bigr\}\,.
\]
We show that $\Pr\{\Omega_{1}\}>\tfrac{1}{2}\,\mu(E)$.

\medskip{}

\begin{proof}
Let 
\[
\Omega_{2}=\Omega\setminus\Omega_{1}=\bigl\{\omega\in\Omega\colon m(E_{\omega})\le1-\tfrac{1}{n}\bigr\}.
\]
Since condition $(C_{E})$ is not satisfied, we have 
\[
\intsy_{0}^{1}\Delta_{t}(E)\,{\rm d}t<\frac{1}{2n}\,\mu(E)\,.
\]
A straightforward computation shows that 
\[
\intsy_{0}^{1}m\left(\left(E_{\omega}+t\right)\setminus E_{\omega}\right)\,{\rm d}t=m(E_{\omega})\left(1-m(E_{\omega})\right).
\]
Since $m(E_{\omega})\le1-\tfrac{1}{n}$ implies that $m(E_{\omega})\le nm(E_{\omega})(1-m(E_{\omega}))$,
we get 
\begin{align*}
\intsy_{\Omega_{2}}m(E_{\omega})\,{\rm d}\Pr(\omega) & \le n\intsy_{\Omega_{2}}m(E_{\omega})\left(1-m(E_{\omega})\right)\,{\rm d}\Pr(\omega)\\
 & \le n\intsy_{\Omega}m(E_{\omega})\left(1-m(E_{\omega})\right)\,{\rm d}\Pr(\omega)\\
 & =n\intsy_{0}^{1}\Delta_{t}(E)\,{\rm d}t<\frac{1}{2}\mu(E).
\end{align*}
Therefore, 
\[
\pr{\Omega_{1}}\ge\intsy_{\Omega_{1}}m(E_{\omega})\,{\rm d}\Pr(\omega)=\mu(E)-\intsy_{\Omega_{2}}m(E_{\omega})\,{\rm d}\Pr(\omega)>\frac{1}{2}\mu(E).
\]
\end{proof}
\begin{rem*}
\noindent \textit{Since $n=n\left(p,\tfrac{1}{2}\mu(E)\right)\ge2$
if $C^{\prime\prime}\ge2$, we trivially have} 
\[
\pr{\Omega_{1}}\le\frac{n}{n-1}\intsy_{\Omega_{1}}m(E_{\omega})\,{\rm d}\Pr(\omega)\le\frac{n}{n-1}\,\mu(E)\le2\,\mu(E).
\]

\end{rem*}

\subsection{Many `long' sections \label{subsect:many-long-sections}}

Assume that the set $E$ does not satisfy condition $(C_{E})$. We
will show that 
\[
\intsy_{Q}|f|^{2}\,{\rm d}\mu\le\frac{4}{\mu(E)}\,\intsy_{E}|f-b|^{2}\,{\rm d}\mu\,,
\]
where, as above, $b=b(\omega)$ is a random constant, $\|b\|_{\infty}<\tfrac{1}{20}\|f\|_{2}$.

Let $\mu=\mu(E)$ and $\Omega_{1}$ be as above. We have 
\begin{align*}
\intsy_{E}|f-b|^{2}\,{\rm d}\mu & \ge\intsy_{\Omega_{1}}\left(\intsy_{E_{\omega}}\,|f-b|^{2}\,{\rm d}m\right)\,{\rm d}\Pr(\omega)\\
 & =\intsy_{\Omega_{1}}\,\intsy_{\bbt}|f-b|^{2}\,{\rm d}m\,{\rm d}\Pr(\omega)-\intsy_{\Omega_{1}}\,\intsy_{\bbt\setminus E_{\omega}}\,|f-b|^{2}\,{\rm d}m\,{\rm d}\Pr(\omega)\\
 & =({\rm I})-({\rm II})\,.
\end{align*}
Notice that by the result of Section~\ref{subsect:spreading-C_E},
we have $2\mu\ge\pr{\Omega_{1}}\ge\tfrac{1}{2}\mu$.

Bounding integral $({\rm I})$ from below is straightforward: we have
\[
\intsy_{\bbt}|f-b|^{2}\,{\rm d}m\ge\bigl(\|f\|_{2}-\|b\|_{\infty}\bigr)^{2}\ge\frac{9}{10}\,\|f\|_{2}^{2}\,,
\]
whence, 
\[
({\rm I})\ge\frac{9}{10}\,\|f\|_{2}^{2}\,\pr{\Omega_{1}}\ge\frac{9}{20}\,\cdot\mu\,\|f\|_{2}^{2}\,.
\]

Now let us estimate the integral (II) from above. We have 
\[
({\rm II})\le2\intsy_{\Omega_{1}}\,\intsy_{\bbt\setminus E_{\omega}}|f|^{2}\,{\rm d}m\,{\rm d}\Pr(\omega)+2\intsy_{\Omega_{1}}\intsy_{\bbt\setminus E_{\omega}}|b|^{2}\,{\rm d}m\,{\rm d}\Pr(\omega)=({\rm II}_{a})+({\rm II}_{b})\,.
\]
Estimating the second integral is also straightforward: 
\[
({\rm II}_{b})\le\frac{4\mu}{n}\,\|b\|_{\infty}^{2}<\frac{1}{10}\,\mu\,\|f\|_{2}^{2}
\]
(recall that $n\ge2$ and $\|b\|_{\infty}<\tfrac{1}{20}\|f\|_{2}$).
Furthermore, 
\[
({\rm II}_{a})=2\intsy_{\Omega_{1}}\,\intsy_{\bbt}\indf_{\bbt\setminus E_{\omega}}\,|f|^{2}\,{\rm d}m\,{\rm d}\Pr(\omega)\le2\Bigl(\intsy_{\Omega_{1}}\,\intsy_{\bbt}\indf_{\bbt\setminus E_{\omega}}\Bigr)^{\frac{1}{r}}\Bigl(\intsy_{\Omega_{1}}\,\intsy_{\bbt}|f|^{2s}\Bigr)^{\frac{1}{s}}
\]
with $\tfrac{1}{r}+\tfrac{1}{s}=1$. By Khinchin's inequality, 
\[
\Bigl(\intsy_{\Omega}\,\intsy_{\bbt}|f|^{2s}\Bigr)^{\frac{1}{s}}\le Cs\,\|f\|_{2}^{2}\,.
\]
Hence, 
\[
({\rm II}_{a})\le\Bigl(\frac{2\mu}{n}\Bigr)^{\frac{1}{r}}\, Cs\,\|f\|_{2}^{2}\,.
\]
Letting $\tfrac{1}{r}=\tfrac{p}{p+1},\tfrac{1}{s}=\tfrac{1}{p+1}$
and recalling that $n\ge\tfrac{1}{2}C''\, p^{2}\mu^{-1/p}$ and that
$p\ge1$, we continue the estimate as 
\[
({\rm II}_{a})\le\Bigl(\frac{4\mu^{1+\frac{1}{p}}}{C''\, p^{2}}\Bigr)^{\frac{p}{p+1}}\,2Cp\,\|f\|_{2}^{2}<\frac{8C}{\sqrt{C''}}\,\mu\, p^{-\frac{p-1}{p+1}}\,\|f\|_{2}^{2}<\frac{1}{10}\,\mu\,\|f\|_{2}^{2}\,,
\]
provided that the constant $C''$ in the definition of $n$ was chosen
sufficiently big. Finally, 
\[
\intsy_{E}|f-b|^{2}\ge({\rm I})-({\rm II}_{a})-({\rm II}_{b})\ge\Bigl(\frac{9}{20}-\frac{4}{20}\Bigr)\mu\|f\|_{2}^{2}=\frac{1}{4}\,\mu\,\|f\|_{2}^{2}\,,
\]
completing the argument. \hfill{}$\Box$

\subsection{End of the proof of Theorem~\ref{thm:Zygmund-type-final}: solving
a difference inequality \label{subsect:difference_ineq}}

Recall that by $D(\mu)$ we denote the smallest value such that the
inequality 
\[
\intsy_{Q}|f|^{2}\,{\rm d}\mu\le D(\mu)\intsy_{E}|f-b|^{2}\,{\rm d}\mu
\]
holds for every $E\subset Q$ with $\mu(E)\ge\mu$, every $f\in\RFfuncs$,
and every random constant $b\in L^{\infty}(\Omega)$ satisfying $\|b\|_{\infty}<\tfrac{1}{20}\|f\|_{2}$.

By~\ref{subsect:large-meas}, $D(\mu)\le2$ for $\mu\ge\tfrac{9}{10}$,
and by the estimates proven in~\ref{subsect:spreading-C_E} and~\ref{subsect:many-long-sections},
for $0<\mu<\tfrac{9}{10}$ we have\index{spreading procedure} 
\[
D(\mu)<\max\Bigl\{\Bigl(\frac{Cp}{\mu}\Bigr)^{Cp^{2}\mu^{-\frac{1}{p}}}D\bigl(\mu+\frac{c}{p^{2}}\,\mu^{1+\frac{1}{p}}\bigr),\frac{4}{\mu}\Bigr\}\,.
\]
Increasing, if needed, the constant $C$ in the exponent, and taking
into account that $\frac{p}{\mu}\ge\frac{1}{\nicefrac{9}{10}}>1$
and $D\ge1$, we simplify this to 
\[
D(\mu)<\Bigl(\frac{p}{\mu}\Bigr)^{Cp^{2}\mu^{-\frac{1}{p}}}D\left(\mu+\frac{c}{p^{2}}\,\mu^{1+\frac{1}{p}}\right).
\]
Put 
\[
\delta(\mu)=\frac{c}{p^{2}}\,\mu^{1+\frac{1}{p}}\,.
\]
Making the constant $c$ on the right-hand side small enough, we assume
that $\delta\bigl(\tfrac{9}{10}\bigr)<\tfrac{1}{10}$ (it suffices
to take $c<\tfrac{1}{10}$). Then, for $0<\mu<\tfrac{9}{10}$, 
\[
\log D(\mu)-\log D(\mu+\delta(\mu))<Cp^{2}\mu^{-\frac{1}{p}}\log\Bigl(\frac{p}{\mu}\Bigr)<C\,\delta(\mu)\, p^{4}\mu^{-1-\frac{2}{p}}\log\Bigl(\frac{p}{\mu}\Bigr).
\]
To solve this difference inequality, we define the sequence $\mu_{0}=\mu$,
$\mu_{k+1}=\mu_{k}+\delta(\mu_{k})$, $k\ge0$, and stop when $\mu_{s-1}<\tfrac{9}{10}\le\mu_{s}$.
Since we assumed that $\delta\bigl(\tfrac{9}{10}\bigr)<\tfrac{1}{10}$,
the terminal value $\mu_{s}$ will be strictly less than $1$. We
get 
\begin{align*}
\log D(\mu) & =\log D(\mu_{s})+\sum_{k=0}^{s-1}\bigl[\log D(\mu_{k})-\log D(\mu_{k+1})\bigr]\\
 & <1+Cp^{4}\,\sum_{k=0}^{s-1}\delta(\mu_{k})\mu_{k}^{-1-\frac{2}{p}}\log\Bigl(\frac{p}{\mu}_{k}\Bigr)\\
 & <1+Cp^{4}\,\log\Bigl(\frac{p}{\mu}\Bigr)\,\sum_{k=0}^{s-1}\delta(\mu_{k})\mu_{k}^{-1-\frac{2}{p}}\,.
\end{align*}
Since $\mu_{k+1}=\mu_{k}+c\, p^{-2}\,\mu_{k}^{1+\frac{1}{p}}<C\mu_{k}$,
we have $\mu_{k}^{-1-\frac{2}{p}}<C\mu_{k+1}^{-1-\frac{2}{p}}$. Therefore,
\begin{align*}
\sum_{k=0}^{s-1}\delta(\mu_{k})\mu_{k}^{-1-\frac{2}{p}} & <C\,\sum_{k=0}^{s-1}\delta(\mu_{k})\mu_{k+1}^{-1-\frac{2}{p}}\\
 & <C\,\sum_{k=0}^{s-1}\intsy_{\mu_{k}}^{\mu_{k+1}}\frac{{\rm d}x}{x^{1+\frac{2}{p}}}<C\,\intsy_{\mu}^{1}\frac{{\rm d}x}{x^{1+\frac{2}{p}}}<C\, p\,\mu^{-\frac{2}{p}}\,,
\end{align*}
whence 
\[
\log D(\mu)<1+C\, p^{5}\,\mu^{-\frac{2}{p}}\,\log\left(\frac{p}{\mu}\right).
\]
This holds for any $p\ge1$. Letting $p=2\log\bigl(\tfrac{2}{\mu}\bigr)$,
we finally get $\log D(\mu)<C\log^{6}\Bigl(\frac{2}{\mu}\Bigr)$.
This completes the proof of Theorem~\ref{thm:Zygmund-type-final}.
\hfill{}$\Box$

\section{On the power $6$ in the statement of Theorem \ref{thm:Zygmund-type-final}
\label{sect_log_int_examples}}

In this section we will present an example that shows that the constant
$6$ in the exponent on the RHS of the inequality proven in Theorem~\ref{thm:Zygmund-type-final}
cannot be replaced by any number smaller than $2$.

Let 
\[
g_{N}(\theta)=\left(\sin(2\pi\theta)\right)^{2N}=\left(\frac{e(\theta)-e(-\theta)}{2i}\right)^{2N}=\sum_{|n|\le N}a_{n}e(2n\theta).
\]
The function $g_{N}$ satisfies 
\begin{equation}
|g_{N}(\theta)|\le e^{-cN^{2}}\qquad\mbox{for}\,\,|\theta|\le e^{-CN},\label{eq:g_N}
\end{equation}
provided that $C$ is large enough.

Now consider the Rademacher trigonometric polynomial 
\[
f_{N}(\theta)=\sum_{|n|\le N}\xi_{n}a_{n}e(2n\theta)\,,
\]
denote by $X_{N}$ the event that $\xi_{n}=+1$ for all $n\in\bigl\{-N,...,N\bigr\}$,
and put $E_{N}=X_{N}\times T_{N}$, where $T_{N}=[-e^{CN},e^{CN}]\subset\bbt$
is the set from~\eqref{eq:g_N}. Then 
\[
\mu(E_{N})\ge2^{-(2N+1)}\cdot e^{-CN}\ge e^{-CN}\,,
\]
while 
\[
\intsy_{E_{N}}|f_{N}|^{2}\,{\rm d}\mu\le e^{-cN^{2}}\mu(E_{N})\le e^{-cN^{2}}
\]
and 
\[
\intsy_{\Omega\times\bbt}|f_{N}|^{2}\,{\rm d}\mu=\intsy_{\bbt}|g_{N}|^{2}\,{\rm d}m\,.
\]
It is not difficult to see that the integral on the RHS is not less
than $\tfrac{c}{N}$, for some constant $c>0$. Recalling that $|\log\mu(E_{N})|\le CN$,
we see that for every $\epsym>0$, $C>0$, the inequality 
\[
\intsy_{Q}|f_{N}|^{2}\,{\rm d}\mu\le e^{C|\log\mu(E_{N})|^{2-\epsym}}\intsy_{E_{N}}|f_{N}|^{2}\,{\rm d}\mu
\]
fails when $N\ge N_{0}(\epsym,C)$. This shows that one cannot replace
$6$ by any number less than $2$. \hfill{}$\Box$

\section{Some open problems\index{open problems}}

Following the last section, it is not clear what is the optimal constant
in the statement of Theorem \ref{thm:Zygmund-type-final}. Proving
that the constant is strictly larger than $2$ seems to be an interesting
problem (in case it is true).

It should be mentioned that the result of the previous section (the
optimal constant in the power of the logarithm) does not contradict
the possibility that the distributional inequality can be improved
if one is ready to discard an event of small probability. Here is
a sample 
\begin{problem}
Consider Rademacher trigonometric polynomials $f_{n}$ of large degree
$n$. Given a small $\delta>0$, does there exist an event $\Omega^{\prime}(n,\delta)\subset\Omega$
with $\pr{\Omega\backslash\Omega^{\prime}}\to0$ as $n\to\infty$,
such that
\[
\intsy_{\Omega^{\prime}\times\bbt}\left|f_{n}\right|^{-\delta}\d{\mu}<\infty\,?
\]

\end{problem}
It would be interesting to find a simple probabilistic proof of an
inequality of the form
\[
\normp[f][2]2\le C\left(\mu\left(E\right)\right)\intsy_{E}\left|f\right|^{2}\d{\mu},
\]
even without a quantitative dependence on the measure of the set $E$.

A (related) very natural problem is to prove a generalization of Theorem
\ref{thm:Zygmund-type-final} for other types of random variables.
It is possible that one should modify the statement of theorem to
consider the event where the sum $\sum_{n\in\bbz}\left|\xi_{n}\right|^{2}\left|a_{n}\right|^{2}$
is small.

A known open problem for non-random, `half-lacunary' Fourier series
is the following 
\begin{problem}
Consider the random Fourier series of the form
\[
g\left(\theta\right)=\sum_{k>0}a_{n_{k}}e^{2\pi in_{k}\theta}+\sum_{n\ge0}a_{n}e^{2\pi in\theta},
\]
where $\left\{ a_{n}\right\} \in\ell^{2}\left(\bbz\right)$, and $\left\{ n_{k}\right\} \subset\bbz^{-}$
is lacunary in the sense of Hadamard (i.e., $\liminf_{k\to\infty}\frac{n_{k+1}}{n_{k}}>1$).
Is it true that $g\left(\theta\right)=0$ only on a set of measure
$0$? Is it true that $\log\left|g\right|\in L^{p}\left(\bbt\right)$,
for some $p>0$?
\end{problem}
The same question can be asked for `half-Rademacher' Fourier series
of the form
\[
g\left(\theta\right)=\sum_{n<0}\xi_{n}a_{n}e^{2\pi in\theta}+\sum_{n\ge0}a_{n}e^{2\pi in\theta},
\]
where, as usual, $\xi_{n}$ is a sequence of independent Bernoulli
random variables, which take the values $\pm1$, with probability
$\tfrac{1}{2}$ each.
\begin{problem}
Let $g$ be a `half-Rademacher' Fourier series. Is it true that $\log\left|g\right|\in L^{p}\left(Q\right)$
for some $p>0$?

\newpage{}

\thispagestyle{empty}$\,$\newpage{}
\end{problem}

\part{The value distribution of random analytic functions}

The theory that is known today as the value distribution theory of
holomorphic (or meromorphic) functions, was originally developed by
Nevanlinna in the 1920s (see \cite{GO}). Compared to the more classical
and qualitative Picard Theorem, it tries to get a more quantitative
measure of the number of times an analytic function $f\left(z\right)$
assumes some value $b$, as $z$ grows. This theory can also be generalized
to several complex variables. One can think about the value distribution
of random analytic functions as a representation of the behavior of
`standard' analytic functions. In particular, it is possible to give
very precise versions of the general results of the theory.

One of the basic notions of the general theory is that of an exceptional
value. Little Picard Theorem is a classical example, as it states
that every entire function must take every complex value, except maybe
one exceptional value. In general we expect that random functions
will be ``unable to distinguish'' between different values, and
thus that they will not have any exceptional values (one must define
the correct model of course, as $e^{f\left(z\right)}\ne0$ always).
In the first section of this part we prove a result of this kind,
which is an answer to an old question of J.-P. Kahane.

A central part of the theory of entire functions was developed for
the goal of counting the number of zeros of entire functions, in large
domains (motivated, in no small part, by Riemann's Hypothesis). In
the second section we give a short proof of a strong law of large
numbers for the number of zeros of Rademacher entire functions in
large disks around $0$, as the radius of the disk tends to infinity.
This is the analog of the simpler result for Gaussian analytic functions.

In the third section, we use more accurate tools than the ones used
in the second section to show that random entire functions grow very
regularly. In particular, their zeros are equidistributed in sectors
centered at zero. Under certain restrictions functions with equidistributed
zeros are a special case of a certain class of entire functions ---
those of completely regular growth in the sense of Levin and Pfluger
(see \cite{Le1}). We note that our result lies beyond the scope of
the (classical) theory of functions of completely regular growth.

In the last two sections we give an example which demonstrates the
tightness of the results of the third section, as well as some further
work and related open problems.

\section{Kahane's Problem - The Range of Random Taylor Series in the Unit
Disk \label{sect_Kahane}\index{Kahane's question|(}}


We begin with a short description of some of the previous results.
In 1972, Offord~\cite{Of3} proved the analog of Theorem \ref{thm:Kahane_prob}
in the case where $\zeta_{k}=u_{k}a_{k}$, and $u_{k}$ are uniformly
distributed on the unit circle. The proof he gave also works for Gaussian
Taylor series; see also Kahane \cite[Section~12.3]{Ka2} for a different
proof. In the special case $\zeta_{k}=\xi_{k}a_{k}$, where $\xi_{k}$
are independent Rademacher random variables, the result was known
under some additional restrictions on the growth of the deterministic
coefficients $a_{k}$. In 1981, Murai \cite{Mu2} proved it assuming
that $\liminf|a_{k}|>0$. Soon afterwards, Jacob and Offord \cite{JO}
weakened this assumption to 
\[
\liminf_{N\to\infty}\frac{1}{\log N}\sum_{k=0}^{N}|a_{k}|^{2}>0\,.
\]
To the best of our knowledge, since then there was no further improvement.

\subsection{Solution to Kahane's problem - Proof of Theorem \ref{thm:Kahane_prob}}

We start by proving the theorem in the special case when $\zeta_{n}=\xi_{n}a_{n}$,
where $\xi_{n}$ are independent Rademacher random variables, and
$\left\{ a_{n}\right\} $ is a non-random sequence of complex numbers
satisfying the conditions $\limsup_{n}|a_{n}|^{1/n}=1$ and $\sum_{n}|a_{n}|^{2}=\infty$.
The proof is based on the logarithmic integrability of the Rademacher
Fourier series (corollary (\ref{eq:log_int_Rad_Four_gen_case}) of
Theorem~\ref{thm:Zygmund-type-final}) combined with Jensen's formula.
Then using `the principle of reduction' as stated in Kahane's book~\cite[Section~1.7]{Ka2},
we get the result in the general case.

\medskip{}
Let us introduce some notation. For $b\in\bbc$, $0<r<1$, we denote
by $n_{F}(r,b)$ \index{1nF@$n_{F}$!$n_{F}(r,b)$} the number of solutions
to the equation $F(z)=b$ in the disk $r\bbd$, the solutions being
counted according to their multiplicities. In this section it will
be convenient to set\index{1NF@$N_{F}$!$N_{F}(r,b)$} 
\[
N_{F}(r,b)\eqdef\intsy_{1/2}^{r}\frac{n_{F}(t,b)}{t}\,{\rm d}t\,.
\]
By Jensen's formula, 
\begin{equation}
N_{F}(r,b)=\intsy_{\bbt}\log|F(re(\theta))-b|\,{\rm d}m(\theta)-\intsy_{\bbt}\log|F(\tfrac{1}{2}e(\theta))-b|\,{\rm d}m(\theta)\,.\label{eq:Jensen}
\end{equation}
We will prove that a.s. we have 
\[
\lim_{r\to1}N_{F}(r,b)=\infty\,,\enskip\forall b\in\bbc\,,
\]
which is equivalent to Theorem \ref{thm:Kahane_prob}.


\subsubsection{Proof of Theorem~\ref{thm:Kahane_prob} in the Rademacher case}

We define the functions $\sigma_{F}$ and $\widehat{F}$ by\index{1sigmaF@$\sigma_{F}$}\index{1Fhat@$\widehat{F}$}
\[
\sigma_{F}^{2}(r)\eqdef\sum_{n\ge0}|a_{n}|^{2}r^{2n},\qquad\widehat{F}(z)\eqdef\frac{F(z)}{\sigma_{F}(|z|)}\,,
\]
and note that $\|\widehat{F}(re(\theta))\|_{L^{2}(\bbt)}=1$.

Let $M\in\bbn$. For every $r\in(\tfrac{1}{2},1)$, the function $(\omega,b)\mapsto N_{F}(r,b)$
on $\Omega\times\bbc$ is measurable in $\omega$ for fixed $b$ and
continuous in $b$ for fixed $\omega$. Therefore, we can find a measurable
function $b^{*}=b^{*}(\omega)$ such that $\left|b^{*}\right|\le M$
and 
\[
\inf_{|b|\le M}N_{F}(r,b)\ge N_{F}(r,b^{*})-1.
\]
Then 
\begin{multline*}
\inf_{|b|\le M}N_{F}(r,b)\ge\intsy_{\bbt}\log|F(re(\theta))-b^{*}|\,{\rm d}m(\theta)-\intsy_{\bbt}\log|F(\tfrac{1}{2}e(\theta))-b^{*}|\,{\rm d}m(\theta)-1\\
=({\rm I}_{1})-({\rm I}_{2})-1\,.
\end{multline*}
Note that 
\[
({\rm I}_{2})\le\frac{1}{2}\log\left(\intsy_{\bbt}|F(\tfrac{1}{2}e(\theta))-b^{*}|^{2}\,{\rm d}m(\theta)\right)\le\frac{1}{2}\log\left(2\sigma_{F}^{2}(\tfrac{1}{2})+2M^{2}\right).
\]
For the integral $({\rm I}_{1})$, we have the following lower bound:
\begin{multline*}
({\rm I}_{1})=\log\sigma_{F}(r)+\intsy_{\bbt}\log\bigl|\widehat{F}(re(\theta))-\sigma_{F}^{-1}(r)\cdot b^{*}\bigr|\,{\rm d}m(\theta)\\
\ge\log\sigma_{F}(r)-\intsy_{\bbt}\Bigl|\log\bigl|\widehat{F}(re(\theta))-\sigma_{F}^{-1}(r)\cdot b^{*}\bigr|\,\Bigr|\,{\rm d}m(\theta)\,.
\end{multline*}
If we assume that $r$ is so close to $1$ that $\sigma_{F}(r)\ge20M$,
then, using our result \eqref{eq:log_int_Rad_Four_gen_case} on the
logarithmic integrability of the Rademacher Fourier series, we get
\begin{multline*}
\Pr\Bigl\{\intsy_{\bbt}\Bigl|\log\bigl|\widehat{F}(re(\theta))-\sigma_{F}^{-1}(r)\cdot b^{*}\bigr|\,\Bigr|\,{\rm d}m(\theta)>T\Bigr\}\\
\le\frac{1}{T}\,\Ex\Bigl(\intsy_{\bbt}\Bigl|\log\bigl|\widehat{F}(re(\theta))-\sigma_{F}^{-1}(r)\cdot b^{*}\bigr|\,\Bigr|\,{\rm d}m(\theta)\Bigr)\le\frac{C}{T}\,,
\end{multline*}
for all $T>0$.

Taking $r=r_{m}$ so that $\log\sigma_{F}(r_{m})=2m^{2}$ and $T=m^{2}$,
and applying the Borel-Cantelli lemma, we see that, for a.e. $\omega\in\Omega$,
there exists $m_{0}=m_{0}(\omega,M)$ such that, for each $m\ge m_{0}$,
\[
\intsy_{\bbt}\Bigl|\log\bigl|\widehat{F}(r_{m}e(\theta))-\sigma_{F}^{-1}(r_{m})\cdot b^{*}\bigr|\,\Bigr|\,{\rm d}m(\theta)<m^{2}\,,
\]
whence, 
\[
\inf_{|b|\le M}N_{F}(r_{m},b)\ge m^{2}-\frac{1}{2}\log\left(2\sigma_{F}^{2}(\tfrac{1}{2})+2M^{2}\right)-1,\qquad\forall m\ge m_{0}\,.
\]
Therefore, for every $M\in\bbn$, there is a set $A_{M}\subset\Omega$
with $\Pr(A_{M})=1$ such that, for every $\omega\in A_{M}$ and every
$b\in\bbc$ with $|b|\le M$, we have 
\begin{equation}
\lim_{r\to1}N_{F}(r,b)=\infty\,.\label{eq:N(r,b)}
\end{equation}
Let $A=\bigcap_{M}A_{M}$. Then $\Pr(A)=1$, and for every $\omega\in A$,
$b\in\bbc$, we have~\eqref{eq:N(r,b)}. Thus, the theorem is proved
in the Rademacher case.

\subsubsection{Proof of Theorem~\ref{thm:Kahane_prob} in the general case}

For every $M\in\bbn$, consider the event 
\[
B_{M}=\Bigl\{\omega\colon\lim_{r\to1}\,\inf_{|b|\le M}\, N_{F}(r,b)=+\infty\Bigr\}.
\]
Given $r\in\bigl(\tfrac{1}{2},1\bigr)$, the function $\inf_{\left|b\right|\le M}\, N_{F}(r,b)$
is measurable in $\omega$ (note that here the infimum can be taken
over any dense countable subset of the disk $\{|b|\le M\}$). Thus,
the set $B_{M}$ is measurable and so is the set $B=\bigcap_{M}B_{M}$,
and for every $\omega\in B$, $b\in\bbc$, we have~\eqref{eq:N(r,b)}.
It remains to show that $B$ holds almost surely.

To that end, we extend the probability space to $\Omega\times\Omega'$
and introduce a sequence of independent Rademacher random variables
$\left\{ \xi_{n}(\omega')\right\} $, $\omega'\in\Omega'$, which
are also independent from the random variables $\left\{ \zeta_{n}(\omega)\right\} $,
$\omega\in\Omega$, and consider the random analytic function 
\[
G(z)=G(z;\omega,\omega')=\sum_{n\ge0}\xi_{n}(\omega')\zeta_{n}(\omega)z^{n}\,,\qquad(\omega,\omega')\in\Omega\times\Omega'\,.
\]
By the previous section, for fixed $\zeta_{n}$'s (outside a set of
probability zero in $\Omega$), the event 
\[
\left\{ \omega'\in\Omega'\colon\lim_{r\to1}\,\inf_{|b|\le M}N_{G}(r,b)=+\infty\right\} 
\]
occurs with probability $1$. Hence, by Fubini's theorem, the event
$B_{M}$ occurs a.s. and so does the event $B$. Note that due to
the symmetry of the distribution of $\zeta_{n}$'s, the random variables
$\{\xi_{n}(\omega')\zeta_{n}(\omega)\}$ are equidistributed with
$\{\zeta_{n}(\omega)\}$. This yields the theorem in the general case
of symmetric random variables. \hfill{}$\Box$ 
\index{Kahane's question|)}

\subsection{Some further problems \index{open problems}}

Curiously enough, even in the case when $\zeta_{k}=\xi_{k}a_{k}$
with the standard complex Gaussian $\xi_{k}$'s, the question when
$F(\mathbb{D})=\mathbb{C}$ almost surely is not completely settled.
In \cite{Mu1} Murai proved Paley's conjecture, which states that
if $F$ is a (non-random) Taylor series with Hadamard gaps and with
the radius of convergence $1$, then $F$ assumes every complex value
infinitely often, provided that $\sum_{k\ge0}|a_{k}|=+\infty$. Therefore,
the same holds for random Taylor series with Hadamard gaps. However,
even the case of sequences $a_{k}$ with a regular behaviour remains
open:
\begin{problem}
\label{quest:gaussian} Suppose that $\xi_{k}$ are independent standard
complex Gaussian random variables. In addition, assume that the non-random
sequence $\{a_{k}\}$ is decaying regularly and satisfies 
\begin{equation}
\sum_{k\ge0}|a_{k}|^{2}<\infty\,,\qquad\sum_{k\ge0}\Bigl|\frac{a_{k}}{\sqrt{k}}\Bigr|=\infty\,.\label{eq:Kahane_reg}
\end{equation}
Does the range of the random Taylor series $F(z)=\sum_{k\ge0}\xi_{k}a_{k}z^{k}$
fill the whole complex plane $\mathbb{C}$ a.s.?
\end{problem}
Note that convergence of the first series in (\ref{eq:Kahane_reg})
yields that, a.s., the function $F$ belongs to all Hardy spaces $H^{p}$
with $p<\infty$. Moreover, by the Paley-Zygmund theorem \cite[Chapter~5]{Ka2},
a.s. we have $e^{\lambda|\widehat{F}|^{2}}\in L^{1}(\mathbb{T})$
for every positive $\lambda$, where $\widehat{F}$ denotes the non-tangential
boundary values of $F$ on $\mathbb{T}$. On the other hand, by Fernique's
theorem~\cite[Chapter~15]{Ka2}, divergence of the second series
in (\ref{eq:Kahane_reg}) yields that, a.s., $F$ is unbounded in
$\mathbb{D}$.

\section{Almost sure convergence of the number of zeros\label{sec:as_conv_of_n(r)}}

In this section, we explain how to use Theorem \ref{thm:log_int_for_Rade_Four_series}
to find almost sure asymptotics for the number of zeros of a Rademacher
entire function $f$ in a disk $\disk r=\left\{ \left|z\right|\le r\right\} $,
as $r$ tends to infinity (this can be viewed as a strong law of large
numbers for the number of zeros). In the next section we use a more
elaborate argument to prove the equidistribution of the zeros.

\subsection{Short Background}

There is a large body of work about the real zeros of random polynomials%
\footnote{It should be mentioned, however, that we are not aware of any `asymptotic
rate of growth' result about the real zeros of polynomials with general
(say) Gaussian coefficients.%
}, see the books \cite{BS,Far}. However, there are not too many works
that are concerned with complex zeros of random analytic functions.
Hammersley \cite{Ham} gave an expression for the joint distribution
of the complex (and real) zeros of random polynomials with general
Gaussian coefficients. However, it is not easy to deduce the asymptotic
behavior of the zeros from his results. Shepp and Vanderbei \cite{SV}
studied the very special case of $\sum_{k=0}^{n}\xi_{k}z^{k}$, with
$\xi_{n}$ i.i.d. standard real Gaussians. They extended the results
of Kac and Rice about the distribution of the real zeros to the distribution
of the complex ones. Using the special structure of these polynomials
they found asymptotic expressions for the distribution of the zeros.
Ibragimov and Zeitouni \cite{IZ} then proved generalizations of these
results for random variables $\xi_{k}$ that belong to the domain
of attraction of a stable law. Among several other results relating
to the above polynomials, Edelman and Kostlan also proved in \cite[Theorem 8.2]{EK}
the formula (\ref{eq:Ex_n_f(r)}) for the expected number of zeros
of a (Gaussian) entire function in the disk $\disk r$ that we have
mentioned in the introduction.

\subsection{Notations}

Let

\[
f\left(z\right)=\sum_{n\ge0}a_{n}\xi_{n}z^{n}
\]
be a Rademacher entire function. In order to simplify the argument,
we will assume in this section that 
\[
a_{0}=1.
\]
If $n_{f}\left(r\right)$ \index{1nF@$n_{F}$}is the number of zeros
of $f$ inside the disk $\disk r=\left\{ \left|z\right|\le r\right\} $
(including multiplicities), then by Jensen's formula\index{1NF@$N_{F}$}\index{Jensen's formula}
\[
\intsy_{0}^{1}\log\left|f\left(re{}^{2\pi i\theta}\right)\right|\, d\theta=\intsy_{0}^{r}\frac{n_{f}\left(t\right)}{t}\, dt\eqdef N_{f}\left(r\right),
\]
We thus have $n_{f}\left(r\right)=\frac{dN_{f}\left(r\right)}{d\log r}=r\cdot\frac{\dd N_{f}\left(r\right)}{\dd r}.$
We also define the following functions:\index{1sigmaF@$\sigma_{F}$}\index{1sF@$s_{F}$}
\begin{eqnarray*}
\sigma_{f}^{2}\left(r\right) & = & \sum_{k\ge0}\left|a_{n}\right|^{2}r^{2n},\\
\zrf_{f}\left(r\right) & = & \frac{d\log\sigma_{f}\left(r\right)}{d\log r}=r\cdot\frac{\sigma_{f}^{\prime}\left(r\right)}{\sigma_{f}\left(r\right)}.
\end{eqnarray*}
The first of these functions measures the `expected' rate of growth
of $N_{f}\left(r\right)$, while the second one measures the rate
of growth of $n_{f}\left(r\right)$. A set $E\subset\left[1,\infty\right)$
is called a set of finite logarithmic measure if\index{finite logarithmic measure}
\[
\intsy_{E}\,\frac{\dd t}{t}<\infty.
\]
We write\index{1Fhat@$\widehat{F}$}
\[
\widehat{f}\left(re^{2\pi i\theta}\right)=\frac{f\left(re^{2\pi i\theta}\right)}{\sigma_{f}\left(r\right)},
\]
and notice that
\[
\Ex\left\{ \intsy_{\bbt}\left|\widehat{f}\left(re^{2\pi i\theta}\right)\right|^{2}\d{\theta}\right\} =1,
\]
Hence, by Theorem \ref{thm:log_int_for_Rade_Four_series},
\begin{equation}
\Ex\left\{ \left|N_{f}\left(r\right)-\log\sigma_{f}\left(r\right)\right|^{p}\right\} \le\Ex\left\{ \intsy_{\bbt}\left|\log\left|\widehat{f}\left(re^{2\pi i\theta}\right)\right|\right|^{p}\right\} \le\left(Cp\right)^{6p},\label{eq:moment_est_for_N_f(r)}
\end{equation}
for every $p\ge1$. Using this result we prove
\begin{thm}
\label{thm:as_conve_of_n_f(r)}Almost surely, for every $r\ge r_{0}\left(\omega\right)\ge1$
\[
\left|n_{f}\left(r\right)-\zrf_{f}\left(r\right)\right|\le C\sqrt{\zrf_{f}\left(r\right)}\log^{4}\zrf_{f}\left(r\right),\quad r\notin E,
\]
where $E$ is a (non-random) set of finite logarithmic measure.
\end{thm}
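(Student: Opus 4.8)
The plan is to control the fluctuations of the integrated counting function $N_{f}$ around its deterministic proxy $\log\sigma_{f}$, and then to \emph{differentiate} this relation, passing from $N_{f}$ to $n_{f}=\dd N_{f}/\dd\log r$ by a convexity (divided-difference) argument. Throughout we may assume $f$ is not a polynomial, so that $s_{f}(r)\to\infty$; the polynomial case (with $s_{f}$ bounded) is trivial.

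\textbf{Step 1: a uniform bound on $N_{f}-\log\sigma_{f}$.} Write $\varphi(r)=N_{f}(r)-\log\sigma_{f}(r)=\intsy_{\bbt}\log|\widehat{f}(re^{2\pi\mathrm{i}\theta})|\,\dd\theta$. Estimate (\ref{eq:moment_est_for_N_f(r)}), a direct consequence of Theorem~\ref{thm:log_int_for_Rade_Four_series}, gives $\Ex|\varphi(r)|^{p}\le(Cp)^{6p}$ for all $p\ge1$, hence $\Pr\{|\varphi(r)|>\lambda\}\le\exp(-c\lambda^{1/6})$ for large $\lambda$. Since $\log\sigma_{f}$ is continuous and increases to $+\infty$, choose $r_{k}$ with $\log\sigma_{f}(r_{k})=k$; applying Chebyshev's inequality with $\lambda_{k}\asymp\log^{6}k$ and then the Borel--Cantelli lemma, almost surely $|\varphi(r_{k})|\le C\log^{6}k$ for all $k\ge k_{0}(\omega)$. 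Because $N_{f}$ and $\log\sigma_{f}$ are both nondecreasing, sandwiching an arbitrary $r\in[r_{k},r_{k+1}]$ between the two endpoints yields, almost surely,
\[
|\varphi(r)|\le C\log^{6}\!\bigl(e+\log\sigma_{f}(r)\bigr),\qquad r\ge r_{0}(\omega).
\]

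\textbf{Step 2: passing to $n_{f}$, and the exceptional set.} Fix a scale $h=h(r)>0$. Since $t\mapsto N_{f}(e^{t})$ is convex and $\log\sigma_{f}(re^{h})-\log\sigma_{f}(r)=\intsy_{0}^{h}s_{f}(re^{u})\,\dd u\le h\,s_{f}(re^{h})$, the upper and lower divided differences of $N_{f}$ give $s_{f}(re^{-h})-2L/h\le n_{f}(r)\le s_{f}(re^{h})+2L/h$ with $L=\sup_{[re^{-h},\,re^{h}]}|\varphi|$, hence $|n_{f}(r)-s_{f}(r)|\le\bigl(s_{f}(re^{h})-s_{f}(re^{-h})\bigr)+2L/h$; Step~1 controls $L$ by a power of $\log\log\sigma_{f}$. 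It remains to bound the increment of the deterministic nondecreasing function $s_{f}$ over the window $[re^{-h},re^{h}]$ — which, since no regularity is imposed on $\{a_{n}\}$, can only be done off a thin set. Here one invokes the classical Borel--Nevanlinna lemma: with a suitable slowly growing $\psi$ (roughly $\psi(s)\asymp\sqrt{s}\,\log^{2}s$, so that the lengths $1/\psi(s_{f}(r_{n}))$ along any chain of violating radii sum against a finite budget $\intsy^{\infty}\bigl(\psi(s)\,\sqrt{s}\log^{4}s\bigr)^{-1}\dd s<\infty$), one gets that, for $r$ outside a set $E$ of finite logarithmic measure, $s_{f}$ moves by at most $C\sqrt{s_{f}(r)}\log^{4}s_{f}(r)$ over a window of log-length $\asymp 1/\psi(s_{f}(r))$. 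Taking $h(r)\asymp 1/\psi(s_{f}(r))$, inserting the Step~1 bound on $L$, and dominating $\log\log\sigma_{f}(r)$ via $\log\sigma_{f}(r)\le C+s_{f}(r)\log r$, one makes $2L/h\asymp L\,\psi(s_{f}(r))$ no larger than the jump term, which yields
\[
|n_{f}(r)-s_{f}(r)|\le C\sqrt{s_{f}(r)}\,\log^{4}s_{f}(r),\qquad r\ge r_{0}(\omega),\ r\notin E,
\]
with $E$ non-random, since it is manufactured from $s_{f}$ alone.

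\textbf{Main obstacle.} The delicate point is Step~2: the two error contributions $s_{f}(re^{h})-s_{f}(re^{-h})$ and $2L/h$ pull $h$ in opposite directions (a wider window kills the second but inflates the first), and the Borel lemma only licenses the favourable inequality after discarding radii from a set of finite logarithmic measure — this is why the exceptional set is genuinely unavoidable, exactly as the remark following the statement records. Getting the exponents to land precisely on $\sqrt{s_{f}}\log^{4}s_{f}$ requires careful bookkeeping of the window size against the uniform $\varphi$-bound of Step~1 and the Borel budget; one must also treat separately, by an easier variant, the regime in which $s_{f}$ grows so slowly that $\log\log\sigma_{f}$ is not controlled by $\log s_{f}$.
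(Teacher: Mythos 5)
Your scheme — the moment bound \eqref{eq:moment_est_for_N_f(r)}, Borel--Cantelli, and then ``differentiation'' of $N_{f}-\log\sigma_{f}$ by convex divided differences with an exceptional set — is the same as the paper's, and your Step~1 is correct (it is exactly the paper's remark \eqref{eq:a.s._est_for_N_f(r)}). But Step~2 has a genuine gap, plus an arithmetic slip. The slip first: with your window $h\asymp1/\psi(s_{f}(r))$, $\psi(s)\asymp\sqrt{s}\log^{2}s$, the derivative-error term is $2L/h\asymp L\sqrt{s}\log^{2}s$, and since $L$ is (at best) of order $\log^{6}s_{f}$ in the typical regular case (e.g.\ $\log\sigma_{f}(r)\asymp r^{2}$), this is $\asymp\sqrt{s}\log^{8}s$, exceeding the target $\sqrt{s}\log^{4}s$ by $\log^{4}s$; the window must be $h\asymp\log^{2}s/\sqrt{s}$ (i.e.\ $\psi(s)\asymp\sqrt{s}/\log^{2}s$), and then the Borel budget and the increment bound still go through via Lemma~\ref{lemma:B-N-type} with $b=2$. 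That is repairable. The serious problem is that your only control on $L=\sup_{[re^{-h},re^{h}]}|\varphi|$ is the Step~1 bound $C\log^{6}\bigl(\log\sigma_{f}\bigr)$, and for irregular coefficients this is \emph{not} $O\bigl(\log^{6}s_{f}\bigr)$: for very lacunary series $s_{f}$ has long plateaus (say $s_{f}\approx n_{k}$ on $[R_{k-1},R_{k}]$ with transition radii $R_{k}$ chosen arbitrarily far apart), on which $\log\log\sigma_{f}\approx\log\log r$ can be astronomically larger than $\log s_{f}$, and this happens on sets of \emph{infinite} logarithmic measure, so you cannot discard that regime. Your proposed cure — widen $h$ so that $2L/h\lesssim\sqrt{s_{f}}\log^{4}s_{f}$ — then forces $h\gtrsim(\log\log r)^{6}/(\sqrt{s_{f}}\log^{4}s_{f})$, and excising neighborhoods of that log-width around each plateau transition gives a total of order $\sum_{k}(\log\log R_{k})^{6}/(\sqrt{n_{k}}\log^{4}n_{k})$, which diverges if $R_{k}$ grows fast enough; so the ``easier variant'' you defer to does not exist along these lines, and the exceptional set your argument produces need not have finite logarithmic measure.

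The paper avoids this entirely by never invoking a bound on $\varphi(r)$ that is uniform in $r$: it samples at radii $\rho_{m}$ defined by $s_{f}(e^{\rho_{m}})=m^{2}\log^{8}m$ (together with the offsets $\rho_{m}^{\pm}=\rho_{m}\pm\delta_{m}$, $\delta_{m}=C/(m\log^{2}m)$), and applies Chebyshev with $p=2\log m$ and Borel--Cantelli along this sequence, so that the almost sure error \emph{at the sampling radii} is $C\log^{6}m\asymp\log^{6}s_{f}$, with no $\log\sigma_{f}$ or $\log r$ entering — precisely because the moment bound \eqref{eq:moment_est_for_N_f(r)} is uniform in $r$ and the index $m$ tracks $s_{f}$-levels. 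The divided differences are then taken over the short intervals $(\rho_{m+1}^{-},\rho_{m+1})$ and $(\rho_{m},\rho_{m}^{+})$, the prescribed values force $s_{f}(e^{\rho_{m+1}})\le(1+C/m)\,s_{f}(e^{\rho_{m}})$ automatically (so no growth lemma is needed here at all), and the exceptional set is just $\bigcup_{m}(\rho_{m}^{-},\rho_{m}^{+})$ together with those gaps $[\rho_{m},\rho_{m+1}]$ shorter than $2\delta_{m}$, whose total length is summable by construction; long plateaus cause no loss since only $\log^{6}m$ appears there. To repair your proof you would have to reorganize Step~2 around such an $s_{f}$-parametrized sampling (or otherwise obtain an a.s.\ bound $|\varphi|\lesssim\log^{6}s_{f}$ at the radii actually used in the divided differences), rather than feeding the global $\log^{6}\log\sigma_{f}$ bound into the window estimate.
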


\subsection{Proof of Theorem \ref{thm:as_conve_of_n_f(r)}}

After the change variables $\rho=\log r$ the exceptional set that
will appear has to be of finite (Lebesgue) measure in $\rho.$

\subsubsection{The sequences $\left\{ \rho_{m}\right\} $ and $\left\{ \rho_{m}^{\pm}\right\} $}

Since the function $\zrf_{f}\left(r\right)$ tends monotonically to
infinity, we can choose a sequence $\rho_{m}\uparrow\infty$ such
$\zrf_{f}\left(e^{\rho_{m}}\right)=m^{\alpha}\log^{\beta}m$, where
$\alpha>1$, $\beta>0$ are some constants that will be chosen later.
We write $\Delta_{m}=\rho_{m+1}-\rho_{m}$ and let $\left\{ \delta_{m}\right\} $
be a sequence such that $0\le\delta_{m}\le\tfrac{1}{2}\Delta_{m}$,
again to be chosen later.

Let $\left\{ \rho_{m}^{\prime}\right\} $ be the sequence $\rho_{1}^{-},\rho_{1},\rho_{1}^{+},\rho_{2}^{-},\rho_{2},\rho_{2}^{+},\rho_{3}^{-},\rho_{3},\rho_{3}^{+},\ldots$,
where
\[
\rho_{m}^{-}=\rho_{m}-\delta_{m},\enskip\rho_{m}^{+}=\rho_{m}+\delta_{m}.
\]
The sequence $\left\{ \rho_{m}^{\prime}\right\} $ is where we have
`good sampling' of the function $N_{f}\left(e^{t}\right)$ (that is,
it is close to $\log\sigma_{f}\left(e^{t}\right)$), then using the
convexity of these functions we show that they cannot be far apart
for any $t$, outside an exceptional set of finite measure.

\subsubsection{How to `differentiate' $N_{f}\left(e^{t}\right)$?}

By (\ref{eq:moment_est_for_N_f(r)}) and Chebyshev\textquoteright{}s
inequality,
\begin{eqnarray*}
\pr{\left|N_{f}\left(e^{\rho_{m}^{\prime}}\right)-\log\sigma_{f}\left(e^{\rho_{m}^{\prime}}\right)\right|>t} & = & \pr{\left|N_{f}\left(e^{\rho_{m}^{\prime}}\right)-\log\sigma_{f}\left(e^{\rho_{m}^{\prime}}\right)\right|^{p}>t^{p}}\\
 & \le & \left(\frac{Cp^{6}}{t}\right)^{p},
\end{eqnarray*}
and choosing $t=Cep^{6}$, $p=2\log m$ we get
\[
\pr{\left|N_{f}\left(e^{\rho_{m}^{\prime}}\right)-\log\sigma_{f}\left(e^{\rho_{m}^{\prime}}\right)\right|>C\log^{6}m}\le\frac{1}{m^{2}}.
\]
Therefore, by the Borel-Cantelli lemma we have that a.s. for every
$m\ge m_{0}\left(\omega\right)$,
\begin{equation}
\left|N_{f}\left(e^{\rho_{m}^{\prime}}\right)-\log\sigma_{f}\left(e^{\rho_{m}^{\prime}}\right)\right|\le C\log^{6}m.\label{eq:samp_of_func_N_f(r)}
\end{equation}
Now, since $t\mapsto N_{f}\left(e^{t}\right)$ and $t\mapsto\log\sigma_{f}\left(e^{t}\right)$
are convex functions, we get for $\rho\in\left(\rho_{m}^{+},\rho_{m+1}^{-}\right)$,
\begin{eqnarray*}
n_{f}\left(e^{\rho}\right) & \le & n_{f}\left(e^{\rho_{m+1}^{-}}\right)\le\frac{N_{f}\left(e^{\rho_{m+1}}\right)-N_{f}\left(e^{\rho_{m+1}^{-}}\right)}{\rho_{m+1}-\rho_{m+1}^{-}}\\
 & \le & \frac{\log\sigma_{f}\left(e^{\rho_{m+1}}\right)-\log\sigma_{f}\left(e^{\rho_{m+1}^{-}}\right)}{\rho_{m+1}-\rho_{m+1}^{-}}+\frac{C\log^{6}m}{\delta_{m}}\\
 & \le & \zrf_{f}\left(e^{\rho_{m+1}}\right)+\frac{C\log^{6}m}{\delta_{m}}.
\end{eqnarray*}
In the other direction, using a similar argument, we get 
\[
n_{f}\left(e^{\rho}\right)\ge\zrf_{f}\left(e^{\rho_{m}}\right)-\frac{C\log^{6}m}{\delta_{m}}.
\]
In addition, by the choice of the sequence $\rho_{m}$,
\[
\zrf_{f}\left(e^{\rho_{m+1}}\right)\le\zrf_{f}\left(e^{\rho_{m}}\right)\left(1+\frac{C}{m}\right)\le\zrf_{f}\left(e^{\rho}\right)\left(1+\frac{C}{m}\right),
\]
and
\[
\zrf_{f}\left(e^{\rho_{m}}\right)\ge\zrf_{f}\left(e^{\rho_{m+1}}\right)\left(1-\frac{C}{m}\right)\ge\zrf_{f}\left(e^{\rho}\right)\left(1-\frac{C}{m}\right).
\]
Therefore, for $\rho\in\left(\rho_{m}^{+},\rho_{m+1}^{-}\right)$
we have
\[
\left|n_{f}\left(e^{\rho}\right)-\zrf_{f}\left(e^{\rho}\right)\right|\le\frac{C\cdot\zrf_{f}\left(e^{\rho}\right)}{m}+\frac{C\log^{6}m}{\delta_{m}}m^{\alpha-1}\le Cm^{\alpha-1}\log^{\beta}m+\frac{C\log^{6}m}{\delta_{m}}.
\]
The exceptional set is contained in $\bigcup_{m}\left(\rho_{m}^{-},\rho_{m}^{+}\right)$,
and thus we have to require $\sum\delta_{m}<\infty$. Hence, we have
the conditions:
\begin{eqnarray}
Cm^{\alpha-1}\log^{\beta}m+\frac{C\log^{6}m}{\delta_{m}} & = & o\left(m^{\alpha}\log^{\beta}m\right),\label{eq:req_diff_error}\\
\sum\delta_{m} & < & \infty,\label{eq:req_finite_excp_set}
\end{eqnarray}
and
\[
0\le\delta_{m}\le\tfrac{1}{2}\Delta_{m}.
\]

\subsubsection{Choice of the parameters}

Recall that $\sum\Delta_{m}=\infty$. We balance the requirement (\ref{eq:req_diff_error})
by choosing
\[
\delta_{m}=\frac{C\log^{6-\beta}m}{m^{\alpha-1}}.
\]
Consequently, if we want to satisfy the requirement (\ref{eq:req_finite_excp_set}),
we can select for example $\alpha=2$ and $\beta=8$. Now we must
have
\begin{equation}
\delta_{m}=\frac{C}{m\log^{2}m}\le\frac{1}{2}\Delta_{m}.\label{eq:upp_bnd_cond_for_delta_m}
\end{equation}
Notice that in principle $\Delta_{m}$ might be very small for a lot
of values of $m$, but then we can discard the corresponding intervals!
(that is, add them to the exceptional set). We denote by $E=\left\{ m_{k}\right\} $
the sequence of indices such that following holds
\[
\Delta_{m_{k}}<\frac{2C}{m_{k}\log^{2}m_{k}},
\]
and we add the sequence of intervals $\left[\rho_{m},\rho_{m+1}\right]$,
$m\in E$, to the exceptional set (the sum of their lengths is finite).
For the rest we can choose $\delta_{m}$ so that (\ref{eq:upp_bnd_cond_for_delta_m})
holds (it is clear that there are infinitely many indices that satisfy
(\ref{eq:upp_bnd_cond_for_delta_m}), since $\sum\Delta_{m}=\infty$
).

We conclude that for $\rho\in\left(\rho_{m}^{+},\rho_{m+1}^{-}\right)$,
$m\notin E$, and $m\ge m_{0}\left(\omega\right)$, we have, by the
choice of the sequence $\left\{ \rho_{m}\right\} $,
\[
\left|n_{f}\left(e^{\rho}\right)-\zrf_{f}\left(e^{\rho}\right)\right|\le\frac{C\cdot\zrf_{f}\left(e^{\rho_{m}}\right)}{m}\le C\sqrt{\zrf_{f}\left(e^{\rho_{m}}\right)}\log^{4}\zrf_{f}\left(e^{\rho_{m}}\right).
\]
Finally, for $r\ge r_{0}\left(\omega\right)$ and outside a set of
finite logarithmic measure
\[
\left|n_{f}\left(r\right)-\zrf_{f}\left(r\right)\right|\le C\sqrt{\zrf_{f}\left(r\right)}\log^{4}\zrf_{f}\left(r\right).
\]
 \hfill{}$\Box$ 
\begin{rem*}
By a similar argument to the above, and using (\ref{eq:samp_of_func_N_f(r)})
(with a different choice of the sequence $\rho_{k}^{\prime}$), one
can prove that a.s., for $r\ge r_{0}\left(\omega\right)$ we have
\begin{equation}
N_{f}\left(r\right)=\log\sigma_{f}\left(r\right)+\O\left(\log^{6}\left(\log\sigma_{f}\left(r\right)\right)\right).\label{eq:a.s._est_for_N_f(r)}
\end{equation}
No exceptional set is required in this case.\end{rem*}

\subsection{On The Exceptional Set in Theorem \ref{thm:as_conve_of_n_f(r)}}

In this section we construct an example which shows that the exceptional
set in Theorem \ref{thm:as_conve_of_n_f(r)} cannot always be avoided.
The idea is to construct a lacunary entire function $f$, so that
the number of zeros `jumps' at certain radii, while the function $s_{f}\left(r\right)$
increases more smoothly. This is an example which is essentially non-random.

Let $f$ be the following lacunary series 
\begin{equation}
f\left(z\right)=\sum_{n\ge0}\xi_{n}\frac{z^{2^{n}}}{\exp\left(2^{n}n\right)},\label{eq:f_lacunary_examp}
\end{equation}
with $\xi_{n}\in\left\{ \pm1\right\} $ (they need not be random).
We thus have
\[
\sigma_{f}^{2}\left(r\right)=\sum_{n\ge0}\frac{r^{2^{n+1}}}{\exp\left(2^{n+1}n\right)},
\]
and
\begin{eqnarray*}
s_{f}\left(r\right) & = & \frac{\dd\log\sigma_{f}}{\dd\log r}=\frac{r}{2\sigma_{f}^{2}\left(r\right)}\cdot\sum_{n\ge0}\frac{\left(2^{n+1}\right)r^{2^{n+1}-1}}{\exp\left(2^{n+1}n\right)}\\
 & = & \frac{1}{\sigma_{f}^{2}\left(r\right)}\cdot\sum_{n\ge0}\frac{\left(2^{n}\right)r^{2^{n+1}}}{\exp\left(2^{n+1}n\right)}.
\end{eqnarray*}
We show that there exists a set $E\subset\left[1,\infty\right)$ of
infinite Lebesgue measure, such that for any choice of signs $\xi_{n}\in\left\{ \pm1\right\} $,
we have
\[
\left|n_{f}\left(r\right)-s_{f}\left(r\right)\right|\ge cs_{f}\left(r\right),\quad r\in E,
\]
with some numerical constant $c>0$.

To simplify notation let us write $s=\log r$, and for $\left|z\right|=r=e^{s}$
\begin{eqnarray*}
B_{n}\left(s\right) & = & \frac{r^{2^{n+1}}}{\exp\left(2^{n}n\right)}=\exp\left(2^{n}\left(s-n\right)\right),\\
b_{n}\left(s\right) & = & \log B_{n}\left(s\right)=2^{n}\left(s-n\right),
\end{eqnarray*}
so now we have
\begin{eqnarray*}
s_{f}\left(r\right) & = & \sum_{n\ge0}\left[2^{n}B_{n}^{2}\left(s\right)\right]/\sum_{n\ge0}B_{n}^{2}\left(s\right).
\end{eqnarray*}
We note that for $s$ which is near an integer $k\in\bbn$ the largest
terms in (\ref{eq:f_lacunary_examp}) are those with indices $k-2$
and $k-1$. The rest we can put to the error term. We have
\begin{eqnarray*}
b_{k-1}\left(s\right) & = & 2^{k-1}\left(s-k+1\right),\\
b_{k-2}\left(s\right) & = & 2^{k-2}\left(s-k+2\right)=2^{k-1}\cdot\frac{s-k+2}{2},
\end{eqnarray*}
we also use the following notation for the sum of the remaining terms
\begin{eqnarray*}
E & = & E\left(z\right)=\sum_{n\notin\left\{ k-2,k-1\right\} }\xi_{n}\frac{z^{2^{n}}}{\exp\left(2^{n}n\right)},\\
E^{\prime} & = & E^{\prime}\left(s\right)=\sum_{n\notin\left\{ k-2,k-1\right\} }B_{n}^{2}\left(s\right),\\
E^{\dprime} & = & E^{\dprime}\left(s\right)=\sum_{n\notin\left\{ k-2,k-1\right\} }2^{n}B_{n}^{2}\left(s\right).
\end{eqnarray*}
We are going to find $\delta=\delta\left(k\right)$ such that $f\left(z\right)$
has at most $2^{k-2}$ zeros in $\disk{\left(k-\delta\left(k\right)\right)}$
and $f\left(z\right)$ has at least $2^{k-1}$ zeros in $\disk{\left(k+\delta\left(k\right)\right)}$.
The exceptional set will be around the points $k-\delta\left(k\right)$
(too few zeros) and $k+\delta\left(k\right)$ (too many zeros).

We start with a bound for the error terms, first for $E$ and $E^{\prime}$.
\begin{claim}
\label{clm:upp_bnd_tail_err}Let $k\in\bbn$ be sufficiently large,
and let $\left|z\right|=e^{s}$. If $\left|s-k\right|<\tfrac{1}{4}$,
then
\begin{eqnarray*}
\left|E\right| & \le & k\exp\left(\frac{s-k+3}{4}\cdot2^{k-1}\right),\\
E^{\prime} & \le & k\exp\left(\frac{s-k+3}{4}\cdot2^{k}\right).
\end{eqnarray*}
\end{claim}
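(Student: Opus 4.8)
The claim is a routine tail estimate, so the proof is a direct calculation splitting the series at the two dominant indices. I would argue as follows.

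First, fix $k\in\bbn$ large and $\left|z\right|=e^{s}$ with $\left|s-k\right|<\tfrac14$. Recall $b_{n}\left(s\right)=2^{n}\left(s-n\right)$, so $B_{n}\left(s\right)=\exp\left(b_{n}\left(s\right)\right)$ and $B_{n}^{2}\left(s\right)=\exp\left(2b_{n}\left(s\right)\right)$. The key observation is monotonicity of $n\mapsto b_{n}\left(s\right)$ along the relevant range: for $n\le k-2$ one has $s-n\ge s-k+2>\tfrac74>0$, and since both $2^{n}$ and $s-n$ increase as $n$ increases (up to $n\approx s$), the sequence $b_{n}\left(s\right)$ is increasing for $n$ in $\{0,1,\dots,k-2\}$; for $n\ge k$ one has $s-n\le s-k<\tfrac14<1$, and in fact $s-n<0$ once $n\ge k+1$, while at $n=k$ the term $b_{k}\left(s\right)=2^{k}\left(s-k\right)<2^{k-2}$ is already dominated by $b_{k-2}\left(s\right)=2^{k-2}\left(s-k+2\right)>2^{k-1}\cdot\tfrac78$ (using $s-k+2>\tfrac74$); for $n>k$, $b_{n}\left(s\right)<0<b_{k-2}\left(s\right)$. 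Hence over all $n\notin\{k-2,k-1\}$ the maximum of $b_{n}\left(s\right)$ is attained at $n=k-3$ (if $k\ge3$) or is negative, and in either case is at most $b_{k-3}\left(s\right)=2^{k-3}\left(s-k+3\right)=\tfrac14\left(s-k+3\right)2^{k-1}$.

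Second, I bound the tail sums geometrically. For $E^{\prime}=\sum_{n\notin\{k-2,k-1\}}B_{n}^{2}\left(s\right)=\sum\exp\left(2b_{n}\left(s\right)\right)$, the terms with $n\le k-3$ are bounded by their maximum $\exp\left(2b_{k-3}\left(s\right)\right)$, and there are at most $k-2\le k$ of them; the terms with $n\ge k$ are each at most $\exp\left(2b_{k}\left(s\right)\right)\le\exp\left(2^{k-1}\right)$, and being a sum of exponentials with exponents $2b_{n}\left(s\right)$ strictly decreasing and eventually very negative, their total is at most, say, $2\exp\left(2b_{k}\left(s\right)\right)$, which is again $\le\exp\left(2b_{k-3}\left(s\right)\right)$ for $k$ large since $s-k+3>\tfrac{11}{4}$ forces $2b_{k-3}\left(s\right)>2^{k-1}\cdot\tfrac{11}{4}>2^{k-1}+2\ge 2b_k(s)+\log 2$. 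Combining, $E^{\prime}\le k\exp\left(2b_{k-3}\left(s\right)\right)=k\exp\left(\tfrac{s-k+3}{4}\cdot2^{k}\right)$ (absorbing the factor $2$ into $k$). For $E=\sum_{n\notin\{k-2,k-1\}}\xi_{n}z^{2^{n}}\exp\left(-2^{n}n\right)$, the triangle inequality gives $\left|E\right|\le\sum_{n\notin\{k-2,k-1\}}B_{n}\left(s\right)=\sum\exp\left(b_{n}\left(s\right)\right)$, and exactly the same geometric-series bookkeeping yields $\left|E\right|\le k\exp\left(b_{k-3}\left(s\right)\right)=k\exp\left(\tfrac{s-k+3}{4}\cdot2^{k-1}\right)$.

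**Main obstacle.** The only delicate point is making the two-sided truncation of the geometric tails fully rigorous with the stated clean constant $\tfrac14$ in the exponent and the clean prefactor $k$ — i.e., verifying that for $n\ge k$ the terms really are dominated (for $k$ large) by the single term $b_{k-3}\left(s\right)$, so that the entire tail is swallowed by the $k-2$ terms on the left side and the prefactor stays $k$ rather than $Ck$. This is where one must be slightly careful about whether $k-3\ge 0$ (for small $k$ the left tail is empty or tiny, which only helps) and about the precise comparison $2^{k-2}(s-k+2)$ versus $2^k(s-k)$ near $s=k$; but since $\left|s-k\right|<\tfrac14$ all these comparisons have uniform slack and hold once $k$ exceeds an absolute constant, which is exactly the hypothesis "$k$ sufficiently large." Everything else is bookkeeping.
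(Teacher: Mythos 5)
Your proof is correct and takes essentially the same route as the paper: split the tail at $n\le k-3$ and $n\ge k$, bound the left block by the number of terms times its largest term at $n=k-3$, and absorb the right block, whose exponents are dominated by $b_{k-3}(s)$ because $s-k<\tfrac14$, into the prefactor $k$. Two harmless slips worth fixing: $s-n$ decreases (not increases) in $n$, so the monotonicity of $b_{n}(s)$ on $\{0,\dots,k-2\}$ should instead be justified by $b_{n+1}(s)/b_{n}(s)=2(s-n-1)/(s-n)>1$ whenever $s-n>2$, which holds for $n\le k-3$; and $2b_{k-3}(s)=2^{k-2}(s-k+3)>2^{k-1}\cdot\tfrac{11}{8}$ (not $\tfrac{11}{4}$), which still exceeds $2b_{k}(s)+\log 2$ once $k$ is large, so your conclusion stands.
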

\begin{proof}
The modulus of $E$ is bounded as follows
\[
\left|E\right|\le\sum_{0\le n\le k-3}\exp\left(b_{n}\left(s\right)\right)+\sum_{n\ge k}\exp\left(b_{n}\left(s\right)\right)=S_{1}+S_{2}.
\]
We write $m=k-n$. For the first sum we have
\[
S_{1}=\sum_{3\le m\le k}\exp\left(2^{k}\cdot2^{-m}\left(s-k+m\right)\right)\le\left(k-2\right)\exp\left(\frac{s-k+3}{4}\cdot2^{k-1}\right)
\]
and for the second one 
\[
S_{2}=\sum_{m\le0}\exp\left(2^{k}\cdot2^{-m}\left(s-k+m\right)\right)=\sum_{l\ge0}\exp\left(2^{k}\cdot2^{l}\left(s-k-l\right)\right)=\left(\star\right).
\]
Now, if $\left|s-k\right|<1$ we have
\begin{eqnarray*}
\left(\star\right) & \le & \exp\left(2^{k}\left(s-k\right)\right)+\sum_{l\ge1}\exp\left(2^{k}\cdot2\left(s-k-l\right)\right)\le\exp\left(2^{k}\left(s-k\right)\right)+2\\
 & \le & 2\exp\left(2\left(s-k\right)\cdot2^{k-1}\right).
\end{eqnarray*}
Since for $\left|s-k\right|<\tfrac{1}{4}$ we have $\frac{s-k+3}{4}>2\left(s-k\right)$
we get the required result for $E$. It is clear that exactly the
same method leads to the estimate for $E^{\prime}$.
\end{proof}
We now need similar estimates for the error term in the numerator
of $s_{f}\left(r\right)$.
\begin{claim}
\label{clm:upp_bnd_numer_tail_err}Let $k\in\bbn$ be sufficiently
large. If $\left|s-k\right|<\tfrac{1}{4}$, then
\[
E^{\dprime}\le4^{k+1}\exp\left(\frac{s-k+3}{4}\cdot2^{k}\right).
\]
\end{claim}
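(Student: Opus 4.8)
The plan is to mimic the proof of Claim~\ref{clm:upp_bnd_tail_err}, splitting the sum defining $E^{\dprime}$ at the index $n=k$. Recall $2^{n}B_{n}^{2}(s)=2^{n}\exp\bigl(2^{n+1}(s-n)\bigr)$, and that $E'$, as defined just above the claim, is the sum of $B_{n}^{2}(s)$ over \emph{all} $n\ge0$ with $n\notin\{k-2,k-1\}$.

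First I would treat the ``low'' range $0\le n\le k$ (with $n\notin\{k-2,k-1\}$). There the extra factor $2^{n}$ occurring in $E^{\dprime}$ (relative to $E'$) is at most $2^{k}$, so
\[
\sum_{\substack{0\le n\le k\\ n\notin\{k-2,k-1\}}}2^{n}B_{n}^{2}(s)\;\le\;2^{k}\!\!\sum_{\substack{n\ge0\\ n\notin\{k-2,k-1\}}}\!\!B_{n}^{2}(s)\;=\;2^{k}E'\;\le\;k\,2^{k}\exp\!\Bigl(\tfrac{s-k+3}{4}\,2^{k}\Bigr),
\]
where the last inequality is precisely the bound on $E'$ from Claim~\ref{clm:upp_bnd_tail_err}. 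Next I would treat the ``high'' range $n\ge k+1$. For $|s-k|<\tfrac14$ and $n\ge k+1$ one has $s-n< \tfrac14-(n-k)\le-\tfrac34$, hence $2^{n+1}(s-n)<-3\cdot2^{n-1}$ and $2^{n}B_{n}^{2}(s)<2^{n}\exp(-3\cdot2^{n-1})$. Summing over $n=k+m$, $m\ge1$, the series $\sum_{m\ge1}2^{k+m}\exp(-3\cdot2^{k+m-1})$ is dominated by a rapidly convergent series that tends to $0$ as $k\to\infty$; in particular it is $\le1$ for $k$ large, and since the exponent $\tfrac{s-k+3}{4}2^{k}$ is positive (because $s-k+3>0$) this range contributes at most $\exp\!\bigl(\tfrac{s-k+3}{4}2^{k}\bigr)$.

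Adding the two contributions gives $E^{\dprime}\le(k\,2^{k}+1)\exp\!\bigl(\tfrac{s-k+3}{4}2^{k}\bigr)$, and since $k\,2^{k}+1\le 4^{k+1}$ for every $k\ge1$ (as $k\le2^{k}$), the claim follows. The only place requiring any attention is the high range, where the factor $2^{n}$ grows; but the double-exponential decay of $B_{n}^{2}(s)$ there (since $s-n$ is bounded away from $0$ by a negative constant) overwhelms it, exactly as in the estimate of $S_{2}$ in Claim~\ref{clm:upp_bnd_tail_err}, so I expect no real obstacle --- the generous constant $4^{k+1}$ is there precisely to absorb the $2^{k}$ lost in the low range.
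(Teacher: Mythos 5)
Your proof is correct: the split into $0\le n\le k$ and $n\ge k+1$ covers exactly the index set of $E^{\dprime}$, the crude bound $2^{n}\le 2^{k}$ on the low range legitimately reduces it to $2^{k}E^{\prime}$, the quoted bound $E^{\prime}\le k\exp\bigl(\tfrac{s-k+3}{4}\cdot2^{k}\bigr)$ from Claim \ref{clm:upp_bnd_tail_err} applies under the same hypotheses, the terms with $n\ge k+1$ indeed satisfy $s-n<-\tfrac34$ and hence are superexponentially small, and $k\,2^{k}+1\le4^{k+1}$ closes the argument. The route is genuinely a bit different from the paper's. The paper splits at $n\le k-3$ and $n\ge k$ and redoes the term-by-term estimates of Claim \ref{clm:upp_bnd_tail_err} with the extra powers of two carried along: the low block is bounded directly by $4^{k}\exp\bigl(\tfrac{s-k+3}{4}\cdot2^{k}\bigr)$, while the tail starting at $n=k$ is bounded by $2^{k+2}\exp\bigl(2(s-k)\cdot2^{k}\bigr)$, so the paper must invoke once more the elementary inequality $\tfrac{s-k+3}{4}>2(s-k)$ for $|s-k|<\tfrac14$. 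You instead absorb the borderline term $n=k$ into the low block and recycle the already-proved bound on $E^{\prime}$, so the only new computation is the trivially small range $n\ge k+1$ and the final comparison of exponents never reappears; the price is a slightly larger combinatorial factor ($k\,2^{k}$ rather than the paper's $2^{k-2}$-type count), which the generous constant $4^{k+1}$ absorbs. Your version is shorter and makes the dependence on the previous claim explicit; the paper's is self-contained and exactly parallels its own proof of Claim \ref{clm:upp_bnd_tail_err}. Either way the stated bound holds for $k$ sufficiently large.
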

\begin{proof}
Let us write
\[
E^{\dprime}=\sum_{0\le n\le k-3}2^{n}B_{n}^{2}\left(s\right)+\sum_{n\ge k}2^{n}B_{n}^{2}\left(s\right)=S_{1}+S_{2},
\]
and again write $m=k-n$. Thus we have the following upper bounds
\[
S_{1}=\sum_{3\le m\le k}2^{k-m}\exp\left(2^{k+1}\cdot2^{-m}\left(s-k+m\right)\right)\le4^{k}\exp\left(\frac{s-k+3}{4}\cdot2^{k}\right),
\]
and similarly
\begin{eqnarray*}
S_{2} & = & \sum_{m\le0}2^{k-m}\exp\left(2^{k+1}\cdot2^{-m}\left(s-k+m\right)\right)\\
 & = & \sum_{l\ge0}2^{k+l}\exp\left(2^{k+1}\cdot2^{l}\left(s-k-l\right)\right)\\
 & \le & 2^{k}\exp\left(2^{k+1}\left(s-k\right)\right)+\sum_{l\ge1}2^{k+l}\exp\left(2^{k+1}\cdot2\left(\frac{1}{4}-l\right)\right)\\
 & \le & 2^{k+1}\exp\left(2\left(s-k\right)\cdot2^{k}\right)+2^{2+k}\exp\left(-3\cdot2^{k+2}\right)\\
 & \le & 2^{k+2}\exp\left(2\left(s-k\right)\cdot2^{k}\right).
\end{eqnarray*}
Again using the fact that for $\left|s-k\right|<\tfrac{1}{4}$ we
have that $\frac{s-k+3}{4}>2\left(s-k\right)$ we get the required
estimate for $E^{\dprime}$.
\end{proof}
The conclusion of these claims is that if $\left|z\right|=e^{s}$,
$\left|s-k\right|<\frac{1}{4}$, then
\begin{eqnarray*}
f\left(z\right) & = & \xi_{k-2}\frac{z^{2^{k-2}}}{\exp\left(2^{k-2}\left(k-2\right)\right)}+\xi_{k-1}\frac{z^{2^{k-1}}}{\exp\left(2^{k-1}\left(k-1\right)\right)}\\
 &  & +O\left(k\cdot\exp\left(\frac{s-k+3}{4}\cdot2^{k-1}\right)\right),
\end{eqnarray*}
and
\[
s_{f}\left(e^{s}\right)=\frac{2^{k-2}B_{k-2}^{2}\left(s\right)+2^{k-1}B_{k-1}^{2}\left(s\right)+O\left(4^{k+1}\exp\left(\frac{s-k+3}{4}\cdot2^{k}\right)\right)}{B_{k-2}^{2}\left(s\right)+B_{k-1}^{2}\left(s\right)+\O\left(k\cdot\exp\left(\frac{s-k+3}{4}\cdot2^{k}\right)\right)}.
\]
Now let us set $\delta_{k}=2^{2-k}\log2$. The following claims show
that $n_{f}\left(r\right)$ has a `jump' somewhere between the points
$s=k-\delta_{k}$ and $s=k+\delta_{k}$, while the function $s_{f}\left(r\right)$
increases more smoothly. Therefore, there is some discrepancy between
these functions near those points.
\begin{claim}
Let $k\in\bbn$ be sufficiently large. We have 
\[
n_{f}\left(e^{s}\right)=\begin{cases}
2^{k-2} & ,\, s\in\left(k-2\delta_{k},k-\delta_{k}\right),\\
2^{k-1} & ,\, s\in\left(k+\delta_{k},k+2\delta_{k}\right).
\end{cases}
\]
\end{claim}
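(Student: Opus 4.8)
The plan is to prove the claim by Rouch\'{e}'s theorem, comparing $f$ on the circles $\{|z|=e^{s}\}$ with the two leading terms
\[
g_{k}(z)=\xi_{k-2}\,\frac{z^{2^{k-2}}}{\exp(2^{k-2}(k-2))}+\xi_{k-1}\,\frac{z^{2^{k-1}}}{\exp(2^{k-1}(k-1))},
\]
so that $f=g_{k}+E$, where $E$ is exactly the tail estimated in Claim~\ref{clm:upp_bnd_tail_err}. First I would locate the zeros of $g_{k}$ precisely. Pulling out $z^{2^{k-2}}$, the polynomial $g_{k}$ has a zero of order $2^{k-2}$ at the origin, and its remaining $2^{k-2}$ zeros satisfy $z^{2^{k-2}}=-(\xi_{k-2}/\xi_{k-1})\exp\!\bigl(2^{k-1}(k-1)-2^{k-2}(k-2)\bigr)=-(\xi_{k-2}/\xi_{k-1})\exp(2^{k-2}k)$; since $|\xi_{k-2}/\xi_{k-1}|=1$, each of these zeros has modulus exactly $e^{k}$. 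Hence $g_{k}$ has no zeros on $\{|z|=e^{s}\}$ whenever $s\neq k$, it has exactly $2^{k-2}$ zeros in $\disk{e^{s}}$ for $s<k$, and exactly $2^{k-1}$ zeros in $\disk{e^{s}}$ for $s>k$. Therefore it suffices to show $|E|<|g_{k}|$ on $\{|z|=e^{s}\}$ for every $s$ in the two windows $(k-2\delta_{k},k-\delta_{k})$ and $(k+\delta_{k},k+2\delta_{k})$.

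Next I would bound $|g_{k}|$ from below on a circle $\{|z|=e^{s}\}$. By the triangle inequality, $\min_{|z|=e^{s}}|g_{k}(z)|\geq|B_{k-2}(s)-B_{k-1}(s)|$, and a direct computation gives $b_{k-2}(s)-b_{k-1}(s)=2^{k-2}(k-s)$, so $B_{k-2}(s)/B_{k-1}(s)=\exp(2^{k-2}(k-s))$. The value $\delta_{k}=2^{2-k}\log2$ is calibrated so that $2^{k-2}\delta_{k}=\log2$, whence this ratio equals $2$ at $s=k-\delta_{k}$. Thus for $s\in(k-2\delta_{k},k-\delta_{k})$ the ratio is at least $2$, giving $|B_{k-2}(s)-B_{k-1}(s)|\geq\tfrac{1}{2}B_{k-2}(s)=\tfrac{1}{2}\exp(2^{k-2}(s-k+2))\geq\tfrac{1}{2}e^{2^{k-1}-2\log2}=\tfrac{1}{8}e^{2^{k-1}}$, using $2^{k-1}\delta_{k}=2\log2$. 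Symmetrically, for $s\in(k+\delta_{k},k+2\delta_{k})$ one has $B_{k-1}(s)/B_{k-2}(s)\geq2$ and $|B_{k-2}(s)-B_{k-1}(s)|\geq\tfrac{1}{2}B_{k-1}(s)\geq\tfrac{1}{2}e^{2^{k-1}}$. So in both windows $|g_{k}|\geq\tfrac{1}{8}e^{2^{k-1}}$ on the circle.

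Finally I would invoke Claim~\ref{clm:upp_bnd_tail_err}: once $k$ is large enough that $2\delta_{k}<\tfrac{1}{4}$, every $s$ in the two windows satisfies $|s-k|<\tfrac{1}{4}$ and $\tfrac{s-k+3}{4}\leq\tfrac{3+2\delta_{k}}{4}\leq\tfrac{7}{8}$, hence $|E|\leq k\exp\bigl(\tfrac{s-k+3}{4}\cdot2^{k-1}\bigr)\leq k\,e^{(7/8)2^{k-1}}$ on $\{|z|=e^{s}\}$. Comparing with the lower bound, $|E|<|g_{k}|$ on the circle as soon as $8k<e^{2^{k-4}}$, which holds for all large $k$. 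Rouch\'{e}'s theorem then shows that $f$ and $g_{k}$ have the same number of zeros in $\disk{e^{s}}$ for every such $s$, i.e. $n_{f}(e^{s})=2^{k-2}$ for $s\in(k-2\delta_{k},k-\delta_{k})$ and $n_{f}(e^{s})=2^{k-1}$ for $s\in(k+\delta_{k},k+2\delta_{k})$, which is the claim.

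The only delicate point is the lower bound for $|g_{k}|$ on the circle: one must notice that the minimum modulus of $g_{k}$ there can be as small as $|B_{k-2}(s)-B_{k-1}(s)|$, so the windows must be kept narrow enough that the two leading monomials remain separated by a bounded multiplicative factor as $s$ ranges over them $-$ which is exactly why $\delta_{k}$ is chosen of the precise size $2^{2-k}\log2$. Everything else is a routine application of Rouch\'{e}'s theorem together with the already-established tail bound, so no genuine obstacle remains.
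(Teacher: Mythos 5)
Your proof is correct and is essentially the paper's own argument: the same Rouch\'{e} comparison on the circles $\{|z|=e^{s}\}$, the same tail estimate from Claim \ref{clm:upp_bnd_tail_err}, and the same calibration $2^{k-2}\delta_{k}=\log 2$ guaranteeing that the two leading terms differ by a factor at least $2$ throughout each window. The only cosmetic difference is that the paper dominates everything else by a single monomial (the $(k-2)$-nd term below $s=k$, the $(k-1)$-st term above), while you compare $f$ with the two-term polynomial $g_{k}$, bound $|g_{k}|$ from below by the reverse triangle inequality, and locate its zeros (modulus $0$ or $e^{k}$) explicitly; both routes give the same zero counts.
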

\begin{proof}
We will consider only the case $s\in\left(k-2\delta_{k},k-\delta_{k}\right)$,
the second case being similar. By Rouché's Theorem it is sufficient
to show that
\[
\left|\xi_{k-2}\frac{z^{2^{k-2}}}{\exp\left(2^{k-2}\left(k-2\right)\right)}\right|>\left|\xi_{k-1}\frac{z^{2^{k-1}}}{\exp\left(2^{k-1}\left(k-1\right)\right)}\right|+\left|E\right|,
\]
for $\left|z\right|=e^{s}$, which in our notation translates to the
inequality
\[
B_{k-2}\left(s\right)>B_{k-1}\left(s\right)+\left|E\right|,
\]
which is equivalent to
\[
\exp\left(b_{k-2}\left(s\right)\right)>\exp\left(b_{k-1}\left(s\right)\right)+\left|E\right|.
\]
Let us write $s=k-\delta$, for some $\delta\in\left(\delta_{k},2\delta_{k}\right)$.
Thus,
\[
b_{n}\left(s\right)=2^{n}\left(s-n\right)=2^{n}\left(k-n-\delta\right),
\]
and so
\begin{eqnarray*}
b_{k-2}\left(s\right)-b_{k-1}\left(s\right) & = & 2^{k-2}\left(2-\delta\right)-2^{k-1}\left(1-\delta\right),\\
 & = & 2^{k-2}\delta.
\end{eqnarray*}
By our choice of $\delta_{k}$ it now follows that
\[
B_{k-2}\left(s\right)\ge2B_{k-1}\left(s\right),\quad s\in\left(k-2\delta_{k},k-\delta_{k}\right),
\]
we also note that for all such $s$ we have
\[
B_{k-2}\left(s\right)\ge\exp\left(2^{k-2}\left(2-2\delta_{k}\right)\right)=\frac{1}{4}\exp\left(2^{k-1}\right).
\]
Now, by Claim \ref{clm:upp_bnd_tail_err}, for any $s\in\left(k-2\delta_{k},k-\delta_{k}\right)$
we have for $k$ sufficiently large
\[
\left|E\right|\le k\cdot\exp\left(\frac{s-k+3}{4}\cdot2^{k-1}\right)\le k\cdot\exp\left(\frac{3}{4}\cdot2^{k-1}\right)<\frac{1}{2}B_{k-2}\left(s\right).
\]
This concludes the proof of the claim.
\end{proof}
On the other hand
\begin{claim}
If $k\in\bbn$ is sufficiently large, then
\[
s_{f}\left(e^{s}\right)\ge\frac{18}{17}\cdot2^{k-2}\left(1+o\left(1\right)\right),\quad s\in\left(k-2\delta_{k},k-\delta_{k}\right)
\]
and
\[
s_{f}\left(e^{s}\right)\le\frac{33}{34}\cdot2^{k-1}\left(1+o\left(1\right)\right),\quad s\in\left(k+\delta_{k},k+2\delta_{k}\right).
\]
\end{claim}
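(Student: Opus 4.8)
The plan is to read everything off the asymptotic formula for $s_{f}(e^{s})$ displayed just above the claim, valid for $\left|s-k\right|<\tfrac{1}{4}$, which is itself a consequence of Claims~\ref{clm:upp_bnd_tail_err} and~\ref{clm:upp_bnd_numer_tail_err}. First I would divide its numerator and denominator by $B_{k-2}^{2}(s)$ and introduce the single parameter
\[
x=x(s)=\frac{B_{k-1}^{2}(s)}{B_{k-2}^{2}(s)}=\exp\bigl(2b_{k-1}(s)-2b_{k-2}(s)\bigr)=\exp\bigl(2^{k-1}(s-k)\bigr),
\]
the last equality being a one-line computation from $b_{n}(s)=2^{n}(s-n)$. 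In these terms the leading part of $s_{f}(e^{s})$ becomes $2^{k-2}\,g(x)$ with $g(x)=\dfrac{1+2x}{1+x}$, and since $g'(x)=(1+x)^{-2}>0$ the function $g$ is strictly increasing on $(0,\infty)$; this monotonicity is the entire mechanism of the claim.

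Next I would check that the two error terms are harmless. A direct computation gives
\[
\frac{\exp\bigl(\tfrac{s-k+3}{4}\,2^{k}\bigr)}{B_{k-2}^{2}(s)}=\exp\bigl(-2^{k-2}(1+s-k)\bigr),
\]
which for $\left|s-k\right|<\tfrac{1}{4}$ is at most $\exp(-2^{k-3})$; multiplying by $4^{k+1}$, respectively by $k$, still gives a quantity tending to $0$ as $k\to\infty$, uniformly in $s$ on the intervals in question. Hence, after cancelling $B_{k-2}^{2}(s)$, the additive $O$-terms contribute only a factor $1+o(1)$ (here one uses that $g(x)$ and $1+x$ stay bounded away from $0$ and $\infty$ on the relevant $x$-ranges), and one obtains
\[
s_{f}(e^{s})=2^{k-2}\,g\bigl(x(s)\bigr)\,(1+o(1)),
\]
with an $o(1)$ that depends only on $k$.

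It then remains to locate $x(s)$. With $\delta_{k}=2^{2-k}\log 2$: if $s\in(k-2\delta_{k},k-\delta_{k})$ then $2^{k-1}(s-k)\in(-4\log 2,-2\log 2)$, so $x(s)\in\bigl(\tfrac{1}{16},\tfrac{1}{4}\bigr)$, whence $g(x(s))>g\bigl(\tfrac{1}{16}\bigr)=\tfrac{18}{17}$; if $s\in(k+\delta_{k},k+2\delta_{k})$ then $2^{k-1}(s-k)\in(2\log 2,4\log 2)$, so $x(s)\in(4,16)$, whence $g(x(s))<g(16)=\tfrac{33}{17}$. Substituting into $s_{f}(e^{s})=2^{k-2}g(x(s))(1+o(1))$ and using $\tfrac{33}{17}\cdot2^{k-2}=\tfrac{33}{34}\cdot2^{k-1}$ gives precisely the two displayed inequalities.

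The argument presents no real obstacle; the one point deserving care is the uniformity of the $o(1)$ across $s$ in each interval, together with the observation that it is comfortably swallowed by the strict inequalities $g>\tfrac{18}{17}$ and $g<\tfrac{33}{17}$, which hold because the endpoints of the $x$-ranges are not attained. Everything else is bookkeeping with the linear exponents $b_{n}(s)$.
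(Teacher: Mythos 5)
Your proof is correct and follows essentially the same route as the paper: the same two dominant terms $B_{k-2}^{2},B_{k-1}^{2}$ with the same negligible error terms, the same ratio $1/16$ (resp.\ $16$) at the relevant points, yielding $\tfrac{18}{17}\cdot2^{k-2}$ and $\tfrac{33}{17}\cdot2^{k-2}=\tfrac{33}{34}\cdot2^{k-1}$. The only (harmless) difference is at the end: the paper evaluates $s_{f}$ at the endpoints $s=k\mp2\delta_{k}$ and invokes the monotonicity of $s_{f}(r)$, while you bound $s_{f}(e^{s})$ at every $s$ in the interval through the monotone function $g(x)=\frac{1+2x}{1+x}$ of $x(s)=\exp\bigl(2^{k-1}(s-k)\bigr)$, with the uniformity of the $o(1)$ correctly checked.
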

\begin{proof}
We again consider only the case $s\in\left(k-2\delta_{k},k-\delta_{k}\right)$.
Note that
\[
b_{k-2}\left(k-2\delta_{k}\right)-b_{k-1}\left(k-2\delta_{k}\right)=2^{k-2}\left(2\delta_{k}\right)=2\log2,
\]
and thus
\begin{eqnarray*}
4B_{k-1}\left(k-2\delta_{k}\right) & = & B_{k-2}\left(k-2\delta_{k}\right),
\end{eqnarray*}
in addition,
\[
B_{k-2}\left(k-2\delta_{k}\right)=\exp\left(2^{k-2}\left(2-2\delta_{k}\right)\right)=\frac{1}{4}\exp\left(2^{k-1}\right).
\]
By combining the estimates for the error terms from Claim \ref{clm:upp_bnd_tail_err}
and Claim \ref{clm:upp_bnd_numer_tail_err} we get
\begin{eqnarray*}
s_{f}\left(e^{k-2\delta_{k}}\right) & = & \frac{2^{k-2}B_{k-2}^{2}\left(k-2\delta_{k}\right)+2^{k-1}B_{k-1}^{2}\left(k-2\delta_{k}\right)+\O\left(k\exp\left(\frac{3}{4}\cdot2^{k}\right)\right)}{B_{k-2}^{2}\left(k-2\delta_{k}\right)+B_{k-1}^{2}\left(k-2\delta_{k}\right)+O\left(4^{k+1}\exp\left(\frac{3}{4}\cdot2^{k}\right)\right)}\\
 & = & \frac{2^{k-2}\cdot16+2^{k-1}+o\left(1\right)}{16+1+o\left(1\right)}=\frac{18}{17}\cdot2^{k-2}\left(1+o\left(1\right)\right).
\end{eqnarray*}
Since the function $s_{f}\left(r\right)$ is increasing we finished
the proof of this claim.
\end{proof}
This concludes the example for the necessity of an exceptional set.
\begin{rem*}
It should be mentioned that the Lebesgue measure of the exceptional
set is infinite. This follows from the fact that the series
\[
\sum_{k\ge1}\left[\exp\left(k+2\delta_{k}\right)-\exp\left(k+\delta_{k}\right)\right],\,\,\sum_{k\ge1}\left[\exp\left(k-\delta_{k}\right)-\exp\left(k-2\delta_{k}\right)\right],
\]
are both divergent (in fact their terms tend to infinity).\end{rem*}

\section{Equidistribution of the zeros - Proof of Theorem \ref{eq:Ex_n_f(r)}}

In this part we are interested in employing the tools developed in
the previous part of the work to study in more detail the distribution
of the zeros of Rademacher entire functions 
\[
F(z)=\sum_{n\ge0}\xi_{n}a_{n}z^{n}.
\]
By our knowledge on the behavior of GAFs, and specifically by the
Edelman-Kostlan formula \eqref{eq:Edel-Kos_for}, we expect that the
function\index{1sigmaF@$\sigma_{F}$} 
\[
\sigma_{F}^{2}(r)=\Ex\bigl\{|F(re^{{\rm i}\theta}|^{2})\bigr\}=\sum_{n\ge0}|a_{n}|^{2}r^{2n},
\]
and its logarithmic derivative\index{1sF@$s_{F}$} 
\[
s_{F}\left(r\right)=\frac{\mathrm{d}\log\sigma_{F}(r)}{\mathrm{d}\log r},
\]
will determine the rate of growth of the number of zeros of $F$,
and this is confirmed by the results of the previous section. In the
same way as we did in the Introduction, let us denote by $n_{F}(r,\alpha,\beta)$
the number of zeros of $F$ in the sector $\left\{ z\colon\alpha\le\arg z<\beta,\,\left|z\right|\le r\right\} $,
counted with multiplicities. Then the integrated counting function
is given by \index{1nF@$n_{F}$!n_{F}(r,alpha,beta)
@$n_{F}(r,\alpha,\beta)$} \index{1NF@$N_{F}$!N_{F}(r,alpha,beta)
@$N_{F}(r,\alpha,\beta)$} 
\[
N_{F}(r,\alpha,\beta)=\intsy_{0}^{r}\frac{n_{F}(r,\alpha,\beta)}{t}\d t\,.
\]
In the pioneering paper \cite{LO2}, Littlewood and Offord considered
Rademacher Taylor series that represent entire functions of a finite
and positive order of growth. That is, they assumed that\index{Rademacher Taylor series}
\[
0<\limsup_{n\to\infty}\frac{n\log n}{\log(1/|a_{n}|)}<\infty,
\]
which is equivalent to 
\[
0<\limsup_{r\to\infty}\frac{\log\log\sigma_{F}(r)}{\log r}<\infty.
\]
Under this assumption, they discovered that, for every $\epsym>0$,
a.s., 
\begin{equation}
\log|F(re^{{\rm i}\theta})|\ge\log\max_{n\ge0}\bigl(|a_{n}|r^{n}\bigr)-\O\left(r^{\epsym}\right)\label{eq:L-O}
\end{equation}
everywhere in the plane, except for the union of simply connected
domains of small diameter, in the interior of which the function $F$
has one or more zeros, and on the boundary of which $|F|$ is a constant.
They called these domains `pits'. Later, Offord \cite{Of1,Of4} extended
this result to random Taylor series with rather general independent
coefficients that represent entire functions of positive or infinite
order of growth. Other results obtained by Littlewood and Offord provided
additional information about the size of the pits and their location
(see~\cite[Theorem~4]{LO2}). However, it appears that they could
not deduce from~\eqref{eq:L-O} the asymptotics for the counting
function $n_{F}(r,\alpha,\beta)$, similar to the one we obtain in
this part of the work.

In the next section we will use a more general form of Jensen's formula
to show that, asymptotically, the size of $N_{F}(r,\alpha,\beta)$
is proportional to $\frac{\beta-\alpha}{2\pi}$, and that, actually,
the error can be bounded uniformly. This is the conclusion of the
first result:

\textit{Suppose that $a>2$ and $\gamma\in\bigl(\tfrac{1}{2}+\tfrac{1}{a},1\bigr)$.
Then, a.s. and in mean, when $r\to\infty$ and $\log\sigma_{F}(r)>\log^{a}r$,
}
\[
\sup_{0\le\alpha<\beta\le2\pi}\,\Bigl|N_{F}(r,\alpha,\beta)-\frac{\beta-\alpha}{2\pi}\,\log\sigma_{F}(r)\Bigr|=\O\left(\left(\log\sigma_{F}\left(r\right)\right)^{\gamma}\right).
\]
We then `differentiate' the above estimate (in a similar spirit to
the proof of Theorem \ref{thm:as_conve_of_n_f(r)}) to get the following
asymptotic estimate for $n_{F}\left(r,\alpha,\beta\right)$

\textit{Suppose that $a>4$ and $\gamma\in\bigl(\tfrac{3}{4}+\tfrac{1}{a},1\bigr)$.
Then there exists a set $E\subset[1,\infty)$ of finite logarithmic
measure such that, a.s. and in mean,} 
\[
\sup_{0\le\alpha<\beta\le2\pi}\,\Bigl|n_{F}(r,\alpha,\beta)-\frac{\beta-\alpha}{2\pi}\, s_{F}\left(r\right)\Bigr|=\O\left(\left(s_{F}\left(r\right)\right)^{\gamma}\right)
\]
\textit{when $r\to\infty$ , $r\notin E$ and} \textit{$\log\sigma_{F}(r)>\log^{a}r$}
.\index{Rademacher Taylor series!equidistribution of the zeros}

We mention here again that some lower bound on the growth of $\sigma_{F}$
is necessary, as we will show in Section \ref{sec:Rade_Func_with_many_real_zeros}
that certain functions have only real zeros (deterministically).

\subsubsection*{Regular coefficients $a_{n}$}

It is also worth mentioning that if the coefficients $a_{n}$ have
a very regular behaviour, namely, for some $\rho>0$ and $\Delta>0$,
\[
a_{n}=(\Delta+o(1))^{n}e^{-\frac{n}{\rho}\log n}\qquad{\rm as\ }\ n\to\infty\,,
\]
then the results of Theorem~\ref{thm:Rade_ent_funcs_ang_dist_of_zero}
follow from the lower bound \eqref{eq:L-O} of Littlewood and Offord
combined with some classical results on entire functions of completely
regular growth (see \cite[Chapter~III]{Le1}). In this case, the exceptional
set $E\subset[1,\infty)$ in the statement of the theorem is not needed.
In a recent work \cite{KZ}, Kabluchko and Zaporozhets gave an elegant
proof of similar results for the same class of very regular non-random
sequences $a_{n}$ and for arbitrary i.i.d. random variables $\xi_{k}$
satisfying $\mathcal{E}\left\{ \log^{+}|\xi|\right\} \!<\!\infty$.
Their proof relies upon estimates of a concentration function combined
with some potential theory. Apparently, it would
not work for sequences $a_{n}$, that do not behave very regulary.

\subsubsection*{Equidistribution for zeros of polynomials}

Let us denote (only for the purpose of this explanation) the number
of zeros of the polynomial $g_{m}\left(z\right)=\sum_{k=0}^{m}c_{k}z^{k}$
inside the sector $0\le\alpha<\arg z<\beta\le2\pi$ by $n\left(\alpha,\beta\right)$.
Erd\H{o}s and Tur�n considered the uniform distribution of the zeros
of $g_{m}$. They showed in \cite{ET} that
\[
\left|n\left(\alpha,\beta\right)-\frac{\left(\beta-\alpha\right)m}{2\pi}\right|<16\sqrt{m\cdot\log\left(\frac{\sum_{k=0}^{m}\left|c_{k}\right|}{\sqrt{\left|c_{0}c_{m}\right|}}\right)},\quad\forall\,0\le\alpha<\beta\le2\pi,
\]
which, for example, implies the equidistribution of the zeros in the
case where all the coefficients are bounded: $c<\left|c_{k}\right|<C$.
Unfortunately, it seems that this result cannot be used to prove the
equidistribution of the zeros of entire functions. The key difference
is that in \cite{ET}, the limiting measure of the zero counting measure
is $m_{\bbt}$, the Lebesgue measure on the circle. In that case equidistribution
can be phrased in terms of the absolute values $\left|c_{k}\right|$.
In other cases (such as $m_{\bbd}$), this is not the case. In general,
for entire functions the arguments of the coefficients are important
for determining the equidistribution of the zeros.

\subsection{Proof of Theorem \ref{thm:Rade_ent_funcs_ang_dist_of_zero}}

In the rest of this section we will prove Theorem~\ref{thm:Rade_ent_funcs_ang_dist_of_zero},
by estimating the `error functions' 
\begin{align*}
E_{F}(r) & =\sup_{0\le\alpha<\beta\le2\pi}\,\Bigl|N_{F}(r,\alpha,\beta)-\frac{\beta-\alpha}{2\pi}\,\log\sigma_{F}(r)\Bigr|\,,\\
e_{F}(r) & =\sup_{0\le\alpha<\beta\le2\pi}\,\Bigl|n_{F}(r,\alpha,\beta)-\frac{\beta-\alpha}{2\pi}\,\zrf_{F}(r)\Bigr|\,.
\end{align*}
The proof starts with a Jensen-type estimate for $|E_{F}(r)|$. This
part uses some arguments that are customary in the theory of entire
functions, and is non-probabilistic. Then, combining the Jensen-type
estimate with the log-integrability of Rademacher Fourier series,
we estimate the error function $E_{F}$, first in mean, and then almost
surely. We proceed with a growth lemma about functions of a real variable.
This lemma is a version of the classical Borel-Nevanlinna lemma, which
is widely used in the theory of entire and meromorphic functions.
This part is also non-probabilistic. At last, applying the growth
lemma, we `differentiate' estimates for $E_{F}(r)$ and obtain their
counterparts for $e_{F}(r)$.

\subsection{Jensen-type estimate}

In this section 
\[
G(z)=\sum_{n\ge0}g_{n}z^{n}
\]
is a non-random analytic function in the closed disk $\{|z|\le r\}$
with $r\ge2$. We assume that $|G(0)|=1$ and fix the value of $\arg G(0)$.
If $G$ does not vanish on the segment $\{z=te^{{\rm i}\theta}\colon0\le t\le r\}$,
then we take a continuous branch of $\arg G(te^{{\rm i}\theta})$
and let 
\[
v(t,\theta)=\arg G(te^{{\rm i}\theta})-\arg G(0)\,.
\]
By $A_{G}$ we denote various positive constants, which depend only
on the sequence of absolute values $|g_{n}|$ of the Taylor coefficients
of $G$.

Put 
\[
\sigma_{G}^{2}(t)=\sum_{n\ge0}|g_{n}|^{2}t^{2n},\qquad L_{G}(t,\theta)=\log|G(te^{{\rm i}\theta})|=\widehat{L}_{G}(t,\theta)+\log\sigma_{G}(t),
\]
and let 
\[
L_{G}^{*}(t,\delta)=\sup_{m(I)=\delta}\,\intsy_{I}|\widehat{L}_{G}(t,\theta)|\d{m\left(\theta\right)}\,,
\]
where the supremum is taken over all arcs $I\subset\bbt$ of length
$\delta$.
\begin{lem}
\textup{{[}Jensen-type estimate{]}}\label{lem:Jensen-type-ineq} For
every $r\ge2$ and every $\delta\in(0,2\pi]$, we have 
\begin{multline*}
\sup_{0\le\alpha<\beta\le2\pi}\,\bigl|N_{G}(r,\alpha,\beta)-\frac{\beta-\alpha}{2\pi}\,\log\sigma_{G}(r)\bigr|\\
\lesssim\delta\log\sigma_{G}(r)+\frac{1}{\delta^{2}}\,\intsy_{1}^{r}L_{G}^{*}(t,\delta)\,\log\Bigl(\frac{r}{t}\Bigr)\,\frac{\dd t}{t}+\intsy_{\bbt}|\widehat{L}_{G}(t,\theta)|\d{m(\theta)}+A_{G}\log r\,.
\end{multline*}

\end{lem}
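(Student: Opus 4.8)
The plan is to derive an exact sectorial form of Jensen's formula and then to bound the resulting error terms; the second part is where the work lies.

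\emph{Step 1: a sectorial Jensen formula.} Fix $(\alpha,\beta)$ and suppose first that $G$ does not vanish on either of the two bounding radii $\{te^{{\rm i}\alpha}:0\le t\le r\}$, $\{te^{{\rm i}\beta}:0\le t\le r\}$; the at most countably many remaining pairs are recovered at the end by a routine limiting argument using the monotonicity of $N_{G}(r,\cdot,\cdot)$ in its arguments. Applying Green's second identity on the annular sector $\{te^{{\rm i}\theta}:\varepsilon\le t\le r,\ \alpha\le\theta\le\beta\}$ to $\log|G|$ (whose distributional Laplacian is $2\pi$ times the zero-counting measure) and $\log(r/|z|)$ (harmonic off the origin, with the usual jump of the outward normal derivative across $\{|z|=r\}$), then letting $\varepsilon\to0$ and using $|G(0)|=1$ together with the Cauchy--Riemann identity $\partial_{t}v(t,\psi)=-t^{-1}\,\partial_{\theta}\log|G(te^{{\rm i}\theta})|\big|_{\theta=\psi}$ on the two radii and an integration by parts, one obtains
\[
2\pi N_{G}(r,\alpha,\beta)=\int_{\alpha}^{\beta}\log|G(re^{{\rm i}\theta})|\,\d{\theta}+\int_{0}^{r}\frac{v(t,\alpha)-v(t,\beta)}{t}\,\d{t}.
\]
Writing $\log|G(re^{{\rm i}\theta})|=\widehat{L}_{G}(r,\theta)+\log\sigma_{G}(r)$ turns the first integral into $(\beta-\alpha)\log\sigma_{G}(r)+\int_{\alpha}^{\beta}\widehat{L}_{G}(r,\theta)\,\d{\theta}$, so
\[
N_{G}(r,\alpha,\beta)-\frac{\beta-\alpha}{2\pi}\log\sigma_{G}(r)=\frac{1}{2\pi}\int_{\alpha}^{\beta}\widehat{L}_{G}(r,\theta)\,\d{\theta}+\frac{1}{2\pi}\int_{0}^{r}\frac{v(t,\alpha)-v(t,\beta)}{t}\,\d{t}.
\]
The first term on the right is at most $\int_{\bbt}|\widehat{L}_{G}(r,\theta)|\,\d{m(\theta)}$, which is present in the assertion, so the lemma reduces to a bound, uniform in $\psi$, for the winding integral $\int_{0}^{r}t^{-1}|v(t,\psi)|\,\d{t}$.

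\emph{Step 2: bounding the winding.} The goal is the key estimate: \emph{for every direction $\psi$ on which $G$ does not vanish and every $\delta\in(0,2\pi]$,}
\[
|v(t,\psi)|\lesssim\delta\,s_{G}(t)+\frac{1}{\delta^{2}}\int_{1}^{t}L_{G}^{*}(s,\delta)\,\frac{\d{s}}{s}+A_{G},\qquad s_{G}(t):=\frac{\mathrm{d}\log\sigma_{G}(t)}{\mathrm{d}\log t},
\]
since integrating this against $\d{t}/t$ over $[0,r]$ produces, term by term, $\delta\log\sigma_{G}(r)$, the quantity $\delta^{-2}\int_{1}^{r}L_{G}^{*}(t,\delta)\log(r/t)\,\d{t}/t$, and $A_{G}\log r$. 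To prove the key estimate I would use the Cauchy--Riemann relation of Step 1 to express $v(t,\psi)$ as an integral along the ray of $-s^{-1}\partial_{\theta}\log|G(se^{{\rm i}\theta})|\big|_{\theta=\psi}$, and then split $\theta\mapsto\log|G(se^{{\rm i}\theta})|$, near $\theta=\psi$, into a ``regular'' part and an ``irregular'' part. The regular part is essentially the radial function $\log\sigma_{G}(s)$ plus a slowly varying correction; its contribution to $v$ is a total logarithmic derivative and so telescopes to something of size $\lesssim\delta\,s_{G}(t)$ on the scale $\delta$. The irregular part is produced by the zeros of $G$ lying within angular distance $\sim\delta$ of the ray: after factoring those out, the residual angular derivative is controlled by a Tur\'an--Remez estimate (the restriction of $\log|G|$ to a scale-$\delta$ window behaves like $\log$ of an exponential polynomial whose degree is the number of those zeros), which bounds it pointwise by $\delta^{-2}$ times the $\delta$-arc mean $L_{G}^{*}(s,\delta)$; the number of zeros itself is estimated by a localized Jensen formula on discs of radius $\sim\delta s$ centred on the ray. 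Finally, the zeros in $\{|z|\le1\}$ --- at most $A_{G}=C\log\sigma_{G}(2)$ of them, by Jensen --- contribute the bounded term $A_{G}$, after a Borel--Carath\'eodory bound for $\arg G$ on the zero-free region near the origin.

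The crux, and the step I expect to be genuinely delicate, is the key estimate of Step 2: carrying out the regular/irregular splitting so that the regular part really does telescope to $\delta\,s_{G}(t)$ rather than to the much larger $\log\sigma_{G}(t)$; extracting exactly the power $\delta^{-2}$ from the Tur\'an--Remez step together with the localized Jensen normalisation; and, throughout, controlling the zeros that sit very close to the chosen radius (the ``Blaschke'' factors) and passing uniformly from boundary means of $\log|G|$ on the small discs to the arc-localised quantity $L_{G}^{*}$ --- this last point being exactly where one needs $L_{G}^{*}$ to be a supremum over \emph{all} arcs of a given length rather than a pointwise object. The sectorial Jensen formula of Step 1 and the reduction to non-exceptional directions are, by contrast, routine.
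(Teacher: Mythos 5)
Your Step 1 coincides with the paper's starting point (the sectorial Jensen formula of Lemma~\ref{lem:Jensen-type-formula} plus the Cauchy--Riemann identity $\partial_{t}v(t,\psi)=-t^{-1}\partial_{\theta}L_{G}(t,\theta)\bigr|_{\theta=\psi}$ and the $A_{G}$-bounds for the unit disk), and your reduction to bounding $\intsy_{0}^{r}|v(t,\psi)|\,t^{-1}\dd t$ uniformly over non-exceptional rays is logically valid. But the entire content of the lemma has been pushed into the unproved ``key estimate'' of Step 2, and the mechanism you sketch for it does not work as stated. A Tur\'an/Remez-type bound (as in~\ref{subsubsect:Turan}) controls a sup by the values on a subset at the cost of a factor \emph{exponential} in the number of frequencies/zeros, not by a fixed $\delta^{-2}$ times an arc mean; with $n$ zeros within angular distance $\sim\delta$ of the ray there is no pointwise-in-$s$ bound of the form $|\partial_{\theta}L_{G}(s,\theta)|_{\theta=\psi}|\lesssim\delta^{-2}L_{G}^{*}(s,\delta)$ --- indeed a single zero may lie arbitrarily close to the chosen ray, making $\partial_{\theta}L_{G}(s,\psi)$ arbitrarily large at radii near it, so any such estimate must first be integrated in $s$ and must track the near-ray (Blaschke) factors explicitly, which is precisely the hard part of Levin's theory that you have not carried out. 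Likewise, the claim that the ``regular part telescopes to $\lesssim\delta\cdot\frac{\dd\log\sigma_{G}(t)}{\dd\log t}$'' has no mechanism behind it: in a pointwise argument at a single ray nothing produces a factor $\delta$ in front of $\log\sigma_{G}$; in the paper that term arises only because the sector is \emph{widened} by $O(\delta)$ when one averages over the endpoints.

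This is exactly the point where the paper's proof diverges from yours, and why it succeeds with elementary means: it never bounds $v$ (or the angular derivative) pointwise. It first reduces to the one-sided upper bound for $N_{G}(r,\alpha,\beta)$, recovering the lower bound from the classical full-circle Jensen formula applied to the complementary sector; then it writes the winding term as $\bigl[\frac{\dd}{\dd\theta}\intsy_{1}^{r}L_{G}(t,\theta)\log(r/t)\,\frac{\dd t}{t}\bigr]_{\alpha}^{\beta}$ and averages \emph{twice} over $\alpha$ and $\beta$ in windows of length $\delta$, so that by Newton--Leibnitz the derivative only ever appears through second differences, i.e.\ integrals of $L_{G}(t,\cdot)$ over arcs of length $\delta$, which are bounded directly by $L_{G}^{*}(t,\delta)$; the two averagings give the factor $\delta^{-2}$ and the widened sector gives the $\delta\log\sigma_{G}(r)$ term. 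Unless you can actually prove your pointwise key estimate (which would be a substantially stronger statement than the lemma and, by the obstructions above, would require genuinely new input such as a careful zero-factorization along each ray), the proposal has a genuine gap at its central step.
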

Our starting point is a Jensen-type integral formula~\cite[Chapter~III, Eq.~(3.04)]{Le1},
which is a straightforward consequence of the argument principle and
the Cauchy-Riemann equations.\index{Jensen's formula!for sectors}
\begin{lem}
\textup{{[}Jensen-type integral formula{]}}\label{lem:Jensen-type-formula}
Suppose the function $G$ does not vanish on the segments $\{z=te^{{\rm i}\alpha}\colon0\le t\le r\}$
and $\{z=te^{{\rm i}\beta}\colon0\le t\le r\}$. Then 
\[
N_{G}(r,\alpha,\beta)=\frac{1}{2\pi}\intsy_{\alpha}^{\beta}L_{G}(r,\theta)\d{\theta}+\frac{1}{2\pi}\intsy_{0}^{r}\frac{v(t,\alpha)-v(t,\beta)}{t}\d t\,.
\]

\end{lem}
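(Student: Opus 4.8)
The plan is to derive the identity from the argument principle together with the polar form of the Cauchy--Riemann equations, following the classical scheme behind Jensen's formula (cf. \cite[Chapter~III]{Le1}). First I would record the hypothesis: $G$ does not vanish on the two radial segments $\arg z=\alpha$ and $\arg z=\beta$, $0\le|z|\le r$, so that $v(t,\alpha)$ and $v(t,\beta)$ are well defined via continuous branches of the argument along those segments. For all but countably many $t\in(0,r)$ the circle $|z|=t$ also contains no zero of $G$ in the closed sector; I call such a radius \emph{good}. Fixing a good $t$ and applying the argument principle to $R_{t}=\{\rho e^{\mathrm{i}\theta}\colon 0<\rho<t,\ \alpha<\theta<\beta\}$, the total change of $\arg G$ along $\partial R_{t}$ equals $2\pi$ times the number of zeros of $G$ in $R_{t}$; the corner at the origin contributes nothing (as $G(0)\ne0$), and the two radial pieces contribute $v(t,\alpha)$ (outward) and $-v(t,\beta)$ (inward), the branches being exactly those defining $v$. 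Hence
\[
2\pi\, n_{G}(t,\alpha,\beta)=v(t,\alpha)-v(t,\beta)+\intsy_{\alpha}^{\beta}\frac{\partial}{\partial\theta}\arg G(te^{\mathrm{i}\theta})\,\d\theta .
\]

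Next I would rewrite the arc integral using the identity $\tfrac{\partial}{\partial\theta}\arg G(te^{\mathrm{i}\theta})=t\,\tfrac{\partial}{\partial t}\log|G(te^{\mathrm{i}\theta})|$, valid wherever $G(te^{\mathrm{i}\theta})\ne0$; this is just the Cauchy--Riemann equations written in polar coordinates for the locally defined holomorphic function $\log G$ (since $\partial_{\theta}\log G=\mathrm{i}t\,\partial_{t}\log G$, comparing real and imaginary parts). Because $t$ is good, $u(t,\theta)\eqdef\log|G(te^{\mathrm{i}\theta})|$ is smooth near the arc, so differentiating under the integral sign gives, for a.e. $t\in(0,r)$,
\[
2\pi\, n_{G}(t,\alpha,\beta)=v(t,\alpha)-v(t,\beta)+t\,\frac{\d}{\d t}\intsy_{\alpha}^{\beta}u(t,\theta)\,\d\theta .
\]
Dividing by $2\pi t$ and integrating over $t\in(0,r)$ would then yield
\[
N_{G}(r,\alpha,\beta)=\frac{1}{2\pi}\intsy_{0}^{r}\frac{v(t,\alpha)-v(t,\beta)}{t}\,\d t+\frac{1}{2\pi}\Bigl(\intsy_{\alpha}^{\beta}u(r,\theta)\,\d\theta-\lim_{t\to0^{+}}\intsy_{\alpha}^{\beta}u(t,\theta)\,\d\theta\Bigr).
\]
The inner limit equals $(\beta-\alpha)\log|G(0)|=0$ since $G$ is continuous and $|G(0)|=1$, and $\intsy_{\alpha}^{\beta}u(r,\theta)\,\d\theta=\intsy_{\alpha}^{\beta}L_{G}(r,\theta)\,\d\theta$; this is precisely the asserted formula. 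I would also note in passing that the $t$-integral converges at the origin, because $v(t,\alpha),v(t,\beta)=\O(t)$ as $t\to0^{+}$ ($G'/G$ being holomorphic near $0$).

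The one point deserving care — and the main, if mild, obstacle — is the passage from the pointwise-in-$t$ identity to its integral over $(0,r)$: the left-hand side $n_{G}(t,\alpha,\beta)$ is a step function, while $\Psi(t)\eqdef\intsy_{\alpha}^{\beta}u(t,\theta)\,\d\theta$ can have logarithmic singularities at the (finitely many in each $[\epsilon,r]$) radii where a zero of $G$ crosses the bounding arc. The hard part will be to check that $\Psi$ is nevertheless locally absolutely continuous on $(0,r]$ with a.e. derivative equal to the arc integral above, so that the Fundamental Theorem of Calculus applies. This is handled exactly as in the standard proof of Jensen's formula: near an offending radius one writes $\log|G|$ as a bounded harmonic piece plus $\sum_{k}\log|z-z_{k}|$ and uses that $t\mapsto\intsy_{\alpha}^{\beta}\log|te^{\mathrm{i}\theta}-z_{k}|\,\d\theta$ is continuous (the log-singularity being integrable in $\theta$), so these crossings contribute no jump to $\Psi$ and no extra term to the formula. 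With this regularity in place the computation above goes through verbatim, completing the proof.
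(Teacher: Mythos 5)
Your proof is correct and takes exactly the route the paper intends: the paper gives no proof of this lemma, citing instead \cite[Chapter~III, Eq.~(3.04)]{Le1} with the remark that the formula is a straightforward consequence of the argument principle and the Cauchy--Riemann equations, and your argument-principle computation on the truncated sector combined with the polar form $\partial_{\theta}\arg G=t\,\partial_{t}\log|G|$ and integration in $t$ is precisely that derivation. Your extra care with the good radii and the absolute continuity of $t\mapsto\int_{\alpha}^{\beta}\log|G(te^{\mathrm{i}\theta})|\,\mathrm{d}\theta$ correctly fills in the only points left implicit by the citation.
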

\noindent \textit{Proof of Lemma~\ref{lem:Jensen-type-ineq}:} First,
we observe that in order to prove Lemma~\ref{lem:Jensen-type-ineq},
it suffices to prove the upper bound 
\begin{multline}
N_{G}(r,\alpha,\beta)-\frac{\beta-\alpha}{2\pi}\,\log\sigma_{G}(r)\\
\lesssim\delta\log\sigma_{G}(r)+\frac{1}{\delta^{2}}\,\intsy_{1}^{r}L_{G}^{*}(t,\delta)\,\log\bigl(\frac{r}{t}\bigr)\,\frac{\dd t}{t}+\intsy_{\bbt}|\widehat{L}_{G}(t,\theta)|\d{m(\theta)}+A_{G}\log r\,,\label{eq:upper_bound_for_N}
\end{multline}
uniformly with respect to $\alpha$ and $\beta$. Indeed, having~\eqref{eq:upper_bound_for_N},
observing that $N_{G}(r,\alpha,\beta)=N_{G}(r,0,2\pi)-N_{G}(r,\beta,\alpha+2\pi)$,
and recalling that, by the classical Jensen formula, 
\[
N_{G}(r,0,2\pi)=\intsy_{\bbt}\log|G(re^{{\rm i}\theta})|\d{\theta}-\log|G(0)|=\log\sigma_{G}(r)+\intsy_{\bbt}\widehat{L}_{G}(t,\theta)\d{m(\theta)}\,,
\]
we get the lower bound for $N(r,\alpha,\beta)$, which matches the
upper one.

By Lemma \ref{lem:Jensen-type-formula}, 
\begin{multline*}
N_{G}(r,\alpha,\beta)-N_{G}(1,\alpha,\beta)\\
=\frac{1}{2\pi}\intsy_{\alpha}^{\beta}L_{G}(r,\theta)\d{\theta}-\frac{1}{2\pi}\intsy_{\alpha}^{\beta}L_{G}(1,\theta)\d{\theta}+\frac{1}{2\pi}\intsy_{1}^{r}\frac{v(t,\alpha)-v(t,\beta)}{t}\d{t\,.}
\end{multline*}
Once again, using the classical Jensen formula (and recalling that
$|G(0|=1$), we see that the terms $N_{G}(1,\alpha,\beta)$ and ${\displaystyle \intsy_{\alpha}^{\beta}|L_{G}(1,\theta)|\d{m(\theta)}}$
are bounded by $C\cdot\log\sigma_{G}(2)$, whence 
\[
N_{G}(r,\alpha,\beta)\le\frac{1}{2\pi}\intsy_{\alpha}^{\beta}L_{G}(r,\theta)\d{\theta}+\frac{1}{2\pi}\intsy_{1}^{r}\frac{v(t,\alpha)-v(t,\beta)}{t}\d t\,+A_{G}\,.
\]
By the Cauchy-Riemann equations, for $t\ge1$ we have 
\[
v(t,\theta)=v(1,\theta)-\frac{\dd}{\dd\theta}\intsy_{1}^{t}L_{G}(s,\theta)\,\frac{\dd s}{s}\,,
\]
whence 
\begin{multline*}
\frac{1}{2\pi}\intsy_{1}^{r}\frac{v(t,\alpha)-v(t,\beta)}{t}\d t\\
=\bigl[v(1,\alpha)-v(1,\beta)\bigr]\log r+\frac{1}{2\pi}\Bigl[\frac{\dd}{\dd\theta}\intsy_{1}^{r}L_{G}(t,\theta)\log\Bigl(\frac{r}{t}\Bigr)\,\frac{\dd t}{t}\Bigr]_{\alpha}^{\beta}\,.
\end{multline*}
By a standard complex analysis estimate (see, for instance,~\cite[Lemma~6, Chapter~VI]{Le1}),
for every $\theta$ such that $G$ does not vanish on the segment
$\bigl\{ te^{{\rm i}\theta}\colon0\le t\le1\bigr\}$, we have $\bigl|v(1,\theta)\bigr|\le A_{G}$
. Therefore, 
\begin{equation}
N_{G}(r,\alpha,\beta)\le\frac{1}{2\pi}\intsy_{\alpha}^{\beta}L_{G}(r,\theta)\d{\theta}+\frac{1}{2\pi}\Bigl[\frac{\dd}{\dd\theta}\intsy_{1}^{r}L_{G}(t,\theta)\log\Bigl(\frac{r}{t}\Bigr)\,\frac{\dd t}{t}\Bigr]_{\alpha}^{\beta}+A_{G}\log r\,.\label{eq:Jensen_long}
\end{equation}

Given $\delta\in(0,2\pi]$, we have 
\[
N_{G}(r,\alpha,\beta)\le\frac{1}{\delta}\,\intsy_{0}^{\delta}N_{G}(r,\alpha-\phi,\beta+\phi)\d{\phi}\,.
\]
Without loss of generality we assume that the function $G$ does not
vanish on the unit circle $\bbt$. Then the derivative 
\[
\frac{\dd}{\dd\theta}\intsy_{1}^{r}L_{G}(t,\theta)\log\Bigl(\frac{r}{t}\Bigr)\,\frac{\dd t}{t}=\Re\Bigl\{{\rm i}e^{{\rm i}\theta}\intsy_{1}^{r}\frac{G'(te^{{\rm i}\theta})}{G(te^{{\rm i}\theta})}\log\Bigl(\frac{r}{t}\Bigr)\,\frac{\dd t}{t}\Bigr\}
\]
is a bounded function of $\theta$, which may have at most finitely
many points of discontinuity%
\footnote{This follows from the fact that, for $r>1$ and $1<\rho\le r$, 
\[
\phi\mapsto\intsy_{1}^{r}\frac{1}{t-\rho e^{{\rm i}\phi}}\,\log\Bigl(\frac{r}{t}\Bigr)\frac{\dd t}{t}
\]
is a bounded function continuous everywhere except at the point $\phi=0$.%
}. Therefore, we can apply the Newton-Leibnitz formula when averaging
the RHS of inequality~\eqref{eq:Jensen_long}. We get 
\begin{align*}
 & N_{G}(r,\alpha,\beta)\le\frac{1}{\delta}\,\intsy_{0}^{\delta}\dd\phi\,\frac{1}{2\pi}\intsy_{\alpha-\phi}^{\beta+\phi}L_{G}(r,\theta)\d{\theta}+A_{G}\log r\\
 & \qquad+\frac{1}{\delta}\,\frac{1}{2\pi}\,\intsy_{1}^{r}\bigl[L_{G}(t,\beta+\delta)-L_{G}(t,\beta)-L_{G}(t,\alpha)+L_{G}(t,\alpha-\delta)\bigr]\log\Bigl(\frac{r}{t}\Bigr)\,\frac{\dd t}{t}\,.
\end{align*}
Applying the same average one more time, we get 
\begin{align*}
N_{G}(r,\alpha,\beta) & \le\frac{1}{\delta^{2}}\,\intsy_{0}^{\delta}\dd\phi_{1}\intsy_{0}^{\delta}\dd\phi\,\frac{1}{2\pi}\intsy_{\alpha-\phi_{1}-\phi}^{\beta+\phi_{1}+\phi}L_{G}(r,\theta)\d{\theta}+A_{G}\log r\\
 & \quad+\frac{1}{\delta^{2}}\,\frac{1}{2\pi}\,\intsy_{1}^{r}\log\Bigl(\frac{r}{t}\Bigr)\,\frac{\dd t}{t}\Bigl[\intsy_{\beta+\delta}^{\beta+2\delta}-\intsy_{\beta}^{\beta+\delta}+\intsy_{\alpha-2\delta}^{\alpha-\delta}-\intsy_{\alpha-\delta}^{\alpha}\Bigr]L_{G}(t,\theta)\d{\theta}\\
 & \le\frac{1}{\delta^{2}}\intsy_{0}^{\delta}\dd\phi_{1}\intsy_{0}^{\delta}\dd\phi\,\frac{\beta-\alpha+2\phi_{1}+2\phi}{2\pi}\,\log\sigma_{G}(r)+A_{G}\log r\\
 & \quad+\frac{1}{\delta^{2}}\,\intsy_{0}^{\delta}\dd\phi_{1}\intsy_{0}^{\delta}\dd\phi\,\frac{1}{2\pi}\intsy_{\alpha-\phi_{1}-\phi}^{\beta+\phi_{1}+\phi}\widehat{L}_{G}(r,\theta)\d{\theta}\\
 & \quad+\frac{1}{\delta^{2}}\,\frac{1}{2\pi}\,\intsy_{1}^{r}4L_{G}^{*}(t,\delta)\,\log\Bigl(\frac{r}{t}\Bigr)\,\frac{\dd t}{t}\\
 & \le\frac{\beta-\alpha}{2\pi}\log\sigma_{G}(r)+{\rm RHS\ of\ }\eqref{eq:upper_bound_for_N}\,,
\end{align*}
completing the proof of Lemma~\ref{lem:Jensen-type-ineq}. \hfill{}$\Box$

\subsection{Proof of the first part of Theorem~\ref{thm:Rade_ent_funcs_ang_dist_of_zero}}

In what follows, we denote by $A$ various positive constants depending
on the absolute values of the non-random coefficients $|a_{n}|$ in
the Taylor expansion of $F(z)$.

\subsubsection{Mean estimate}

We start with an estimate for the random maximal function 
\[
L_{F}^{*}(t,\delta)=\sup\left\{ \intsy_{I}|\widehat{L}_{G}(t,\theta)|\d{m(\theta)}\,:\, I\subset\bbt\mbox{ is an arc, }m(I)=\delta\right\} \,.
\]

\begin{claim}
\label{claim:max-function}

(i) For every $t\ge1$ and every $0<\delta\le2\pi$, $\Ex\left\{ L_{F}^{*}(t,\delta)\right\} \lesssim\delta\,\log^{6}\left(C\delta^{-1}\right)$.

(ii) For every $p\ge1$, every $0<\delta\le2\pi$, and every $r\ge2$,
\[
\Bigl\|\intsy_{1}^{r}L_{F}^{*}(t,\delta)\,\log\Bigl(\frac{r}{t}\Bigr)\,\frac{\dd t}{t}\Bigr\|_{L^{p}(\Omega)}\le Cp^{6}\cdot\delta^{1-\frac{1}{p}}\cdot\log^{2}r\,.
\]
\end{claim}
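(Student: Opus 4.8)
Throughout, fix $t\ge1$ and write $\widehat L_F(t,\theta)=\log|\widehat F(te^{{\rm i}\theta})|$. Since $|F(0)|=1$ we have $\sigma_F(t)>0$, so $\theta\mapsto\widehat F(te^{{\rm i}\theta})=\sum_{n\ge0}\widehat a_n(t)\,\xi_n e^{{\rm i}n\theta}$ with $\widehat a_n(t)=a_nt^n/\sigma_F(t)$, $\sum_n|\widehat a_n(t)|^2=1$, is a Rademacher Fourier series of $L^2$-norm $1$; hence Theorem~\ref{thm:log_int_for_Rade_Four_series} applies with a constant independent of $t$ and of $\{a_n\}$ and yields $\Ex\bigl\{\intsy_\bbt|\widehat L_F(t,\theta)|^q\,\d{m(\theta)}\bigr\}\le C^qq^{6q}$ for every $q\ge1$. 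The plan is to deduce from this the uniform moment bound
\[
\bigl\|L_F^*(t,\delta)\bigr\|_{L^p(\Omega)}\le Cp^6\,\delta^{1-\frac1p}\,,\qquad t\ge1,\ \delta\in(0,2\pi],\ p\ge1,
\]
and then read off both parts (i) and (ii) from it. To estimate $L_F^*(t,\delta)$ I first reduce the supremum over arcs to a finite maximum: partitioning $\bbt$ into $N\le C\delta^{-1}$ consecutive arcs $I_1,\dots,I_N$ of length $\le\delta$, every arc of length $\delta$ meets at most two of them, so $L_F^*(t,\delta)\le2\max_{1\le j\le N}Y_j$ with $Y_j=\intsy_{I_j}|\widehat L_F(t,\theta)|\,\d{m(\theta)}$. (Measurability of $L_F^*$ on $\Omega\times[1,\infty)$ is routine, using continuity in $t$ and that the supremum may be restricted to arcs with rational endpoints.)

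The heart of the matter is to estimate the moments of $\max_jY_j$ while working in $L^{2p}(\Omega)$ rather than in $L^p(\Omega)$. By Hölder in $\theta$, $Y_j\le\delta^{1-\frac1{2p}}\bigl(\intsy_{I_j}|\widehat L_F(t,\theta)|^{2p}\,\d m\bigr)^{1/(2p)}$, so $\Ex\{Y_j^{2p}\}\le\delta^{2p-1}\Ex\bigl\{\intsy_\bbt|\widehat L_F(t,\theta)|^{2p}\,\d m\bigr\}\le\delta^{2p-1}C^{2p}(2p)^{12p}$, i.e.\ $\|Y_j\|_{L^{2p}(\Omega)}\le C(2p)^6\delta^{1-\frac1{2p}}$. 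Using $\max_jY_j^{2p}\le\sum_jY_j^{2p}$ and $N\le C\delta^{-1}$,
\[
\bigl\|\max_jY_j\bigr\|_{L^{2p}(\Omega)}\le N^{1/(2p)}\,\max_j\|Y_j\|_{L^{2p}(\Omega)}\le (C\delta^{-1})^{1/(2p)}\cdot C(2p)^6\delta^{1-\frac1{2p}}\le Cp^6\,\delta^{1-\frac1p},
\]
the two fractional powers of $\delta$ combining to exactly $\delta^{1-1/p}$. Since $\Omega$ is a probability space, $\|L_F^*(t,\delta)\|_{L^p}\le2\|\max_jY_j\|_{L^p}\le2\|\max_jY_j\|_{L^{2p}}\le Cp^6\delta^{1-1/p}$, which is the uniform bound. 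The only genuine subtlety is this choice of exponent: a direct estimate in $L^p$ would leave the factor $N^{1/p}=\delta^{-1/p}$ and only give $\delta^{1-2/p}$; interpolating the number $N\sim\delta^{-1}$ of arcs against the $q^6\delta^{1-1/q}$ growth of the moments of a single $Y_j$ at the balanced exponent $q=2p$ is exactly what cancels the spurious $\delta^{-1/p}$.

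Part (i) follows by optimizing $p$ in the uniform bound: $\Ex\{L_F^*(t,\delta)\}=\|L_F^*(t,\delta)\|_{L^1}\le Cp^6\delta^{1-1/p}=Cp^6\,\delta\,e^{p^{-1}\log(1/\delta)}$ for every $p\ge1$, and taking $p=\max\bigl(1,\log(1/\delta)\bigr)$ makes $e^{p^{-1}\log(1/\delta)}\le e$ and $p^6\le\log^6(C\delta^{-1})$, so $\Ex\{L_F^*(t,\delta)\}\lesssim\delta\log^6(C\delta^{-1})$ (choosing the constant inside the logarithm large enough also covers, trivially, the range of $\delta$ bounded away from $0$). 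For part (ii), Minkowski's integral inequality followed by the uniform bound with one fixed $p$ gives
\[
\Bigl\|\intsy_1^rL_F^*(t,\delta)\,\log\tfrac rt\,\tfrac{\d t}t\Bigr\|_{L^p(\Omega)}\le\intsy_1^r\bigl\|L_F^*(t,\delta)\bigr\|_{L^p(\Omega)}\,\log\tfrac rt\,\tfrac{\d t}t\le Cp^6\delta^{1-\frac1p}\intsy_1^r\log\tfrac rt\,\tfrac{\d t}t,
\]
and since $\intsy_1^r\log(r/t)\,\tfrac{\d t}t=\tfrac12\log^2r$ (substitute $u=\log t$), the right-hand side is $\le Cp^6\delta^{1-1/p}\log^2r$, as claimed.
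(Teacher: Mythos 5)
Your proof is correct, and it rests on the same ingredients as the paper's: H\"older's inequality played against the log-integrability Theorem~\ref{thm:log_int_for_Rade_Four_series}, the choice $p\asymp\log(1/\delta)$ to get (i), and the integral Minkowski inequality together with $\intsy_{1}^{r}\log\bigl(\tfrac{r}{t}\bigr)\,\tfrac{\dd t}{t}=\tfrac12\log^{2}r$ to get (ii). The only place where you genuinely diverge is the treatment of the supremum over arcs: you discretize $\bbt$ into $N\lesssim\delta^{-1}$ arcs and control $\max_{j}Y_{j}$ by working at the doubled exponent $2p$, so that the factor $N^{1/(2p)}$ is absorbed into $\delta^{1-1/p}$. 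This works, but the detour (and the ``subtlety'' about the exponent that you point out) is unnecessary: for \emph{any} arc $I$ with $m(I)\le\delta$, H\"older gives $\intsy_{I}|\widehat{L}_{F}(t,\theta)|\,\d{m(\theta)}\le\delta^{1-\frac1p}\bigl(\intsy_{\bbt}|\widehat{L}_{F}(t,\theta)|^{p}\,\d{m(\theta)}\bigr)^{1/p}$, where the $p$-th power integral has been extended from $I$ to all of $\bbt$; since the right-hand side no longer depends on $I$, the supremum over arcs can be taken pointwise in $\omega$ and then the expectation, yielding $\bigl\|L_{F}^{*}(t,\delta)\bigr\|_{L^{p}(\Omega)}\le Cp^{6}\,\delta^{1-\frac1p}$ directly at exponent $p$, with no partition and no maximum over finitely many arcs --- this is exactly how the paper argues. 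One microscopic slip in your version: with a partition into arcs of length at most $\delta$ (hence possibly shorter than $\delta$), an arbitrary arc of length $\delta$ can meet three consecutive partition arcs rather than two, so your factor $2$ should be a $3$; this only changes the numerical constant and does not affect the result.
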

\begin{proof}
\noindent Fix arbitrary $p\ge1$, $\delta\le1$, and $t\ge1$. Then,
for every arc $I\subset\bbt$ of measure $\delta$, we have 
\[
\Ex\Bigl(\intsy_{I}|\widehat{L}_{F}(t,\theta)|\d{m(\theta)}\Bigr)^{p}\le\Ex\Bigl(\intsy_{\bbt}|\widehat{L}_{F}(t,\theta)|^{p}\d{m(\theta)}\cdot|I|^{p-1}\Bigr)\le\bigl(Cp\bigr)^{6p}\delta^{p-1}\,.
\]
This follows from Hölder's inequality combined with the log-integrability
of Rademacher Fourier series (Theorem~\ref{thm:log_int_for_Rade_Four_series}).
Therefore, 
\[
\bigl\| L_{F}^{*}(t,\delta)\|_{L^{p}(\Omega)}\le Cp^{6}\,\delta^{1-\frac{1}{p}}\,.
\]
Letting $p=\log(C/\delta)$, we get estimate (i).

Estimate (ii) follows by application of the integral Minkowski inequality:
\begin{eqnarray*}
\Bigl\|\intsy_{1}^{r}L_{F}^{*}(t,\delta)\log\Bigl(\frac{r}{t}\Bigr)\frac{\dd t}{t}\Bigr\|_{L^{p}(\Omega)} & \le & \intsy_{1}^{r}\bigl\| L_{F}^{*}(t,\delta)\bigr\|_{L^{p}(\Omega)}\log\Bigl(\frac{r}{t}\Bigr)\frac{\dd t}{t}\\
 & \stackrel{{\rm (i)}}{\le} & Cp^{6}\delta^{1-\frac{1}{p}}\cdot\log^{2}r\,,
\end{eqnarray*}
completing the proof.
\end{proof}
\medskip{}
Now, combining Lemma~\ref{lem:Jensen-type-ineq} with Theorem~\ref{thm:log_int_for_Rade_Four_series}
and using the previous claim, we get 
\begin{multline*}
\Ex\bigl\{ E_{F}(r)\bigr\}\lesssim\delta\,\log\sigma_{F}(r)+\frac{1}{\delta^{2}}\intsy_{1}^{r}\Ex\bigl\{ L_{F}^{*}(t,\delta)\bigr\}\,\log\Bigl(\frac{r}{t}\Bigr)\,\frac{\dd t}{t}+A\log r\\
\lesssim\delta\,\log\sigma_{F}(r)+\frac{1}{\delta}\,\log^{6}\Bigl(\frac{C}{\delta}\Bigr)\cdot\log^{2}r+A\log r\,.
\end{multline*}
Choosing $\delta$ that balances the RHS and recalling that $\log\sigma_{F}(r)\ge\log^{a}r$
with some $a>2$, we obtain 
\begin{equation}
\Ex\bigl\{ E_{F}(r)\bigr\}\le A\bigl(\log\sigma_{F}(r)\bigr)^{\frac{1}{2}}\cdot\bigl(\log\log\sigma_{F}(r)\bigr)^{3}\cdot\log r\,,\label{eq:E_F-mean}
\end{equation}
which does not exceed $A\log^{\gamma}\sigma_{F}(r)$ with $\tfrac{1}{2}+\tfrac{1}{a}<\gamma<1$
and $r$ sufficiently large. \hfill{}$\Box$

\subsubsection{Almost sure estimate}

The proof of the a.s. estimate uses an $L^{p}(\Omega)$-estimate of
$E_{F}(r)$ with large values of $p$. Put 
\[
I_{p}(r)=\bigl(\log\sigma_{F}(r)\bigr)^{\frac{p+1}{2p+1}}\cdot\bigl(\log^{2}r\bigr)^{\frac{p}{2p+1}}\,.
\]

\begin{lem}
\label{lemma:L^p} For every $p\ge1$ and every $r\ge2$ with $\log\sigma_{F}(r)\ge\log^{2}r$,
we have 
\begin{equation}
\|E_{F}(r)\|_{L^{p}(\Omega)}\le Ap^{6}\cdot I_{p}(r)\,.\label{eq:L^p}
\end{equation}
\end{lem}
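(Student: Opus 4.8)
The plan is to derive \eqref{eq:L^p} from the Jensen-type estimate of Lemma~\ref{lem:Jensen-type-ineq} by taking $L^{p}(\Omega)$-norms, in the same way that \eqref{eq:E_F-mean} was obtained by taking expectations, but now keeping careful track of the $p$-dependence. Fix $r\ge2$ with $\log\sigma_{F}(r)\ge\log^{2}r$ and a parameter $\delta\in(0,2\pi]$ to be chosen later. For a.e.\ $\omega$ the function $z\mapsto F(z;\omega)$ is a non-random analytic function in $\{|z|\le r\}$ with $|F(0)|=1$ and with Taylor moduli $|a_{n}|$, so Lemma~\ref{lem:Jensen-type-ineq} applies (the constants $A_{G}$ there depend only on the $|a_{n}|$, hence are non-random) and yields, pointwise in $\omega$,
\[
E_{F}(r)\lesssim\delta\,\log\sigma_{F}(r)+\frac{1}{\delta^{2}}\intsy_{1}^{r}L_{F}^{*}(t,\delta)\,\log\Bigl(\frac{r}{t}\Bigr)\,\frac{\dd t}{t}+\intsy_{\bbt}\bigl|\widehat{L}_{F}(r,\theta)\bigr|\d{m(\theta)}+A\log r\,.
\]

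Next I would take $L^{p}(\Omega)$-norms of both sides and apply Minkowski's inequality. The first and last terms are deterministic. For the middle integral, Claim~\ref{claim:max-function}(ii) gives $\bigl\|\delta^{-2}\intsy_{1}^{r}L_{F}^{*}(t,\delta)\log(r/t)\,\dd t/t\bigr\|_{L^{p}(\Omega)}\le Cp^{6}\delta^{-1-1/p}\log^{2}r$. For the term $\intsy_{\bbt}|\widehat{L}_{F}(r,\theta)|\,\dd m(\theta)$, Jensen's inequality on the probability space $(\bbt,m)$ combined with Theorem~\ref{thm:log_int_for_Rade_Four_series} gives
\[
\Ex\Bigl(\intsy_{\bbt}\bigl|\widehat{L}_{F}(r,\theta)\bigr|\d{m(\theta)}\Bigr)^{p}\le\Ex\intsy_{\bbt}\bigl|\widehat{L}_{F}(r,\theta)\bigr|^{p}\d{m(\theta)}\le(Cp)^{6p}\,,
\]
since for each fixed $r$ the function $\theta\mapsto\widehat{F}(re(\theta))$ is a Rademacher Fourier series of $L^{2}(\bbt)$-norm $1$; hence this term contributes at most $Cp^{6}$. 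Collecting the four contributions,
\[
\|E_{F}(r)\|_{L^{p}(\Omega)}\le A\Bigl(\delta\,\log\sigma_{F}(r)+\frac{p^{6}}{\delta^{1+1/p}}\,\log^{2}r+p^{6}+\log r\Bigr)\,.
\]

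It then remains to optimize in $\delta$. Balancing the first two terms suggests $\delta=\bigl(p^{6}\log^{2}r/\log\sigma_{F}(r)\bigr)^{p/(2p+1)}$, for which both equal $(p^{6})^{p/(2p+1)}(\log^{2}r)^{p/(2p+1)}(\log\sigma_{F}(r))^{(p+1)/(2p+1)}=(p^{6})^{p/(2p+1)}I_{p}(r)\le p^{6}I_{p}(r)$. If this value of $\delta$ exceeds $2\pi$ I would instead fix $\delta$ to a constant; then $\log\sigma_{F}(r)\le Cp^{6}\log^{2}r$, and since the exponent $a:=\tfrac{p+1}{2p+1}\in(\tfrac12,1)$ and $\log^{2}r\le\log\sigma_{F}(r)$ we have $\log^{2}r\le I_{p}(r)\le\log\sigma_{F}(r)$, so both terms are again $\le Cp^{6}I_{p}(r)$. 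Finally $p^{6}+\log r\le Cp^{6}I_{p}(r)$ once $r$ is large enough that $I_{p}(r)\ge1$ and $\log r\le\log^{2}r$, and the remaining bounded range of $r$ is absorbed into $A$. This gives \eqref{eq:L^p}.

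The only genuinely substantive ingredient here is Claim~\ref{claim:max-function}(ii) — in turn the logarithmic integrability of Rademacher Fourier series, Theorem~\ref{thm:log_int_for_Rade_Four_series} — which is already available; everything else is bookkeeping. The one place that needs care is the optimization over $\delta$, and in particular the boundary case $\delta>2\pi$: it must be handled so that the mixed exponents $\tfrac{p}{2p+1}$ and $\tfrac{p+1}{2p+1}$ appearing in $I_{p}(r)$ come out exactly, while the polynomial prefactor stays at $p^{6}$ rather than growing with $p$.
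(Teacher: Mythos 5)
Your argument is correct and follows essentially the same route as the paper: apply the Jensen-type estimate of Lemma~\ref{lem:Jensen-type-ineq}, bound the maximal-function term via Claim~\ref{claim:max-function}(ii) and the boundary term via Theorem~\ref{thm:log_int_for_Rade_Four_series} (yielding the $p^{6}$ contribution), and then optimize in $\delta$, with the balanced choice $\delta=\bigl(p^{6}\log^{2}r/\log\sigma_{F}(r)\bigr)^{p/(2p+1)}$ when admissible and a constant $\delta$ otherwise. The paper splits the cases at $\log\sigma_{F}(r)\gtrless p^{6}\log^{2}r$ with $\delta=1$ in the second case, which is exactly your boundary-case handling in slightly different clothing, so there is no substantive difference.
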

\begin{proof}
Combining Lemma~\ref{lem:Jensen-type-ineq}, Theorem~\ref{thm:log_int_for_Rade_Four_series}
and Claim~\ref{claim:max-function}, we get 
\begin{multline*}
\|E_{F}(r)\|_{L^{p}(\Omega)}\lesssim\delta\,\log\sigma_{F}(r)+\frac{1}{\delta^{2}}\Bigl\|\intsy_{1}^{r}L_{F}^{*}(t,\delta)\,\log\bigl(\frac{r}{t}\bigr)\,\frac{\dd t}{t}\Bigr\|_{L^{p}(\Omega)}+p^{6}+A\log r\\
\stackrel{{\rm (ii)}}{\lesssim}\delta\,\log\sigma_{F}(r)+p^{6}\,\delta^{-1-\frac{1}{p}}\,\log^{2}r+p^{6}+A\log r\,.
\end{multline*}
To balance the RHS, we let 
\[
\delta=\Bigl(\frac{p^{6}\log^{2}r}{\log\sigma_{F}(r)}\Bigr)^{\frac{p}{2p+1}};
\]
this choice is possible provided that $\log\sigma_{F}(r)\ge p^{6}\cdot\log^{2}r$.
This gives us 
\[
\|E_{F}(r)\|_{L^{p}(\Omega)}\le Ap^{6}\cdot I_{p}(r)\,.
\]
If $\log\sigma_{F}(r)\le p^{6}\,\log^{2}r$, we let $\delta=1$. In
this case, 
\[
\|E_{F}(r)\|_{L^{p}(\Omega)}\le Ap^{6}\,\log^{2}r\,,
\]
which is again consistent with the RHS of~\eqref{eq:L^p}. This proves
Lemma~\ref{lemma:L^p}.
\end{proof}
\noindent Now we are ready to prove the a.s. estimate for $E_{F}(r)$.
Using Chebyshev's inequality and Lemma~\ref{lemma:L^p}, for every
$T>0$, we get 
\[
\Pr\bigl\{ E_{F}(r)>T\bigr\}\le T^{-p}\,\|E_{F}(r)\|_{L^{p}(\Omega)}^{p}\le T^{-p}\left(Ap^{6}\cdot I_{p}(r)\right)^{p}\,,
\]
whence, for every $p\ge1$ and every $r\ge2$, 
\[
\Pr\bigl\{ E_{F}(r)>e\cdot Ap^{6}\cdot I_{p}(r)\bigr\}\le e^{-p}\,.
\]
For sufficiently large $m\in\bbn$, we let $p_{m}=2\log m$, and choose
$r_{m}$ so that $\log\sigma_{F}(r_{m})=m$. Then 
\[
\Pr\bigl\{ E_{F}(r_{m})\ge A\log^{6}m\cdot I_{p_{m}}(r_{m})\bigr\}\le\frac{1}{m^{2}}\,.
\]
Therefore, by the Borel-Cantelli lemma, there exists an event $\Omega_{0}\subset\Omega$
of probability zero such that, for each $\omega\in\Omega\setminus\Omega_{0}$,
there exists $m_{0}(\omega)\in\bbn$ with the following property:
\begin{equation}
E_{F}(r_{m})\le AI_{p_{m}}(r_{m})\cdot\log^{6}m\,,\qquad m\ge m_{0}(\omega)\,.\label{eq:Borel-Cantelli}
\end{equation}
Fix $\omega\in\Omega\setminus\Omega_{0}$, and assume that $r_{m}\le r\le r_{m+1}$
and $m\ge m_{0}(\omega)$. For each $0\le\alpha<\beta\le2\pi$, we
have 
\begin{multline*}
N_{F}(r_{m},\alpha,\beta)-\frac{\beta-\alpha}{2\pi}\log\sigma_{F}(r_{m})-1\le N_{F}(r,\alpha,\beta)-\frac{\beta-\alpha}{2\pi}\log\sigma_{F}(r)\\
\le N_{F}(r_{m+1},\alpha,\beta)-\frac{\beta-\alpha}{2\pi}\log\sigma_{F}(r_{m+1})+1\,.
\end{multline*}
Therefore, 
\begin{multline*}
E_{F}(r)\le\max\bigl\{ E_{F}(r_{m}),E_{F}(r_{m+1})\bigr\}+1\\
\stackrel{\eqref{eq:Borel-Cantelli}}{\le}A\max\bigl\{ I_{p_{m}}(r_{m}),I_{p_{m+1}}(r_{m+1})\bigr\}\cdot\bigl(\log\log\sigma_{F}(r_{m})\bigr)^{6}\,.
\end{multline*}
Note that 
\begin{multline*}
I_{p_{m}}(r_{m})=\bigl(\log\sigma_{F}(r_{m})\bigr)^{\frac{2\log m+1}{4\log m+1}}\cdot\bigl(\log^{2}r_{m}\bigr)^{\frac{2\log m}{4\log m+1}}\\
=m^{\frac{1}{2(4\log m+1)}}\cdot\bigl(\log\sigma_{F}(r_{m})\bigr)^{\frac{1}{2}}\cdot\bigl(\log r_{m}\bigr)^{1-\frac{1}{4\log m+1}}\lesssim\bigl(\log\sigma_{F}(r_{m})\bigr)^{\frac{1}{2}}\,\log r_{m}\,.
\end{multline*}
Due to the convexity of the function $t\mapsto\log\sigma_{F}(e^{t})$,
we have 
\[
\frac{\log\sigma_{F}(r_{m})-\log\sigma_{F}(1)}{\log r_{m}}\le\frac{\log\sigma(r_{m+1})-\log\sigma(r_{m})}{\log r_{m+1}-\log r_{m}}\,,
\]
that is, $(m-c)(\log r_{m+1}-\log r_{m})\le\log r_{m}$, whence $\log r_{m+1}\le2\log r_{m}$,
at least when $m$ is large enough. Hence, $I_{p_{m+1}}(r_{m+1})\lesssim\bigl(\log\sigma_{F}(r_{m})\bigr)^{\frac{1}{2}}\,\log r_{m}$.
Recalling that $\log\sigma_{F}(r)>\log^{a}r$, we obtain 
\begin{equation}
E_{F}(r)\le A\,\bigl(\log\sigma_{F}(r)\bigr)^{\frac{1}{2}}\cdot\bigl(\log\log\sigma_{F}(r)\bigr)^{6}\cdot\log r\le A\log^{\gamma}\sigma_{F}(r)\,,\label{eq:E_F-a.s.}
\end{equation}
as above, with $\tfrac{1}{2}+\tfrac{1}{a}<\gamma<1$. This concludes
the proof of the first part of Theorem~\ref{thm:Rade_ent_funcs_ang_dist_of_zero}.
\mbox{}\hfill{}$\Box$

\subsection{A growth lemma}

Given $b>1$, a continuous non-decreasing function $\Phi\colon[1,\infty)\to(1,\infty)$,
${\displaystyle \lim_{r\to\infty}\Phi(r)=\infty}$, and a continuous
non-increasing function $\errfunc\colon[1,\infty)\to(0,1]$, we define
$X\subset[1,\infty)$ as the set of $r$'s for which 
\begin{multline*}
(1-\errfunc(r))\Phi(r)<\Phi\bigl((1+\errfunc(r)\log^{-b}\Phi(r))^{-1}r\bigr)\\
\le\Phi\bigl((1+\errfunc(r)\log^{-b}\Phi(r))r\bigr)<(1+\errfunc(r))\Phi(r)\,.
\end{multline*}

\begin{lem}
\label{lemma:B-N-type} The complement $[1,\infty)\setminus X$ has
finite logarithmic measure.\end{lem}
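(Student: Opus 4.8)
Lemma~\ref{lemma:B-N-type} is a version of the classical Borel--Nevanlinna (``growth'') lemma, and the plan is to adapt the standard argument to the slightly unusual form of the dilation that appears here. First I would rewrite everything in the multiplicative variable $\rho=\log r$, setting $\psi(\rho)=\Phi(e^{\rho})$, which is continuous, non-decreasing and tends to $\infty$. Then $r\in[1,\infty)\setminus X$ means that at least one of the two inequalities fails, i.e.
\[
\psi\bigl(\rho-\errfunc(e^{\rho})\log^{-b}\psi(\rho)\bigr)\le(1-\errfunc(e^{\rho}))\psi(\rho)
\quad\text{or}\quad
\psi\bigl(\rho+\errfunc(e^{\rho})\log^{-b}\psi(\rho)\bigr)\ge(1+\errfunc(e^{\rho}))\psi(\rho)\,.
\]
Since $\errfunc$ is non-increasing and $\psi$ non-decreasing, on any interval where $\rho$ stays in a bounded range the quantities $\errfunc$ and $\psi$ are controlled, so it is enough to bound, for each large integer $k$, the logarithmic measure (= Lebesgue measure in $\rho$) of the ``bad'' set inside $\{\rho:\ 2^{k}\le\psi(\rho)<2^{k+1}\}$ and then sum over $k$. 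The hoped-for bound is $O(k^{-b'})$ for each such dyadic block with some $b'>1$ (a power slightly less than $b$, losing a bit for the $\errfunc$ factor), whose sum over $k$ converges; since $\errfunc$ and $\log^{-b}\psi$ only help, any bound of the form $\sum_k (\text{summable})$ suffices.

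The core estimate is the classical chaining argument. Fix a dyadic block $B_k=\{\rho: 2^k\le\psi(\rho)<2^{k+1}\}=[\rho_k,\rho_k')$ (an interval by monotonicity of $\psi$). On $B_k$ we have $\log^{-b}\psi(\rho)\ge (C(k+1))^{-b}$ and $\errfunc(e^\rho)\ge\errfunc(e^{\rho_k'})=:\eta_k$, so the step size $h(\rho):=\errfunc(e^\rho)\log^{-b}\psi(\rho)$ is bounded below by $h_k:=\eta_k (C(k+1))^{-b}$. Consider the ``right-bad'' points, where $\psi(\rho+h(\rho))\ge(1+\errfunc(e^\rho))\psi(\rho)\ge(1+\eta_k)\psi(\rho)$. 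Starting from any right-bad point $\rho$ in $B_k$ and jumping by $h(\rho)\ge h_k$, $\psi$ multiplies by at least $1+\eta_k$; but $\psi$ cannot exceed $2^{k+1}$ on $B_k$, so $\psi$ can multiply by $1+\eta_k$ at most $\log 2/\log(1+\eta_k)\le C/\eta_k$ times before leaving $B_k$. Hence the right-bad set in $B_k$ is covered by at most $C/\eta_k$ intervals of length at most (the relevant step, which is $\le h(\rho)$ and in turn $\le \log^{-b}\psi\le (Ck)^{-b}$ since $\errfunc\le1$), giving total measure $\le C\eta_k^{-1}(Ck)^{-b}\cdot(\text{something})$. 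The point is that the factor $\errfunc$ appears with a $+1$ power in the growth ratio (helping us count fewer jumps is wrong — it helps us: larger $\eta_k$ means fewer jumps) **and** it does not appear in the length bound (we use $\errfunc\le1$ there). Actually the cleanest route avoids $\eta_k$ entirely: drop the $\errfunc$ in the growth factor to $(1+\errfunc)\ge$ nothing useful — instead, use the standard trick of building a maximal increasing sequence $\rho_1<\rho_2<\cdots$ of right-bad points with $\rho_{j+1}\ge\rho_j+h(\rho_j)$ and $\psi(\rho_{j+1})\ge(1+\errfunc(e^{\rho_j}))\psi(\rho_j)$; telescoping $\log\psi$ over one dyadic block and using $\log(1+x)\ge x/2$ for $x\le1$ gives $\sum_j \errfunc(e^{\rho_j})\le C$, hence $\sum_j h(\rho_j)\le (Ck)^{-b}\sum_j\errfunc(e^{\rho_j})\le C(Ck)^{-b}$, and the right-bad set in $B_k$ is contained in $\bigcup_j[\rho_j,\rho_j+h(\rho_j)]$ up to a single extra interval of length $h$ at the end. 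Summing over $k\ge1$: $\sum_k C k^{-b}<\infty$ since $b>1$. The left-bad set is handled symmetrically (run the chaining with decreasing jumps, or apply the right-bad argument to $\rho\mapsto-\rho$ after a reflection, noting $\psi$ decreasing there requires only cosmetic changes).

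The main technical obstacle — really the only subtle point — is book-keeping the variable step size $h(\rho)=\errfunc(e^\rho)\log^{-b}\psi(\rho)$, which depends on $\rho$ both through $\errfunc$ and through $\psi$, whereas the textbook Borel--Nevanlinna lemma uses a fixed dilation $r\mapsto r(1+\errfunc(r))$. Restricting to a dyadic block $B_k$ tames the $\psi$-dependence (it varies by a factor $2$ there), and monotonicity of $\errfunc$ together with the maximal-sequence telescoping argument tames the $\errfunc$-dependence (we only ever need the lower bound $h(\rho_j)\ge(Ck)^{-b}\errfunc(e^{\rho_j})$ and the summability $\sum_j\errfunc(e^{\rho_j})\le C$ coming from the geometric growth of $\psi$ across the block). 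One should also check at the outset that $X$ is Lebesgue-measurable — it is, being defined by inequalities between compositions of continuous functions — and that for every $r$ the dilated radii $(1+\errfunc(r)\log^{-b}\Phi(r))^{\pm1}r$ lie in $[1,\infty)$ for $r$ large, so that the definition of $X$ makes sense; the finitely many small $r$ where this fails contribute finite logarithmic measure and may be absorbed into the exceptional set. Putting the two (right and left) dyadic estimates together and summing the convergent series $\sum_k k^{-b}$ yields $\int_{[1,\infty)\setminus X}\frac{\dd r}{r}<\infty$, completing the proof.
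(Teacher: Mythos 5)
Your proof is correct, and at its core it is the same mechanism as the paper's: after the harmless change of variable $\rho=\log r$ and the replacement of the shifts $\log(1+h)$ by the larger steps $h=\varepsilon(r)\log^{-b}\Phi(r)$ (which only enlarges the bad set), you build a greedy maximal chain of bad points along which $\Phi$ grows by successive factors $1+\varepsilon$, so the accumulated multiplicative growth controls $\log^{-b}\Phi$ at the chain points and the covering intervals have summable lengths; the hypothesis $b>1$ enters in exactly the same way. The genuine difference is the bookkeeping of the final summation. The paper runs one global chain $r_{1}<r_{1}'<r_{2}<\cdots$ on all of $[1,\infty)$, bounds $\log\Phi(r_{n})$ from below by the partial sums $P_{n}=\sum_{j<n}\log(1+\varepsilon(r_{j}))$, and then needs a two-case analysis (either $P_{n}$ stays bounded, so $\sum_{n}\varepsilon(r_{n})<\infty$, or $P_{n}\to\infty$ and one invokes the Abel--Dini-type fact that $\sum_{n}a_{n}(\sum_{j\le n}a_{j})^{-b}<\infty$ for $b>1$). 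You instead localize to dyadic blocks $2^{k}\le\Phi<2^{k+1}$, where telescoping across the block at once gives $\sum_{j}\varepsilon(e^{\rho_{j}})\le C$ and each covering interval has length at most $(k\log2)^{-b}\varepsilon(e^{\rho_{j}})$, so each block contributes $O(k^{-b})$ and you simply sum over $k$; this buys you a cleaner conclusion with no case split and no auxiliary convergence fact, at the small cost of verifying that the per-block chains terminate (the step is bounded below on each block since $\varepsilon$ is non-increasing and $\Phi<2^{k+1}$ there) and of discarding the bounded initial range where $\log\Phi$ may be small, which has finite logarithmic measure anyway. One editorial remark: the first covering attempt in the middle of your argument (counting roughly $C/\eta_{k}$ intervals) is superseded by the telescoping version you settle on, and only the latter should be retained; with that pruning, the write-up is complete, including the symmetric treatment of the left-bad set, which the paper likewise omits.
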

\begin{proof}
\noindent Letting 
\[
E=\left\{ r\in[1,\infty)\colon\Phi\bigl(r+r\,\errfunc(r)\log^{-b}\Phi(r)\bigr)\ge(1+\errfunc(r))\Phi(r)\right\} ,
\]
we show that the set $E$ has finite logarithmic measure. This gives
half of the statement. The proof of the other half is very similar
and we skip it. We suppose that the set $E$ is non-empty and unbounded,
otherwise, there is nothing to prove. We construct inductively sequences
$r_{n}<r_{n}'<r_{n+1}$ such that $E$ is contained in the union of
the intervals $[r_{n},r_{n}']$, and then show that the series $\sum_{n}(r_{n}'-r_{n})/r_{n}$
converges.

Let $r_{1}=\inf\left\{ r\colon r\in E\right\} $. Since $E$ is closed,
$r_{1}\in E$. Suppose that the values $r_{1},\ldots,r_{n}$ have
already been defined. Put 
\[
r_{n}'=\bigl(1+\errfunc(r_{n})\log^{-b}\Phi(r_{n})\bigr)\cdot r_{n},\qquad r_{n+1}=\inf\bigl\{ r\ge r_{n}'\colon r\in E\bigr\}\,,
\]
and let 
\[
P_{n+1}=\sum_{1\le j\le n}\log\bigl(1+\errfunc(r_{j})\bigr)\le\sum_{1\le j\le n}\errfunc(r_{j})\,.
\]
Note that $r_{n}\uparrow\infty$. Otherwise, $r_{n}\uparrow r^{*}$,
and then also $r_{n}'\uparrow r^{*}$. Hence, 
\[
\errfunc(r_{n})\log^{-b}\Phi(r_{n})r_{n}=r_{n}'-r_{n}\to0\,.
\]
On the other hand, by the continuity of $\Phi$ and $\errfunc$, the
LHS converges to $\errfunc(r^{*})\log^{-b}\Phi(r^{*})r^{*}$, which
cannot be zero. By construction, $E\subset\bigcup_{n\ge1}\left[r_{n},r_{n}^{\prime}\right]$.

Next, 
\begin{eqnarray*}
\Phi(r_{n+1})\ge\Phi(r_{n}') & = & \Phi\bigl(r_{n}+\errfunc(r_{n})\log^{-b}\Phi(r_{n})r_{n}\bigr)\\
 & \stackrel{r_{n}\in E}{\ge} & \bigl(1+\errfunc(r_{n})\bigr)\Phi(r_{n})\ge\,\ldots\\
 & \ge & \prod_{1\le j\le n}\bigl(1+\errfunc(r_{j})\bigr)\cdot\Phi(r_{1})\ge e^{P_{n+1}}\,.
\end{eqnarray*}
Using this estimate, we get 
\[
\intsy_{E}\frac{\dd r}{r}\le\sum_{n\ge1}\frac{r_{n}'-r_{n}}{r_{n}}=\sum_{n\ge1}\errfunc(r_{n})\log^{-b}\Phi(r_{n})\le\sum_{n\ge1}\errfunc(r_{n})P_{n}^{-b}\,.
\]

Now we consider two cases. If the $P_{n}$'s converge to a finite
limit, then, by the definition of that sequence, the series $\sum_{n}\errfunc(r_{n})$
also converges, and therefore, the set $E$ has finite logarithmic
measure. If the $P_{n}$'s increase to $\infty$, then we note that,
for $n$ large enough, 
\[
P_{n}^{b}=\left(\sum_{1\le j\le n-1}\log\bigl(1+\errfunc(r_{j})\bigr)\right)^{b}\ge c_{b}\left(\sum_{1\le j\le n}\errfunc(r_{j})\bigr)\right)^{b}\,.
\]
Since $b>1$, the series 
\[
\sum_{n\ge1}\errfunc(r_{n})\left(\sum_{1\le j\le n}\errfunc(r_{j})\right)^{-b}
\]
always converges. Once again, we conclude that the set $E$ also has
finite logarithmic measure.
\end{proof}

\subsection{Proof of the second part of Theorem~\ref{thm:Rade_ent_funcs_ang_dist_of_zero}}

Fix $\alpha$ and $\beta$ such that $0\le\alpha<\beta\le2\pi$, and
fix $b>1$. Let $r_{1}=r\left(1-\errfunc(r)\log^{-b}\zrf_{F}(r)\right)\ge2$,
and $r_{2}=r\left(1+\errfunc(r)\log^{-b}\zrf_{F}(r)\right)$. The
continuous decreasing function $\errfunc(r)$ will be specified later.

Since the functions $t\mapsto N_{F}(e^{t})$ and $t\mapsto\log\sigma_{F}(e^{t})$
are convex, we have 
\begin{multline*}
n_{F}(r_{1},\alpha,\beta)\le\frac{N_{F}(r,\alpha,\beta)-N_{F}(r_{1},\alpha,\beta)}{\log r-\log r_{1}}\\
\le n_{F}(r,\alpha,\beta)\le\frac{N_{F}(r_{2},\alpha,\beta)-N_{F}(r,\alpha,\beta)}{\log r_{2}-\log r}\le n_{F}(r_{2},\alpha,\beta)\,,
\end{multline*}
and 
\[
\zrf_{F}(r_{1})\le\frac{\log\sigma_{F}(r)-\log\sigma_{F}(r_{1})}{\log r-\log r_{1}}\le\zrf_{F}(r)\le\frac{\log\sigma_{F}(r_{2})-\log\sigma_{F}(r)}{\log r_{2}-\log r}\le\zrf_{F}(r_{2})\,.
\]
Therefore, 
\begin{gather*}
n_{F}(r,\alpha,\beta)-\frac{\beta-\alpha}{2\pi}\,\zrf_{F}(r)\\
=n_{F}(r,\alpha,\beta)-\frac{\beta-\alpha}{2\pi}\,\zrf_{F}(r_{2})+\frac{\beta-\alpha}{2\pi}\,\zrf_{F}(r_{2})-\frac{\beta-\alpha}{2\pi}\,\zrf_{F}(r)\\
\le\frac{\bigl[N_{F}(r_{2},\alpha,\beta)-\frac{\beta-\alpha}{2\pi}\log\sigma_{F}(r_{2})\bigr]-\bigl[N_{F}(r,\alpha,\beta)-\frac{\beta-\alpha}{2\pi}\log\sigma_{F}(r)\bigr]}{\log r_{2}-\log r}\\
+\bigl[\zrf_{F}(r_{2})-\zrf_{F}(r)\bigr]\\
\le\frac{E_{F}(r_{2})+E_{F}(r)}{\log r_{2}-\log r}+\bigl[\zrf_{F}(r_{2})-\zrf_{F}(r)\bigr]\,.
\end{gather*}
Applying the growth Lemma~\ref{lemma:B-N-type} with the function
$\Phi(r)=\zrf_{F}(r)$, we conclude that 
\[
n_{F}(r,\alpha,\beta)-\frac{\beta-\alpha}{2\pi}\,\zrf_{F}(r)\le C\frac{E_{F}(r)+E_{F}(r_{2})}{\errfunc(r)}\,\bigl(\log\zrf_{F}(r)\bigr)^{b}+\errfunc(r)\zrf_{F}(r)\,,
\]
provided that $r\in[2,\infty)\setminus E$, where $E$ is a set of
finite logarithmic measure. Similarly, 
\begin{align*}
n(r)-\frac{\beta-\alpha}{2\pi}\,\zrf_{F}(r) & \ge-\frac{S(r)+S(r_{1})}{\log r-\log r_{1}}-\bigl[\zrf_{F}(r)-\zrf_{F}(r_{1})\bigr]\\
 & \ge-C\frac{E_{F}(r)+E_{F}(r_{1})}{\errfunc(r)}\,\bigl(\log\zrf_{F}(r)\bigr)^{b}-\errfunc(r)\zrf_{F}(r)\,,
\end{align*}
provided that $r\in[2,\infty)\setminus E$. Therefore, 
\begin{equation}
e_{F}(r)\lesssim\frac{E_{F}(r)+E_{F}(r_{1})+E_{F}(r_{2})}{\errfunc(r)}\,\bigl(\log\zrf_{F}(r)\bigr)^{b}+\errfunc(r)\zrf_{F}(r)\label{eq:s(r)}
\end{equation}
provided that $r\in[2,\infty)\setminus E$, where $E$ is a set of
finite logarithmic measure. The choice of the decreasing function
$\errfunc(r)$ is at our disposal. Now we can readily deduce estimates
for $e_{F}(r)$ in mean and a.s. from the corresponding estimates~\eqref{eq:E_F-mean}
and~\eqref{eq:E_F-a.s.} for $E_{F}(r)$.

By estimates~\eqref{eq:E_F-mean} and~\eqref{eq:E_F-a.s.},
\begin{equation}
E_{F}(r)\le A\bigl(\log\sigma_{F}(r)\bigr)^{\frac{1}{2}}\cdot\bigl(\log\log\sigma_{F}(r)\bigr)^{C}\cdot\log r\,\quad\mbox{a.s.}\label{eq:upper_bound_for_S(r)}
\end{equation}
and also in mean, provided that $r$ is sufficiently big. Since $\log\sigma_{F}(r)\le\zrf_{F}(r)\cdot\log r+A$,
and since we assume that $\log\sigma_{F}(r)\ge\log^{a}r$ with some
$a>4$, by~\eqref{eq:upper_bound_for_S(r)} we have that a.s., $E_{F}(r)\le A\zrf_{F}^{\gamma_{1}}(r)$
with $\frac{1}{2}+\frac{2}{a}<\gamma_{1}<1$. The same bound holds
for $E_{F}(r_{1})$. Since $r\notin E$, this also holds for $E_{F}(r_{2})$.
Then, making use of~\eqref{eq:s(r)}, we see that $e_{F}(r)$ does
not exceed $A\errfunc(r)^{-1}\cdot\zrf_{F}^{\gamma_{2}}(r)+\errfunc(r)\zrf_{F}(r)$,
both in mean and a.s., with the exponent $\gamma_{2}$ lying in the
same range as $\gamma_{1}$. Letting $\errfunc(r)=\left(\zrf_{F}(r)\right)^{-(1-\gamma_{2})/2}$,
we get part (ii) of Theorem~\ref{thm:Rade_ent_funcs_ang_dist_of_zero}
with $\gamma=\frac{1}{2}\left(1+\gamma_{2}\right)$. \hfill{}$\Box$

\section{Random entire functions with a fixed portion of real zeros\label{sec:Rade_Func_with_many_real_zeros}}

In this section we construct an example of a class of entire functions
with only real zeros. In particular this implies that the zeros are
not equidistributed in sectors.

The next theorem is based on an idea that goes back to Hardy \cite{Har}.
It is worth mentioning that Littlewood and Offord used a similar construction
in \cite[Theorem 3 (iii)]{LO1}.\index{Rademacher entire functions!having only real zeros}
\begin{thm}
\label{thm:real_zeros_example}Let
\[
f\left(z\right)=\sum_{k\ge0}\xi_{k}e^{-\alpha k^{2}}z^{k},
\]
be a Rademacher entire function. For $\alpha\ge\log3$ the function
$f$ has only real zeros.\end{thm}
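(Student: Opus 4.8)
The plan is to follow the classical idea of Hardy, rescaling the variable so that the Taylor series becomes (essentially) a Jacobi theta-type series, and then exhibiting it as a limit of polynomials whose zeros lie on the unit circle. Concretely, write $f(z)=\sum_{k\ge0}\xi_k e^{-\alpha k^2}z^k$ and substitute $z=w\,e^{\alpha(N-1)}$ (or more symmetrically center the exponents), so that for the truncation $f_N(z)=\sum_{k=0}^{N}\xi_k e^{-\alpha k^2}z^k$ one obtains, after multiplying by a suitable monomial, a self-inversive polynomial: a polynomial $p_N(w)$ of degree $N$ satisfying $w^N\overline{p_N(1/\bar w)}=\pm p_N(w)$. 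The point of centering the exponents at $k=N/2$ is exactly that $e^{-\alpha k^2}$ times the reversal coefficient $e^{-\alpha(N-k)^2}$ has the palindromic symmetry forced by $\xi_k=\pm1$ only up to sign, which is harmless.

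First I would reduce to a statement about polynomials with all zeros on $\{|w|=1\}$. The key classical lemma (Hardy, or the Cohn/Schur–Cohn type criterion for self-inversive polynomials) is: if $p(w)=\sum_{k=0}^N c_k w^k$ is self-inversive and the "diagonal dominance" condition $|c_{N/2}|>\sum_{k\ne N/2}|c_k|$ — or more precisely the condition that makes the associated Rouché/argument-principle count work — holds, then all zeros of $p$ lie on the unit circle. I would verify the relevant inequality for the centered coefficients $b_k=\pm e^{-\alpha(k-m)^2}$ where $m=N/2$: the middle coefficient has modulus $1$ (after normalization) while $\sum_{k\ne m}e^{-\alpha(k-m)^2}\le 2\sum_{j\ge1}e^{-\alpha j^2}<2\sum_{j\ge1}e^{-\alpha j}=\frac{2e^{-\alpha}}{1-e^{-\alpha}}$, which is $\le1$ precisely when $e^{-\alpha}\le\tfrac13$, i.e. $\alpha\ge\log3$. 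This is where the constant $\log 3$ enters, and I expect this elementary estimate together with the self-inversive-polynomial lemma to be the technical heart; the only subtlety is handling the two parities of $N$ (integer versus half-integer center) and carrying the $\pm$ signs through without breaking self-inversiveness.

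Next I would pass from the polynomials $f_N$ to $f$ by a Hurwitz-type argument. Since $\sum e^{-\alpha k^2}<\infty$, the truncations $f_N$ converge to $f$ uniformly on compact subsets of $\bbc$; hence by Hurwitz's theorem any zero $z_0$ of $f$ with $f\not\equiv0$ is a limit of zeros of $f_N$. Each such zero of $f_N$ lies (in the $w$-coordinate, $w=z e^{-\alpha(\text{shift})}$) on a circle $|w|=1$, i.e. on the circle $|z|=e^{\alpha(\text{shift}_N)}$ — and here I must be a little careful, because the radius of that circle depends on $N$ and grows with $N$. The cleaner route is to note that after the symmetric centering the zeros of $f_N$ in the $z$-variable all have a fixed modulus only asymptotically, so instead I would argue directly: the zeros of $f_N$ in the symmetric normalization lie on $|w|=1$ where $w$ is a fixed affine rescaling independent of $N$ only after we fix the center; to avoid this issue entirely, follow Hardy's original device of writing $f(z)=\sum \xi_k e^{-\alpha k^2} z^k$ and observing $f(z)$ and its "conjugate reciprocal" $\sum \xi_k e^{-\alpha k^2} \bar z^{-k}$-type object, establishing that $f(x)$ is real for $x\in\bbr$ and that $f$ has, by the polynomial approximation, the maximal possible number of sign changes on each interval $[-R,R]$, forcing all zeros onto $\bbr$.

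The main obstacle, then, is organizing the limiting argument so that the "zeros on the unit circle" property of the truncations survives the rescaling and the passage to the limit; I expect the self-inversive-polynomial lemma plus the $\alpha\ge\log3$ coefficient inequality to be routine, and the Hurwitz/approximation bookkeeping (especially the parity of $N$ and the moving normalization) to be where care is required. I would conclude by remarking that $\log\sigma_f(r)\sim C_\alpha\log^2 r$, consistent with the remark after Theorem~\ref{thm:Rade_ent_funcs_ang_dist_of_zero} that a lower bound on the growth of $\sigma_F$ of order larger than $\log^2 r$ is needed for equidistribution.
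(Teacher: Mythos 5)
Your proposal does not reach the statement, and the main route is aimed at the wrong conclusion. The paper's proof is a localization argument: for $\alpha\ge\log 3$ one has $\sum_{k\ge1}e^{-\alpha k^{2}}<\tfrac12$, so on each circle $\left\{ \left|z\right|=e^{2\alpha m}\right\} $ the single monomial $\xi_{m}e^{-\alpha m^{2}}z^{m}$ dominates the sum of all other terms; Rouch\'e then gives exactly $m$ zeros in $\left\{ \left|z\right|\le e^{2\alpha m}\right\} $, hence exactly one zero (with multiplicity) in each annulus $\left\{ e^{2\alpha m}<\left|z\right|\le e^{2\alpha\left(m+1\right)}\right\} $, and since the Taylor coefficients are real the zero set is invariant under conjugation, so a single zero in a conjugation-invariant annulus must be real. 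Your coefficient estimate $2\sum_{j\ge1}e^{-\alpha j^{2}}\le\frac{2e^{-\alpha}}{1-e^{-\alpha}}\le1$ is essentially the same numerical heart (and explains the constant $\log3$ in the same way), but the framework you wrap around it does not work. The truncations are \emph{not} self-inversive: self-inversiveness requires $c_{N-k}=\epsilon\,\overline{c_{k}}$ with one fixed unimodular $\epsilon$, whereas here $c_{N-k}/c_{k}=\xi_{N-k}\xi_{k}$ is a $k$-dependent sign for a general Rademacher sequence, so the ``up to sign, which is harmless'' step is exactly where the structure is destroyed. Moreover the conclusion you would feed into Hurwitz --- all zeros of the rescaled truncation on a single circle whose radius depends on $N$ and tends to infinity --- cannot be correct in the limit: $f_{N}\to f$ locally uniformly and $f$ has zeros of many different moduli (one per annulus), so the truncation zeros approximating a fixed zero of $f$ cannot all sit on one common $N$-dependent circle. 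This is not a bookkeeping issue with the moving normalization; the intermediate statement is of the wrong shape (zeros on a circle rather than on $\bbr$) and false for the objects at hand.

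Your fallback sketch (``$f$ is real on $\bbr$ and has the maximal possible number of sign changes on $[-R,R]$'') presupposes precisely the two ingredients you have not supplied: a count of the total number of zeros of $f$ in $\left\{ \left|z\right|\le r\right\} $ (this is the Rouch\'e lemma above, the actual content of the proof), and a verification that enough sign changes occur for an \emph{arbitrary} sign pattern $\xi_{k}$. The latter is not automatic: at the radius $e^{2\alpha m}$ the sign of $f$ on the positive half-axis is $\xi_{m}$ and on the negative half-axis is $\left(-1\right)^{m}\xi_{m}$, so between consecutive dominant radii a sign change occurs on the positive axis iff $\xi_{m}\ne\xi_{m+1}$ and on the negative axis iff $\xi_{m}=\xi_{m+1}$; one can salvage a proof this way, but only after proving the domination estimate on the circles and doing this case analysis, at which point the conjugation-symmetry argument of the paper is both shorter and sign-independent. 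As written, the proposal has a genuine gap: it never establishes how many zeros $f$ has in a given disk, and without that neither of your two routes closes.
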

\begin{rem*}
This result is not probabilstic, as it does not depend on the values
of the sequence $\left\{ \xi_{n}\right\} $. It is possible to show
that for every $\alpha>0$ there is, almost surely, a positive portion
(depending on $\alpha$) of the zeros in $\disk r$ that are real.
\end{rem*}
Let $\alpha=\beta^{2}/2$. We first show that $\log\sigma_{f}\left(r\right)$
is of order $\log^{2}r$. Note that
\[
\sigma_{f}^{2}\left(r\right)=\sum_{k\ge0}e^{-2\alpha k^{2}}r^{2k}=\sum_{k\ge0}e^{-2\alpha k^{2}+2k\log r},
\]
and thus
\[
\sigma_{f}^{2}\left(r\right)=\exp\left(\frac{\log^{2}r}{2\alpha}\right)\sum_{k\ge0}\exp\left(-2\alpha\left(k-\frac{\log r}{2\alpha}\right)^{2}\right).
\]
Hence for $r\ge e$, and using the fact that
\[
\sum_{k\ge1}e^{-2\alpha k^{2}}\le\sum_{k\ge0}\exp\left(-2\alpha\left(k-\frac{\log r}{2\alpha}\right)^{2}\right)\le2\cdot\sum_{k\ge0}e^{-2\alpha k^{2}},
\]
we get that
\[
C_{1,\alpha}\cdot\exp\left(\frac{\log^{2}r}{2\alpha}\right)\le\sigma_{f}^{2}\left(r\right)\le C_{2,\alpha}\cdot\exp\left(\frac{\log^{2}r}{2\alpha}\right),
\]
where $C_{1,\alpha},C_{2,\alpha}$ are certain positive constants
that depend only on $\alpha$. Therefore,
\[
\log\sigma_{f}\left(r\right)=\frac{\log^{2}r}{4\alpha}+\O_{\alpha}\left(1\right),
\]
and indeed the function $f$ lies outside of the scope of Theorem
\ref{thm:Rade_ent_funcs_ang_dist_of_zero}.

Before we prove Theorem we first need a lemma that locates the zeros
of $f$. Without changing notation, we now assume that $\left\{ \xi_{n}\right\} $
is an arbitrary sequence, which consists of the numbers $\pm1$ (there
is no probability involved).
\begin{lem}
Suppose that For $\alpha\ge\log3$. Then given any $m\in\bbn$, the
function $f$ has exactly $m$ zeros inside the disk $\left\{ \left|z\right|\le e^{2\alpha m}\right\} $.\end{lem}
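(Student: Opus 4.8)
The plan is to use Rouché's theorem on the circles $|z| = e^{(2k+1)\alpha}$, $k \ge 0$, comparing $f$ with its dominant monomial there. Fix $\alpha \ge \log 3$ and set $r_k = e^{(2k+1)\alpha}$. On the circle $|z| = r_k$, the term $\xi_k e^{-\alpha k^2} z^k$ has modulus $e^{-\alpha k^2} r_k^k = e^{-\alpha k^2 + (2k+1)\alpha k} = e^{\alpha(k^2 + k)}$. I would show that this single term strictly dominates the sum of the moduli of all the other terms on this circle. For the term with index $k+j$, $j \ne 0$, its modulus on $|z| = r_k$ is $e^{-\alpha(k+j)^2 + (2k+1)\alpha(k+j)} = e^{\alpha(k^2 + k) - \alpha j^2 + \alpha j} = e^{\alpha(k^2+k)} \cdot e^{-\alpha j(j-1)}$. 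Thus the ratio of the $(k+j)$-term to the $k$-term is exactly $e^{-\alpha j(j-1)}$, which depends only on $j$ and not on $k$. Summing over $j \in \bbz \setminus \{0\}$ (with $k + j \ge 0$), the total is at most $\sum_{j \ne 0} e^{-\alpha j(j-1)}$; the exponents $j(j-1)$ for $j \in \{\dots, -2, -1, 1, 2, 3, \dots\}$ take the values $\{\dots, 6, 2, 0, 2, 6, \dots\}$, so the sum is $2\big(e^{0} + e^{-2\alpha} + e^{-6\alpha} + \cdots\big) - $ correction, i.e. roughly $2e^{-0} + \text{smaller terms}$. Wait — the $j = 1$ term gives exponent $0$, hence contributes $1$, which is not $< 1$. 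This is the point that needs care.

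Resolving this: the $j = 1$ neighbor (index $k+1$) is as large as the $k$-term on $|z| = r_k$, so Rouché fails on exactly this circle with this comparison. The fix is to perturb the radius slightly, or better, to compare on $|z| = e^{2\alpha k}$ as the lemma statement suggests rather than $e^{(2k+1)\alpha}$. On $|z| = R_m := e^{2\alpha m}$, the modulus of the index-$n$ term is $e^{-\alpha n^2 + 2\alpha m n} = e^{\alpha m^2} e^{-\alpha(n-m)^2}$. So, writing $n = m + j$, the $n$-term has modulus $e^{\alpha m^2} e^{-\alpha j^2}$, and the two largest are $j = 0$ and... again $j = \pm 1$ tie for second with modulus $e^{\alpha m^2} e^{-\alpha}$. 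Hmm, here the $j=0$ term strictly dominates: $1$ versus $2e^{-\alpha} + 2e^{-4\alpha} + \cdots < 2e^{-\alpha}\cdot\frac{1}{1 - e^{-3\alpha}}$, and for $\alpha \ge \log 3$ we have $e^{-\alpha} \le 1/3$, so $2e^{-\alpha}/(1-e^{-3\alpha}) \le (2/3)/(1 - 1/27) = (2/3)\cdot(27/26) < 1$. Therefore on $|z| = R_m$, the term $\xi_m e^{-\alpha m^2} z^m$ strictly dominates the sum of all others. By Rouché, $f$ has exactly as many zeros in $\{|z| < R_m\}$ as $\xi_m e^{-\alpha m^2} z^m$, namely $m$. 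This proves the Lemma. The main obstacle is precisely this constant-chasing: verifying $\sum_{j \ne 0} e^{-\alpha j^2} < 1$ for $\alpha \ge \log 3$, which reduces to a clean geometric-series estimate.

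Given the Lemma, Theorem~\ref{thm:real_zeros_example} follows by a Hermite–Biehler / real-rootedness argument going back to Hardy and to Pólya's work on functions with only real zeros, as in \cite{Har} and \cite[Theorem 3 (iii)]{LO1}. The Lemma shows that $f$ has exactly one zero in each annulus $e^{2\alpha(m-1)} \le |z| < e^{2\alpha m}$ for $m \ge 1$ (and none in $|z| < e^0 = 1$, taking $m = 0$). Since the coefficients $\xi_k e^{-\alpha k^2}$ are real, the non-real zeros of $f$ come in conjugate pairs $w, \bar w$ with $|w| = |\bar w|$; two distinct such zeros would lie in the same annulus, contradicting the Lemma's count of exactly one zero per annulus — unless $w = \bar w$, i.e. $w \in \bbr$. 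One subtlety: a zero could be a real double zero, or a conjugate pair could straddle an annulus boundary $|z| = e^{2\alpha m}$; but the strict domination of the $m$-term on that exact circle shows $f$ does not vanish on $|z| = e^{2\alpha m}$, so each annulus is closed off cleanly and contains precisely one zero counted with multiplicity, forcing it to be a simple real zero. Hence all zeros of $f$ are real, completing the proof. I would present the Rouché estimate as the technical core and the real-rootedness deduction as a short corollary of the "one zero per annulus" structure.
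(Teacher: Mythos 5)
Your proposal is correct and matches the paper's own argument: both apply Rouch\'e's theorem on the circle $\left\{ \left|z\right|=e^{2\alpha m}\right\}$, where the term $\xi_{m}e^{-\alpha m^{2}}z^{m}$ has modulus $e^{\alpha m^{2}}$ and the remaining terms sum to at most $e^{\alpha m^{2}}\cdot2\sum_{j\ge1}e^{-\alpha j^{2}}<e^{\alpha m^{2}}$ for $\alpha\ge\log3$ by a geometric-series comparison (the paper bounds $\sum_{j\ge1}e^{-\alpha j^{2}}\le\sum_{j\ge1}3^{-j}=\tfrac{1}{2}$, while you use $2e^{-\alpha}/(1-e^{-3\alpha})<1$; these are equivalent in spirit). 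The only difference is cosmetic constant-chasing, so the proposal is essentially the same proof.
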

\begin{proof}
Let $m\in\bbn$ and $r_{m}=e^{2\alpha m}$. Then for any $z$ on the
circle $\left\{ \left|z\right|=r_{m}\right\} $ we have
\begin{eqnarray*}
\left|f\left(z\right)-\xi_{m}e^{-\alpha m^{2}}z^{m}\right| & < & \sum_{k\ne m}e^{-\alpha k^{2}}\left|z\right|^{k}=\exp\left(\alpha m^{2}\right)\sum_{k\ne m}e^{-\alpha\left(k-m\right)^{2}}\\
 & \le & \exp\left(\alpha m^{2}\right)\cdot2\sum_{k\ge1}e^{-\alpha k^{2}}.
\end{eqnarray*}
We now observe that for $\alpha\ge\log3$ %
\footnote{With a more precise calculation this can be improved to $\alpha\ge0.7855$.%
}
\[
\sum_{k\ge1}e^{-\alpha k^{2}}\le\sum_{k\ge1}3^{-k^{2}}<\sum_{k\ge1}3^{-k}=\frac{1}{2}.
\]
Hence, we conclude that
\[
\left|f\left(z\right)-\xi_{m}e^{-\alpha m^{2}}z^{m}\right|<\left|\xi_{m}e^{-\alpha m^{2}}z^{m}\right|=\exp\left(\alpha m^{2}\right).
\]
Now Rouché\textquoteright{}s Theorem yields the required result.
\end{proof}
\textit{Proof of Theorem \ref{thm:real_zeros_example}}: By the lemma,
each annulus $\left\{ e^{2\alpha m}<\left|z\right|\le e^{2\alpha\left(m+1\right)}\right\} $,
$m\in\bbn$, contains exactly one zero of the function $f$. Since
the Taylor coefficients of the function $f$ are real, this zero must
be real as well. The same holds for the disk $\left\{ \left|z\right|\le e^{2\alpha}\right\} $.
Hence, all the zeros of $f$ are real.\hfill{}$\Box$

\section{Further Work and Open Problems\index{open problems}}

In a new work \cite{NNS2} we prove stronger versions of Theorem \ref{thm:Rade_ent_funcs_ang_dist_of_zero}.
Let $F\left(z\right)=\sum_{n\ge0}\xi_{n}a_{n}z^{n}$ be a Rademacher
entire function, which is not a polynomial and such that $a_{0}\ne0$.
Let $\varphi\in C^{2}\left(\bbr\right)$, $\varphi\left(\theta\right)\in\left[0,1\right]$
be a $2\pi$-periodic test function. Our zero counting function is
\[
n_{F}\left(r;\varphi\right)=\sum_{\zeta\in Z_{F}\cap\disk r}\varphi\left(\arg\zeta\right),\quad Z_{F}=F^{-1}\left\{ 0\right\} ,
\]
where the zero set $Z_{F}$ includes multiplicities. We mention that
$n_{F}\left(r\right)=n_{F}\left(r;\mathbf{1}\right)$. We will use
the following notation:
\[
F_{r}\left(\theta\right)=F\left(re^{i\theta}\right),\quad\widehat{F}_{r}=\frac{F_{r}}{\sigma_{F}\left(r\right)},
\]
and for any integrable $g\colon\left[0,2\pi\right]\to\bbr$
\[
\left\langle g\right\rangle =\frac{1}{2\pi}\intsy_{0}^{2\pi}g\left(t\right)\d t.
\]
Fix some parameters $\gamma_{0}\in\left(\tfrac{1}{2},1\right)$, $q_{0}>1$
and put $p_{0}=\frac{q_{0}}{q_{0}-1}$. Here $c$ and $C$ denote
various positive constants that may depend (only) on these parameters.
We denote by $A_{F}$ various positive constants that depend only
on the sequence$\left\{ \left|a_{n}\right|\right\} $ of absolute
values of the Taylor coefficients of the Rademacher entire function
$F$, and on the parameters $\gamma_{0},q_{0}$.

The first theorem is a more accurate version of Theorem \ref{thm:Rade_ent_funcs_ang_dist_of_zero},
where we study the concentration of $n_{F}\left(r;\varphi\right)$
around its mean, for a certain test function $\varphi$.
\begin{thm*}
\textup{{[}Almost sure concentration for $n_{F}\left(r;\varphi\right)${]}}
There exists a set $E=E\left(\varphi\right)\subset\left[1,\infty\right)$
of finite logarithmic measure, such that almost surely we have
\begin{align*}
\left|n_{F}\left(r;\varphi\right)-\Ex\left\{ n_{F}\left(r;\varphi\right)\right\} \right| & \le\\
 & C\left(1+\norm[\varphi^{\dprime}]1\right)\left(\Ex\left\{ n_{F}\left(r;\varphi\right)\right\} \right)^{\gamma_{0}}+A_{F}\norm[\varphi^{\dprime}]{q_{0}}\log^{\gamma_{0}}r,
\end{align*}
for $r\ge r_{0}$, and $r\notin E$.\end{thm*}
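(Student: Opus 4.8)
The plan is to transcribe, step by step, the proof of Theorem~\ref{thm:Rade_ent_funcs_ang_dist_of_zero}, with the weight $\varphi$ replacing the indicator of a sector; because $\varphi\in C^{2}$, the random maximal function $L_{F}^{*}$ of that proof will not be needed. Normalise $|a_{0}|=1$, so $|F(0)|=1$, and write $\widehat F_{t}(\theta)=F(te^{{\rm i}\theta})/\sigma_{F}(t)$, a normalised Rademacher Fourier series for each fixed $t$. \emph{Step 1 (a Green/Jensen identity).} With $\nu_{F}=\tfrac1{2\pi}\Delta\log|F|$ the zero-counting measure and $N_{F}(r;\varphi)=\intsy_{0}^{r}n_{F}(t;\varphi)\,\tfrac{\dd t}{t}=\intsy_{\disk r}\varphi(\arg z)\log\tfrac{r}{|z|}\,\dd\nu_{F}(z)$, Green's formula on $\disk r$ (with a small disk about the origin, where $F$ does not vanish, excised) and the fact that $\intsy_{0}^{2\pi}\varphi''=0$ give
\begin{multline*}
N_{F}(r;\varphi)=\langle\varphi\rangle\log\sigma_{F}(r)+\frac1{2\pi}\intsy_{0}^{2\pi}\varphi(\theta)\log|\widehat F_{r}(\theta)|\,\dd\theta\\
+\frac1{2\pi}\intsy_{1}^{r}\log\Bigl(\frac rt\Bigr)\Bigl(\intsy_{0}^{2\pi}\varphi''(\theta)\log|\widehat F_{t}(\theta)|\,\dd\theta\Bigr)\frac{\dd t}{t}+A_{F}\,O\bigl((1+\norm[\varphi'']1)\log r\bigr)\,;
\end{multline*}
for $\varphi\equiv\mathbf{1}$ this is the classical Jensen formula. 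This is the counterpart of Lemma~\ref{lem:Jensen-type-ineq}, now with no arc-smoothing parameter, since $\varphi''$ is a genuine function.

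\emph{Step 2 ($L^{p}(\Omega)$-estimates).} As stressed in the introduction to Part~\ref{part:log_int_of_Rad_Four}, for a Rademacher series $\log|\widehat F_{t}(\theta)|$ admits no pointwise-in-$\theta$ control, so the $\theta$-integrals above must be kept intact. The boundary term $\tfrac1{2\pi}\intsy_{0}^{2\pi}\varphi\log|\widehat F_{r}|$ has $L^{p}(\Omega)$-norm $\le Cp^{6}$, directly from $\norm[\varphi]\infty\le1$, H\"older on $\bbt$, and Theorem~\ref{thm:log_int_for_Rade_Four_series}. For the bulk term one pairs $\varphi''$ with $\log|\widehat F_{t}|$: by H\"older with $\tfrac1{p_{0}}+\tfrac1{q_{0}}=1$ and Theorem~\ref{thm:log_int_for_Rade_Four_series} (with exponent $p\,p_{0}$), $\bigl\|\intsy_{0}^{2\pi}\varphi''(\theta)\log|\widehat F_{t}(\theta)|\,\dd\theta\bigr\|_{L^{p}(\Omega)}\le C\,\norm[\varphi'']{q_{0}}\,p^{6}$ uniformly in $t\ge1$, so Minkowski's integral inequality and $\intsy_{1}^{r}\log(r/t)\,\tfrac{\dd t}{t}\lesssim\log^{2}r$ bound the bulk term in $L^{p}(\Omega)$ by $C\,\norm[\varphi'']{q_{0}}\,p^{6}\log^{2}r$. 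To bring out the sharper dependence $1+\norm[\varphi'']1$ on the \emph{principal} fluctuation, one splits $\varphi''$ into a low-frequency trigonometric polynomial — paired with $\log|\widehat F_{t}|$ through the Tur\'an/Khinchin estimates of Part~\ref{part:log_int_of_Rad_Four}, which is where the $\ell^{1}$-type constant enters — and a high-frequency remainder of small $L^{q_{0}}$-norm, and optimises the cutoff; this replaces the $\delta$-optimisation of Lemma~\ref{lemma:L^p}. The net result is
\[
\bigl\|N_{F}(r;\varphi)-\langle\varphi\rangle\log\sigma_{F}(r)\bigr\|_{L^{p}(\Omega)}\le C\bigl(1+\norm[\varphi'']1\bigr)p^{6}\,J_{p}(r)+A_{F}\,\norm[\varphi'']{q_{0}}\,p^{6}\log^{2}r\,,
\]
with $J_{p}(r)$ an interpolation quantity of the type in Lemma~\ref{lemma:L^p} (in particular $J_{p}(r)=o(\log\sigma_{F}(r))$ where the latter is large), and, for $p=1$, the same estimate for $\bigl|\Ex\{N_{F}(r;\varphi)\}-\langle\varphi\rangle\log\sigma_{F}(r)\bigr|$.

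\emph{Step 3 (from the integrated to the pointwise statement, then ``differentiating'').} Sampling at radii $r_{m}$ with $\Ex\{N_{F}(r_{m};\varphi)\}=m$, taking $p=p_{m}=2\log m$, and applying Chebyshev's inequality and Borel--Cantelli turn the $L^{p}$-bound into an almost sure bound for $\bigl|N_{F}(r_{m};\varphi)-\Ex\{N_{F}(r_{m};\varphi)\}\bigr|$; the convexity of $s\mapsto N_{F}(e^{s};\varphi)$ and of $s\mapsto\Ex\{N_{F}(e^{s};\varphi)\}$ — valid because $\varphi\ge0$ makes $n_{F}(\cdot\,;\varphi)$ and its mean non-decreasing — then propagates this to all $r\ge r_{0}(\omega)$, just as in the first part of Theorem~\ref{thm:Rade_ent_funcs_ang_dist_of_zero}. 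Finally one differentiates: setting $r_{1,2}=r\bigl(1\pm\errfunc(r)\log^{-b}s_{F}(r)\bigr)$, the convexity sandwich
\[
n_{F}(r_{1};\varphi)\le\frac{N_{F}(r;\varphi)-N_{F}(r_{1};\varphi)}{\log r-\log r_{1}}\le n_{F}(r;\varphi)\le\frac{N_{F}(r_{2};\varphi)-N_{F}(r;\varphi)}{\log r_{2}-\log r}\le n_{F}(r_{2};\varphi)
\]
(and its analogue for the expectations), together with the Borel--Nevanlinna growth Lemma~\ref{lemma:B-N-type} applied to the deterministic $\Phi=s_{F}$, bounds $\bigl|n_{F}(r;\varphi)-\Ex\{n_{F}(r;\varphi)\}\bigr|$ by $\errfunc(r)^{-1}(\log s_{F}(r))^{b}$ times the integrated errors at $r,r_{1},r_{2}$, plus $\errfunc(r)\,s_{F}(r)$, valid for $r\notin E$ with $E=E(\varphi)$ of finite logarithmic measure. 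Choosing $\errfunc(r)$ to balance the two terms, using $\Ex\{n_{F}(r;\varphi)\}\asymp\langle\varphi\rangle s_{F}(r)$ off $E$ (obtained by differentiating the $p=1$ relation of Step 2 with the same growth lemma) and the trivial bound where $\log\sigma_{F}(r)$ is small, delivers the exponent $\gamma_{0}$ and the stated mixed-norm estimate.

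The main obstacle is Step 2: the single tool at our disposal, the logarithmic integrability of Rademacher Fourier series, imposes an $L^{p_{0}}$--$L^{q_{0}}$ duality and hence the norm $\norm[\varphi'']{q_{0}}$, so extracting in addition the sharper $\norm[\varphi'']1$ on the leading fluctuation, with an interpolation $J_{p}(r)$ good enough that the ``differentiation'' of Step 3 yields the final exponent $\gamma_{0}\in(\tfrac12,1)$ rather than something weaker, is the part that genuinely requires new work. The remaining steps are faithful adaptations of arguments already in the excerpt.
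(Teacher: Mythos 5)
First, a point of comparison: the thesis itself contains no proof of this statement --- it is announced from \cite{NNS2} (``in preparation'') in the ``Further Work'' section --- so your plan can only be judged against the statement and the tools actually developed in the text. As a strategy it is the natural one (a Green/Jensen identity with the smooth weight $\varphi$, the logarithmic integrability of Theorem \ref{thm:log_int_for_Rade_Four_series}, Chebyshev plus Borel--Cantelli, and the growth Lemma \ref{lemma:B-N-type}), and your Step 1 identity is a correct computation. The genuine gap sits exactly where the announced theorem goes beyond Theorem \ref{thm:Rade_ent_funcs_ang_dist_of_zero}, and you concede it yourself. The only Step 2 bound you actually justify is $\bigl\Vert\intsy_{0}^{2\pi}\varphi^{\dprime}(\theta)\log|\widehat F_{t}(\theta)|\,\dd\theta\bigr\Vert_{L^{p}(\Omega)}\lesssim p^{6}\Vert\varphi^{\dprime}\Vert_{q_{0}}$, hence a bulk fluctuation of order $\Vert\varphi^{\dprime}\Vert_{q_{0}}p^{6}\log^{2}r$; after the convexity sandwich and the optimisation in $\errfunc(r)$ this yields an error of order $\bigl(\Vert\varphi^{\dprime}\Vert_{q_{0}}\log^{2}r\cdot\Ex\{n_{F}(r;\varphi)\}\bigr)^{1/2}$ up to powers of $\log\log$. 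In the slow-growth regime, say $\log\sigma_{F}(r)\asymp\log^{2}r$ (which the statement does not exclude --- unlike Theorem \ref{thm:Rade_ent_funcs_ang_dist_of_zero}, it carries no hypothesis $\log\sigma_{F}(r)>\log^{a}r$), this gives roughly $\log^{3/2}r$, which is far larger than the claimed $C(1+\Vert\varphi^{\dprime}\Vert_{1})\bigl(\Ex\{n_{F}(r;\varphi)\}\bigr)^{\gamma_{0}}+A_{F}\Vert\varphi^{\dprime}\Vert_{q_{0}}\log^{\gamma_{0}}r$ with $\gamma_{0}<1$. The paper's own Lemma \ref{lemma:L^p} only wins when $\log\sigma_{F}(r)\gg\log^{2}r$, which is precisely why Theorem \ref{thm:Rade_ent_funcs_ang_dist_of_zero} needs its growth condition; a transcription of that machinery therefore cannot produce the exponents in this statement.

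The device you invoke to do better --- splitting $\varphi^{\dprime}$ into a low-frequency trigonometric polynomial ``paired with $\log|\widehat F_{t}|$ through the Tur\'an/Khinchin estimates of Part \ref{part:log_int_of_Rad_Four}'' --- is not a mechanism the thesis provides: the Tur\'an-type lemma and the (bilinear) Khinchin inequalities act inside the proof of Theorem \ref{thm:Zygmund-type-final} (on exponential polynomials and on the operator $A_{E}$) and give no bound, with an $\ell^{1}$-type constant, on the pairing of a fixed trigonometric polynomial against $\log|\widehat F_{t}|$, let alone the cancellation in $t$ across the $\dd t/t$ integral needed to beat $\log^{2}r$. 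Since the concentration is around $\Ex\{n_{F}(r;\varphi)\}$ itself, what is really required is control of the fluctuation of the bulk term about its mean --- a decorrelation/variance-type estimate for $\theta\mapsto\log|\widehat F_{t}(\theta)|$ over different radii $t$ --- and nothing in the text supplies it; your closing paragraph admits this step ``genuinely requires new work,'' which is exactly the missing proof. A smaller repair: in Step 3 you invoke $\Ex\{n_{F}(r;\varphi)\}\asymp\langle\varphi\rangle s_{F}(r)$ off $E$, which is the content of the second announced theorem and needs $\log r=\O(s_{F}(r))$; since no growth hypothesis is available here, you should instead apply Lemma \ref{lemma:B-N-type} directly to the non-decreasing function $r\mapsto\Ex\{n_{F}(r;\varphi)\}$, which is legitimate because the exceptional set is allowed to depend on $\varphi$.
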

\begin{rem*}
Here $r_{0}$ is random. It is worth to mention that
\[
\Ex\left\{ n_{F}\left(r;\varphi\right)\right\} =\left\langle \varphi\right\rangle s_{F}\left(r\right)+\intsy_{0}^{r}\left\langle \varphi\cdot\Ex\log\left|\widehat{F}_{t}\right|\right\rangle \,\frac{\dd t}{t}+\frac{\dd\left\langle \varphi\cdot\Ex\log\left|\widehat{F}_{t}\right|\right\rangle }{\dd\log r}.
\]

\end{rem*}
Put
\[
\varepsilon_{F}\left(r\right)=\sup_{t\ge r}\left(\frac{\log t}{s_{F}\left(t\right)}\right).
\]
Note that $\varepsilon_{F}\left(r\right)$ does not increase when
$F$ does not grow too slow; that is, when $\log r=\O\left(s_{F}\left(r\right)\right)$
(which is equivalent to $\log^{2}r=\O\left(\log\sigma_{F}\left(r\right)\right)$).
Otherwise $\varepsilon_{F}\left(r\right)\equiv+\infty$.

The second theorem is an uniform equidistribution result.
\begin{thm*}
\textup{{[}Uniform equidistribution for $n_{F}\left(r;\varphi\right)${]}}
There exists a set $E\subset\left[1,\infty\right)$ of finite logarithmic
measure, such that almost surely we have
\begin{align*}
\left|n_{F}\left(r;\varphi\right)-\left\langle \varphi\right\rangle s_{F}\left(r\right)\right| & \le\\
 & A_{F}\left(1+\norm[\varphi^{\dprime}]{q_{0}}\right)s_{F}^{\gamma_{0}}\left(r\right)+C\norm[\varphi^{\dprime}]{q_{0}}s_{F}\left(r\right)\varepsilon_{F}\left(r\right),\tag{\ensuremath{\star}}
\end{align*}
for any test function $\varphi$, and any $r\ge r_{0}$, such that
$r\notin E$. Furthermore, for any $r\notin E$
\[
\left|\Ex n_{F}\left(r;\varphi\right)-\left\langle \varphi\right\rangle s_{F}\left(r\right)\right|\le\mbox{the RHS of }\left(\star\right).
\]
\end{thm*}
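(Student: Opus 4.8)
The plan is to run the same three-part template as in the proof of Theorem~\ref{thm:Rade_ent_funcs_ang_dist_of_zero} --- a deterministic Jensen--Green estimate, then the logarithmic integrability of Rademacher Fourier series, then a Borel--Nevanlinna growth lemma to ``differentiate'' --- but now with a $C^{2}$ weight $\varphi$ in place of the indicator of a sector. Replacing the sector by a smooth $\varphi$ both removes the mollification parameter $\delta$ of Lemma~\ref{lem:Jensen-type-ineq} and, crucially, lets us record the $\varphi$-dependence explicitly, so that the exceptional set can be taken uniform in $\varphi$.

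\emph{Step 1: a Jensen--Green estimate.} Put $N_{F}(r;\varphi)=\intsy_{0}^{r}n_{F}(t;\varphi)\,\frac{\dd t}{t}$. Since $\varphi\ge0$, the weighted counting function $n_{F}(\cdot;\varphi)$ is non-decreasing, hence $t\mapsto N_{F}(e^{t};\varphi)$ is convex and $n_{F}(r;\varphi)=\dd N_{F}(r;\varphi)/\dd\log r$. From $\nu_{F}=\frac{1}{2\pi}\Delta\log|F|$ we have $N_{F}(r;\varphi)=\frac{1}{2\pi}\intsy_{|z|<r}\log\tfrac{r}{|z|}\,\varphi(\arg z)\,\Delta\log|F(z)|\,\dd m(z)$. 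Integrating by parts --- twice in the angular variable, which turns $\varphi$ into $\varphi''$, and once in the radial variable, which extracts $\langle\varphi\rangle$ from the radial part $\log\sigma_{F}(|z|)$ of $\log|F|$ --- gives, for $r\ge2$,
\[
N_{F}(r;\varphi)=\langle\varphi\rangle\log\sigma_{F}(r)+\langle\varphi\cdot\widehat{L}_{F}(r,\cdot)\rangle+\langle\varphi''\cdot\Pi_{r}\rangle+B_{F}(r;\varphi),
\]
where $\widehat{L}_{F}(t,\theta)=\log|\widehat{F}_{t}(\theta)|$, $\Pi_{r}(\theta)=\intsy_{1}^{r}\log\tfrac{r}{t}\,\widehat{L}_{F}(t,\theta)\,\frac{\dd t}{t}$, and $B_{F}(r;\varphi)$ collects the boundary terms on $\{|z|=1\}$ and the contribution of the zeros in $\{|z|\le1\}$. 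A standard reorganisation of $B_{F}(r;\varphi)$ (in the spirit of \cite[Chapters~III and~VI]{Le1}: use the Cauchy--Riemann equations on $\{|z|=1\}$ to trade a radial derivative of $\log|F|$ for an angular one, the fact that $\int_{\bbt}\varphi'=0$ so $\|\varphi'\|_{\infty}\lesssim\|\varphi''\|_{q_{0}}$, and $0\le\varphi\le1$), exactly as in the classical Jensen identity for $\varphi\equiv1$, shows $|B_{F}(r;\varphi)|\le(1+\|\varphi''\|_{q_{0}})(A_{F}\log r+R_{F})$, with $A_{F}$ depending only on $\{|a_{n}|\}$ and $R_{F}$ an a.s.\ finite random constant independent of $r$ and $\varphi$. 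Thus, by H\"older with the conjugate exponent $p_{0}$,
\[
\bigl|N_{F}(r;\varphi)-\langle\varphi\rangle\log\sigma_{F}(r)\bigr|\;\lesssim\;\|\widehat{L}_{F}(r,\cdot)\|_{L^{1}(\bbt)}+\|\varphi''\|_{q_{0}}\,\|\Pi_{r}\|_{L^{p_{0}}(\bbt)}+(1+\|\varphi''\|_{q_{0}})(A_{F}\log r+R_{F}).
\]
This step is essentially non-probabilistic.

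\emph{Step 2: logarithmic integrability, uniformly in $\varphi$.} For each fixed $t$, $\widehat{F}_{t}(\theta)=\sum_{n}\widehat{a}_{n}(t)e^{in\theta}\xi_{n}$ is a Rademacher Fourier series with $\sum_{n}|\widehat{a}_{n}(t)|^{2}=1$, so Theorem~\ref{thm:log_int_for_Rade_Four_series} yields $\Ex\intsy_{\bbt}|\widehat{L}_{F}(t,\theta)|^{p}\,\dd\theta\le(Cp)^{6p}$ for all $p\ge1$. The point is that $\widehat{L}_{F}(t,\theta)$ is controlled only \emph{after} integration in $\theta$ (for fixed $\theta$ a Rademacher sum vanishes with positive probability), so one works with the $\theta$-averaged, $\varphi$-free random functions $r\mapsto\|\widehat{L}_{F}(r,\cdot)\|_{L^{1}(\bbt)}$ and $r\mapsto\|\Pi_{r}\|_{L^{p_{0}}(\bbt)}$. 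By Minkowski's integral inequality applied to the $t$-integral defining $\Pi_{r}$, successively inside the $L^{p_{0}}(\bbt)$ and $L^{p}(\Omega)$ norms, together with Jensen's inequality on $\bbt$, for $p\ge p_{0}$,
\[
\bigl\|\,\|\widehat{L}_{F}(r,\cdot)\|_{L^{1}(\bbt)}\,\bigr\|_{L^{p}(\Omega)}\le(Cp)^{6},\qquad\bigl\|\,\|\Pi_{r}\|_{L^{p_{0}}(\bbt)}\,\bigr\|_{L^{p}(\Omega)}\le(Cp)^{6}\intsy_{1}^{r}\log\tfrac{r}{t}\,\frac{\dd t}{t}\lesssim(Cp)^{6}\log^{2}r.
\]
Taking $p=p_{0}$ gives the mean bound $\Ex\bigl|N_{F}(r;\varphi)-\langle\varphi\rangle\log\sigma_{F}(r)\bigr|\lesssim(1+\|\varphi''\|_{q_{0}})(\log^{2}r+A_{F})$. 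For the almost sure bound, fix $r_{m}\uparrow\infty$ with $s_{F}(r_{m})=m$, take $p=2\log m$, and apply Chebyshev's inequality and the Borel--Cantelli lemma to the two $\varphi$-free random variables above at $r=r_{m}$; then the convexity of $t\mapsto N_{F}(e^{t};\varphi)$ interpolates from the $r_{m}$'s to all $r$ (as in the derivation of~\eqref{eq:E_F-a.s.}), and we obtain, almost surely and uniformly over all test functions $\varphi$,
\[
\bigl|N_{F}(r;\varphi)-\langle\varphi\rangle\log\sigma_{F}(r)\bigr|\le(1+\|\varphi''\|_{q_{0}})\,\Psi_{F}(r),\qquad r\ge r_{0}(\omega),
\]
where $\Psi_{F}(r)=A_{F}\bigl((\log s_{F}(r))^{C}\log^{2}r+\log r\bigr)$.

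\emph{Step 3: differentiation via the growth lemma.} Apply Lemma~\ref{lemma:B-N-type} with $\Phi=s_{F}$ (on a tail where $s_{F}>1$) and a decreasing $\errfunc$, to be chosen below independently of $\varphi$: outside a set $E$ of finite logarithmic measure, $s_{F}$ changes by at most a factor $1\pm\errfunc(r)$ over $\bigl[r(1-\errfunc(r)\log^{-b}s_{F}(r)),\,r(1+\errfunc(r)\log^{-b}s_{F}(r))\bigr]$, with $b>1$. Using the convexity of $t\mapsto N_{F}(e^{t};\varphi)$ to sandwich $n_{F}(r;\varphi)$ between difference quotients of $N_{F}(\cdot;\varphi)$ over that interval, subtracting the main term $\langle\varphi\rangle s_{F}(r)$, and inserting Step~2 together with the growth-lemma bound on the $s_{F}$-increments, one gets, for $r\in[2,\infty)\setminus E$ with $r\ge r_{0}(\omega)$,
\[
\bigl|n_{F}(r;\varphi)-\langle\varphi\rangle s_{F}(r)\bigr|\;\lesssim\;\frac{(1+\|\varphi''\|_{q_{0}})\,\Psi_{F}(r)\,(\log s_{F}(r))^{b}}{\errfunc(r)}+\errfunc(r)\,s_{F}(r).
\]
Since $\log r\le s_{F}(r)\,\varepsilon_{F}(r)$ by definition of $\varepsilon_{F}$, the logarithmic factors in $\Psi_{F}$ can be absorbed into a power of $s_{F}(r)\varepsilon_{F}(r)$, and then an appropriate $\varphi$-free choice of $\errfunc(r)$ --- a small power of $s_{F}(r)$ times a logarithm, as in the proof of part~(ii) of Theorem~\ref{thm:Rade_ent_funcs_ang_dist_of_zero} --- balancing the two terms splits the right-hand side into an $A_{F}(1+\|\varphi''\|_{q_{0}})s_{F}(r)^{\gamma_{0}}$ term and a $C\|\varphi''\|_{q_{0}}s_{F}(r)\varepsilon_{F}(r)$ term, which is the asserted bound. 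The exceptional set is $E$ together with the null event of Step~2, both of finite logarithmic measure and independent of $\varphi$. Finally, running Step~3 with the deterministic convex function $\Ex N_{F}(\cdot;\varphi)$ and the mean bound of Step~2 gives the ``furthermore'' estimate for $\Ex n_{F}(r;\varphi)-\langle\varphi\rangle s_{F}(r)$.

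\emph{The main obstacle} is twofold and intertwined. First, the bookkeeping in Step~1: one must verify that the boundary terms at $|z|=1$ and the apparently divergent near-origin part of the Green kernel recombine, as they do in the classical Jensen identity, so that the only non-absorbable remainder is a \emph{deterministic} $A_{F}\log r$ plus a \emph{fixed} random additive constant, all genuine $\varphi$-dependence being carried by $\|\varphi''\|_{q_{0}}$ and $\|\varphi\|_{\infty}\le1$. Second, uniformity in $\varphi$: because the error in the theorem is explicit in $\|\varphi''\|_{q_{0}}$ and the exceptional set may not depend on $\varphi$, the whole argument must be built around the $\theta$-averaged, $\varphi$-free random functions $\|\widehat{L}_{F}(r,\cdot)\|_{L^{1}(\bbt)}$ and $\|\Pi_{r}\|_{L^{p_{0}}(\bbt)}$ --- no pointwise-in-$\theta$ estimate being available --- and the auxiliary function $\errfunc$ in the growth lemma must be chosen without reference to $\varphi$ while still delivering exactly the powers $s_{F}^{\gamma_{0}}$ and $s_{F}\varepsilon_{F}$.
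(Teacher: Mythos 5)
The thesis itself contains no proof of this statement: it is announced from the work \cite{NNS2} (``in preparation''), and the only guidance the text offers is the exact formula for $\Ex\left\{ n_{F}\left(r;\varphi\right)\right\} $ displayed in the remark right after the theorem. So there is no in-paper argument to compare yours with, and I can only judge the proposal on its own merits. Your template --- pass to $N_{F}(r;\varphi)$, estimate the integrated error via Theorem \ref{thm:log_int_for_Rade_Four_series}, then ``differentiate'' with the growth Lemma \ref{lemma:B-N-type} --- is exactly the proof of Theorem \ref{thm:Rade_ent_funcs_ang_dist_of_zero} with the sector replaced by a $C^{2}$ weight (and you correctly observe that smoothness removes the mollification parameter $\delta$ and allows $\varphi$-free random majorants, hence uniformity in $\varphi$). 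But this route has a genuine gap at the last step. Integrating to $N_{F}$ introduces the kernel $\log\frac{r}{t}$, so your cumulative term $\Pi_{r}$ is only bounded by $\lesssim\|\varphi''\|_{q_{0}}\log^{2}r$ (up to polylogs), and in Step 3 this whole quantity is divided by the window $\errfunc(r)\log^{-b}s_{F}(r)\le1$. Hence the bound you actually prove is at least of order $\|\varphi''\|_{q_{0}}\log^{2}r$, which cannot be absorbed into $A_{F}(1+\|\varphi''\|_{q_{0}})s_{F}^{\gamma_{0}}(r)+C\|\varphi''\|_{q_{0}}s_{F}(r)\varepsilon_{F}(r)$. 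Concretely, if $\log\sigma_{F}(r)\asymp\log^{3}r$, then $s_{F}(r)\asymp\log^{2}r$ and $s_{F}(r)\varepsilon_{F}(r)\asymp\log r$, so the claimed right-hand side is $o(s_{F}(r))$ while your estimate is $\gtrsim\log^{2}r\asymp s_{F}(r)$: no equidistribution at all. This is precisely the regime the new theorem is meant to reach (it carries no hypothesis $\log\sigma_{F}(r)>\log^{a}r$); your argument, as written, essentially reproves part (ii) of Theorem \ref{thm:Rade_ent_funcs_ang_dist_of_zero} for smooth $\varphi$, which needed $a>4$.

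The repair --- and, judging from the identity quoted in the remark, the intended route --- is not to integrate first. Work with the single-scale Green identity
\[
n_{F}\left(r;\varphi\right)=\left\langle \varphi\right\rangle s_{F}\left(r\right)+\intsy_{0}^{r}\bigl\langle\varphi''\,\widehat{L}_{F}(t,\cdot)\bigr\rangle\,\frac{\dd t}{t}+\frac{\dd}{\dd\log r}\bigl\langle\varphi\,\widehat{L}_{F}(r,\cdot)\bigr\rangle,
\]
bound the cumulative $\varphi''$-term \emph{undivided} by $\|\varphi''\|_{q_{0}}\cdot\O\left(\log r\right)\le C\|\varphi''\|_{q_{0}}s_{F}(r)\varepsilon_{F}(r)$ (this is where the $\varepsilon_{F}$-term comes from, and why in the statement it is proportional to $\|\varphi''\|_{q_{0}}$ alone), and apply your window/growth-lemma argument only to the last, single-scale term, whose almost sure size along the sampled radii is merely polylogarithmic; balancing your auxiliary function then yields $s_{F}^{1/2+o(1)}(r)\le s_{F}^{\gamma_{0}}(r)$, which is exactly why $\gamma_{0}>\tfrac{1}{2}$. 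Two smaller slips point the same way: in Step 2 the Borel--Cantelli sampling should be at $\log\sigma_{F}(r_{m})=m$ (with $s_{F}(r_{m})=m$ the increments $\log\sigma_{F}(r_{m+1})-\log\sigma_{F}(r_{m})$ used in the interpolation are uncontrolled), and in your final bound the $\|\varphi''\|_{q_{0}}$-dependence sits on the wrong terms (your $\errfunc(r)s_{F}(r)$ term is $\varphi$-free, whereas the claimed $s_{F}\varepsilon_{F}$ term carries $\|\varphi''\|_{q_{0}}$), a mismatch that disappears once the three terms of the identity above are estimated separately.
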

\begin{rem*}
Here $r_{0}$ is random, but it does not depend on $\varphi$. In
particular, the last theorem implies the equidistribution of the zeros
of $F$, as long as $\log r=o\left(s_{F}\left(r\right)\right)$.
\end{rem*}

\subsubsection*{Functions with `many' real zeros}

Let $f$ be a Rademacher entire function of the form 
\[
f\left(z\right)=\sum_{k\ge0}\xi_{k}e^{-\alpha k^{2}}z^{k},
\]
and let us denote by $n_{\bbr^{\mathrm{c}}}\left(r\right),\, n_{\bbr}\left(r\right)$
the number of (complex non-real\textbackslash{}real) roots of $f$
inside $\disk r$. Recall that for $\alpha\ge\log3\approxeq1.099$
we proved that $n_{\bbr^{\mathrm{c}}}\left(r\right)=0$ always, for
any $r\ge0$. Some numerical experiments suggest that already for
$\alpha=0.7853$ we have that $n_{\bbr^{\mathrm{c}}}\left(r\right)>0$
with a positive probability. It seems that there is a critical `threshold'
value, the root of the equation
\[
\sum_{k\ge1}e^{-\alpha k^{2}}=\frac{1}{2},
\]
which is approximately $0.785409$. It is not clear if there are some
values of $\alpha$ such that $n_{\bbr^{\mathrm{c}}}\left(r\right)=0$
almost surely, but some instances (of measure $0$) for which $n_{\bbr^{\mathrm{c}}}\left(r\right)>0$.
Another possible situation is that there are values of $\alpha$ for
which the ratio $n_{\bbr^{\mathrm{c}}}\left(r\right)/n_{\bbr}\left(r\right)$
almost surely tends to $0$ as $r$ tends to infinity (that is, there
are `few' non-real zeros).

\subsubsection*{Distribution of zeros of random entire functions}

There are many possible generalization of Theorem \ref{thm:Rade_ent_funcs_ang_dist_of_zero},
and related questions. A natural problem is to generalize the results
to more general i.i.d. random variables, while trying to keep the
assumptions on the sequence of coefficients $\left\{ a_{k}\right\} $
minimal. A result for not necessarily independent coefficients also
seems very interesting.

Other related problems are to find bounds for the variance of the
number of zeros (at least for regular sequences $\left\{ a_{k}\right\} $,
or upper bounds for general coefficients), or to find accurate bounds
for the number of real zeros (in terms of $s_{F}\left(r\right)$).
Both problems are interesting even for (real) Gaussian coefficients.

\newpage{}

\part{The hole probability for Gaussian entire functions}

We begin this part with the background of the problem and a (short)
history of previous results. We mention relations to questions in
probability theory and in mathematical physics. We then give a proof
of the main result and discuss, briefly, its sharpness. We then give
an example that illustrates the significance of Gaussianity for the
result. We finish with a list of related open problems.

\section{Background and History}

Let us consider first the following special case of a Gaussian entire
function\index{Gaussian Entire Function, the|see{GEF}} \index{GEF}

\begin{equation}
F(z)=\sum_{n=0}^{\infty}\xi_{n}\frac{z^{n}}{\sqrt{n!}},\label{eq:GEF_def}
\end{equation}
which is sometimes referred to as the Gaussian Entire Function (or
GEF), for reasons that will soon become clear. The random zero set
of this function is known to be distribution invariant with respect
to isometries of the plane. Furthermore, this is the only Gaussian
entire function with distribution invariant zeros (see the book \cite[Section 2.5]{BKPV}
for details). It is also known that the GEF and GAFs in general have
simple zeros almost surely (unless the zero is non-random). Thus one
can consider the zero set of a GAF as a simple point process in the
plane, and compare it to other point processes.

We recall that if $\nu_{F}$ is the random zero counting measure of
$F$, then by the Edelman-Kostlan formula (\ref{eq:Edel-Kos_for})
we have \index{zero counting measure} \index{Edelman-Kostlan formula}
\index{GEF!statistics for the zeros}
\[
\Ex\left\{ \nu_{F}\left(z\right)\right\} =\frac{1}{2\pi}\Delta\log\sqrt{K_{F}\left(z,\overline{z}\right)}=\frac{1}{4\pi}\Delta\log\Ex\left\{ F\left(z\right)\overline{F\left(\overline{z}\right)}\right\} =\frac{\mathrm{d}m\left(z\right)}{\pi}
\]
in the sense of distributions, where $m$ is the Lebesgue measure
in the complex plane. We conclude that the GEF is similar to the Poisson
point process in the plane, in the sense that the expected number
of points in a region is proportional to the area of the region. We
mention that for the GEF there are many results concerning the linear
statistics of the zeros, such as the variance, aysmptotic normality
and deviations from normality (see \cite{NS1,NSV1,ST1} for explanations
and proofs). There are also various geometric properties that are
known about the zero set of the GEF (see for example \cite{NSV2}).

\subsection{Hole Probability}

We now show a particular way in which the Poisson point process and
the GEF zero set differ in a rather dramatic way. One of the interesting
characteristics of a random point process is the probability that
there are no points in some (say, simply connected) domain $D\subset\bbc$.
In the case of the Poisson point process, we know exactly what is
the distribution of the number of points, that is, the Poisson distribution
with parameter $m\left(D\right)$. We immediately conclude that the
probablity that there are no points in $D$ is $\exp\left(-m\left(D\right)\right)$.
To simplify matters, let us now consider the probability of the event
where there are no points of the process inside the disk $\disk r=\left\{ \left|z\right|\le r\right\} $,
as $r$ tends to infinity. We will call this event the `hole' event.
Before returning to the GEF, let us give some other examples.\index{hole probability}
\index{hole probability!examples}

\subsubsection{The Poisson point process}

As we have already mentioned, this is the simplest process to analyze.
The probability of the hole event is $\exp\left(-cr^{2}\right)$,
where the constant $c$ depends on the normalization of the processes
(i.e., the number of expected points of the process per unit area).
We do not expect that this process will be a good model for the zeros
of the GEF, as the points of this process tend to `clump' together,
while the zeros of the GEF have a more rigid (lattice like) structure.

\subsubsection{A (Gaussian) perturbed lattice}

Here we consider the perturbed lattice: 
\[
\left\{ \frac{\left(m,n\right)}{\sqrt{\pi}}+\xi_{m,n}\,\mid\, n,m\in\bbz\right\} ,
\]
where $\xi_{m,n}$ are i.i.d. standard complex Gaussians. The hole
probability is expected to be of order $\exp\left(-cr^{4}\right)$.
Heuristically, the reason is that in order to have a hole around $0$,
we have to ``push'' the points which are at distance $s$ from $0$,
a distance of at least $r-s$. The probability of the last event is
of order $\exp\left(-\left(r-s\right)^{2}\right)$. Since there are
approximately $2s$ points at distance $r-s$ from $0$, the hole
probability is approximately of order
\[
\exp\left(-\intsy_{0}^{r}\left(r-s\right)^{2}\cdot2s\, ds\right)=\exp\left(-\frac{r^{4}}{6}\right).
\]
We will see that this probability is much closer to the hole probability
of the GEF.

\subsubsection{Infinite Ginibre ensemble}

Let $M$ be an $N\times N$ matrix with entries which are i.i.d standard
complex Gaussians, and let $\lambda_{n}$ be its eigenvalues. Kostlan
showed that
\[
\left\{ \left|\lambda_{1}\right|,\ldots,\left|\lambda_{n}\right|\right\} \overset{d}{=}\left\{ Z_{1},\ldots,Z_{n}\right\} ,
\]
where the random variables $Z_{k}$ are independent and $Z_{k}^{2}$
has the $\Gamma\left(k,1\right)$ distribution. The infinite Ginibre
ensemble is the `limit' of this process (see \cite[p. 69]{BKPV}).
In this book there is a proof that the hole probability is of order
$\exp\left(-\frac{r^{4}}{4}\right)$. This model is a special case
of the one-component plasma model.

\subsubsection{One-component plasma}

This is a simple physical model, where particles with positive charge
are embedded in a background of negative charge. The simplest case
is the two-dimensional one, studied non-rigorously in \cite{JLM}.
They found the hole probability to be of order $\exp\left(-cr^{4}\right)$.
This model has natural generalizations to higher dimensions, which
are interesting to study.

\subsubsection{The GEF - The plane invariant GAF\index{GEF!hole probablity}}

We now consider the probability of the event where $F\left(z\right)\ne0$
in the disk $\disk r=\left\{ \left|z\right|\le r\right\} $, as $r$
tends to infinity, and denote it by $P_{H}\left(F;r\right)$. Since
the decay rate of this probability is known to be exponential in $r$,
we will use the notation\index{1pH@$p_{H}(F;r)$}
\[
p_{H}(F;r)=\log^{-}P_{H}(F;r)=\log^{-}\Pr\left(F(z)\ne0\mbox{ for }|z|\le r\right).
\]
In the paper \cite{ST3}, Sodin and Tsirelson showed that for $r\ge1$,
\begin{equation}
c_{1}r^{4}\le p_{H}(F;r)\le c_{2}r^{4}\label{eq:hole_ST_asymp}
\end{equation}
with some (unspecified) positive numerical constants $c_{1}$ and
$c_{2}$. This result was extended in different directions by Ben
Hough \cite{BH1}, Krishnapur \cite{Kri}, Zrebiec \cite{Zr1,Zr2}
and Shiffman, Zelditch and Zrebiec \cite{SZZ}.

In \cite{ST3}, Sodin and Tsirelson asked whether the limit
\[
\lim_{r\to\infty}\frac{p_{H}(F;r)}{r^{4}}
\]
exists and what is its value? We found an answer to this question
in the paper \cite{Ni1}.
\begin{thm}
For $r$ large enough\label{thm:Hole_Prob_spec}
\begin{equation}
p_{H}(F;r)=\frac{e^{2}}{4}\cdot r^{4}+\O\left(r^{18/5}\right).\label{eq:accurate_hole_asymp}
\end{equation}

\end{thm}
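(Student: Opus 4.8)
The plan is to prove matching upper and lower bounds for $p_{H}(F;r)$, both within $O(r^{18/5})$ of the quantity $S(r)=2\sum_{n\ge0}\log^{+}\bigl(r^{n}/\sqrt{n!}\bigr)$, and to evaluate $S(r)$ asymptotically. Write $a_{n}=a_{n}(r)=r^{n}/\sqrt{n!}$, so that $\sigma_{F}^{2}(r)=\sum_{n\ge0}a_{n}^{2}=e^{r^{2}}$, $\log\sigma_{F}(r)=\tfrac12 r^{2}$, $s_{F}(r)=r^{2}$, and set $\widehat F_{\rho}(\theta)=F(\rho e^{i\theta})/\sigma_{F}(\rho)$, a Gaussian Fourier series with $\|\widehat F_{\rho}\|_{L^{2}(\bbt)}=1$ whose Fourier coefficients have squared moduli $e^{-\rho^{2}}\rho^{2n}/n!$ (the $\mathrm{Poisson}(\rho^{2})$ weights). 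A routine Euler--Maclaurin/Stirling computation, using that $\log a_{n}\ge0$ exactly for $n\le er^{2}$ and that the summand is $O(\log r)$ near $n=er^{2}$, gives $S(r)=\tfrac{e^{2}}{4}r^{4}+O(r^{2}\log r)$; hence it suffices to show $S(r)-O(r^{18/5})\le p_{H}(F;r)\le S(r)+O(r^{18/5})$.

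For the upper bound on $p_{H}$ (equivalently, a lower bound on $P_{H}$) I would exhibit an explicit favourable event. Put $m=\lceil er^{2}\rceil$, fix a small constant $\epsilon_{0}>0$, and let $\mathcal A_{r}$ be the event that $1\le|\xi_{0}|\le2$, that $|\xi_{n}|\le \epsilon_{0}/(m\,a_{n})$ for every $1\le n\le m$ with $a_{n}\ge\epsilon_{0}/m$, and that the tail $\sum_{n>m}|\xi_{n}|a_{n}$ (which has expectation $\lesssim\sum_{n>m}a_{n}\to0$, since $a_{n+1}/a_{n}=r/\sqrt{n+1}\le e^{-1/2}$ for $n>m$) is at most $\epsilon_{0}$. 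On $\mathcal A_{r}$ and for $|z|\le r$ one has $|F(z)-\xi_{0}|\le\sum_{1\le n\le m}|\xi_{n}|a_{n}+\sum_{n>m}|\xi_{n}|a_{n}\le 2\epsilon_{0}<|\xi_{0}|$, so by Rouché (comparing with the non-vanishing constant $\xi_{0}$) $F$ has no zeros in $\disk r$. Since the $\xi_{n}$ are independent standard complex Gaussians, $-\log\pr{\mathcal A_{r}}\le\sum_{n\le m}2\log\!\bigl(m\,a_{n}/\epsilon_{0}\bigr)^{+}+O(1)=2\sum_{n\le er^{2}}\log^{+}a_{n}+O(r^{2}\log r)=S(r)+O(r^{2}\log r)$, whence $p_{H}(F;r)\le S(r)+O(r^{2}\log r)$.

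The substantial part is the lower bound $p_{H}(F;r)\ge S(r)-O(r^{18/5})$, i.e.\ $P_{H}(F;r)\le\exp\bigl(-S(r)+O(r^{18/5})\bigr)$, which I would approach in three steps. First, an a priori growth estimate: the double-exponential lower tail of $|\text{complex Gaussian}|$, the unit correlation length of $\widehat F_{\rho}$ on the circle $|z|=\rho$ (so $\sim\rho$ effective samples), and a union bound over $O(r)$ radii give $\pr{\exists\,\rho\le 2r,\ \max_{|z|=\rho}|F(z)|>r^{C}\sigma_{F}(\rho)}\le e^{-2S(r)}$ for a suitable $C$; call the complementary (overwhelmingly likely) event $\mathcal G$. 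Second, on $\mathcal G\cap\{\text{hole}\}$, Jensen's formula applies: no zeros in $\disk r$ means $N_{F}(\rho)=0$, i.e.\ $\int_{\bbt}\log|\widehat F_{\rho}(\theta)|\,\d\theta=\log|\xi_{0}|-\tfrac12\rho^{2}$, for every $\rho\le r$; combined with $\int_{\bbt}\log^{+}|\widehat F_{\rho}|\le C\log r$ from $\mathcal G$, this forces $\int_{\bbt}\log^{-}|\widehat F_{\rho}(\theta)|\,\d\theta\ge\tfrac12\rho^{2}-O(\log r)$ for all $\rho$ in a range such as $[r^{1-\delta},r]$ --- the normalized Gaussian series $\widehat F_{\rho}$ is anomalously small on a macroscopic portion of each such circle. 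Third, and this is the crux, one bounds the probability of this simultaneous smallness. For a single $\rho$ the needed small-ball estimate is the Gaussian counterpart of the log-integrability bound of Theorem~\ref{thm:Zygmund-type-final}, sharpened to the correct exponential order; the near-independence of $\widehat F_{\rho}$ and $\widehat F_{\rho'}$ for $|\rho-\rho'|\gtrsim1$ then lets one multiply the contributions over a well-chosen net of radii. Running the optimization so that the resulting exponent agrees, to leading order, with the coefficient-by-coefficient cost $2\sum_{n}\log^{+}a_{n}=S(r)$ yields $P_{H}(F;r)\le\exp\bigl(-(1-o(1))S(r)\bigr)$; tracking the slack between the a priori bound and the loss incurred in discretizing the circle and the radii produces the error $O(r^{18/5})$. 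Together with the previous paragraph this gives $p_{H}(F;r)=\tfrac{e^{2}}{4}r^{4}+O(r^{18/5})$.

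The main obstacle is precisely this third step of the lower bound: extracting the \emph{sharp} constant $e^{2}/4$. A crude small-ball estimate for Gaussian analytic functions already yields $p_{H}(F;r)\gtrsim cr^{4}$ for some $c>0$ --- the Sodin--Tsirelson bound \eqref{eq:hole_ST_asymp} --- but pinning down $c=e^{2}/4$ requires the analysis of the ``anomalously small on many circles'' event to be carried out so that its cost matches the explicit strategy of the upper bound term by term: no Jensen-type or geometric loss is affordable at the level of the leading constant, and it is exactly the balancing of the unavoidable losses against the a priori slack that both forces the particular exponent $18/5$ in the error and constitutes the technical heart of the argument.
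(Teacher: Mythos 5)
Your upper bound and the evaluation $S(r)=\tfrac{e^{2}}{4}r^{4}+O(r^{2}\log r)$ are fine, and this half is essentially the paper's argument (Proposition \ref{prp:p_H_upp_bnd}): force $\xi_{0}$ to dominate and pay $2\log^{+}(a_{n}r^{n})$ plus logarithmic corrections per coefficient. (Minor slip: as written, your event $\mathcal A_{r}$ leaves the indices $1\le n\le m$ with $a_{n}<\epsilon_{0}/m$ unconstrained, so the bound $\sum_{1\le n\le m}|\xi_{n}|a_{n}\le\epsilon_{0}$ does not follow; adding the same constraint for those indices costs only $O(1)$ per term and repairs it.)

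The genuine gap is the lower bound for $p_{H}$, i.e.\ the sharp upper bound $P_{H}\le\exp(-S(r)+O(r^{18/5}))$, which is precisely the step you leave unproved, and the route you sketch would not succeed. First, the ``near-independence of $\widehat F_{\rho}$ and $\widehat F_{\rho'}$ for $|\rho-\rho'|\gtrsim1$'' is false for the GEF: the normalized covariance of $F(\rho e^{i\theta})$ and $F(\rho' e^{i\theta})$ is $e^{-(\rho-\rho')^{2}/2}$, which equals $e^{-1/2}$ at unit separation; and even genuinely small correlations do not allow multiplying probabilities of events without a quantitative decoupling argument. Second, even granting a product over $\sim r$ circles, a per-circle small-ball estimate tuned only ``to leading exponential order'' cannot be expected to reproduce the constant $e^{2}/4$ (a count of effective samples per circle gives a different constant), and Theorem \ref{thm:Zygmund-type-final} is a distributional $L^{2}$ bound whose constants are nowhere near the exponential precision required, with no mechanism offered for the claimed sharpening. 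The paper's proof (of the general Theorem \ref{thm:hole_prob_GAFs}, which contains this statement) avoids multi-circle arguments entirely: on the hole event $\log|f|$ is harmonic, so its circle averages equal $\log|f(0)|$; this is localized to $n=n(r)$ points $z_{j}$ on a \emph{single} circle $|z|=\rho=r(1-\gamma)$, made stable under a random rotation $e^{i\alpha}$ by the bound on the angular logarithmic derivative (Lemma \ref{lem:upp_bnd_log_deriv}), so that the hole event forces a small-ball event for the finite-dimensional complex Gaussian vector $\left(f(e^{i\alpha}z_{1}),\ldots,f(e^{i\alpha}z_{n})\right)$, whose probability is at most $\volcn{\Omega}/\left(\pi^{n}\det\Sigma\right)$. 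The sharp constant then comes from the covariance determinant bound $\det\Sigma\ge\prod_{n\in N(r)}a_{n}^{2}r^{2n}=e^{S(r)}$ (Corollary \ref{cor:low_bnd_det_sigma}), proved by choosing the points so that a generalized Vandermonde determinant is large, combined with the volume estimate of Lemma \ref{lem:prob_integral_upper_bnd}. This reduction to a single-circle finite-dimensional Gaussian event and the determinant computation are the missing ideas; without them your scheme yields only the order $r^{4}$ (the Sodin--Tsirelson bound \eqref{eq:hole_ST_asymp}), not the constant $e^{2}/4$, and the exponent $18/5$ is likewise unsubstantiated.
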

We mention that before this result the explicit constant for a GAF
was known only for the very special case $\sum_{n=0}^{\infty}\xi_{n}z^{n}$
(see \cite{PV}), using completely different methods.

\subsubsection{Gaussian entire functions}

We extended Theorem \ref{thm:Hole_Prob_spec} to a large class of
GAFs, whose sequence of coefficients $\left\{ a_{n}\right\} $ satisfy
some regularity condition (see \cite{Ni2}). In this part of the work
we prove Theorem \ref{thm:hole_prob_GAFs}, which generalizes the
latter result to arbitrary GAFs given in a Taylor series form. 

To recall the statement of the theorem, let\index{Gaussian entire functions}
\[
f\left(z\right)=\sum_{n\ge0}\xi_{n}a_{n}z^{n}
\]
be a GAF such that $\sum_{n\ge0}a_{n}z^{n}$ $f$ is a non-constant
entire functions and $a_{0}\ne0$ (in order to avoid trivial situations).
We write
\[
S\left(r\right)=2\cdot\sum_{n\ge0}\log^{+}\left(a_{n}r^{n}\right).
\]
A set $\excpset\subset\left[1,\infty\right)$ is of \textit{finite
logarithmic measure }if $\intsy_{\excpset}\,\frac{\mathrm{d}t}{t}<\infty$\index{finite logarithmic measure}.
Theorem \ref{thm:hole_prob_GAFs} states that for $\epsym\in\left(0,\frac{1}{2}\right)$
there exists an exceptional set $\excpset\subset\left[1,\infty\right)$,
of finite logarithmic measure, which depends only on $\epsym$ and
on the sequence $\left\{ a_{n}\right\} $ such that as $r\to\infty$
outside of the set $\excpset$,
\[
p_{H}(f;r)=S(r)+\O\left(S\left(r\right)^{1/2+\epsym}\right).
\]
We remark that the exceptional set $\excpset$ does not appear if
the coefficients of $f$ satisfy some regularity conditions, but in
general it is unavoidable (see Section \ref{sec:hole_prob_nec_exc_set}).
It turns out that Gaussianity is very important for Theorem \ref{thm:hole_prob_GAFs}
to hold. For the reader's convenience we reproduce in Section \ref{sec:No_Hole_for_Non_Gaus}
the proof of the next theorem (from \cite{Ni1}).\index{hole probability!examples}
\begin{thm}
\label{thm:No_Hole_Prob_for_Non_Gaus}Let $K\subset\bbc$ be a compact
set and $0\notin K$. Let $\left\{ \phi_{n}\right\} $ be any sequence,
such that $\mathbf{\phi}_{n}\in K$ for each $n$, and put
\begin{equation}
g\left(z\right)=\sum_{n\ge0}\phi_{n}\frac{z^{n}}{\sqrt{n!}}.\label{eq:non_Gaus_f_def}
\end{equation}
Then there exists $r_{0}=r_{0}(K)<\infty$ so that $g(z)$ must vanish
somewhere in the disk $\left\{ |z|\le r_{0}\right\} $.\end{thm}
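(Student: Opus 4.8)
The plan is to argue by contradiction, using a compactness (normal families) reduction. Suppose the statement fails. Then for every $N\in\bbn$ there is a sequence $\{\phi^{(N)}_n\}_n$ with $\phi^{(N)}_n\in K$ for all $n$ such that $g_N(z)=\sum_{n\ge0}\phi^{(N)}_n z^n/\sqrt{n!}$ does not vanish in $\disk N$. Since $K^{\bbn}$ is compact in the product topology, after passing to a subsequence we may assume $\phi^{(N_j)}_n\to\phi^*_n\in K$ for each $n$; as $|\phi^{(N)}_n|\le M:=\max_{w\in K}|w|$ uniformly in $N$, the tails $\sum_{n>m}|\phi^{(N)}_n|r^n/\sqrt{n!}$ are small uniformly in $N$, so $g_{N_j}\to g^*$ uniformly on compact subsets of $\bbc$, where $g^*(z)=\sum_{n\ge0}\phi^*_n z^n/\sqrt{n!}$. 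Note $g^*(0)=\phi^*_0\ne0$, since $0\notin K$ and $K$ is closed, so $g^*\not\equiv0$. Fixing $R>0$, for $j$ large enough that $N_j>R$ the functions $g_{N_j}$ are zero-free on the open disk $\{|z|<R\}$, so by Hurwitz's theorem $g^*$ is zero-free there as well; letting $R\to\infty$, the function $g^*$ has no zeros in $\bbc$.

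Next I would determine the growth of $g^*$. By the Cauchy--Schwarz inequality $|g^*(z)|\le M\sum_{n\ge0}r^n/\sqrt{n!}\le M\sqrt{2}\,e^{r^2}$ with $r=|z|$, so $g^*$ has order at most $2$. Writing $\mu:=\min_{w\in K}|w|>0$, Parseval's identity on the circle $|z|=r$ gives
\[
\sigma^2(r):=\frac{1}{2\pi}\intsy_0^{2\pi}\bigl|g^*(re^{i\theta})\bigr|^2\d\theta=\sum_{n\ge0}\frac{|\phi^*_n|^2}{n!}\,r^{2n},
\]
so that $\mu^2 e^{r^2}\le\sigma^2(r)\le M^2 e^{r^2}$ for all $r\ge0$; in particular $\max_{|z|=r}|g^*|\ge\sigma(r)\ge\mu\,e^{r^2/2}$, and the order of $g^*$ is exactly $2$. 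By the Hadamard factorization theorem a zero-free entire function of finite order equals $e^{P}$ with $P$ a polynomial whose degree is that order, hence $g^*(z)=e^{P(z)}$ with $P(z)=az^2+bz+c$ and $a\ne0$. If $b=0$, the odd Taylor coefficients of $g^*=e^{az^2+c}$ all vanish, contradicting $\phi^*_{2k+1}\in K$ (so $|\phi^*_{2k+1}|\ge\mu>0$); hence $b\ne0$.

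The contradiction now comes from computing $\sigma^2(r)$ a second way. Since $|g^*(re^{i\theta})|^2=e^{2\Re P(re^{i\theta})}$, the integral is governed by the maximum $m_r:=\max_{\theta}\Re P(re^{i\theta})$. For large $r$ the quadratic term $|a|r^2\cos(2\theta+\arg a)$ dominates the linear one, so $\theta\mapsto\Re P(re^{i\theta})$ has exactly two maxima (near the angles where that cosine equals $1$), each non-degenerate with second derivative of size $\asymp -r^2$. Laplace's method then yields the two-sided asymptotic $\sigma^2(r)=(C+o(1))\,r^{-1}e^{2m_r}$ for some $C>0$. Comparing with $\mu^2 e^{r^2}\le\sigma^2(r)\le M^2 e^{r^2}$ forces $2m_r=r^2+\log r+\O(1)$. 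On the other hand, expanding $\Re P(re^{i\theta})$ about its maximizing angle gives $m_r=|a|r^2+\gamma r+\O(1)$, where $\gamma=|b|\,|\cos(\theta_1+\arg b)|\ge0$ (at one of the two competing maxima the linear contribution is non-negative). Matching the two expressions for $2m_r$ forces $2|a|=1$, and then $2\gamma r=\log r+\O(1)$ — which is impossible, since the left side grows linearly if $\gamma>0$ and vanishes if $\gamma=0$. This contradiction proves the theorem, and the compactness argument shows $r_0$ depends on $K$ only.

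The step I expect to be the main obstacle is obtaining the asymptotics for $\sigma^2(r)$ sharply enough. A one-sided estimate such as $\sigma^2(r)\le e^{2m_r}$ together with a crude lower bound only pins down $|a|=\tfrac12$, and eliminating this borderline value genuinely requires the exact Laplace constant — i.e.\ the $r^{-1}$ polynomial factor coming from the non-degenerate maxima — combined with the second-order expansion of $m_r$ in powers of $r$. An alternative, essentially equivalent, endgame is to estimate $|\phi^*_n|=\sqrt{n!}\,\bigl|[z^n]e^{az^2+bz+c}\bigr|$ directly by a saddle-point computation and verify that it cannot remain in $[\mu,M]$; this trades the circle-integral Laplace estimate for a coefficient saddle-point estimate but runs into the same delicate borderline case.
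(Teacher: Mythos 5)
Your proposal is correct, and its first half coincides with the paper's argument: the same compactness/normal-families reduction with Hurwitz's theorem, the same growth estimate showing order exactly $2$, and Hadamard's factorization giving $g^{*}=e^{az^{2}+bz+c}$ with $a\ne0$ (and $b\ne0$ via the odd coefficients). Where you genuinely differ is the endgame. The paper pins down $|a|=\tfrac{1}{2}$ from the classical formulas expressing order and type through Taylor coefficients, and then derives the contradiction on the coefficient side: a saddle-point asymptotic for $[z^{n}]\exp\left(\tfrac{1}{2}z^{2}+\beta z\right)$ compared with Stirling's formula shows that $|b_{n}|\sqrt{n!}$ cannot remain between two positive constants (the case $\beta=0$ being disposed of separately). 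You instead work with circle averages: Parseval gives $\mu^{2}e^{r^{2}}\le\sigma^{2}(r)\le M^{2}e^{r^{2}}$, a two-peak Laplace evaluation gives $\sigma^{2}(r)\asymp r^{-1}e^{2m_{r}}$ with $m_{r}=|a|r^{2}+\gamma r+O(1)$, and matching the expansions through the $\log r$ term forces $2|a|=1$ and then the impossible relation $2\gamma r=\log r+O(1)$. This is the dual of the paper's computation (Laplace on the circle versus saddle point for coefficients), so the analytic content is essentially equivalent, but your route is somewhat more self-contained: it avoids citing the order/type-from-coefficients formulas, it extracts $|a|=\tfrac{1}{2}$ as a by-product of the matching rather than from the type, and since the mismatch $2\gamma r=\log r+O(1)$ is absurd whether $\gamma>0$ or $\gamma=0$, you never actually need $b\ne0$ (that step is correct but redundant in your version, whereas the paper must treat $\beta=0$ separately). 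The one point requiring care is the one you flag yourself: a crude bound that replaces the linear term by $|b|r$ (a Bessel-type estimate) is insufficient when $\gamma<|b|$; you must carry out the localized Laplace analysis at the two competing maxima, keeping both the $r^{-1}$ prefactor and the correct value $\gamma$ of the linear term at the maximizer. That analysis is standard for a fixed quadratic exponent, so the argument goes through.
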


\section{Proof of Theorem \ref{thm:hole_prob_GAFs} - Preliminaries \label{sec:Main-Thm-Prelim}}

In this section we introduce some special notation for the proof.
We also prove some estimates which we use later to control some error
terms. In addition, we give a short description of the strategy of
the proof.

\subsection{Notation}

For $r\ge1$, we denote by $\disk r$ the disk $\left\{ z\,\mid\,|z|\le r\right\} $
and by $\cir r$ its boundary $\left\{ z\,\mid\,|z|=r\right\} $.
The letters $c$ and $C$ denote positive absolute constants (which
can change from line to line). We also use the standard notation
\[
M\left(r\right)=\max_{z\in\disk r}\left|f\left(z\right)\right|.
\]

\subsection{A growth lemma by Hayman}

We recall that a set $E\subset\left[1,\infty\right)$ is of \textit{finite}
\textsl{logarithmic measure} if
\[
\intsy_{E}\,\frac{dt}{t}<\infty.
\]
The following lemma (taken from \cite{Ha2}) is a general result about
the growth of functions.
\begin{lem}
\label{lem:H_Growth_Lemma}Let $\eta>0$ and suppose that $D(r)$
is a positive increasing function of $r$ for $r\ge r_{0}.$ For all
$r$ outside a set of finite \textsl{logarithmic measure} and for
any $\delta$ such that $|\delta|<D(r)^{-\eta}$, we have
\[
\left|D\left(re^{\delta}\right)-D(r)\right|<\eta D(r).
\]

\end{lem}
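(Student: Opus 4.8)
The plan is to pass to the logarithmic scale and run the classical Borel--Nevanlinna iteration. Put $\rho=\log r$, $\rho_{0}'=\log r_{0}$, and $\Phi(\rho)=D(e^{\rho})$, so that $\Phi$ is a positive non-decreasing function on $[\rho_{0}',\infty)$; a set of finite logarithmic measure in the variable $r$ is exactly a set of finite Lebesgue measure in $\rho$, and the asserted bound becomes
\[
\bigl|\Phi(\rho+\delta)-\Phi(\rho)\bigr|<\eta\,\Phi(\rho)\qquad\text{whenever }|\delta|<\Phi(\rho)^{-\eta}.
\]
By monotonicity of $\Phi$, for a fixed $\rho$ this holds for every admissible $\delta$ as soon as
\[
\Phi\bigl(\rho+\Phi(\rho)^{-\eta}\bigr)<(1+\eta)\Phi(\rho)\quad\text{and}\quad\Phi\bigl(\rho-\Phi(\rho)^{-\eta}\bigr)>(1-\eta)\Phi(\rho),
\]
the second condition being vacuous when $\eta\ge1$ (and for large $\rho$ the point $\rho-\Phi(\rho)^{-\eta}$ lies automatically in $[\rho_{0}',\infty)$, or one simply extends $\Phi$ by the constant $\Phi(\rho_{0}')$). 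Hence it suffices to show that the two sets
\[
E_{1}=\bigl\{\rho:\Phi(\rho+\Phi(\rho)^{-\eta})\ge(1+\eta)\Phi(\rho)\bigr\},\qquad E_{2}=\bigl\{\rho:\Phi(\rho-\Phi(\rho)^{-\eta})\le(1-\eta)\Phi(\rho)\bigr\}
\]
each have finite Lebesgue measure.

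For $E_{1}$ I would argue exactly as in the proof of Lemma~\ref{lemma:B-N-type}. Assume $E_{1}$ is unbounded (otherwise there is nothing to prove). Set $\rho_{1}=\inf E_{1}$ and, inductively, $\rho_{n}'=\rho_{n}+\Phi(\rho_{n})^{-\eta}$ and $\rho_{n+1}=\inf\bigl(E_{1}\cap[\rho_{n}',\infty)\bigr)$, stopping if some such intersection is empty (then $E_{1}$ is bounded). Then $\rho_{n}\le\rho_{n}'\le\rho_{n+1}$ and $E_{1}\subset\bigcup_{n}[\rho_{n},\rho_{n}']$, while monotonicity of $\Phi$ together with $\rho_{n}\in E_{1}$ gives $\Phi(\rho_{n})\ge\Phi(\rho_{n-1}')\ge(1+\eta)\Phi(\rho_{n-1})$, so $\Phi(\rho_{n})\ge(1+\eta)^{n-1}\Phi(\rho_{1})$. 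Moreover $\rho_{n}\to\infty$: were $\rho_{n}\uparrow\rho^{\ast}<\infty$, then $\rho_{n}'-\rho_{n}=\Phi(\rho_{n})^{-\eta}\to0$ and hence $\Phi(\rho_{n})\to\infty$, contradicting $\Phi(\rho_{n})\le\Phi(\rho^{\ast})<\infty$. Therefore
\[
\operatorname{meas}(E_{1})\le\sum_{n\ge1}\bigl(\rho_{n}'-\rho_{n}\bigr)=\sum_{n\ge1}\Phi(\rho_{n})^{-\eta}\le\Phi(\rho_{1})^{-\eta}\sum_{n\ge1}(1+\eta)^{-\eta(n-1)}<\infty,
\]
since $(1+\eta)^{-\eta}<1$.

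The set $E_{2}$ I would reduce to the previous case. If $\eta\ge1$ it is empty, so suppose $0<\eta<1$ and write $\kappa=(1-\eta)^{-1}>1$. For $\rho\in E_{2}$ put $\sigma=\rho-\Phi(\rho)^{-\eta}$; then $\Phi(\sigma)\le\kappa^{-1}\Phi(\rho)$, whence $\Phi(\rho)\ge\kappa\Phi(\sigma)\ge\Phi(\sigma)$, and therefore $\rho-\sigma=\Phi(\rho)^{-\eta}\le\Phi(\sigma)^{-\eta}$, i.e. $\rho\in[\sigma,\sigma+\Phi(\sigma)^{-\eta}]$; consequently $\Phi(\sigma+\Phi(\sigma)^{-\eta})\ge\Phi(\rho)\ge\kappa\Phi(\sigma)$, so $\sigma$ belongs to the set $\{\sigma:\Phi(\sigma+\Phi(\sigma)^{-\eta})\ge\kappa\Phi(\sigma)\}$. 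That set has finite measure by the argument just given (with $\kappa$ in place of $1+\eta$), and running the same construction yields covering intervals $[\sigma_{n},\sigma_{n}+\Phi(\sigma_{n})^{-\eta}]$ with $\sigma_{n}\uparrow\infty$ and $\sum_{n}\Phi(\sigma_{n})^{-\eta}<\infty$. Since each $\rho\in E_{2}$ satisfies $\rho\in[\sigma(\rho),\sigma(\rho)+\Phi(\sigma(\rho))^{-\eta}]$ with $\sigma(\rho)\in[\sigma_{n},\sigma_{n}+\Phi(\sigma_{n})^{-\eta}]$ for some $n$, monotonicity of $\Phi$ gives $\rho\le\sigma_{n}+2\Phi(\sigma_{n})^{-\eta}$, so $E_{2}\subset\bigcup_{n}[\sigma_{n},\sigma_{n}+2\Phi(\sigma_{n})^{-\eta}]$, of measure $\le2\sum_{n}\Phi(\sigma_{n})^{-\eta}<\infty$. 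Passing back to $r=e^{\rho}$, the complement $[1,\infty)\setminus X$ of the set $X$ on which the conclusion holds is contained in the image of $E_{1}\cup E_{2}$ and hence has finite logarithmic measure.

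I do not expect a genuine obstacle here: this is the standard Borel-type growth lemma, and the only points needing care are bookkeeping ones. The backward estimate ($E_{2}$) is dispatched by the reduction above, and the possibility that $D$ is merely increasing rather than continuous — so that $\inf E_{1}$ need not lie in $E_{1}$ — is immaterial in every application in the paper, where $D(r)$ is $\log M(r)$ or a comparable continuous function; in general it is absorbed by replacing each $\rho_{n}$ by a point of $E_{1}$ within distance $2^{-n}$ of the relevant infimum, which enlarges the covering by a set of measure at most $\sum_{n}2^{-n}<\infty$.
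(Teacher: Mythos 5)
Your proof is correct, and you should note that the paper itself gives no proof of Lemma \ref{lem:H_Growth_Lemma}: it is quoted from Hayman's survey \cite{Ha2}. What you wrote is the classical Borel--Nevanlinna iteration, which is precisely the scheme the paper does carry out for the closely related Lemma \ref{lemma:B-N-type} (inductive points $\rho_n\le\rho_n'\le\rho_{n+1}$, geometric growth of $\Phi(\rho_n)$, summation of the covering lengths), so in substance you have reproduced the paper's own technique rather than found a different route. Two small corrections to your closing remarks. First, the claim that in the applications $D$ is continuous is off: in this paper the lemma is applied to $m(r)=4\sum_{n\in N(r)}n$, a step function, so $E_1$ and $E_2$ need not be closed and your $2^{-n}$-slack modification (rather than the ``$E$ is closed'' shortcut used in Lemma \ref{lemma:B-N-type}, which relies on continuity of $\Phi$ and $\errfunc$) is genuinely needed; it does work, at the cost of the extra summable slack intervals. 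Second, when you feed the slack-modified covering of $E_1'$ into your reduction for $E_2$, take the slack at step $n$ to be at most $\min\bigl(2^{-n},\Phi(\sigma_n)^{-\eta}\bigr)$, so that every point of the extra interval still lies to the right of $\sigma_n$ and the monotonicity bound $\Phi(\sigma(\rho))\ge\Phi(\sigma_n)$ survives; with that one-line adjustment the measure estimate for $E_2$ goes through exactly as you wrote it.
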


\subsection{Further notation, definition of the exceptional set}

We use the following notations:
\[
b_{n}\left(r\right)=\begin{cases}
\frac{1}{n}\log a_{n}+\log r,\, & \mbox{if }a_{n}>0,\\
-\infty, & \mbox{if }a_{n}=0,
\end{cases}
\]
and 
\begin{eqnarray*}
N\left(r\right) & = & \left\{ n\,\mid\, b_{n}\left(r\right)\ge0\right\} ,\\
S(r) & = & 2\cdot\sum_{n\in N\left(r\right)}\log\left(a_{n}r^{n}\right)=2\cdot{\displaystyle \sum_{n\in N\left(r\right)}}nb_{n}\left(r\right).
\end{eqnarray*}
Since the coefficients $a_{n}$ satisfy $\frac{\log a_{n}}{n}\to-\infty$,
we have that $b_{n}\to-\infty$ as $n\to\infty$ and the set $N(r)$
is finite for every $r\ge1$.

We now write 
\begin{eqnarray*}
n\left(r\right) & = & \#N(r),\\
m\left(r\right) & = & 4\cdot\sum_{n\in N\left(r\right)}n,\\
N_{\delta}\left(r\right) & = & \left\{ n\,\mid\, b_{n}\left(r\right)\ge-\delta\right\} ,\\
n_{\delta}\left(r\right) & = & \#N_{\delta}\left(r\right).
\end{eqnarray*}
Note that $b_{n}\left(r\right)$ is increasing with $r$ and therefore
$n\left(r\right)$ and $m\left(r\right)$ are increasing functions
of $r$. Moreover, 
\begin{eqnarray}
N_{-\delta}\left(r\right) & \subset & N_{0}\left(r\right)=N\left(r\right),\label{eq:N_delta_to_N_r}\\
N_{-\delta}\left(r\right) & = & N\left(re^{-\delta}\right).\nonumber 
\end{eqnarray}
Let $\eta\in\left(0,\frac{1}{4}\right]$. We apply Lemma \ref{lem:H_Growth_Lemma}
to the function $m\left(r\right)$, taking 
\[
\delta=\delta\left(r\right)=m^{-\eta}\left(r\right).
\]
We conclude that outside an exceptional set $\excpset$ of finite
logarithmic measure, we have
\begin{eqnarray}
m\left(re^{-\delta}\right) & > & \left(1-\eta\right)m\left(r\right),\label{eq:m(r)_normal}\\
m\left(re^{\delta}\right) & < & \left(1+\eta\right)m\left(r\right).\nonumber 
\end{eqnarray}

From now on we will fix $r\notin\excpset$. We also assume that $r$
is large enough, depending on the coefficients $a_{n}$ and on the
value of $\eta$. In particular, we assume that $n\left(r\right)\ge2$. 

We also note that the choice of $\delta$ remains the same throughout
the paper. The limit in Theorem \ref{thm:hole_prob_GAFs} is taken
over $\excpset^{\mathrm{c}}$. If, for some set of coefficients $a_{n}$,
the inequalities (\ref{eq:m(r)_normal}) hold for all large values
of $r$, then there is no exceptional set in Theorem \ref{thm:hole_prob_GAFs}.

\subsection{Estimates for $S(r)$}

Here we find relations between $S\left(r\right)$ and the functions
$m\left(r\right),\, n\left(r\right)$, that will be used later in
the proof.
\begin{lem}
\label{lem:S(r)_low_bnd}We have
\[
S(r)\ge\frac{1}{8}\cdot\left(m\left(r\right)\right)^{1-\eta}\ge\frac{1}{8}\cdot n\left(r\right)^{2-2\eta}.
\]
\end{lem}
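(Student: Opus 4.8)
The plan is to prove the two inequalities separately; the first is where all the content lies, and it rests on the growth lemma already invoked in the setup.

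Recall that $S(r)=2\sum_{n\in N(r)}n\,b_n(r)$ with every summand nonnegative (by the definition of $N(r)$), so a crude term-by-term bound yields only $S(r)\ge 0$: the obstacle is that $b_n(r)$ may be arbitrarily close to $0$ on a large part of $N(r)$. The key observation is that the indices with $b_n(r)$ bounded away from $0$ are exactly the ones that survive the contraction $r\mapsto re^{-\delta}$; indeed $N_{-\delta}(r)=\{n:b_n(r)\ge\delta\}=N(re^{-\delta})\subset N(r)$ by (\ref{eq:N_delta_to_N_r}). Since the discarded terms are $\ge 0$, I would write
\[
S(r)\;=\;2\!\!\sum_{n\in N(r)}\!\! n\,b_n(r)\;\ge\;2\!\!\sum_{n\in N(re^{-\delta})}\!\! n\,b_n(r)\;\ge\;2\delta\!\!\sum_{n\in N(re^{-\delta})}\!\! n\;=\;\frac{\delta}{2}\,m\bigl(re^{-\delta}\bigr),
\]
using $b_n(r)\ge\delta$ on $N(re^{-\delta})$ together with $m(\rho)=4\sum_{n\in N(\rho)}n$.

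Now I would plug in the choice $\delta=\delta(r)=m^{-\eta}(r)$ and use that $r\notin\excpset$, so that (\ref{eq:m(r)_normal}) applies: $m(re^{-\delta})>(1-\eta)m(r)$. This gives
\[
S(r)\;>\;\frac{1-\eta}{2}\,\delta\,m(r)\;=\;\frac{1-\eta}{2}\,m(r)^{1-\eta}\;\ge\;\frac{1}{8}\,m(r)^{1-\eta},
\]
the last step because $\eta\le\tfrac14$ forces $\tfrac{1-\eta}{2}\ge\tfrac38>\tfrac18$. For the second inequality, since $t\mapsto t^{1-\eta}$ is increasing on $(0,\infty)$ (as $1-\eta>0$), it suffices to show $m(r)\ge n(r)^2$. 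But $N(r)$ consists of $n(r)\ge 2$ distinct nonnegative integers, hence $\sum_{n\in N(r)}n\ge 0+1+\cdots+(n(r)-1)=\tfrac{n(r)(n(r)-1)}{2}\ge\tfrac14\,n(r)^2$ (the last inequality holds precisely because $n(r)\ge 2$); multiplying by $4$ gives $m(r)\ge n(r)^2$, and therefore $m(r)^{1-\eta}\ge n(r)^{2-2\eta}$.

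The only genuine idea here is the identification $\{n:b_n(r)\ge\delta\}=N(re^{-\delta})$, which turns the small-$b_n$ nuisance into a direct application of Hayman's growth Lemma~\ref{lem:H_Growth_Lemma}; everything else — discarding nonnegative terms, the arithmetic-progression bound, and the numerical check on $\eta$ — is routine, so I do not anticipate any real obstacle.
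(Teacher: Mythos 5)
Your proof is correct and follows essentially the same route as the paper: drop the terms with $n\in N(r)\setminus N(re^{-\delta})$, use $b_n(r)\ge\delta$ on $N_{-\delta}(r)=N(re^{-\delta})$, invoke the non-exceptionality estimate $m(re^{-\delta})>(1-\eta)m(r)$ with $\delta=m^{-\eta}(r)$, and finish with $m(r)\ge n(r)^2$ (which the paper also gets from the observation that $m(r)$ is minimized when $N(r)=\{0,1,\dots,n(r)-1\}$). No gaps.
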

\begin{proof}
Recall that it is assumed that $n\left(r\right)\ge2$. Notice that
\begin{equation}
m\left(r\right)=4\cdot\sum_{n\in N\left(r\right)}n\ge n^{2}\left(r\right),\label{eq:m(r)_low_bnd}
\end{equation}
since $m\left(r\right)$ is minimal when $N\left(r\right)=\left\{ 0,1,\ldots,n\left(r\right)-1\right\} $.
Now, using the inclusion $N_{-\delta}\left(r\right)\subset N\left(r\right)$$ $,
we get:
\begin{eqnarray*}
\frac{S\left(r\right)}{2} & = & \sum_{n\in N\left(r\right)}nb_{n}\left(r\right)\ge\sum_{n\in N_{-\delta}\left(r\right)}nb_{n}\left(r\right)\\
 & \ge & \sum_{n\in N\left(re^{-\delta}\right)}n\delta\ge\frac{\delta}{4}\cdot m\left(re^{-\delta}\right)\\
 & \ge & \frac{\left(1-\eta\right)}{4}\cdot\frac{m\left(r\right)}{\left(m\left(r\right)\right)^{\eta}}\ge\frac{1}{8}\cdot n\left(r\right)^{2-2\eta}.
\end{eqnarray*}

\end{proof}
We now estimate the rate of growth of the function $S\left(r\right)$.
\begin{lem}
\label{lem:S(r)_growth}For $\gamma\in\left(0,\frac{1}{2}\right)$
we have,
\[
S\left(\left(1-\gamma\right)r\right)\ge S\left(r\right)-\gamma\cdot m\left(r\right).
\]
\end{lem}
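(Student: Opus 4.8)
The plan is to exploit the fact that replacing the argument $r$ by $(1-\gamma)r$ shifts every $b_n$ by the same negative constant. From $b_n(\rho)=\frac1n\log a_n+\log\rho$ (for $a_n>0$) one reads off the translation identity $b_n\bigl((1-\gamma)r\bigr)=b_n(r)+\log(1-\gamma)$, and since $\log(1-\gamma)<0$ this gives at once the inclusion $N\bigl((1-\gamma)r\bigr)\subset N(r)$.

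First I would discard the indices in $N(r)\setminus N\bigl((1-\gamma)r\bigr)$. For such $n$ we have $a_n>0$ (because $n\in N(r)$) and $b_n\bigl((1-\gamma)r\bigr)<0$ by the definition of $N\bigl((1-\gamma)r\bigr)$, hence $n\,b_n\bigl((1-\gamma)r\bigr)\le 0$. Consequently
\[
\frac{S\bigl((1-\gamma)r\bigr)}{2}=\sum_{n\in N((1-\gamma)r)}n\,b_n\bigl((1-\gamma)r\bigr)\ \ge\ \sum_{n\in N(r)}n\,b_n\bigl((1-\gamma)r\bigr),
\]
the point being that adding the discarded (nonpositive) terms back in only decreases the right-hand side.

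Next I would plug in the translation identity on the right: using $\sum_{n\in N(r)}n=\tfrac14 m(r)$,
\[
\sum_{n\in N(r)}n\,b_n\bigl((1-\gamma)r\bigr)=\sum_{n\in N(r)}n\,b_n(r)+\log(1-\gamma)\sum_{n\in N(r)}n=\frac{S(r)}{2}+\frac{\log(1-\gamma)}{4}\,m(r).
\]
Combining the two displays and multiplying by $2$ gives $S\bigl((1-\gamma)r\bigr)\ge S(r)+\tfrac12\log(1-\gamma)\,m(r)$.

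It then remains only to invoke the elementary inequality $\log(1-\gamma)\ge-2\gamma$, valid for $\gamma\in(0,\tfrac12)$ — which holds since $\gamma\mapsto\log(1-\gamma)+2\gamma$ vanishes at $0$ and has nonnegative derivative $2-(1-\gamma)^{-1}$ on $[0,\tfrac12]$ — to conclude $S\bigl((1-\gamma)r\bigr)\ge S(r)-\gamma\,m(r)$. I do not expect any genuine obstacle here: the statement is essentially a one-line consequence of the translation identity for the $b_n$'s together with the monotonicity of $N(\cdot)$ in the radius. The only step demanding a moment's care is keeping the direction of the inequality correct when passing from the sum over $N\bigl((1-\gamma)r\bigr)$ to the sum over the larger index set $N(r)$, and checking that the terms one introduces in doing so are indeed $\le 0$.
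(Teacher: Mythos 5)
Your proof is correct and is essentially the paper's own argument in different notation: the paper bounds $S(r)-S((1-\gamma)r)$ by summing $\log^{+}(a_{n}r^{n})-\log^{+}(a_{n}((1-\gamma)r)^{n})\le n\log\frac{1}{1-\gamma}$ over $N(r)$, which is exactly your translation-identity-plus-discarded-nonpositive-terms computation, and both conclude with $\log(1-\gamma)\ge-2\gamma$ for $\gamma\in(0,\tfrac{1}{2})$. No gap.
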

\begin{proof}
Write $r^{\prime}=\left(1-\gamma\right)r$ and notice that for $\gamma<\frac{1}{2}$
we have
\[
\log\left(1-\gamma\right)\ge-\gamma-\gamma^{2}\ge-2\gamma.
\]
We will also use the inequality
\[
\log^{+}a-\log^{+}b\le\log\frac{a}{b},\quad a>b>0.
\]
It follows that (since $N(r^{\prime})\subset N(r)$)
\begin{eqnarray*}
S(r)-S\left(r^{\prime}\right) & = & 2\cdot\sum_{n\in N(r)}\left\{ \log^{+}\left(a_{n}r^{n}\right)-\log^{+}\left[a_{n}\left(r^{\prime}\right)^{n}\right]\right\} \\
 & \le & {\displaystyle 2\cdot\sum_{n\in N(r)}}n\cdot\log\frac{r}{r^{\prime}}=\log\left(\frac{1}{1-\gamma}\right)\cdot\frac{m\left(r\right)}{2}\\
 & \le & \gamma\cdot m\left(r\right).
\end{eqnarray*}

\end{proof}
In the next lemma we show that the value of $S\left(r\right)$ does
not depend on the scaling of the coefficients (up to some error term).
\begin{lem}
\label{lem:S(r)_scaling}Let $d>0$ and set $\Salt r=2\cdot\sum_{n\ge0}\log^{+}\left(d\cdot a_{n}r^{n}\right)$.
We have
\[
\left|S\left(r\right)-\Salt r\right|\le C\sqrt{m\left(r\right)},
\]
where $C$ is a positive constant that might depend on $d$.\end{lem}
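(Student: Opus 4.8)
The plan is to compare $S(r)$ and $\Salt r$ term by term and to reduce the whole estimate to counting how many indices $n$ are ``active'' in either sum. First I would record the elementary pointwise inequality
$$\bigl|\log^{+}x-\log^{+}(dx)\bigr|\le|\log d|\qquad(x>0,\ d>0),$$
proved by a short case analysis according to whether each of $x$ and $dx$ is $\le 1$ or $>1$; in the same analysis one sees that the left-hand side vanishes unless $x>1$ or $dx>1$. Applying this with $x=a_{n}r^{n}$ and summing over $n$ gives
$$\bigl|S(r)-\Salt r\bigr|\le 2|\log d|\cdot\#A,\qquad A:=\{n\ge0:\ a_{n}r^{n}>1\text{ or }d\,a_{n}r^{n}>1\},$$
so it remains to show $\#A\le C\sqrt{m(r)}$ with $C$ depending only on $d$.

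The second step is to bound $\#A$, splitting into two cases. If $0<d\le1$ then $A\subseteq N(r)$, so $\#A\le n(r)\le\sqrt{m(r)}$ directly from \eqref{eq:m(r)_low_bnd}. If $d>1$ then $A=\{n:\ a_{n}r^{n}>1/d\}$; here I would further split according to whether $n<\delta^{-1}\log d$ or not, where $\delta=m^{-\eta}(r)$ is the parameter already fixed. For $n\ge\delta^{-1}\log d$ one has $e^{\delta n}\ge d$, so $a_{n}r^{n}>1/d$ forces $a_{n}(re^{\delta})^{n}=a_{n}r^{n}e^{\delta n}>1$, i.e. $n\in N(re^{\delta})=N_{\delta}(r)$ (the identity $N(re^{\delta})=N_{\delta}(r)$ being immediate from $b_{n}(re^{\delta})=b_{n}(r)+\delta$). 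Hence $\#A\le\delta^{-1}\log d+1+n_{\delta}(r)$. Since $\eta\le\tfrac14<\tfrac12$, the first term is $(\log d)\,m(r)^{\eta}\le(\log d)\sqrt{m(r)}$. For $n_{\delta}(r)$ I would use that $\sum_{n\in N_{\delta}(r)}n$ is smallest when $N_{\delta}(r)$ is an initial segment, so that (using $n_{\delta}(r)\ge n(r)\ge2$) $m(re^{\delta})=4\sum_{n\in N_{\delta}(r)}n\ge n_{\delta}(r)^{2}$; combined with \eqref{eq:m(r)_normal} this yields $n_{\delta}(r)^{2}\le(1+\eta)m(r)\le2m(r)$, hence $n_{\delta}(r)\le2\sqrt{m(r)}$. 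Collecting the three terms, $\#A\le(\log d+3)\sqrt{m(r)}$, and therefore $|S(r)-\Salt r|\le 2\log d\,(\log d+3)\sqrt{m(r)}$, which is the assertion with $C=C(d)$.

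I do not expect a genuine obstacle. The only point needing some care is the case $d>1$, where $\Salt r$ picks up the few indices $n$ with $1/d<a_{n}r^{n}\le1$ that $S(r)$ does not see, and one must check that this surplus is $O(\sqrt{m(r)})$; this is the single place where the hypothesis $r\notin\excpset$ enters, through the regularity estimate \eqref{eq:m(r)_normal} for $m$ at scale $e^{\delta}$, consistently with the earlier remark that $\excpset$ is not needed when the coefficients $a_{n}$ are regular. Everything else --- the pointwise inequality for $\log^{+}$, the case $0<d\le1$, and the bookkeeping --- is routine.
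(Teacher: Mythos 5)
Your proof is correct and follows essentially the same route as the paper: the case $d\le 1$ is handled by $n(r)\le\sqrt{m(r)}$, and for $d>1$ the active indices are split according to membership in $N(re^{\delta})=N_{\delta}(r)$ (equivalently $n\gtrless\delta^{-1}\log d$), with the two groups bounded via $m(re^{\delta})\le(1+\eta)m(r)$ and $\delta^{-1}\log d\le\log d\cdot\sqrt{m(r)}$ respectively. The only cosmetic difference is that you use the uniform per-term bound $|\log^{+}x-\log^{+}(dx)|\le|\log d|$ and count indices, whereas the paper bounds the contribution of the extra terms directly via $\widetilde{b}_{n}(r)\le\log d/n$; the substance and the use of the exceptional set are identical.
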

\begin{proof}
If $d<1$, then $\Salt r$ is at least $S\left(r\right)-\log\frac{1}{d}\cdot n\left(r\right)\ge S\left(r\right)-\log\frac{1}{d}\sqrt{m\left(r\right)}$.
Now assume that $d>1$. Denote by $\widetilde{b}_{n}\left(r\right),\Nalt r,\widetilde{n}\left(r\right)$
the corresponding functions for the set of coefficients $\left\{ d\cdot a_{n}\right\} _{n\ge0}$.
By Lemma \ref{lem:S(r)_low_bnd}, we have $n\left(re^{\delta}\right)\le C\sqrt{m\left(r\right)}$,
so the contribution from terms in $N\left(re^{\delta}\right)$ to
$\Salt r$ is at most
\[
C\log d\cdot\sqrt{m\left(r\right)}.
\]
If $n\notin N\left(re^{\delta}\right)$ and $n\in\Nalt r$, then
\[
-\delta+\frac{\log d}{n}\ge b_{n}\left(r\right)+\frac{\log d}{n}=\frac{\log\left(da_{n}\right)}{n}+\log r=\widetilde{b}_{n}\left(r\right)\ge0,
\]
so we have $n\le\frac{\log d}{\delta}\le\log d\cdot m^{\eta}\left(r\right)$.
So the contribution from these terms is at most $\log^{2}d\cdot m^{\eta}\left(r\right)\le C\sqrt{m\left(r\right)}$.
\end{proof}

\subsection{Gaussian distributions\label{sub:Gauss-props}}

Many times we use the fact that if the random variable $a$ has standard
complex Gaussian distribution, then 
\begin{equation}
\pr{|a|\ge\lambda}=\exp(-\lambda^{2}),\label{eq:Gaus_prob_large}
\end{equation}
and for $\lambda\le1,$

\begin{equation}
\pr{|a|\le\lambda}\in\left[\frac{\lambda^{2}}{2},\lambda^{2}\right].\label{eq:Gaus_prob_small}
\end{equation}

\subsection{Strategy of the proof\label{sub:proof_strategy}}

The proof consists of two parts. We first show that
\[
p_{H}\left(r\right)\le S\left(r\right)+C\sqrt{m\left(r\right)}\log m\left(r\right)
\]
(Proposition \ref{prp:p_H_upp_bnd}), and then we prove that
\[
p_{H}\left(r\right)\ge S\left(r\right)-Cn\left(r\right)\log S\left(r\right)
\]
(Proposition \ref{prp:p_H_low_bnd}). Combining these bounds with
Lemma \ref{lem:S(r)_low_bnd}, we get Theorem \ref{thm:hole_prob_GAFs}.
If the coefficients $a_{n}$ are such that for $r\ge1$ there is no
exceptional set $\excpset$, then Theorem \ref{thm:hole_prob_GAFs}
holds for every $r$ large enough. In some cases, where the coefficients
$a_{n}$ have regular asymptotic behavior, it is possible to prove
that $m\left(r\right)\le CS\left(r\right)$ and obtain
\[
p_{H}\left(r\right)=S\left(r\right)+\O\left(\sqrt{S\left(r\right)}\log S\left(r\right)\right),\quad r\to\infty.
\]
For instance, such is the case for the coefficients of the GEF, where
$a_{n}=\frac{1}{\sqrt{n!}}$, or more generally where $a_{n}\sim\frac{1}{\Gamma\left(\alpha n+1\right)}$,
with some $\alpha>0$.
\begin{rem*}
By Lemma \ref{lem:S(r)_scaling}, we can scale $f\left(z\right)$
by a constant factor that will add a term of order $\sqrt{m\left(r\right)}$
to $S\left(r\right)$. By Lemma \ref{lem:S(r)_low_bnd}, this term
is of order at most $S\left(r\right)^{1/2+\epsym}$ and can be ignored.
From now on, in order to simplify some of the expressions in the paper,
we will assume that 
\[
a_{0}=1.
\]
\end{rem*}

\section{Proof of Theorem \ref{thm:hole_prob_GAFs} - Upper Bound for $p_{H}(f;r)$
\label{sec:Main-Thm-Upper-Bound}}

In this section we will prove the following
\begin{prop}
\label{prp:p_H_upp_bnd}We have 
\[
p_{H}(f;r)\le S(r)+C\cdot\sqrt{m\left(r\right)}\log m(r),
\]
with some positive absolute constant $C$.\end{prop}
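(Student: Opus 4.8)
The plan is to produce an explicit ``hole event'' $\mathcal{H}$ on which $f$ is zero-free in $\disk r$ and which has probability at least $\exp\!\bigl(-S(r)-C\sqrt{m(r)}\log m(r)\bigr)$; this gives the proposition at once. Since we normalised $a_{0}=1$, for every $z$ with $|z|\le r$ one has $|f(z)|\ge|\xi_{0}|-\sum_{n\ge1}|\xi_{n}|\,a_{n}|z|^{n}\ge|\xi_{0}|-T$, where $T=\sum_{n\ge1}|\xi_{n}|\,a_{n}r^{n}$. So it suffices to find a likely event on which $|\xi_{0}|\ge1$ and $T\le\tfrac12$.

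The difficulty is that forcing \emph{every} $|\xi_{n}|$ to be small is far too expensive: there may be $\sim m(r)$ indices with $a_{n}r^{n}$ not negligible, and each independent constraint $\{|\xi_{n}|\le\lambda_{n}\}$ with $\lambda_{n}\le1$ costs a factor $\le\lambda_{n}^{2}$. The remedy is to split off a \emph{controlled} family of ``relevant'' coefficients, to be pinned down one by one, from a genuine tail handled by a first-moment bound. Keeping $\delta=m(r)^{-\eta}$ as fixed in Section~\ref{sec:Main-Thm-Prelim}, recalling $N_{\delta}(r)=N(re^{\delta})$, and choosing an integer $K$ of size comparable to $\delta^{-1}\log(\delta^{-1})$ (see below), set
\[
\mathcal{C}=\bigl(\{1,\dots,K\}\cup N_{\delta}(r)\bigr)\setminus\{0\}.
\]
Two facts bound $\#\mathcal{C}$: arguing as for~\eqref{eq:m(r)_low_bnd} at the point $re^{\delta}$ and then invoking $m(re^{\delta})<(1+\eta)m(r)$ from~\eqref{eq:m(r)_normal} (available since $r\notin\excpset$) gives $n_{\delta}(r)=n(re^{\delta})\le\sqrt{m(re^{\delta})}\le\sqrt{2\,m(r)}$; and $\eta\le\tfrac14$ makes $K\le C\,m(r)^{\eta}\log m(r)=o\bigl(\sqrt{m(r)}\bigr)$. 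Hence $\#\mathcal{C}\le C\sqrt{m(r)}$ and $\log\#\mathcal{C}\le\log m(r)$ for $r$ large. Moreover, if $n\notin\mathcal{C}\cup\{0\}$ then $n>K$ and $n\notin N_{\delta}(r)$, so $a_{n}r^{n}<e^{-n\delta}$; choosing $K$ so that $e^{-K\delta}\le\delta/(16\sqrt{\pi})$ gives $\sum_{n\notin\mathcal{C}\cup\{0\}}a_{n}r^{n}<\sum_{n>K}e^{-n\delta}\le\tfrac{1}{8\sqrt{\pi}}$.

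Let $\mathcal{H}$ be the intersection of $\{|\xi_{0}|\ge1\}$, of the events $\{|\xi_{n}|\le\lambda_{n}\}$ for $n\in\mathcal{C}$ with $a_{n}>0$, where $\lambda_{n}=\bigl(4\,\#\mathcal{C}\cdot\max(1,a_{n}r^{n})\bigr)^{-1}\le1$, and of $\bigl\{\sum_{n\notin\mathcal{C}\cup\{0\}}|\xi_{n}|\,a_{n}r^{n}\le\tfrac14\bigr\}$. On $\mathcal{H}$ we get $\sum_{n\in\mathcal{C}}|\xi_{n}|\,a_{n}r^{n}\le\sum_{n\in\mathcal{C},\,a_{n}>0}\lambda_{n}a_{n}r^{n}\le\tfrac14$, hence $T\le\tfrac12<1\le|\xi_{0}|$ and $f\neq0$ in $\disk r$. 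The coordinates split into three independent groups ($\xi_{0}$, the $\xi_{n}$ with $n\in\mathcal{C}$, and the rest), so $\pr{\mathcal{H}}$ factors accordingly. Using~\eqref{eq:Gaus_prob_large} ($\pr{|\xi_{0}|\ge1}=e^{-1}$, and $\Ex|\xi_{n}|=\int_{0}^{\infty}e^{-t^{2}}\,\mathrm{d}t=\tfrac{\sqrt{\pi}}{2}$), Markov's inequality gives $\pr{\mathrm{tail}\le\tfrac14}\ge\tfrac34$; using~\eqref{eq:Gaus_prob_small} ($\pr{|\xi_{n}|\le\lambda_{n}}\ge\lambda_{n}^{2}/2$) gives
\[
\sum_{n\in\mathcal{C},\,a_{n}>0}\log\pr{|\xi_{n}|\le\lambda_{n}}\ \ge\ -\#\mathcal{C}\bigl(\log2+2\log(4\,\#\mathcal{C})\bigr)\ -\ 2\sum_{n\in\mathcal{C},\,a_{n}r^{n}\ge1}\log(a_{n}r^{n}),
\]
and the last sum equals $\sum_{n\in N(r),\,n\ge1}\log(a_{n}r^{n})=\tfrac12 S(r)$, because $N(r)\setminus\{0\}\subseteq N_{\delta}(r)\subseteq\mathcal{C}\cup\{0\}$, indices of $\mathcal{C}$ outside $N(r)$ contribute $0$, and the $n=0$ term vanishes ($a_{0}=1$). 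Combining the three factors with $\#\mathcal{C}\le C\sqrt{m(r)}$ and $\log\#\mathcal{C}\le\log m(r)$ yields $\log\pr{\mathcal{H}}\ge-S(r)-C\sqrt{m(r)}\log m(r)$, hence $p_{H}(f;r)\le-\log\pr{\mathcal{H}}\le S(r)+C\sqrt{m(r)}\log m(r)$.

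The step that demands the most care is the two-sided bookkeeping hidden in the choice of $\mathcal{C}$: it must be \emph{large enough} that everything left in the tail satisfies $a_{n}r^{n}<e^{-n\delta}$ (which forces us to include both the initial segment $\{1,\dots,K\}$ \emph{and} all of $N_{\delta}(r)$), and yet \emph{small enough} --- of size $O(\sqrt{m(r)})$ --- that individually pinning down its coordinates costs only $O(\sqrt{m(r)}\log m(r))$ in the exponent; this is exactly where the definition of $N_{\delta}(r)$ and the growth inequality~\eqref{eq:m(r)_normal} are used.
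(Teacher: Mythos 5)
Your proposal is correct and takes essentially the same route as the paper's proof: you force the constant term to dominate by pinning each coefficient with $a_{n}r^{n}$ not negligible at scale comparable to $\bigl(\#\mathcal{C}\cdot\max(1,a_{n}r^{n})\bigr)^{-1}$, which is exactly where the factor $e^{-S(r)}$ comes from, at an additional cost $O\bigl(\sqrt{m(r)}\log m(r)\bigr)$ in the exponent controlled via $n_{\delta}(r)\le\sqrt{2m(r)}$ and the non-exceptionality of $r$. The only differences are cosmetic: the paper bounds the tail coefficients pointwise by $e^{\delta n/2}$ (so it must require $|\xi_{0}|\gtrsim m(r)^{1/4}$ to beat a tail of size $\sim m(r)^{\eta}$), whereas you control the tail sum by a first-moment (Markov) bound so that $|\xi_{0}|\ge1$ suffices; both variants stay within the stated error term.
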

\begin{rem*}
Recall that $r$ is assumed to be large.
\end{rem*}
The simplest case where $f(z)$ has no zeros inside $\disk r$ is
when the constant term dominates all the others. Consider the event
$\Omega_{r}$, that is the intersection of the events $\mbox{{\rm \mbox{(i)}}}$,$\mbox{(ii)}$,
and $\mbox{(iii)}$ ($C$ will be selected in an appropriate way)
\[
\begin{array}{ll}
\mbox{{\rm \mbox{(i)}}}: & |\xi_{0}|\ge C\left(m\left(r\right)\right)^{1/4},\\
\mbox{{\rm \mbox{(ii)}}}: & {\displaystyle \bigcap_{n\in N\left(r\right)\backslash\left\{ 0\right\} }}\mbox{{\rm \mbox{(ii)}}}_{n},\\
\mbox{{\rm \mbox{(iii)}}}: & {\displaystyle \bigcap_{n\in\Naltd r\backslash N\left(r\right)}}\mbox{{\rm \mbox{(iii)}}}_{n},\\
\mbox{{\rm \mbox{(iv)}}}: & {\displaystyle \bigcap_{n\in\left(\Naltd r\right)^{c}}}\mbox{{\rm \mbox{(iv)}}}_{n},
\end{array}
\]
where $\Naltd r=N_{\delta}\left(r\right)\cup\left\{ n\,\mid\, n<\sqrt{m\left(r\right)}\right\} $
and
\[
\begin{array}{ll}
\mbox{{\rm \mbox{(ii)}}}_{n}: & |\xi_{n}|\le\frac{(a_{n}r^{n})^{-1}}{\sqrt{m\left(r\right)}},\\
\mbox{{\rm \mbox{(iii)}}}_{n}: & |\xi_{n}|\le\frac{1}{\sqrt{m\left(r\right)}},\\
\mbox{{\rm \mbox{(iv)}}}_{n}: & |\xi_{n}|\le\exp\left(\frac{\delta n}{2}\right).
\end{array}
\]
We notice that by (\ref{eq:m(r)_normal}) and (\ref{eq:m(r)_low_bnd})
we have that $\#\Naltd r\le2\sqrt{m\left(r\right)}$.
\begin{lem}
\label{lem:Low-Bnd-Est-For-Func}If $\Omega_{r}$ holds, then $f$
has no zeros inside $\disk r$.\end{lem}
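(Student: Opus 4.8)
The plan is to show that on the event $\Omega_r$ the constant term $\xi_0 a_0 = \xi_0$ dominates the sum of all the remaining terms uniformly on the closed disk $\disk r$, so that by Rouch\'e's theorem (or simply by the triangle inequality, since $f(0)=\xi_0\neq 0$) $f$ cannot vanish inside $\disk r$. Concretely, I would estimate
\[
\sum_{n\ge 1}\bigl|\xi_n a_n z^n\bigr| \le \sum_{n\ge 1}|\xi_n|\,a_n r^n
\]
for $|z|\le r$, and split the sum according to the three regimes defining $\Omega_r$: the ``main'' indices $n\in N(r)\setminus\{0\}$ controlled by $\mathrm{(ii)}_n$, the ``near-threshold'' indices $n\in\widetilde N_\delta(r)\setminus N(r)$ controlled by $\mathrm{(iii)}_n$, and the ``tail'' indices $n\notin\widetilde N_\delta(r)$ controlled by $\mathrm{(iv)}_n$. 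The goal is to bound each piece by something of size $O\bigl((m(r))^{1/4}\bigr)$ or smaller, hence much less than $|\xi_0|\ge C(m(r))^{1/4}$ once $C$ is chosen large and $r$ is large.

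For the main indices, $\mathrm{(ii)}_n$ gives $|\xi_n| a_n r^n \le 1/\sqrt{m(r)}$, and since $\#N(r)=n(r)\le \sqrt{m(r)}$ by \eqref{eq:m(r)_low_bnd}, this contributes at most $n(r)/\sqrt{m(r)}\le 1$. For the near-threshold indices, $\mathrm{(iii)}_n$ gives $|\xi_n|\le 1/\sqrt{m(r)}$, and on these $a_n r^n \le 1$ (because they lie outside $N(r)$, so $b_n(r)<0$), so each term is again at most $1/\sqrt{m(r)}$; there are at most $\#\widetilde N_\delta(r)\le 2\sqrt{m(r)}$ of them, contributing at most $2$. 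For the tail, $n\notin\widetilde N_\delta(r)$ means both $b_n(r)<-\delta$ (i.e.\ $a_n r^n < e^{-\delta n}$) and $n\ge\sqrt{m(r)}$; combined with $\mathrm{(iv)}_n$ which gives $|\xi_n|\le e^{\delta n/2}$, each tail term is at most $e^{\delta n/2}\cdot e^{-\delta n} = e^{-\delta n/2}$, and summing the geometric series over $n\ge\sqrt{m(r)}$ yields a bound like $C e^{-\delta\sqrt{m(r)}/2}/(1-e^{-\delta/2})$. Since $\delta = m^{-\eta}(r)$ with $\eta\le 1/4$, we have $\delta\sqrt{m(r)} = m(r)^{1/2-\eta}\ge m(r)^{1/4}\to\infty$ while $1/(1-e^{-\delta/2})\approx 2/\delta = 2 m^{\eta}(r)$ grows only polynomially, so this whole tail contribution tends to $0$. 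Altogether $\sum_{n\ge1}|\xi_n a_n z^n|\le C_0$ for an absolute constant $C_0$, while $|f(0)|=|\xi_0|\ge C(m(r))^{1/4}> C_0$ for $r$ large, which forces $\min_{\disk r}|f| > 0$ by the maximum principle applied to $1/f$ — or more elementarily, $|f(z)|\ge |\xi_0| - \sum_{n\ge1}|\xi_n a_n z^n| > 0$ pointwise.

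The main obstacle — really the only subtlety — is making sure the tail estimate is genuinely summable and that the polynomial factor $m^{\eta}(r)$ coming from $1/(1-e^{-\delta/2})$ is dominated by the exponential decay $e^{-\delta\sqrt{m(r)}/2}$; this is exactly why the cut $n<\sqrt{m(r)}$ was built into the definition of $\widetilde N_\delta(r)$, and why the bound $\#\widetilde N_\delta(r)\le 2\sqrt{m(r)}$ (which uses \eqref{eq:m(r)_normal} and \eqref{eq:m(r)_low_bnd}) is stated just before the lemma. A secondary point is to verify that on the near-threshold and tail indices $a_n r^n$ is indeed bounded as claimed (i.e.\ that $b_n(r)<0$ off $N(r)$), which is immediate from the definitions of $b_n$ and $N(r)$. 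Once these are in place the conclusion that $f$ has no zeros in $\disk r$ on $\Omega_r$ follows directly, with $C$ in $\mathrm{(i)}$ chosen larger than the absolute constant $C_0$ produced by the three sums.
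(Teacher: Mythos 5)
Your proof is correct and follows essentially the same approach as the paper: split $\sum_{n\ge 1}|\xi_n|a_n r^n$ according to $N(r)\setminus\{0\}$, $\widetilde N_\delta(r)\setminus N(r)$, and $(\widetilde N_\delta(r))^{\mathrm{c}}$, bound each piece using $\mathrm{(ii)}$, $\mathrm{(iii)}$, $\mathrm{(iv)}$ respectively, and dominate the total by $|\xi_0|\ge C(m(r))^{1/4}$. The only (minor) difference is in the tail: the paper simply bounds $\sum_{n\notin\widetilde N_\delta(r)}e^{-\delta n/2}$ by the full geometric series $\sum_{n\ge 0}e^{-\delta n/2}\le 3/\delta = 3m^\eta(r)$ and then relies on $\eta\le\tfrac14$ to compare with $C(m(r))^{1/4}$, whereas you keep the truncation $n\ge\sqrt{m(r)}$ and observe that the extra factor $e^{-\delta\sqrt{m(r)}/2}=e^{-m(r)^{1/2-\eta}/2}$ makes the tail go to $0$; this is a slight sharpening but not a different route.
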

\begin{proof}
Recall that $\delta=m^{-\eta}\left(r\right)$. To see that $f(z)$
has no zeros inside $\disk r$ we note that 
\begin{equation}
|f(z)|\ge|\xi_{0}|-\sum_{n=1}^{\infty}|\xi_{n}|a_{n}r^{n}.\label{eq:f_low_bnd}
\end{equation}
First, we estimate the sum over the indices in $N(r)\backslash\left\{ 0\right\} $:
\[
\sum_{n\in N\left(r\right)\backslash\left\{ 0\right\} }|\xi_{n}|a_{n}r^{n}\le\sum_{n\in N\left(r\right)}\frac{1}{\sqrt{m\left(r\right)}}\le C_{1},
\]
by (\ref{eq:m(r)_low_bnd}). Second, we estimate the sum over the
indices in $\widetilde{N}_{\delta}\left(r\right)\backslash N\left(r\right)$.
Notice that here $b_{n}\left(r\right)\le0$ and $a_{n}r^{n}=e^{\log\left(a_{n}r^{n}\right)}=e^{nb_{n}}$,
so
\[
\sum_{n\in\widetilde{N}_{\delta}\left(r\right)\backslash N\left(r\right)}|\xi_{n}|a_{n}r^{n}=\sum_{n\in\widetilde{N}_{\delta}\left(r\right)\backslash N\left(r\right)}|\xi_{n}|e^{nb_{n}}\le\sum_{n\in\widetilde{N}_{\delta}\left(r\right)\backslash N\left(r\right)}\frac{1}{\sqrt{m\left(r\right)}}\le C_{2}.
\]
Now the rest of the tail is bounded by
\begin{eqnarray*}
\sum_{n\in\left(\widetilde{N}_{\delta}\left(r\right)\right)^{\mathrm{c}}}|\xi_{n}|a_{n}r^{n} & = & \sum_{n\in\left(\widetilde{N}_{\delta}\left(r\right)\right)^{\mathrm{c}}}|\xi_{n}|e^{nb_{n}}\le\sum_{n\in\left(\widetilde{N}_{\delta}\left(r\right)\right)^{\mathrm{c}}}e^{\frac{\delta n}{2}}\cdot e^{-\delta n}\\
 & \le & \sum_{n\ge0}\exp\left(-\frac{\delta n}{2}\right)\le\frac{3}{\delta}\le3\cdot\left(m\left(r\right)\right)^{\eta}.
\end{eqnarray*}
Here we used the fact that $b_{n}\left(r\right)\le-\delta$. Hence,
(\ref{eq:f_low_bnd}) yields
\[
|f(z)|>C\left(m\left(r\right)\right)^{1/4}-C_{1}-C_{2}-3\left(m\left(r\right)\right)^{\eta}>0,
\]
provided that $C>C_{1}+C_{2}+3$.\end{proof}
\begin{lem}
\label{lem:Prob-of-Low-Bnd-Event}The probability of the event $\Omega_{r}$
is bounded from below as follows:\textup{
\[
\log\pr{\Omega_{r}}\ge-S(r)-C\cdot\sqrt{m\left(r\right)}\log m(r).
\]
}\end{lem}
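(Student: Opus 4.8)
The goal is to bound $\log\pr{\Omega_r}$ from below, where $\Omega_r$ is the intersection of the four independent families of events $\mbox{(i)}$, $\mbox{(ii)}$, $\mbox{(iii)}$, $\mbox{(iv)}$. Since the $\xi_n$ are independent, $\pr{\Omega_r}$ factors as a product over $n$, and $\log\pr{\Omega_r}$ is a sum of four contributions, one from each family. The plan is to estimate each contribution separately using the elementary Gaussian bounds \eqref{eq:Gaus_prob_large} and \eqref{eq:Gaus_prob_small}, and then collect the errors.

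First I would handle $\mbox{(i)}$: by \eqref{eq:Gaus_prob_large}, $\pr{|\xi_0|\ge C(m(r))^{1/4}}=\exp(-C^2\sqrt{m(r)})$, contributing $-C^2\sqrt{m(r)}$ to the log, which is absorbed into the error term $C\sqrt{m(r)}\log m(r)$. Next, for $\mbox{(ii)}_n$ with $n\in N(r)\setminus\{0\}$: since $a_nr^n\ge1$ on $N(r)$, the threshold $(a_nr^n)^{-1}/\sqrt{m(r)}\le1$, so \eqref{eq:Gaus_prob_small} gives $\pr{\mbox{(ii)}_n}\ge\tfrac12\cdot(a_nr^n)^{-2}/m(r)$. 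Taking logs and summing over $n\in N(r)$ produces $-2\sum_{n\in N(r)}\log(a_nr^n)-n(r)\log(2m(r))=-S(r)-n(r)\log(2m(r))$. By Lemma \ref{lem:S(r)_low_bnd}, $n(r)\le C\sqrt{m(r)}$, so the extra term is $O(\sqrt{m(r)}\log m(r))$ — this is where the main term $-S(r)$ comes from. For $\mbox{(iii)}_n$ with $n\in\widetilde N_\delta(r)\setminus N(r)$: the threshold is $1/\sqrt{m(r)}\le1$, so \eqref{eq:Gaus_prob_small} gives $\pr{\mbox{(iii)}_n}\ge\tfrac{1}{2m(r)}$; summing over at most $\#\widetilde N_\delta(r)\le2\sqrt{m(r)}$ indices yields $-2\sqrt{m(r)}\log(2m(r))$, again within the error term. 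Finally, for $\mbox{(iv)}_n$ with $n\notin\widetilde N_\delta(r)$: by \eqref{eq:Gaus_prob_large}, $\pr{\mbox{(iv)}_n}=\pr{|\xi_n|\le e^{\delta n/2}}=1-\exp(-e^{\delta n})\ge1-e^{-1}$ for $n\ge1/\delta$, but more usefully $\log\pr{\mbox{(iv)}_n}\ge-2\exp(-e^{\delta n})\ge-2e^{-\delta n}$ (using $1-x\ge e^{-2x}$ for small $x$, and $e^{-e^{\delta n}}\le e^{-\delta n}$). Summing over $n$ gives $\sum_n 2e^{-\delta n}\le C/\delta\le Cm(r)^\eta\le C\sqrt{m(r)}$, so this family contributes only a negligible error.

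Adding the four contributions gives $\log\pr{\Omega_r}\ge-S(r)-C\sqrt{m(r)}\log m(r)$, as required. I expect the only delicate points to be bookkeeping ones: making sure the thresholds in $\mbox{(ii)}_n$ and $\mbox{(iii)}_n$ are genuinely $\le1$ so that \eqref{eq:Gaus_prob_small} applies (this uses $a_nr^n\ge1$ on $N(r)$ and the assumption that $r$ is large, and for $\mbox{(ii)}_n$ one should double-check the case $n\in N(r)$ with $a_nr^n$ close to $1$, where the bound is cleanest), and controlling the number of indices in each range via Lemma \ref{lem:S(r)_low_bnd} and the bound $\#\widetilde N_\delta(r)\le2\sqrt{m(r)}$ established just before the lemma. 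The step requiring the most care is the $\mbox{(ii)}$ sum, since that is where $S(r)$ itself is produced and where one must verify that the per-index correction $\log(2m(r))$ times $n(r)$ stays within the stated error; everything else is a routine geometric-series or counting estimate. No single step is a serious obstacle — the proof is essentially a careful accounting of independent Gaussian tail probabilities.
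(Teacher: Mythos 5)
Your proposal is correct and follows essentially the same route as the paper: bound each of the four independent families with the elementary Gaussian estimates, extract the main term $-S(r)$ from the events $\mbox{(ii)}_n$ with the per-index correction $n(r)\log(2m(r))=\O(\sqrt{m(r)}\log m(r))$, and absorb the remaining families into the error. Your treatment of $\mbox{(iv)}$ (summing $\log\pr{\mbox{(iv)}_n}\ge-2e^{-\delta n}$ instead of the paper's union bound giving $\pr{\mbox{(iv)}}\ge\tfrac14$) is a negligible variation that still lands inside the stated error term.
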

\begin{proof}
By the properties of $\xi_{n}$ (see Section \ref{sub:Gauss-props}),
\[
\pr{\mbox{{\rm \mbox{(i)}}}}=\exp\left(-C^{2}\cdot\sqrt{m\left(r\right)}\right)
\]
and
\[
\pr{\mbox{{\rm \mbox{(ii)}}}_{n}}\ge\frac{(a_{n}r^{n})^{-2}}{2m\left(r\right)}.
\]
Therefore,
\begin{eqnarray*}
\pr{\mbox{(ii)}} & \ge & \prod_{n\in N\left(r\right)}\frac{(a_{n}r^{n})^{-2}}{2m\left(r\right)}=\left(\prod_{n\in N\left(r\right)}e^{-2\cdot nb_{n}}\right)\cdot\exp\left(-n\left(r\right)\log\left(2m\left(r\right)\right)\right)\\
 & \ge & \exp\left(-2\cdot\sum_{n\in N\left(r\right)}nb_{n}\right)\cdot\exp\left(-Cn\left(r\right)\log m\left(r\right)\right)\\
 & \ge & \exp\left(-S\left(r\right)-C\sqrt{m\left(r\right)}\log m\left(r\right)\right).
\end{eqnarray*}
Similarly, we have
\[
\pr{\mbox{{\rm \mbox{(iii)}}}_{n}}\ge\frac{1}{2m\left(r\right)}
\]
and so (by (\ref{eq:m(r)_normal}) and (\ref{eq:m(r)_low_bnd}))
\[
\pr{\mbox{{\rm \mbox{(iii)}}}}\ge\exp\left(-C\sqrt{m\left(r\right)}\log m\left(r\right)\right).
\]
Finally,

\[
\pr{\mbox{{\rm \mbox{(iv)}}}_{n}^{c}}=\exp\left(-e^{\delta n}\right).
\]
Let $\left\{ A_{n}\right\} $ be some positive sequence. Using the
inequality
\[
\pr{\forall n\,:\,|\xi_{n}|\le A_{n}}=1-\pr{\exists n\,:\,|\xi_{n}|>A_{n}}\ge1-\sum\pr{|\xi_{n}|>A_{n}},
\]
we obtain
\begin{eqnarray*}
1-\pr{\mbox{{\rm \mbox{(iv)}}}} & \le & \sum_{n\in\left(\widetilde{N}_{\delta}\left(r\right)\right)^{c}}\exp\left(-e^{\delta n}\right)\\
 & \le & \sum_{n\ge\sqrt{m\left(r\right)}}\exp\left(-e^{\delta n}\right)\\
 & \le & \sum_{n\ge\sqrt{m\left(r\right)}}\exp\left(-\delta n\right)=\frac{\exp\left(-\delta\sqrt{m\left(r\right)}\right)}{1-e^{-\delta}}.
\end{eqnarray*}
Since $\sqrt{m\left(r\right)}\delta\ge\frac{1}{\delta}$, we get
\[
\frac{\exp\left(-\delta\sqrt{m\left(r\right)}\right)}{1-e^{-\delta}}\le\frac{\exp\left(-\frac{1}{\delta}\right)}{1-e^{-\delta}}\le\frac{3}{4},
\]
and so
\[
\pr{\mbox{{\rm \mbox{(iv)}}}}\ge\frac{1}{4}.
\]
Since the events ${\rm \mbox{(i)}}-{\rm \mbox{(iv)}}$ are independent,
we have
\begin{eqnarray*}
\pr{\Omega_{r}} & = & \pr{\mbox{{\rm \mbox{(i)}}}}\cdot\pr{\mbox{{\rm \mbox{(ii)}}}}\cdot\pr{\mbox{{\rm \mbox{(iii)}}}}\cdot\pr{\mbox{{\rm \mbox{(iv)}}}}\\
 & \ge & \exp\left(-S\left(r\right)-C\sqrt{m\left(r\right)}\log m\left(r\right)\right).
\end{eqnarray*}

\end{proof}
Proposition \ref{prp:p_H_upp_bnd} now follows from the previous lemmas.

\section{Proof of Theorem \ref{thm:hole_prob_GAFs} - Bounds for Gaussian
Entire Functions \label{sec:bnds_for_GAFs}}

In this section we get some bounds for the moduli and the logarithmic
derivatives of Gaussian entire functions, which hold with high probability
(We use the term `high probability' for events which occur with probability
greater than $1-\exp\left(-2\cdot S\left(r\right)\right)$). These
results will be used in the next section in the proof of the lower
bound.

\subsection{Bounds on the modulus of Gaussian entire functions\index{Gaussian entire functions!Bounds on the modulus}}

We first bound the probability of the events where $M\left(r\right)$
is relatively large or small.
\begin{lem}
\textup{\label{lem:upp_bnd_M(r)}}We have\textup{ 
\[
\pr{M\left(r\right)\ge e^{3S\left(r\right)}}\le C\cdot\exp\left(-\exp\left(S\left(r\right)\right)\right)\le e^{-S^{2}\left(r\right)}.
\]
}\end{lem}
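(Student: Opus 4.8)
The plan is to show that, with overwhelming probability, none of the $|\xi_n|$ is too large, and that on that event the trivial bound $M(r)\le\sum_{n\ge0}|\xi_n|\,a_nr^n$ — valid by the maximum modulus principle, since $f$ is a.s.\ entire and the coefficients may be taken $\ge0$ — already forces $M(r)<e^{3S(r)}$. Thus the task reduces to choosing the right thresholds on the $|\xi_n|$ and then two easy estimates: one for the series on the good event, one for the probability of the bad event.

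Concretely, I would split $\bbn$ into $\Naltd r$, which by (\ref{eq:m(r)_normal}) and (\ref{eq:m(r)_low_bnd}) satisfies $\#\Naltd r\le2\sqrt{m(r)}$, and its complement, and let $\Omega'_r$ be the event on which $|\xi_n|\le e^{S(r)}$ for every $n\in\Naltd r$ and $|\xi_n|\le\exp\bigl(S(r)+\tfrac12\delta n\bigr)$ for every $n\notin\Naltd r$. On $\Omega'_r$ I would bound $\sum_n|\xi_n|a_nr^n$ in three pieces. Over $n\in N(r)$: here $a_nr^n=e^{nb_n(r)}$ with $nb_n(r)\le\tfrac12 S(r)$, since $S(r)=2\sum_{n\in N(r)}nb_n(r)$ is a sum of non-negative terms, and $\#N(r)=n(r)\le\sqrt{m(r)}$ by (\ref{eq:m(r)_low_bnd}); this piece is $\le e^{S(r)}\cdot n(r)e^{S(r)/2}$. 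Over $n\in\Naltd r\setminus N(r)$: here $b_n(r)<0$, hence $a_nr^n<1$, so the piece is $\le e^{S(r)}\#\Naltd r$. Over $n\notin\Naltd r$: here $b_n(r)\le-\delta$, hence $a_nr^n\le e^{-\delta n}$, so the piece is $\le e^{S(r)}\sum_n e^{-\delta n/2}\le Cm(r)^{\eta}e^{S(r)}$. Adding up, $M(r)\le C\sqrt{m(r)}\,e^{3S(r)/2}$, which lies below $e^{3S(r)}$ once $r$ is large, because Lemma~\ref{lem:S(r)_low_bnd} bounds $m(r)$ by a fixed power of $S(r)$.

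For the probability, (\ref{eq:Gaus_prob_large}) and a union bound give
$\pr{(\Omega'_r)^{\mathrm c}}\le\#\Naltd r\cdot e^{-e^{2S(r)}}+\sum_{n\ge0}\exp\bigl(-\exp(2S(r)+\delta n)\bigr)\le C\sqrt{m(r)}\,e^{-e^{2S(r)}}$; since $e^{2S(r)}\ge 2e^{S(r)}$ and $\sqrt{m(r)}$ is polynomially bounded in $S(r)$, this is $\le C\exp(-\exp(S(r)))$, and finally $\exp(-\exp(S(r)))\le\exp(-S^2(r))$ because $e^{S(r)}\ge S^2(r)$ for $r$ large. I expect the only genuinely delicate point to be the calibration of the $n$-dependent threshold for $|\xi_n|$ outside $\Naltd r$: a fixed-scale bound like $e^{\delta n/2}$ would yield a deficiency probability of only $\exp(-e^{\delta\sqrt{m(r)}})$, and $\delta\sqrt{m(r)}=m(r)^{1/2-\eta}$ is, by Lemma~\ref{lem:S(r)_low_bnd}, far smaller than $S(r)$ — too weak; inflating the threshold by the factor $e^{S(r)}$ repairs the probability estimate while costing only a factor $e^{O(S(r))}$ in the value of the series, which is absorbed into the exponent $3S(r)$. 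Everything else is routine bookkeeping with the already-established control of $m(r)$, $n(r)$, and $S(r)$.
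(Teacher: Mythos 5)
Your proof is correct and follows the same basic skeleton as the paper's (split indices into a finite ``head'' and an infinite ``tail'', impose a size threshold on each $|\xi_n|$, bound the series on the good event and the probability of the bad event by a union bound). The one genuine parametric difference is where the head ends and how the thresholds are set: the paper enlarges the head to $\widetilde N(r)=N_\delta(r)\cup\{n<S^2(r)\}$ (so the tail begins at $n\ge S^2(r)$, where $\delta n\ge \delta S^2(r)\ge S(r)$ and the plain threshold $e^{\delta n/2}$ already yields deficiency probability $\le\exp(-\exp(S(r)))$), and uses the $n$-dependent threshold $(a_nr^n)^{-1}e^{2S(r)}$ on the head so that $|\xi_n|a_nr^n\le e^{2S(r)}$ termwise; you instead keep the smaller head $\widetilde N_\delta(r)$ from the previous lemma, impose the uniform threshold $e^{S(r)}$ there, and inflate the tail threshold to $e^{S(r)+\delta n/2}$ precisely to compensate for the tail starting at $n\sim\sqrt{m(r)}$, where $\delta n\sim m(r)^{1/2-\eta}$ is otherwise too small. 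Your diagnosis of that calibration issue is exactly the reason the paper redefines the cut-off as $S^2(r)$; both fixes are equivalent in effect, and the remaining bookkeeping (bounding $a_nr^n\le e^{S(r)/2}$ on $N(r)$ via $2nb_n(r)\le S(r)$, $a_nr^n\le1$ on $N_\delta(r)\setminus N(r)$, $a_nr^n\le e^{-\delta n}$ on the tail, and using Lemma~\ref{lem:S(r)_low_bnd} to absorb powers of $m(r)$) matches the paper's.
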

\begin{proof}
We set $\Nalt r=N_{\delta}\left(r\right)\cup\left\{ n<S^{2}\left(r\right)\right\} $.
Notice that, by Lemma \ref{lem:S(r)_low_bnd}, we can assume that
$\#\Nalt r\le2S^{2}\left(r\right)$ if $r$ is large enough. The proof
is similar to that of Proposition \ref{prp:p_H_upp_bnd}. We define
the event $\Omega_{r}$ as the intersection of the events $\mbox{{\rm \mbox{(i)}}}$
and $\mbox{(ii)}$, where
\[
\begin{array}{ll}
\mbox{{\rm \mbox{(i)}}}: & {\displaystyle \bigcap_{n\in\Nalt r}}\mbox{{\rm \mbox{(i)}}}_{n},\\
\mbox{{\rm \mbox{(ii)}}}: & {\displaystyle \bigcap_{n\in\left(\Nalt r\right)^{\mathrm{c}}}}\mbox{{\rm \mbox{(ii)}}}_{n},
\end{array}
\]
and
\[
\begin{array}{ll}
\mbox{{\rm \mbox{(i)}}}_{n}: & |\xi_{n}|\le\left(a_{n}r^{n}\right)^{-1}e^{2S\left(r\right)},\\
\mbox{{\rm \mbox{(ii)}}}_{n}: & |\xi_{n}|\le\exp\left(\frac{1}{2}\delta n\right).
\end{array}
\]
We have the following estimate for $M\left(r\right)$:
\begin{eqnarray*}
|f(z)| & \le & \sum_{n\in\Nalt r}|\xi_{n}|a_{n}r^{n}+\sum_{n\in\left(\widetilde{N}\left(r\right)\right)^{\mathrm{c}}}|\xi_{n}|a_{n}r^{n}\\
 & \le & \#\Nalt r\cdot e^{2S\left(r\right)}+\sum_{n\ge S^{2}\left(r\right)}e^{\frac{\delta n}{2}}\cdot e^{-\delta n}\\
 & \le & 2S^{2}\left(r\right)\cdot e^{2S\left(r\right)}+\frac{C}{\delta}\cdot e^{-\frac{\delta S^{2}\left(r\right)}{2}}\\
 & \le & e^{3S\left(r\right)},
\end{eqnarray*}
provided that $r$ is sufficiently large. Here we used $\delta S^{2}\left(r\right)\ge c\delta\left(\delta m\left(r\right)\right)^{2}\ge\frac{c}{\delta}.$

Now we estimate the probability of the complement of $\Omega_{r}$.
We have:
\begin{eqnarray*}
\pr{|\xi_{n}|\ge\frac{e^{2S\left(r\right)}}{a_{n}r^{n}}} & = & \exp\left(-\frac{e^{4S\left(r\right)}}{\left(a_{n}r^{n}\right)^{2}}\right)\le\exp\left(-e^{2S\left(r\right)}\right),\\
\pr{|\xi_{n}|\ge e^{\frac{\delta n}{2}}} & = & \exp\left(-\exp\left(\delta n\right)\right).
\end{eqnarray*}
By the union bound,
\begin{eqnarray*}
\pr{\mbox{{\rm \mbox{(i)}}}^{\mathrm{c}}} & \le & CS^{2}\left(r\right)\cdot\exp\left(-\exp\left(2S\left(r\right)\right)\right),
\end{eqnarray*}
and
\[
\pr{\mbox{{\rm \mbox{(ii)}}}^{\mathrm{c}}}\le\sum_{n\ge S^{2}\left(r\right)}\exp\left(-\exp\left(\delta n\right)\right)\le C\cdot\exp\left(-\exp\left(S\left(r\right)\right)\right),
\]
since the first term in the sum above dominates the others. Here we
used
\[
\delta S^{2}\left(r\right)=S^{2}\left(r\right)/m^{\eta}\left(r\right)\ge S\left(r\right),
\]
for $r$ large enough. So overall we have
\[
\pr{M\left(r\right)\ge e^{3S\left(r\right)}}\le C\cdot\exp\left(-\exp\left(S\left(r\right)\right)\right)\le\exp\left(-S^{2}\left(r\right)\right).
\]

\end{proof}
In the other direction we have the following
\begin{lem}
\label{lem:low_bnd_M(r)}Let $\rho>0$ be sufficiently large. Then
\[
\pr{M\left(\rho\right)\le\exp\left(-S\left(\rho\right)\right)}\le\exp\left(-S\left(\rho\right)\cdot n\left(\rho\right)\right).
\]
\end{lem}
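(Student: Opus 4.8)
The plan is to convert a small value of the maximum modulus $M(\rho)$ into simultaneous smallness of the Gaussian coefficients $\xi_n$ for all $n\in N(\rho)$, and then to bound the probability of that joint event using independence and the Gaussian small-ball estimate (\ref{eq:Gaus_prob_small}). First I would use Cauchy's inequality on the circle $\cir\rho\subset\disk\rho$: for every $n$ with $a_n>0$ one has $|\xi_n|\,a_n\rho^n\le M(\rho)$. Thus on the event $\{M(\rho)\le\exp(-S(\rho))\}$ we get $|\xi_n|\le\exp(-S(\rho))\,(a_n\rho^n)^{-1}$ for all such $n$. Restricting to $n\in N(\rho)$, where $a_n\rho^n=\exp(nb_n(\rho))\ge1$ by the very definition of $N(\rho)$, this yields
\[
\bigl\{M(\rho)\le\exp(-S(\rho))\bigr\}\ \subseteq\ \bigcap_{n\in N(\rho)}\Bigl\{\,|\xi_n|\le\exp\bigl(-S(\rho)-nb_n(\rho)\bigr)\Bigr\},
\]
and each threshold $\exp(-S(\rho)-nb_n(\rho))$ lies in $(0,1]$.

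Next I would use that the $\xi_n$ are independent, so the probability of the intersection factors, and apply (\ref{eq:Gaus_prob_small}) to each factor:
\[
\pr{M(\rho)\le\exp(-S(\rho))}\ \le\ \prod_{n\in N(\rho)}\exp\bigl(-2S(\rho)-2nb_n(\rho)\bigr)\ =\ \exp\Bigl(-2\,n(\rho)\,S(\rho)-2\sum_{n\in N(\rho)}nb_n(\rho)\Bigr).
\]
Finally I would invoke the identity $S(\rho)=2\sum_{n\in N(\rho)}nb_n(\rho)\ge0$ to rewrite the exponent as $-2\,n(\rho)\,S(\rho)-S(\rho)$, which is $\le-\,n(\rho)\,S(\rho)$; this is the claimed bound (and in fact a little more).

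I do not expect a genuine obstacle here: the estimate is essentially immediate once Cauchy's inequality is in place. The only points that need a little care are the indices with $a_n=0$, which never lie in $N(\rho)$ and are simply discarded so that the Cauchy step is harmless; the fact that (\ref{eq:Gaus_prob_small}) requires its argument to be at most $1$, which is precisely why one keeps only the indices in $N(\rho)$ (there $a_n\rho^n\ge1$); and the hypothesis ``$\rho$ sufficiently large,'' which is used only cosmetically, e.g. via Lemma \ref{lem:S(r)_low_bnd} to ensure $S(\rho)>0$ and $N(\rho)\neq\varnothing$, the statement being vacuous otherwise.
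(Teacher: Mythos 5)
Your argument is correct and is essentially the paper's own proof: Cauchy's estimate gives $|\xi_n|a_n\rho^n\le M(\rho)$, then for $n\in N(\rho)$ one applies the Gaussian small-ball bound (\ref{eq:Gaus_prob_small}) and multiplies over the independent coordinates. The only (harmless) difference is that you keep the factors $(a_n\rho^n)^{-2}$ in the small-ball estimate, yielding the marginally stronger exponent $-2n(\rho)S(\rho)-S(\rho)$, whereas the paper discards them at once via $a_n\rho^n\ge1$ and gets $-2n(\rho)S(\rho)$; both dominate the stated bound.
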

\begin{proof}
By Cauchy's estimate,
\[
|\xi_{n}|a_{n}\rho^{n}\le M\left(\rho\right)\le e^{-S\left(\rho\right)}.
\]
For $n\in N(\rho)$ we have
\[
\pr{|\xi_{n}|\le\left(a_{n}\rho^{n}\right)^{-1}e^{-S\left(\rho\right)}}\le e^{-2S\left(\rho\right)},
\]
and so we get
\[
\pr{M\left(\rho\right)\le\exp\left(-S\left(\rho\right)\right)}\le\prod_{n\in N(\rho)}e^{-2S\left(\rho\right)}=\exp\left(-2\cdot S(\rho)\cdot n\left(\rho\right)\right).
\]

\end{proof}
Notice that we do not assume that $\rho\notin\excpset$.

\subsection{Bounds for the logarithmic derivative\index{Gaussian entire functions!Bounds for the logarithmic derivative}}

We denote by $m$ the normalized angular measure on $\cir r$. In
this section we assume that $f\left(z\right)\ne0$ inside $\disk r$,
and therefore $\log\left|f\right|$ is harmonic there. Under this
condition we have the following bound for the average value of $\left|\log\left|f\right|\right|$:
\begin{lem}
\label{lem:approx_log_int}Let $0<\rho<r$. Outside a set of probability
at most
\[
2\cdot\exp\left(-S(\rho)\cdot n\left(\rho\right)\right),
\]
we have
\[
\intsy_{\cir r}\left|\log|f|\right|\d m\le C\left(1-\frac{\rho}{r}\right)^{-2}\cdot S\left(r\right).
\]
\end{lem}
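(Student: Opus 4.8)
The statement to prove is Lemma~\ref{lem:approx_log_int}: under the hole assumption $f\neq 0$ in $\disk r$, the quantity $\intsy_{\cir r}\bigl|\log|f|\bigr|\d m$ is at most $C(1-\rho/r)^{-2}S(r)$ outside an event of probability at most $2\exp(-S(\rho)n(\rho))$. The natural route is to split $\log|f|$ into its positive and negative parts and bound each separately using a harmonicity/mean-value argument together with the crude modulus bounds of the previous subsection. First I would control $\int_{\cir r}\log^{+}|f|\d m$: since $\log^+|f|\le\log^+M(r)$, Lemma~\ref{lem:upp_bnd_M(r)} gives $\log M(r)\le 3S(r)$ outside an event of probability at most $\exp(-S^2(r))$, which is far smaller than the allowed $2\exp(-S(\rho)n(\rho))$ (here one uses Lemma~\ref{lem:S(r)_low_bnd} to compare $S^2(r)$ with $S(\rho)n(\rho)$, keeping in mind $S$ is increasing and $\rho<r$; if $\rho$ is close to $r$ this is immediate, and in general $S(\rho)n(\rho)\lesssim S(r)^2$ up to the $\eta$ losses). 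So with high probability $\int_{\cir r}\log^{+}|f|\d m\le 3S(r)$, which is already subsumed in the claimed bound since $(1-\rho/r)^{-2}\ge 1$.

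\textbf{The harmonic part.} The genuine content is bounding the negative part $\int_{\cir r}\log^{-}|f|\d m$. Since $f$ has no zeros in $\disk r$, $u=\log|f|$ is harmonic in a neighborhood of $\disk r$. The standard trick (this is exactly the Borel--Carath\'eodory / Harnack-type device used throughout entire-function theory) is: a harmonic function bounded above on a disk is controlled below in terms of its value at the center and its upper bound on a slightly larger circle. Concretely, fix the circle $\cir\rho$ with $\rho<r$ and apply the Poisson/Harnack representation of the harmonic function $U:=3S(r)-u\ge 0$ on $\disk r$ (nonnegative by the first step, with high probability). Harnack's inequality on $\cir\rho$ yields $U(z)\le \frac{r+\rho}{r-\rho}U(0)$ for $|z|=\rho$, hence $u(z)\ge 3S(r)-\frac{r+\rho}{r-\rho}(3S(r)-u(0))$ for $|z|=\rho$; then iterate once more (or use the standard two-constants estimate) to transfer this from $\cir\rho$ to $\cir r$ — this is where the $(1-\rho/r)^{-2}$ rather than $(1-\rho/r)^{-1}$ appears, since one pushes the lower bound out a second time. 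The upshot is a pointwise lower bound $u(z)\ge -C(1-\rho/r)^{-2}\bigl(S(r)+|u(0)|\bigr)$ on $\cir r$, and then $\int_{\cir r}\log^{-}|f|\d m\le C(1-\rho/r)^{-2}(S(r)+|\log|f(0)||)$. Since we have normalized $a_0=1$, $\log|f(0)|=\log|\xi_0|$, and $\bigl|\log|\xi_0|\bigr|$ is $O(S(r))$ except on an event of probability $\le \exp(-e^{cS(r)})+S(r)^2/2\le \exp(-S(\rho)n(\rho))$ by \eqref{eq:Gaus_prob_large}--\eqref{eq:Gaus_prob_small}. Combining the two parts and the two small exceptional events (whose probabilities sum to at most $2\exp(-S(\rho)n(\rho))$) finishes the proof.

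\textbf{Main obstacle.} The delicate point is not the harmonic-function manipulation, which is routine, but bookkeeping the probabilities so that every bad event is genuinely smaller than $2\exp(-S(\rho)n(\rho))$. Lemma~\ref{lem:upp_bnd_M(r)} is stated for $r\notin\excpset$ with exceptional probability $\exp(-S^2(r))$; I need to confirm $S^2(r)\ge S(\rho)n(\rho)$ in the regime $\rho<r$. Using $n(\rho)\le n(r)$ and Lemma~\ref{lem:S(r)_low_bnd} (which gives $n(r)^{2-2\eta}\le 8S(r)$, i.e. $n(r)\lesssim S(r)^{1/(2-2\eta)}$) together with $S(\rho)\le S(r)$, one gets $S(\rho)n(\rho)\lesssim S(r)^{1+1/(2-2\eta)}<S(r)^2$ for small $\eta$, so the bound is comfortable. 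The only real care is that if $\rho$ and $r$ are very close the factor $(1-\rho/r)^{-2}$ blows up but that is fine since it is allowed on the right-hand side; and if $\rho$ is bounded away from $r$ then everything is even easier. So the proof is essentially: (1) control $\log^+|f|$ by $M(r)$ via Lemma~\ref{lem:upp_bnd_M(r)}; (2) control $|\log|f(0)||=|\log|\xi_0||$ via the Gaussian tail bounds; (3) on the complement of these two small events, run the Harnack/two-constants estimate for the harmonic function $\log|f|$ to get the claimed $L^1(\cir r)$ bound.
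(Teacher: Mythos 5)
Your step bounding $\intsy_{\cir r}\log^{+}|f|\d m$ by $3S(r)$ via Lemma~\ref{lem:upp_bnd_M(r)} agrees with the paper, but the core of your argument --- the pointwise lower bound $u(z)\ge -C\left(1-\rho/r\right)^{-2}\left(S(r)+|u(0)|\right)$ for $u=\log|f|$ \emph{on the circle} $\cir r$, obtained by ``iterating Harnack once more'' from $\cir{\rho}$ out to $\cir r$ --- is not a valid step. Harnack, Borel--Carath\'eodory and two-constants estimates for the nonnegative harmonic function $3S(r)-u$ are interior estimates on $\disk r$: their constants blow up as the evaluation point approaches $\cir r$, and there is no second iteration that reaches the boundary circle, because the hole event gives harmonicity only in $\disk r$ plus a neighborhood of uncontrolled size ($f$ may vanish arbitrarily close to $\cir r$ from outside). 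Indeed no pointwise bound of the kind you claim can hold on $\cir r$: multiplying a nonvanishing function by $\left(z-r(1+\epsilon)\right)/(2r)$ keeps $M(r)$ and $|f(0)|$ of the same order while making $|f|$ arbitrarily small at the point $z=r$ as $\epsilon\to0$. Only the \emph{integral} over $\cir r$ of $\log^{-}|f|$ is controlled, and that is exactly how the paper proceeds: by Lemma~\ref{lem:low_bnd_M(r)}, outside an event of probability $\exp\left(-S(\rho)n(\rho)\right)$ there is a point $a\in\cir{\rho}$ with $\log|f(a)|\ge-S(\rho)$; writing $\log|f(a)|$ as the Poisson integral of $\log|f|$ over $\cir r$ and using the two-sided kernel bounds $\frac{r-\rho}{2r}\le P(z,a)\le\frac{2r}{r-\rho}$ converts this into the bound on $\intsy_{\cir r}\log^{-}|f|\d m$; the factor $\left(1-\rho/r\right)^{-2}$ comes from using the kernel bound twice (once for $\log^{-}$, once for $\log^{+}$), not from a second Harnack step.

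There is a second, related gap in the probability bookkeeping: anchoring at the center with $\log|f(0)|=\log|\xi_{0}|$ cannot deliver the stated exceptional probability $2\exp\left(-S(\rho)n(\rho)\right)$. The small-ball estimate \eqref{eq:Gaus_prob_small} gives $\pr{|\xi_{0}|\le e^{-T}}\le e^{-2T}$, so keeping $\left|\log|f(0)|\right|=\O\left(S(r)\right)$ only discards an event of probability about $e^{-2S(r)}$ (your ``$S(r)^{2}/2$'' is not a probability bound), and in the relevant regime --- $\rho$ close to $r$, so $S(\rho)\approx S(r)$ while $n(\rho)$ is large --- this is far bigger than $\exp\left(-S(\rho)n(\rho)\right)$; lowering the threshold to recover that probability would force $\left|\log|f(0)|\right|$ up to order $S(\rho)n(\rho)$ and destroy the claimed bound $C\left(1-\rho/r\right)^{-2}S(r)$. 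The factor $n(\rho)$ in the exponent comes precisely from using all the coefficients indexed by $N(\rho)$ simultaneously, via Cauchy's estimate on $\cir{\rho}$ in Lemma~\ref{lem:low_bnd_M(r)}; this is why the paper anchors the Poisson representation at a point of $\cir{\rho}$ where $|f|\ge e^{-S(\rho)}$ rather than at the origin.
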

\begin{proof}
Denote by $P(z,a)$ the Poisson kernel for the disk $\disk r,$ $|z|=r$,
$|a|=\rho$. By Lemma \ref{lem:low_bnd_M(r)}, we can assume that
there is a point $a\in\cir{\rho}$ such that $\log|f(a)|\ge-S\left(\rho\right)$
(discarding an event of probability $\le\exp\left(-S\left(\rho\right)\cdot n\left(\rho\right)\right)$).
Then we have
\[
-S\left(\rho\right)\le\intsy_{\cir r}P(z,a)\log|f(z)|\d{m\left(z\right)},
\]
and hence
\[
\intsy_{\cir r}P(z,a)\log^{-}|f(z)|\d{m\left(z\right)}\le\intsy_{\cir r}P(z,a)\log^{+}|f(z)|\d{m\left(z\right)}+S\left(\rho\right).
\]
For $|z|=r$ and $|a|=\rho$ we have,
\[
\frac{r-\rho}{2r}\le\frac{r-\rho}{r+\rho}\le P(z,a)\le\frac{r+\rho}{r-\rho}\le\frac{2r}{r-\rho}.
\]
By Lemma \ref{lem:upp_bnd_M(r)}, outside a set of very small probability
(of the order $e^{-S^{2}\left(r\right)}$), we have $\log M(r)\le3\cdot S(r)$.
Therefore
\[
\intsy_{\cir r}\log^{+}|f|\d{\mu}\le3\cdot S(r).
\]
Further,
\[
\intsy_{\cir r}\log^{-}|f|\d{\mu}\le\frac{2r}{r-\rho}\cdot S\left(\rho\right)+\frac{12r^{2}}{\left(r-\rho\right)^{2}}\cdot S\left(r\right).
\]
Finally, we get 
\begin{equation}
\intsy_{\cir r}\left|\log|f|\right|\d{\mu}\le\frac{Cr^{2}}{\left(r-\rho\right)^{2}}\cdot S(r)=C\left(1-\frac{\rho}{r}\right)^{-2}\cdot S(r)\label{eq:abs_log_int_upper_bnd}
\end{equation}

\end{proof}
Next we provide an upper bound for the (angular) logarithmic derivative
of $\log\left|f\right|$ inside $\disk r$. 
\begin{lem}
\label{lem:upp_bnd_log_deriv}Let $0<\rho<r$. Then
\[
\left|\frac{d\log\left|f\left(\rho e^{i\phi}\right)\right|}{d\phi}\right|\le C\left(1-\frac{\rho}{r}\right)^{-5}\cdot S\left(r\right)
\]
outside a set of probability at most
\[
2\cdot\exp\left(-S(\rho)\cdot n\left(\rho\right)\right).
\]
\end{lem}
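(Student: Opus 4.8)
\textbf{Proof proposal for Lemma~\ref{lem:upp_bnd_log_deriv}.}

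The plan is to bound the angular derivative of $\log|f|$ on the circle $\cir{\rho}$ by a Cauchy-type integral against the boundary values of $\log|f|$ on the slightly larger circle $\cir r$, and then feed in the $L^1$-bound for $\log|f|$ from Lemma~\ref{lem:approx_log_int}. Since we assume $f\neq0$ in $\disk r$, the function $u(z)=\log|f(z)|$ is harmonic there, so it has a harmonic conjugate $v$, and $\log f = u+iv$ is analytic in $\disk r$. The key classical identity is that the angular derivative of $u$ on $\cir\rho$ can be written as a convolution of $u|_{\cir r}$ against the $\phi$-derivative of the Poisson kernel: differentiating the Poisson representation $u(\rho e^{i\phi}) = \int_{\cir r} P_r(\rho e^{i\phi}, r e^{i\psi})\, u(r e^{i\psi})\, dm(\psi)$ in $\phi$ gives
\[
\frac{d}{d\phi}\log|f(\rho e^{i\phi})| = \intsy_{\cir r} \frac{\partial}{\partial\phi}P_r(\rho e^{i\phi}, re^{i\psi})\, \log|f(re^{i\psi})|\, dm(\psi).
\]
The point is that $\bigl|\frac{\partial}{\partial\phi}P_r(\rho e^{i\phi}, re^{i\psi})\bigr|$ is uniformly bounded by a constant times $(1-\rho/r)^{-3}$ (one power from each of the ``near-boundary'' factors in differentiating the standard kernel $P_r(z,w) = \frac{r^2-|z|^2}{|re^{i\psi}-z|^2}$, whose denominator is $\gtrsim (r-\rho)^2$ and whose $\phi$-derivative introduces another factor of size $\lesssim r/(r-\rho)$). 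Hence
\[
\Bigl|\frac{d\log|f(\rho e^{i\phi})|}{d\phi}\Bigr| \le \frac{C}{(1-\rho/r)^{3}}\intsy_{\cir r}\bigl|\log|f(re^{i\psi})|\bigr|\, dm(\psi).
\]

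Next I would apply Lemma~\ref{lem:approx_log_int}: outside an event of probability at most $2\exp(-S(\rho)n(\rho))$ we have $\int_{\cir r}|\log|f||\,dm \le C(1-\rho/r)^{-2} S(r)$. Combining the two displays yields
\[
\Bigl|\frac{d\log|f(\rho e^{i\phi})|}{d\phi}\Bigr| \le C\Bigl(1-\frac{\rho}{r}\Bigr)^{-5} S(r),
\]
which is exactly the claimed bound, with the same exceptional event (the one already present in Lemma~\ref{lem:approx_log_int}), so no new probability loss is incurred. One technical wrinkle: Lemma~\ref{lem:approx_log_int} as stated involves two radii $\rho<r$ with the $L^1$-bound on the outer circle; here I want the derivative estimate at radius $\rho$ and the integral at radius $r$, so I should apply Lemma~\ref{lem:approx_log_int} with, say, an intermediate radius $\rho' \in (\rho, r)$ to control $\int_{\cir{\rho'}}|\log|f||\,dm$, and then represent $u$ on $\cir\rho$ by the Poisson integral over $\cir{\rho'}$. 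Choosing $\rho'$ comparable to $(\rho+r)/2$ only changes the constants and the exponent of $(1-\rho/r)$ by a bounded amount, and one can absorb everything into the stated exponent $5$ (being slightly generous with the power is harmless for the application). Alternatively, keep $\rho' = r$ and simply note the Poisson kernel for $\disk r$ evaluated at $|z|=\rho$ is what appears; this is the cleaner route.

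The main obstacle — really the only nontrivial point — is the uniform bound on $\bigl|\partial_\phi P_r(\rho e^{i\phi}, re^{i\psi})\bigr|$ with the correct power of $(1-\rho/r)^{-1}$. This is a routine but slightly fussy computation: write $z=\rho e^{i\phi}$, $w=re^{i\psi}$, $P_r(z,w) = \Re\frac{w+z}{w-z} = \frac{|w|^2-|z|^2}{|w-z|^2}$, differentiate in $\phi$ using $\partial_\phi z = iz$, and observe $|w-z|\ge r-\rho$ throughout, while $|w+z|\le 2r$ and $|w|^2-|z|^2 \le r^2$. Tracking which factors of $|w-z|^{-1}$ and $r$ appear gives a bound of order $r^3/(r-\rho)^3 = (1-\rho/r)^{-3}\cdot (\text{bounded})$ after factoring out $r$-powers appropriately (recall $r\ge1$ and in the regime of interest $\rho$ is of the same order as $r$). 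Everything else is assembly: harmonicity of $\log|f|$ on $\disk r$ under the no-zeros hypothesis, the Poisson representation, and citing Lemma~\ref{lem:approx_log_int} for the $L^1$ input together with its exceptional set.
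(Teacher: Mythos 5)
Your proposal is correct and matches the paper's proof: both start from the Poisson representation of $\log|f|$ on $\disk r$, differentiate in $\phi$ to obtain a kernel of size $O\!\left((1-\rho/r)^{-3}\right)$, and then invoke Lemma~\ref{lem:approx_log_int} for the $L^{1}$-bound on $\cir r$, yielding the stated $(1-\rho/r)^{-5}S(r)$ with the same exceptional event. The ``technical wrinkle'' you worry about is not actually present---Lemma~\ref{lem:approx_log_int} already places the $L^{1}$-estimate on the outer circle $\cir r$ with an exceptional event measured at the inner radius $\rho$, exactly the configuration you need, so no intermediate radius $\rho'$ is required (as you eventually note).
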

\begin{proof}
We start with Poisson's formula
\[
\log\left|f\left(\rho e^{i\phi}\right)\right|=\intsy_{0}^{2\pi}\frac{r^{2}-\rho^{2}}{\left|re^{i\theta}-\rho e^{i\phi}\right|^{2}}\cdot\log\left|f\left(re^{i\theta}\right)\right|\,\frac{\dd\theta}{2\pi}.
\]
Differentiating under the integral we get
\[
\frac{d\log\left|f\left(\rho e^{i\phi}\right)\right|}{d\phi}=\intsy_{0}^{2\pi}\frac{\rho r\left(r^{2}-\rho^{2}\right)\sin\left(\theta-\phi\right)}{\left|re^{i\theta}-\rho e^{i\phi}\right|^{4}}\cdot\log\left|f\left(re^{i\theta}\right)\right|\,\frac{\dd\theta}{2\pi}.
\]
Taking the absolute value, we obtain
\[
\left|\frac{d\log\left|f\left(\rho e^{i\phi}\right)\right|}{d\phi}\right|\le\frac{C\left(r+\rho\right)}{\left(r-\rho\right)^{3}}\intsy_{0}^{2\pi}\left|\log\left|f\left(re^{i\theta}\right)\right|\right|\,\frac{\dd\theta}{2\pi}.
\]
Using the previous lemma we get the required result.\end{proof}

\section{Proof of Theorem \ref{thm:hole_prob_GAFs} - Lower Bound for $p_{H}(f;r)$
\label{sec:Main-Thm-Lower-Bound}}

The goal of this section is to prove
\begin{prop}
\label{prp:p_H_low_bnd}We have 
\[
p_{H}(f;r)\ge S(r)-Cn\left(r\right)\log S\left(r\right),
\]
with some positive numerical constant $C$.
\end{prop}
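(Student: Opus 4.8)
\emph{Plan of the proof.} The statement is equivalent to the bound $\pr{f(z)\ne0\text{ for }|z|\le r}\le\exp\bigl(-S(r)+Cn(r)\log S(r)\bigr)$, so fix $r\notin\excpset$ large, put $\delta=m^{-\eta}(r)$ and $\rho=re^{-\delta}$ as in Section~\ref{sec:Main-Thm-Prelim}, and write $H_r=\{f\ne0\text{ in }\disk r\}$. The first step is to reduce $H_r$ to a ``good'' event $G$: let $G$ be the intersection of the events supplied by Lemmas~\ref{lem:upp_bnd_M(r)}, \ref{lem:low_bnd_M(r)}, \ref{lem:approx_log_int} and \ref{lem:upp_bnd_log_deriv} (the last three used with a suitable inner radius comparable to $\rho$), on which $M(r)\le e^{3S(r)}$, $M(\rho)\ge e^{-S(\rho)}$, $\intsy_{\cir r}|\log|f||\,\dd m\lesssim\delta^{-2}S(r)$, and the angular logarithmic derivative of $\log|f|$ is $\lesssim\delta^{-5}S(r)$. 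Using $r\notin\excpset$ together with the growth lemma and Lemmas~\ref{lem:H_Growth_Lemma}--\ref{lem:S(r)_growth} to compare the relevant quantities at $\rho$ and at $r$, one checks that $\pr{G^{\mathrm c}}$ is far smaller than the target bound, so it suffices to estimate $\pr{H_r\cap G}$.

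On $H_r$ the function $\log|f|$ is harmonic in $\disk r$, so $f=\xi_0e^{h}$ with $h$ holomorphic in $\disk r$, $h(0)=0$ (recall $a_0=1$, so $f(0)=\xi_0$) and $\Re h=\log|f|-\log|\xi_0|$. The event $G$ controls $\Re h$ on $\cir r$, both its $L^1$-norm (by Lemma~\ref{lem:approx_log_int}) and its supremum (by Lemmas~\ref{lem:upp_bnd_M(r)} and~\ref{lem:low_bnd_M(r)}). Feeding these into a Borel--Carath\'eodory/Schwarz estimate bounds the Taylor coefficients $\widehat h_k$ geometrically, and since $a_n\xi_n=\xi_0\,[w^n]e^{h(w)}$, the standard estimate for the coefficients of the exponential of a function with geometrically decaying coefficients yields, for every $n\in N(r)$,
\[
|\xi_n|\,a_nr^n\le|\xi_0|\,e^{\mathcal E_n},
\]
with an error exponent $\mathcal E_n\ge0$ assembled from the quantities above. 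The main obstacle of the whole argument lies precisely here: one must choose the auxiliary radii (and exploit the regularity $r\notin\excpset$) so that $\sum_{n\in N(r)}\mathcal E_n\le Cn(r)\log S(r)$, after legitimately absorbing a contribution comparable to $\delta\sum_{n\in N(r)}n\asymp m^{1-\eta}(r)\lesssim S(r)$ into the main term, i.e.\ keeping the total loss within the $S(r)^{1/2+\epsym}$ tolerance permitted by Lemma~\ref{lem:S(r)_low_bnd}.

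Granting this, the probability is estimated by conditioning on $\xi_0$. On $H_r\cap G$ each $\xi_n$ with $n\in N(r)$ lies in a disk of radius $|\xi_0|e^{\mathcal E_n}(a_nr^n)^{-1}$, and, the $\xi_n$ being independent standard complex Gaussians,
\[
\pr{H_r\cap G\mid\xi_0}\le\prod_{n\in N(r)}\min\!\Bigl(1,\ \frac{|\xi_0|^2e^{2\mathcal E_n}}{(a_nr^n)^2}\Bigr)\le|\xi_0|^{2n(r)}\exp\!\Bigl(2\!\!\sum_{n\in N(r)}\!\!\mathcal E_n\Bigr)e^{-S(r)},
\]
since $\sum_{n\in N(r)}\log(a_nr^n)=\tfrac12S(r)$. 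Integrating against the density $\pi^{-1}e^{-|\xi_0|^2}$ of $\xi_0$ produces the factor $\Ex|\xi_0|^{2n(r)}=\Gamma(n(r)+1)$, whence
\[
\pr{H_r\cap G}\le\Gamma(n(r)+1)\,\exp\!\Bigl(-S(r)+2\!\!\sum_{n\in N(r)}\!\!\mathcal E_n\Bigr).
\]
By Lemma~\ref{lem:S(r)_low_bnd}, $n(r)\le(8S(r))^{1/(2-2\eta)}\le(8S(r))^{2/3}$, so $\log\Gamma(n(r)+1)\le n(r)\log n(r)\le n(r)\log S(r)$ for $r$ large; combined with the bound on $\sum\mathcal E_n$ this gives $\pr{H_r\cap G}\le\exp(-S(r)+Cn(r)\log S(r))$, and together with the first step this proves Proposition~\ref{prp:p_H_low_bnd}.

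To summarise, the routine parts are the conditional Gaussian computation (which directly produces the $n(r)\log n(r)\asymp n(r)\log S(r)$ correction via $\Gamma(n(r)+1)$) and the reduction to $G$; the delicate part is the passage from ``$f$ has no zeros in $\disk r$'' to the coefficient inequalities $|\xi_n|a_nr^n\le|\xi_0|e^{\mathcal E_n}$ with a total error exponent $\sum\mathcal E_n$ small enough, which is where the bounds of Section~\ref{sec:bnds_for_GAFs} and the choice of radii have to be orchestrated carefully.
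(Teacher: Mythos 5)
Your reduction to the good event $G$ and the conditional computation given $\xi_{0}$ (the product over $n\in N(r)$ and the factor $\Gamma(n(r)+1)$ from integrating $|\xi_{0}|^{2n(r)}$) are fine as far as they go, but the step you yourself flag as the main obstacle is not merely delicate --- it fails. The deterministic inclusion you need, namely that on the hole event intersected with $G$ one has $|\xi_{n}|a_{n}r^{n}\le|\xi_{0}|e^{\mathcal{E}_{n}}$ for all $n\in N(r)$ with $\sum_{n\in N(r)}\mathcal{E}_{n}\le Cn(r)\log S(r)$, is false: the hole event controls only the absence of zeros, not the ratio $M(r)/|f(0)|$, and none of the events of Lemmas \ref{lem:upp_bnd_M(r)}, \ref{lem:approx_log_int}, \ref{lem:upp_bnd_log_deriv} forces $f$ to be dominated by its constant term. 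Concretely, already for the GEF coefficients $a_{n}=1/\sqrt{n!}$ consider the realization $f(z)=\xi_{0}e^{\lambda z}$ with $|\xi_{0}|=1$ and $\lambda=r$, i.e. $\xi_{n}=\xi_{0}\lambda^{n}/\sqrt{n!}$. This $f$ is zero-free in the whole plane, and it satisfies every bound defining $G$: $M(r)=e^{\lambda r}=e^{r^{2}}\ll e^{3S(r)}$ since $S(r)\asymp r^{4}$, $\intsy_{\cir r}\bigl|\log|f|\bigr|\d m\lesssim\lambda r\ll\delta^{-2}S(r)$, and the angular logarithmic derivative is at most $\lambda\rho\ll\delta^{-5}S(r)$. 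Yet $|\xi_{n}|a_{n}r^{n}=(\lambda r)^{n}/n!\ge e^{\lambda r/2}|\xi_{0}|$ for $\asymp\lambda r$ indices $n\in N(r)$, so any admissible exponents must satisfy $\sum_{n\in N(r)}\mathcal{E}_{n}\gtrsim(\lambda r)^{2}\asymp S(r)$, which destroys the bound; no choice of auxiliary radii can repair this. The route you sketch cannot do better in any case: Borel--Carath\'eodory only yields $|h|\lesssim\delta^{-1}\bigl(S(r)+\log^{-}|\xi_{0}|\bigr)$ on $\cir{\rho}$, and exponentiating and applying Cauchy's estimate to $e^{h}$ merely reproduces $|\xi_{n}|a_{n}\rho^{n}\le M(\rho)$, with no gain over the trivial estimate.

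What is missing is a mechanism that charges such zero-free but non-constant-dominated configurations their (small) Gaussian probability instead of trying to exclude them deterministically. This is exactly what the paper's argument does: on the hole event it uses the exact mean-value identity $\intsy_{\bbt}\log|f(\rho e^{i\theta})|\,\frac{\dd\theta}{2\pi}=\log|f(0)|$, samples it at $n(r)$ points $z_{j}\in\cir{\rho}$ chosen so that a generalized Vandermonde determinant is large (whence $\log\det\Sigma\ge S(\rho)$ for the covariance of $\bigl(f(z_{j}e^{i\alpha})\bigr)_{j}$), transfers the identity to the sampled points via a random rotation $\alpha$ and the logarithmic-derivative bound of Lemma \ref{lem:upp_bnd_log_deriv}, and finally bounds the probability of the resulting multivariate Gaussian event by $\volcn{\Omega}/\bigl(\pi^{n}\det\Sigma\bigr)$. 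There the main factor $e^{-S(\rho)}$ comes from the determinant and the $n(r)\log S(r)$ loss from the volume of $\Omega$; configurations like $\xi_{0}e^{\lambda z}$ are allowed in $\Omega$ but carry exponentially small weight. To salvage your coefficient-domination strategy you would have to split off and separately estimate the event that a block of the $\xi_{n}$, $n\in N(r)$, is exponentially large, which in effect amounts to rebuilding an argument of this kind.
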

In order to find the lower bound for $p_{H}\left(f;r\right)$ we now
assume that $f(z)\ne0$ inside $\disk r$. We take a small $\gamma>0$
(that will depend on $r$), and write $\rho=r\left(1-\gamma\right)$.

The function $\log\left|f\left(z\right)\right|$ is harmonic in $\disk r$,
therefore
\[
\log\left|f\left(0\right)\right|=\intsy_{0}^{2\pi}\log\left|f\left(\rho e^{i\alpha}\right)\right|\,\frac{\dd\alpha}{2\pi}.
\]
Now if we select $n$ points $z_{j}=\rho e^{i\theta_{j}}$, we have
\begin{eqnarray*}
\intsy_{0}^{2\pi}\frac{1}{n}\sum_{j=1}^{n}\log\left|f\left(\rho e^{i\theta_{j}}\cdot e^{i\alpha}\right)\right|\,\frac{\dd\alpha}{2\pi} & = & \frac{1}{n}\sum_{j=1}^{n}\intsy_{0}^{2\pi}\log\left|f\left(\rho e^{i\theta_{j}}\cdot e^{i\alpha}\right)\right|\,\frac{\dd\alpha}{2\pi}\\
 & = & \frac{1}{n}\sum_{j=1}^{n}\log\left|f\left(0\right)\right|\\
 & = & \log\left|f\left(0\right)\right|.
\end{eqnarray*}
Since $\log\left|f\left(z\right)\right|$ is continuous inside $\disk r$,
we conclude that there exists some $\alpha^{*}$ such that
\[
\frac{1}{n}\sum_{j=1}^{n}\log\left|f\left(\rho e^{i\theta_{j}}\cdot e^{i\alpha^{*}}\right)\right|=\log\left|f\left(0\right)\right|.
\]
Let $\Delta\alpha=\frac{c\gamma^{5}}{S\left(r\right)}$. By Lemma
\ref{lem:upp_bnd_log_deriv}, if $\alpha$ satisfies 
\[
\left|\alpha-\alpha^{*}\right|\le\Delta\alpha,
\]
then the logarithmic derivative of $f$ is not too large with high
probability. Discarding this event, we get 
\begin{equation}
\frac{1}{n}\sum_{j=1}^{n}\log\left|f\left(\rho e^{i\theta_{j}}\cdot e^{i\alpha}\right)\right|\le\log\left|f\left(0\right)\right|+1.\label{eq:log_sum_is_small}
\end{equation}
In this section we will show that, if we select the points $\left\{ z_{j}\right\} $
is an appropriate way, then the probability of (\ref{eq:log_sum_is_small})
is of order $\exp\left(-S\left(r\right)\right)$. This will allow
us to prove Proposition \ref{prp:p_H_low_bnd}.

\subsection{Reduction to an estimate of a multivariate Gaussian event}

In this section, we reduce the problem to an estimate of the probability
of an event in some finite-dimensional complex Gaussian space. We
first note that we work in the product space $\left\{ \left(\alpha,\omega\right)\in\left[0,2\pi\right]\times\Omega\right\} $,
where $\alpha$ is chosen uniformly in $\left[0,2\pi\right]$ and
$\Omega$ is the probability space for our Gaussian entire function
$f$ (we denote the probability measures by $m$ and $\mu$, respectively).
We define the following events (all depend on $r$):
\begin{itemize}
\item $H=\left\{ \left(\alpha,\omega\right)\,\mid\, f\left(z\right)\ne0\mbox{ in }\disk r\right\} $
-- the hole event.

\begin{itemize}
\item \medskip{}

\end{itemize}
\item $L=\left\{ \left(\alpha,\omega\right)\,\mid\,\left|\frac{d\log\left|f\left(\rho e^{i\phi}\right)\right|}{d\phi}\right|\le C\cdot\left(1-\frac{\rho}{r}\right)^{-5}\cdot S\left(r\right),\,\forall\phi\in\left[0,2\pi\right]\right\} $ 

\begin{itemize}
\item The non-exceptional event of Lemma \ref{lem:upp_bnd_log_deriv}, w.r.t
$r$ and $\rho$.\medskip{}

\end{itemize}
\item $C=\left\{ \left(\alpha,\omega\right)\,\mid\,\frac{1}{n}\sum\log\left|f\left(z_{j}e^{i\alpha}\right)\right|\le\log\left|f\left(0\right)\right|+1\right\} $.
\medskip{}

\item $D=\left\{ \left(\alpha,\omega\right)\,\mid\,\left|\alpha-\alpha^{\star}\left(\omega\right)\right|<\Delta\alpha\right\} .$\end{itemize}
\begin{rem*}
We defined $\alpha^{\star}$ only when $f\left(z\right)\ne0$ in $\disk r$,
in other cases we can arbitrarily select $\alpha^{\star}=0$.
\end{rem*}
We note that $\alpha^{\star}$ can be chosen to be measurable with
respect to $\Omega$ and that the events $H$ and $L$ do not depend
on the choice of $\alpha$.

Notice that the event $D$ is independent from the event $H\cap L$.
Indeed, since
\[
\ind{H\cap L\cap D}{\alpha,\omega}=\ind{H\cap L}{\alpha,\omega}\cdot\ind D{\alpha,\omega}=\ind{H\cap L}{0,\omega}\cdot\ind D{\alpha,\omega}
\]
we get that
\begin{eqnarray*}
\pr{H\cap L\cap D} & = & \int\left[\intsy_{\left|\alpha-\alpha^{\star}\left(\omega\right)\right|<\Delta\alpha}\d{m\left(\alpha\right)}\right]\cdot\ind{H\cap L}{0,\omega}\d{\mu\left(\omega\right)}\\
 & = & 2\,\Delta\alpha\cdot\int\ind{H\cap L}{0,\omega}\d{\mu\left(\omega\right)}=\pr D\cdot\pr{H\cap L}.
\end{eqnarray*}
Notice also that by the discussion in the previous section we have
$H\cap L\cap D\subset C$. Therefore,
\begin{eqnarray*}
\pr C & \ge & \pr{H\cap L\cap D}=\pr D\cdot\pr{H\cap L}\ge\\
 & \ge & \Delta\alpha\cdot\left(\pr H-\pr{L^{\mathrm{c}}}\right),
\end{eqnarray*}
and so
\[
\pr H\le\frac{1}{\Delta\alpha}\cdot\pr C+\pr{L^{\mathrm{c}}}.
\]
We now introduce the following events:
\begin{itemize}
\item $A=\left\{ \left(\alpha,\omega\right)\,\mid\,\log\left|f\left(0\right)\right|\le\log S\left(r\right)\right\} $.\medskip{}

\item $B=\left\{ \left(\alpha,\omega\right)\,\mid\,{\displaystyle M\left(r\right)=\max_{z\in\disk r}\left|f\left(z\right)\right|}\le e^{3S\left(r\right)}\right\} .$
\end{itemize}
To estimate $\pr C$ from above we use the inequality
\[
\pr C\le\pr{A\cap B\cap C}+\pr{A^{\mathrm{c}}}+\pr{B^{\mathrm{c}}}.
\]
By (\ref{eq:Gaus_prob_large}) from Subsection \ref{sub:Gauss-props}
and Lemma \ref{lem:upp_bnd_M(r)},
\begin{eqnarray*}
\pr{A^{\mathrm{c}}} & \le & e^{-S^{2}\left(r\right)},\\
\pr{B^{\mathrm{c}}} & \le & e^{-cS^{2}\left(r\right)}.
\end{eqnarray*}
Note that these probabilities are very small compared to $\exp\left(-2S\left(r\right)\right)$.

In the next section, we will show that for a good selection of the
set of points $\left\{ z_{j}\right\} $ we have
\begin{equation}
\pr{A\cap B\cap C}\le\exp\left(-S\left(\rho\right)+Cn\left(r\right)\log S\left(r\right)\right).\label{eq:low_bnd_main_est}
\end{equation}
Therefore, we get (using Lemma \ref{lem:upp_bnd_log_deriv}),
\begin{eqnarray*}
\pr H & \le & \pr{L^{\mathrm{c}}}+\frac{1}{\Delta\alpha}\cdot\left(\pr{A\cap B\cap C}+\pr{A^{\mathrm{c}}}+\pr{B^{\mathrm{c}}}\right)\\
 & \le & 2e^{-S\left(\rho\right)\cdot n\left(\rho\right)}+\exp\left(-S\left(\rho\right)+\log\frac{1}{\Delta\alpha}+Cn\left(r\right)\log S\left(r\right)+\O\left(1\right)\right)\\
 & \le & \exp\left(-S\left(\rho\right)+c\log\frac{1}{\gamma}+Cn\left(r\right)\log S\left(r\right)+\O\left(1\right)\right).
\end{eqnarray*}
Finally, by Lemma \ref{lem:S(r)_growth}, if we select $\gamma=\frac{1}{m\left(r\right)}$,
we have
\[
\pr H\le\exp\left(-S\left(r\right)+Cn\left(r\right)\log S\left(r\right)\right),
\]
thus proving Proposition \ref{prp:p_H_low_bnd}.

\subsection{Estimates for the probabilities}

We now turn to finding an upper bound for the probability of the event
$A\cap B\cap C$, that is the event when simultaneously
\begin{eqnarray*}
\log\left|f\left(0\right)\right| & \le & \log S\left(r\right),\\
M\left(r\right)=\max_{z\in\disk r}\left|f\left(z\right)\right| & \le & e^{3S\left(r\right)},\\
\frac{1}{n}\sum\log\left|f\left(z_{j}e^{i\alpha}\right)\right| & \le & \log\left|f\left(0\right)\right|+1\le\log S\left(r\right)+1.
\end{eqnarray*}
Recall that the points $\left\{ z_{j}\right\} $ were until now some
$n$ arbitrary points on $\cir{\rho}$.

For every $\alpha\in\bbr$, the vector $\left(f(e^{i\alpha}z_{1}),\ldots,f(e^{i\alpha}z_{n})\right)$
has the multivariate complex Gaussian distribution with the covariance
matrix\index{covariance matrix}
\[
\Sigma_{ij}=\Cov{f(e^{i\alpha}z_{i}),f(e^{i\alpha}z_{j})}=\Ex(f(e^{i\alpha}z_{i})\overline{f(e^{i\alpha}z_{j})})=\sum a_{k}^{2}\left(z_{i}\bar{z_{j}}\right)^{k}.
\]
We see that the covariance matrix does not depend on rotation by $\alpha$.
By Fubini,
\begin{eqnarray*}
\pr{A\cap B\cap C} & = & \iint\ind{A\cap B\cap C}{\alpha,\omega}\d{m\left(\alpha\right)}\dd\mu\left(\omega\right)=\\
 & = & \intsy\left[\intsy\ind{A\cap B\cap C}{\alpha,\omega}\d{\mu\left(\omega\right)}\right]\d{m\left(\alpha\right)}\\
 & \le & \intsy_{\Omega}\frac{1}{\pi^{n}\det\Sigma}\exp\left(-\zeta^{*}\Sigma^{-1}\zeta\right)\d{\zeta},
\end{eqnarray*}
where $\Omega=\Omega_{r}$ is the following set\textbackslash{}event:
\[
\Omega=\left\{ \left(\zeta_{1},\ldots,\zeta_{n}\right)\,\mid\,\frac{1}{n}\sum_{j=1}^{n}\log\left|\zeta_{j}\right|\le\log S\left(r\right)+1,\,\left|\zeta_{j}\right|\le e^{3S(r)},\,1\le j\le n\right\} .
\]
We have the following upper bound for the probability of the event
$\Omega$:
\[
\pr{\Omega}=\intsy_{\Omega}\frac{1}{\pi^{n}\det\Sigma}\exp\left(-\zeta^{*}\Sigma^{-1}\zeta\right)\d{\zeta}\le\intsy_{\Omega}\frac{1}{\pi^{n}\det\Sigma}\d{\zeta}=\frac{\volcn{\Omega}}{\pi^{n}\det\Sigma}.
\]
We start by finding a good lower bound for $\det\Sigma$. This depends
on a special selection of the points $\left\{ z_{j}\right\} $. 
\begin{lem}
Let $n\in\bbn$ and $j_{1},\ldots,j_{n-1}\in\bbn$. There exist $n$
points $\left\{ z_{j}\right\} $ on $\cir{\rho}$ such that the determinant
of the generalized Vandermonde matrix 
\[
A=\left(\begin{array}{cccc}
1 & z_{1}^{j_{1}} & \ldots & z_{1}^{j_{n-1}}\\
\vdots & \vdots & \ddots & \vdots\\
1 & z_{n}^{j_{1}} & \ldots & z_{n}^{j_{n-1}}
\end{array}\right)
\]
satisfies $\left|\det A\right|\ge\rho^{\sum_{k=1}^{n-1}j_{k}}$.\end{lem}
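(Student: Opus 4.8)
The plan is to eliminate the radius $\rho$ by a scaling, and then to select the points on the unit circle by a second-moment argument on the torus $\bbt^n$.

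First I would write $z_j=\rho w_j$ with $w_j\in\bbt$. Pulling the common factor $\rho^{j_k}$ out of the $k$-th column of $A$ for $k=1,\dots,n-1$ (the first column consists of $1$'s and contributes no factor of $\rho$), multilinearity of the determinant in the columns gives
\[
\det A=\rho^{\,\sum_{k=1}^{n-1}j_k}\cdot\det B,\qquad B=\bigl(w_i^{\,e_k}\bigr)_{1\le i,k\le n},
\]
where $(e_1,\dots,e_n)=(0,j_1,\dots,j_{n-1})$. Hence it suffices to find $w_1,\dots,w_n\in\bbt$ with $|\det B|\ge1$. Note that we may assume the exponents $e_1,\dots,e_n$ are pairwise distinct: otherwise two columns of $A$ coincide, $\det A\equiv 0$, and the stated inequality could not hold in the first place.

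Next I would compute the $L^2$-average of $|\det B|^2$ over $\bbt^n$. Expanding by the Leibniz formula,
\[
\det B(w_1,\dots,w_n)=\sum_{\sigma}\operatorname{sgn}(\sigma)\,w_1^{\,e_{\sigma(1)}}\cdots w_n^{\,e_{\sigma(n)}},
\]
the sum being over all permutations $\sigma$ of $\{1,\dots,n\}$. Because the $e_k$ are distinct, the exponent vectors $(e_{\sigma(1)},\dots,e_{\sigma(n)})$ are pairwise distinct, so the $n!$ monomials appearing here form an orthonormal system in $L^2(\bbt^n)$ (with $\bbt^n$ carrying normalized Lebesgue measure). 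Consequently
\[
\int_{\bbt^n}\bigl|\det B\bigr|^2\,\dd m=n!\,.
\]
Since $|\det B|^2$ is continuous on the compact set $\bbt^n$, its maximum is at least its average $n!$; choosing a point $(w_1,\dots,w_n)$ at which the maximum is attained yields $|\det B|\ge\sqrt{n!}\ge1$, and these $w_j$ are automatically distinct (otherwise $\det B=0$). Putting $z_j=\rho w_j\in\cir\rho$ then gives $|\det A|=\rho^{\,\sum_{k=1}^{n-1}j_k}|\det B|\ge\rho^{\,\sum_{k=1}^{n-1}j_k}$, which is the assertion.

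I do not expect a real obstacle in this argument; it is a clean orthogonality computation. The only point deserving attention is the distinctness of the exponents $0,j_1,\dots,j_{n-1}$, which is exactly what prevents the $n!$ monomials in the Leibniz expansion from colliding — so that the $L^2$-norm of $\det B$ is exactly $\sqrt{n!}$ with no cancellation — and which also forces the selected $w_j$ (hence the $z_j$) to be genuinely distinct points of $\cir\rho$.
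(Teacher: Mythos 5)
Your proof is correct and is essentially the paper's own argument: both expand the determinant by the Leibniz formula and compute the second moment of $\left|\det\right|^{2}$ over $\bbt^{n}$, using orthogonality of the characters (which requires the exponents $0,j_{1},\ldots,j_{n-1}$ to be distinct, an assumption the paper also makes implicitly) to get the value $n!$ and then picking a point where the modulus is at least its average. The only cosmetic difference is that you factor $\rho^{\sum_{k}j_{k}}$ out of the columns before integrating, whereas the paper keeps $\rho$ inside and extracts $\rho^{2\sum_{k}j_{k}}$ after the integration.
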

\begin{proof}
Start with the formal expression for the determinant:
\[
\det A=\sum_{\sigma}\mathrm{sgn}\left(\sigma\right)\prod_{m=1}^{n}z_{m}^{j_{\sigma\left(m\right)-1}},
\]
where the sum is over all the permutations of the set $\left\{ 1,\ldots,n\right\} $
and we set $j_{0}=0$. If we write $z_{j}=\rho e^{i\theta_{j}}$ then
we have
\begin{align*}
 & \intsy_{\mathbb{T}^{n}}\left|\det A\right|^{2}\,\dd\theta_{1}\ldots\dd\theta_{n}\\
 & =\,\intsy_{\mathbb{T}^{n}}\left(\sum_{\sigma}\mathrm{sgn}\left(\sigma\right)\prod_{m=1}^{n}z_{m}^{j_{\sigma\left(m\right)-1}}\right)\cdot\left(\sum_{\tau}\mathrm{sgn}\left(\tau\right)\prod_{m=1}^{n}\bar{z}_{m}^{j_{\tau\left(m\right)-1}}\right)\,\dd\theta_{1}\ldots\dd\theta_{n}\\
 & =\,\rho^{2\cdot\sum_{m=1}^{n-1}j_{k}}\left({\textstyle \sum}_{1}+{\textstyle \sum}_{2}\right),
\end{align*}
where
\begin{eqnarray*}
{\textstyle \sum}_{1} & = & \sum_{\sigma}\intsy_{\mathbb{T}^{n}}1\,\dd\theta_{1}\ldots\dd\theta_{n}=\left(2\pi\right)^{n}\cdot n!,\\
{\textstyle \sum}_{2} & = & \sum_{\sigma\ne\tau}\mathrm{sgn}\left(\sigma\right)\cdot\mathrm{sgn}\left(\tau\right)\left[\intsy_{\mathbb{T}^{n}}\prod_{m=1}^{n}\exp\left(i\theta_{m}\left(j_{\sigma\left(m\right)-1}-j_{\tau\left(m\right)-1}\right)\right)\,\dd\theta_{1}\ldots\dd\theta_{n}\right].
\end{eqnarray*}
Notice that $j_{k}\ne j_{l}$ if $k\ne l$. Since $\sigma\ne\tau$
in the sum $\sum_{2}$, for at least one $m\in\left\{ 1,\ldots,n\right\} $
we have that $j_{\sigma\left(m\right)-1}\ne j_{\tau\left(m\right)-1}$.
The numbers $j_{k}$ are all integers and therefore we get that $\sum_{2}=0$.
Thus we conclude that there exist some $n$ points $\left\{ z_{j}\right\} $
on $\cir{\rho}$ such that
\[
\left|\det A\right|\ge\rho^{\sum_{m=1}^{n-1}j_{k}}.
\]

\end{proof}
We now have the following\index{covariance matrix!determinant estimates}
\begin{cor}
\label{cor:low_bnd_det_sigma}Using the configuration of $n=n\left(r\right)$
points $\left\{ z_{j}\right\} $ given by the previous lemma, we have
\[
\log\left(\det\Sigma\right)\ge S(r).
\]
\end{cor}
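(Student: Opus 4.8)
The plan is to recognize $\Sigma$ as a Gram matrix of monomials and to bound $\det\Sigma$ from below by a single term of its Cauchy--Binet expansion — the term indexed by $N(r)$ — which is exactly the determinant controlled by the preceding lemma. Put $n=n(r)$ and let $W$ be the $n\times\infty$ matrix with entries $W_{jk}=a_{k}z_{j}^{k}$, $1\le j\le n$, $k\ge0$. Since $\sum_{k\ge0}a_{k}^{2}\rho^{2k}<\infty$ (the function is entire), every row of $W$ lies in $\ell^{2}$, so $\Sigma=WW^{*}$, and by a standard limiting argument (truncate $W$ to its first $M$ columns and let $M\to\infty$) the Cauchy--Binet formula gives
\[
\det\Sigma=\sum_{\#S=n}\Bigl(\prod_{k\in S}a_{k}^{2}\Bigr)\bigl|\det V_{S}\bigr|^{2},
\]
the sum running over all $n$-element subsets $S$ of $\{0,1,2,\dots\}$ and $V_{S}$ being the $n\times n$ matrix with columns $\bigl(z_{j}^{k}\bigr)_{1\le j\le n}$, $k\in S$. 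Every summand is non-negative, so I may discard all but the term $S=N(r)$, which has exactly $n(r)$ elements.

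Since $a_{0}=1$ we have $0\in N(r)$, hence $V_{N(r)}$ has an all-ones column and is precisely the generalized Vandermonde matrix of the preceding lemma with exponent set $\{j_{1}<\dots<j_{n-1}\}=N(r)\setminus\{0\}$. Accordingly I would fix the points $\{z_{j}\}\subset\cir\rho$ to be the configuration that lemma supplies for this choice of exponents (these are the points used throughout the lower-bound argument). Then $|\det V_{N(r)}|\ge\rho^{\sum_{k\in N(r)}k}$, so that
\[
\det\Sigma\ \ge\ \prod_{k\in N(r)}a_{k}^{2}\rho^{2k}\ =\ \exp\Bigl(2\sum_{k\in N(r)}\log\bigl(a_{k}\rho^{k}\bigr)\Bigr).
\]
Finally, using $\log(a_{k}r^{k})=kb_{k}(r)$, $\sum_{k\in N(r)}k=\tfrac14 m(r)$, $\rho=r(1-\gamma)$, and $\log(1-\gamma)\ge-2\gamma$ for $0<\gamma\le\tfrac12$ (exactly as in the proof of Lemma~\ref{lem:S(r)_growth}),
\[
2\sum_{k\in N(r)}\log\bigl(a_{k}\rho^{k}\bigr)=S(r)+\tfrac12 m(r)\log(1-\gamma)\ \ge\ S(r)-\gamma\, m(r),
\]
which is $\ge S(r)-1$ for the value $\gamma=1/m(r)$ chosen later; this additive constant is harmless, being absorbed in the error term $Cn(r)\log S(r)$ of Proposition~\ref{prp:p_H_low_bnd}, so the estimate holds in the form in which it is used.

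The only genuinely non-routine step is the first one: observing that the diagonal structure of the GAF covariance ($\Sigma_{ij}=\sum a_{k}^{2}(z_{i}\bar z_{j})^{k}$) turns $\Sigma$ into a Gram matrix of power functions, so that Cauchy--Binet rewrites $\det\Sigma$ as a sum of squared (generalized) Vandermonde determinants, one of which is exactly what the preceding lemma was designed to bound from below. Everything afterwards — justifying Cauchy--Binet for the infinite matrix $W$ by truncation, throwing away the non-negative remaining terms, and the one-line estimate of $\log(1-\gamma)$ — is bookkeeping.
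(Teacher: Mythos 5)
Your proof is correct and is essentially the paper's argument: the paper likewise writes $\Sigma=VV^{*}$ with $V_{jk}=a_{k}z_{j}^{k}$ and bounds $\det\Sigma$ from below by the squared minor on the columns indexed by $N(r)$ (phrased via monotonicity of singular values under column projection, which is exactly the single nonnegative term you retain from the Cauchy--Binet expansion), and then invokes the generalized Vandermonde lemma for the chosen points. The only divergence is bookkeeping at the radius: you track that the points lie on $\cir{\rho}$ and therefore get $\log\det\Sigma\ge S(r)-\gamma m(r)=S(r)-1$, absorbing the loss into the error term, whereas the paper states the bound as $S(r)$ at the points' radius (it is used as $S(\rho)$ in the proof of Proposition~\ref{prp:p_H_low_bnd}) and performs the $S(\rho)$-to-$S(r)$ comparison afterwards via Lemma~\ref{lem:S(r)_growth} with the same choice $\gamma=1/m(r)$, so the two accountings coincide where it matters.
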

\begin{proof}
Notice that we can represent $\Sigma$ in the form
\[
\Sigma=V\cdot V^{*},
\]
where 
\[
V=\left(\begin{matrix}a_{0} & a_{1}\cdot z_{1} & \dots & a_{j}\cdot z_{1}^{j} & \dots\\
\vdots & \vdots & \vdots & \vdots & \dots\\
a_{0} & a_{1}\cdot z_{n} & \dots & a_{j}\cdot z_{n}^{j} & \dots
\end{matrix}\right).
\]
We can estimate $\det\Sigma$ by projecting $V$ on $N(r)=\left\{ a_{0},a_{j_{1}},\ldots,a_{j_{n-1}}\right\} $
coordinates (let's denote this projection by $P$). Since $\det\Sigma$
is the square of the product of the singular values of $V$, and these
values are only reduced by the projection, we have
\[
\det\Sigma\ge\left(\det PV\right)^{2}=\left|\begin{array}{cccc}
a_{0} & a_{j_{1}}z_{1}^{j_{1}} & \ldots & a_{j_{n-1}}z_{1}^{j_{n-1}}\\
\vdots & \vdots & \ddots & \vdots\\
a_{0} & a_{j_{1}}z_{n}^{j_{1}} & \ldots & a_{j_{n-1}}z_{n}^{j_{n-1}}
\end{array}\right|^{2},
\]
and so, by the previous lemma,
\begin{eqnarray*}
\det\Sigma & \ge & \prod_{j\in N\left(r\right)}a_{j}^{2}\cdot\left|\begin{array}{cccc}
1 & z_{1}^{j_{1}} & \ldots & z_{1}^{j_{n-1}}\\
\vdots & \vdots & \ddots & \vdots\\
1 & z_{n}^{j_{1}} & \ldots & z_{n}^{j_{n-1}}
\end{array}\right|^{2}\\
 & \ge & \prod_{j\in N\left(r\right)}a_{j}^{2}\cdot r^{2\cdot\sum_{j\in N\left(r\right)}j}\\
 & = & \prod_{j\in N\left(r\right)}a_{j}^{2}r^{2j}=\exp\left(S\left(r\right)\right)
\end{eqnarray*}

\end{proof}
We now want to estimate 
\[
I=\frac{1}{\pi^{n}}\cdot\volcn{\Omega},
\]
with respect to the Lebesgue measure on $\bbc^{n}$, where
\[
\Omega=\left\{ \zeta\in\bbc^{n}\,\mid\,\frac{1}{n}\sum_{j=1}^{n}\log\left|\zeta_{j}\right|\le\log S\left(r\right)+1\mbox{ and}\left|\zeta_{j}\right|\le e^{3S\left(r\right)},\,1\le j\le n\right\} .
\]

\begin{lem}
\label{lem:prob_integral_upper_bnd}We have
\[
I\le\exp\left(Cn\left(r\right)\log S\left(r\right)\right).
\]
\end{lem}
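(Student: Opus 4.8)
The plan is to evaluate the volume directly in polar coordinates and then bound the resulting multidimensional real integral by a decoupling trick. Write $\zeta_j = r_j e^{{\rm i}\phi_j}$ for $1\le j\le n$, where $n=n(r)$. The Lebesgue measure on $\bbc^n$ factors as $\prod_{j=1}^n r_j\,\dd r_j\,\dd\phi_j$; the angular variables are unconstrained, so they contribute exactly $(2\pi)^n$. After the substitution $t_j=\log r_j$, which gives $r_j\,\dd r_j = e^{2t_j}\,\dd t_j$, the constraint $|\zeta_j|\le e^{3S(r)}$ becomes $t_j\le T:=3S(r)$, while $\tfrac1n\sum_j\log|\zeta_j|\le \log S(r)+1$ becomes $\sum_j t_j\le nL$ with $L:=\log S(r)+1$. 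Therefore
\[
I=\frac{1}{\pi^n}\,\volcn{\Omega}=\frac{(2\pi)^n}{\pi^n}\,\intsy_{\{\,t_j\le T\ \forall j,\ \sum_j t_j\le nL\,\}}\ \prod_{j=1}^n e^{2t_j}\,\dd t_1\cdots\dd t_n\,.
\]

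The region of integration is unbounded (each $t_j$ may tend to $-\infty$), so one cannot just bound the integrand by its supremum times the measure of the region. Instead I would relax the linear constraint by the elementary inequality $\indf[\sum_j t_j\le nL]\le \exp\bigl(\lambda(nL-\sum_j t_j)\bigr)$, valid for every $\lambda>0$. For $\lambda\in(0,2)$ this makes the integral converge and splits it into a product of one-dimensional integrals:
\begin{multline*}
\intsy_{\{\,t_j\le T\ \forall j,\ \sum_j t_j\le nL\,\}}\ \prod_{j=1}^n e^{2t_j}\,\dd t
\le e^{\lambda nL}\prod_{j=1}^n \intsy_{-\infty}^{T} e^{(2-\lambda)t_j}\,\dd t_j \\
=\exp\Bigl(n\bigl[\lambda L+(2-\lambda)T-\log(2-\lambda)\bigr]\Bigr)\,.
\end{multline*}

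Finally I would optimize in $\lambda$. Taking $\lambda = 2 - S(r)^{-1}$ (permissible for $r$ large, since $S(r)\to\infty$) gives $(2-\lambda)T = 3$, $-\log(2-\lambda)=\log S(r)$, and $\lambda L\le 2L = 2\log S(r)+2$, so the bracket is at most $3\log S(r)+5$, which is $\le C\log S(r)$ for $r$ large. Multiplying by the harmless factor $(2\pi)^n/\pi^n = 2^n = e^{n\log 2}$ and absorbing it, we obtain $I\le \exp\bigl(Cn(r)\log S(r)\bigr)$, which is the assertion of Lemma~\ref{lem:prob_integral_upper_bnd}. I do not expect any genuine obstacle here; the only point requiring a little care is the unboundedness of the domain, handled by the exponential relaxation, together with keeping the choice of $\lambda$ (and hence all constants) uniform in $r$, which the choice $\lambda=2-S(r)^{-1}$ achieves.
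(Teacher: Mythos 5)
Your proof is correct, and it takes a genuinely different route from the paper. The paper's argument is a discretization-and-counting one: it covers $\Omega$ by polydisks whose radii are integer powers $e^{k_j}$, bounds the volume of each admissible polydisk by $\pi^{n}\exp\left(n\log S+2n\right)$, counts the admissible integer vectors $\left(k_{j}\right)$ (at most $\left(4Sn\right)^{n}$ of them, using $-4S\left(n-1\right)\le k_{j}\le4S$), and so arrives at the intermediate bound $\exp\left(2n\log S+Cn\log n\right)$; the final reduction to $\exp\left(Cn\log S\right)$ then silently uses $\log n\lesssim\log S$, which is supplied by Lemma \ref{lem:S(r)_low_bnd}. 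You instead compute the volume exactly in polar/logarithmic coordinates and relax the linear constraint $\sum_{j}t_{j}\le nL$ by the Chernoff-type factor $\exp\bigl(\lambda\left(nL-\sum_{j}t_{j}\right)\bigr)$, which factorizes the integral into one-dimensional pieces that converge because of the cutoff $t_{j}\le3S$; optimizing with $\lambda=2-S^{-1}$ gives the bracket $3\log S+\O\left(1\right)$ directly. Your route is somewhat cleaner: it avoids the discretization and the counting of lattice points, and it does not need the comparison between $n\left(r\right)$ and $S\left(r\right)$ at all, only $S\left(r\right)\to\infty$. What the paper's version buys is the explicit intermediate estimate \eqref{eq:precise_upp_bnd_integral}, $\volcn{\Omega}/\pi^{n}\le\exp\left(2n\log S+n\log n+Cn\right)$, which is quoted again in Section \ref{sec:hole_prob_nec_exc_set}; your method yields an equally explicit substitute, $\exp\left(n\left(3\log S+\O\left(1\right)\right)\right)$, which would serve the same purpose there, so nothing is lost.
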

\begin{proof}
We use the following abbreviated notation: $n=n\left(r\right),S=S\left(r\right)$.
Let $\zeta=\left(\zeta_{1},\ldots,\zeta_{n}\right)\in\Omega$. For
every $\zeta_{j}$ we find the minimal integer $k_{j}$ such that
$\left|\zeta_{j}\right|\le e^{k_{j}}$. Note that $k_{j}\le4S$. We
also have
\[
\sum_{j=1}^{n}\left(k_{j}-1\right)\le\sum_{j=1}^{n}\log\left|\zeta_{j}\right|\le n\left(\log S+1\right),
\]
and so
\[
\sum_{j=1}^{n}k_{j}\le n\log S+2n.
\]
If it happens that $\sum_{j}k_{j}<0$, then we just increase some
of the negative $k_{j}$'s until the sum is $0$. We now observe that
each $\zeta\in\Omega$ is contained in some polydisk of the form $\left\{ \xi\,\mid\,\left|\xi_{j}\right|\le e^{k_{j}},\,1\le j\le n\right\} $.
What is left is to estimate the total volume of all polydisks for
which
\[
0\le\sum_{j=1}^{n}k_{j}\le n\log S+2n
\]
and
\[
k_{j}\le4S.
\]
It is clear that the volume of each polydisk is bounded by 
\[
\pi^{n}\exp\left(\sum_{j}k_{j}\right)\le\pi^{n}\exp\left(n\log S+2n\right).
\]
Also, the value of each $k_{j}$ is between $-4S\left(n-1\right)$
and $4S$, so the total number of polydisks is at most $\left(4Sn\right)^{n}.$
Overall, the total volume of all the polydisks is at most
\begin{eqnarray}
\pi^{n}\exp\left(n\log S+2n\right)\cdot\left(4Sn\right)^{n} & \le & \pi^{n}\exp\left(2n\log S+Cn\log n\right)\label{eq:precise_upp_bnd_integral}\\
 & \le & \pi^{n}\exp\left(Cn\log S\right).\nonumber 
\end{eqnarray}
Therefore we get the required bound for $I$.
\end{proof}
Now, the estimate (\ref{eq:low_bnd_main_est}) follows, since the
original points $\left\{ z_{j}\right\} $ satisfy $\left|z_{j}\right|=\rho$,
and by Corollary \ref{cor:low_bnd_det_sigma} and Lemma \ref{lem:prob_integral_upper_bnd}:
\[
\pr{\Omega}\le\exp\left(-S\left(\rho\right)+Cn\left(r\right)\log S\left(r\right)\right).
\]
This completes the proof of Proposition \ref{prp:p_H_low_bnd}.

\section{On The Exceptional Set in Theorem \ref{thm:hole_prob_GAFs}\label{sec:hole_prob_nec_exc_set}}

We construct a GAF $f\left(z\right)=\sum_{n\ge0}\xi_{n}a_{n}z^{n}$
and a set $E$ of infinite Lebesgue measure such that
\[
p_{H}(f;r)\ge2\cdot S\left(r\right)-C\sqrt{S\left(r\right)},\quad r\in E.
\]
This shows that in general, an exceptional set in the statement of
Theorem \ref{thm:hole_prob_GAFs} is necessary. The idea of the proof
is that indices $n$ such that $a_{n}r^{n}=1$ do not increase the
value of $S\left(r\right)$, while they reduce the probability of
a hole event (since they reduce the probability that the free term
will dominate the others). Therefore, if there are sufficiently many
indices with this property, then the hole probability can be reduced.
It is intuitively clear that this phenomenon cannot occur for too
many values of $r$. In fact, by Theorem \ref{thm:hole_prob_GAFs}
(and specifically Lemma \ref{lem:upp_bnd_log_deriv_spec_case}), the
set of such `bad' $r$'s is of a finite logarithmic measure.

\subsection{Parameters of the construction}

The construction is relatively simple, we start by fixing some increasing
sequence of radii $\left\{ r_{m}\right\} $. At each radius $r_{m}$
we set a `block' of terms $a_{j}r^{j}$ to be equal to $1$, the length
of the $m$th block is equal to $l_{m}$ (which is an increasing sequence
of integers). All the blocks are consecutive, so that this construction
determines all the coefficients of the function $f$. We note, that
at the radius $r_{m}$ only the terms in the blocks $1$ to $m-1$
contribute to $S\left(r_{m}\right)$ (since all the other terms will
be of modulus equal or smaller than $1$).

We then choose a sequence $\delta_{m}\in\left(0,1\right)$, such that
the terms in the $m$th block are still sufficiently large at the
radius $r_{m}e^{-\delta_{m}}$. Now we repeat the proof of the lower
bound for the hole probability, with some changes. The most important
of these changes, is that instead of using Corollary \ref{cor:low_bnd_det_sigma}
to get a lower bound for the determinant of the covariance matrix,
we use a more precise estimate. We note that the corollary has advantage
over the lemma in the case where the coefficients $a_{n}$ are not
regular (for example when the function $f$ is lacunary).

We will fix the following values of the parameters, for $m\ge1$:

\begin{eqnarray*}
r_{0}=1, &  & r_{m}=\exp\left(a^{m}\right),\\
k_{0}=0, &  & k_{m}=\left[\exp\left(b^{m}\right)\right],\\
l_{0}=1, &  & l_{m}=k_{m}-k_{m-1}\\
\delta_{0}=\tfrac{1}{2}, &  & \delta_{m}=\frac{1}{2k_{m}},
\end{eqnarray*}
where $\left[x\right]$ denotes the integral part of $x$. We will
choose the values of $a,b\in\left(1,\infty\right)$ later in the proof.

For every $m\in\bbn$ we choose $a_{j}$ for all the indices $j\in\left\{ k_{m-1}+1,\ldots,k_{m}\right\} $
such that they will satisfy $a_{j}r_{m}^{j}=1$. In addition, we set
$a_{0}=1$.

We now turn to look at the properties of this function. First it is
worth to mention that this choice of coefficients guarantees that
$f$ is an entire function, since $a_{j}^{1/j}=\frac{1}{r_{m}}$ for
$j$'s in the $m$th block and $r_{m}\to\infty$ as $m\to\infty$.

\subsection{Some properties of the function $f$}

We start by analyzing the size of $S\left(r\right)$ at and near the
radius $r_{m}$.
\begin{claim}
\label{clm:bnds_for_S(r)}Let $\eta\in\left(0,1\right)$ and $\mu\in\bbn$
sufficiently large. We have the following bounds
\begin{eqnarray*}
S\left(r_{\mu}e^{-\eta}\right) & \ge & S\left(r_{\mu}\right)-2\eta\exp\left(2b^{\mu-1}\right),\\
S\left(r_{\mu}e^{\eta}\right) & \le & S\left(r_{\mu}\right)+2\eta\exp\left(2b^{\mu}\right)
\end{eqnarray*}
and
\begin{eqnarray*}
S\left(r_{\mu}e^{-\eta}\right) & \ge & \frac{\left(a-1\right)}{2}\cdot a^{\mu-1}\exp\left(2b^{\mu-1}\right)-2\eta\exp\left(2b^{\mu-1}\right),\\
S\left(r_{\mu}e^{\eta}\right) & \le & 2\left(a-1\right)a^{\mu-1}\exp\left(2b^{\mu-1}\right)+2\eta\exp\left(2b^{\mu}\right).
\end{eqnarray*}
\end{claim}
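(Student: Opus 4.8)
The plan is to evaluate $S$ directly, exploiting the block structure of the coefficients. For an index $j$ in the $m$-th block, i.e. $k_{m-1}<j\le k_{m}$ with $m\ge1$, one has $a_{j}=r_{m}^{-j}=\exp(-ja^{m})$, so that $\log(a_{j}r^{j})=j(\log r-a^{m})$; and $a_{0}r^{0}=1$. Hence the $m$-th block contributes to $S(r)=2\sum_{j\ge0}\log^{+}(a_{j}r^{j})$ exactly when $\log r>a^{m}$, and in that case its contribution equals $2(\log r-a^{m})T_{m}$, where
\[
T_{m}=\sum_{j=k_{m-1}+1}^{k_{m}}j=\tfrac12(k_{m}+k_{m-1}+1)(k_{m}-k_{m-1}).
\]
First I would record which blocks are ``active'' near $r_{\mu}$. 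Since $a>1$, for all sufficiently large $\mu$ (uniformly for $\eta\in(0,1)$) we have $a^{\mu}-a^{\mu-1}>1>\eta$ and $a^{\mu+1}-a^{\mu}>1>\eta$; consequently at $r=r_{\mu}$ and at $r=r_{\mu}e^{-\eta}$ the active blocks are exactly $m=1,\dots,\mu-1$, while at $r=r_{\mu}e^{\eta}$ they are $m=1,\dots,\mu$ (the $\mu$-th block switches on precisely because $\log r$ crosses its threshold $a^{\mu}$).

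Next, substituting $\log(r_{\mu}e^{\mp\eta})=a^{\mu}\mp\eta$ into the contribution formula and subtracting the corresponding expression for $S(r_{\mu})$, the terms $(a^{\mu}-a^{m})T_{m}$ cancel and one is left with the exact identities
\[
S(r_{\mu}e^{-\eta})=S(r_{\mu})-2\eta\sum_{j=1}^{k_{\mu-1}}j,\qquad S(r_{\mu}e^{\eta})=S(r_{\mu})+2\eta\sum_{j=1}^{k_{\mu}}j.
\]
Since $\sum_{j=1}^{k}j\le k^{2}$ and $k_{m}=[\exp(b^{m})]\le\exp(b^{m})$, these identities immediately give the first pair of inequalities of the claim.

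For the second pair I would estimate $S(r_{\mu})$ on its own. Using $T_{m}\le k_{m}^{2}\le\exp(2b^{m})$ together with the fact that $b>1$ makes $\sum_{m=1}^{\mu-2}\exp(2b^{m})=o(\exp(2b^{\mu-1}))$, the combined contribution of the blocks $m\le\mu-2$ to $S(r_{\mu})$ is $O(a^{\mu}\exp(2b^{\mu-2}))=o(a^{\mu-1}\exp(2b^{\mu-1}))$. Hence $S(r_{\mu})=2(a^{\mu}-a^{\mu-1})T_{\mu-1}+o(a^{\mu-1}\exp(2b^{\mu-1}))$. Because $k_{\mu-1}=\exp(b^{\mu-1})(1+o(1))$ while $k_{\mu-2}$ is negligible against $k_{\mu-1}$, one gets $T_{\mu-1}=\tfrac12\exp(2b^{\mu-1})(1+o(1))$, and therefore $S(r_{\mu})=(a-1)a^{\mu-1}\exp(2b^{\mu-1})(1+o(1))$. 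For $\mu$ large the factor $(1+o(1))$ lies in $[\tfrac12,2]$, and combining this two-sided bound on $S(r_{\mu})$ with the two identities above produces the remaining two inequalities.

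The argument is essentially bookkeeping; the only point that requires care is making the ``dominant block'' heuristic rigorous — verifying that $b>1$ really does push both the tail of the lower blocks and the correction term $k_{\mu-2}$ inside $T_{\mu-1}$ to lower order — and choosing the threshold on $\mu$ so that all the active-block statements and all the $(1+o(1))$ estimates hold simultaneously, uniformly in $\eta\in(0,1)$.
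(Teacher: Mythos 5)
Your proof is correct and follows essentially the same route as the paper: the block decomposition of $S(r)$ (your $2T_m$ is the paper's $k_m^2+k_m-k_{m-1}^2-k_{m-1}$), the exact identities for $S(r_\mu e^{\pm\eta})$ obtained by noting which blocks are active, and a dominant-block estimate of $S(r_\mu)$ itself. The only difference is presentational: you spell out explicitly, via the $(1+o(1))$ bookkeeping, the absorption of the blocks $m\le\mu-2$ and of the lower-order terms in $T_{\mu-1}$, which the paper's displayed chain of inequalities performs implicitly.
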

\begin{proof}
We recall that
\[
S\left(r\right)=2\cdot\sum_{n\ge0}\log^{+}\left(a_{n}r^{n}\right),
\]
which, using the special properties of our construction, we can rewrite
in the following way
\begin{eqnarray}
S\left(r\right) & = & 2\cdot\sum_{m=1}^{\infty}\sum_{j=k_{m-1}+1}^{k_{m}}\log^{+}\left(a_{j}r_{m}^{j}\left(\frac{r}{r_{m}}\right)^{j}\right)=2\cdot\sum_{m=1}^{\infty}\left[\log^{+}\left(\frac{r}{r_{m}}\right)\sum_{j=k_{m-1}+1}^{k_{m}}j\right]\nonumber \\
 & = & \sum_{m=1}^{\infty}\left[\log^{+}\left(\frac{r}{r_{m}}\right)\left(k_{m}^{2}+k_{m}-k_{m-1}^{2}-k_{m-1}\right)\right].\label{eq:expression_for_S(r)}
\end{eqnarray}
By our choice of the sequence $r_{m}$ we note that $r_{\mu-1}<r_{\mu}e^{-\eta}$,
for sufficiently large $m$. We conclude that if $r=r_{\mu}e^{-\eta}$
only the first $\mu-1$ terms in (\ref{eq:expression_for_S(r)}) do
not vanish, while if $r=r_{\mu}e^{\eta}$ the first $\mu$ contribute
to $S\left(r\right)$. This leads to the following estimates
\begin{eqnarray*}
S\left(r_{\mu}e^{-\eta}\right) & = & \sum_{m=1}^{\mu-1}\left[\left(\log r_{\mu}-\log r_{m}-\eta\right)\left(k_{m}^{2}+k_{m}-k_{m-1}^{2}-k_{m-1}\right)\right]\\
 & = & S\left(r_{\mu}\right)-\eta\sum_{m=1}^{\mu-1}\left(k_{m}^{2}+k_{m}-k_{m-1}^{2}-k_{m-1}\right)\\
 & \ge & S\left(r_{\mu}\right)-2\eta k_{\mu-1}^{2}\ge S\left(r_{\mu}\right)-2\eta\exp\left(2b^{\mu-1}\right).
\end{eqnarray*}
and 
\begin{eqnarray*}
S\left(r_{\mu}e^{\eta}\right) & = & \sum_{m=1}^{\mu}\left[\left(\log r_{\mu}-\log r_{m}+\eta\right)\left(k_{m}^{2}+k_{m}-k_{m-1}^{2}-k_{m-1}\right)\right]\\
 & \le & S\left(r_{\mu}\right)+\eta\sum_{m=1}^{\mu}\left(k_{m}^{2}+k_{m}-k_{m-1}^{2}-k_{m-1}\right)\\
 & \le & S\left(r_{\mu}\right)+2\eta k_{\mu}^{2}\le S\left(r_{\mu}\right)+2\eta\exp\left(2b^{\mu}\right).
\end{eqnarray*}
The other estimates follow
\begin{eqnarray*}
S\left(r_{\mu}e^{-\eta}\right) & \ge & \left(a^{\mu}-a^{\mu-1}\right)\left(\exp\left(2b^{\mu-1}\right)-\exp\left(2b^{\mu-2}\right)\right)-2\eta\exp\left(2b^{\mu-1}\right)\\
 & \ge & \frac{\left(a-1\right)}{2}\cdot a^{\mu-1}\exp\left(2b^{\mu-1}\right)-2\eta\exp\left(2b^{\mu-1}\right),
\end{eqnarray*}
and
\begin{eqnarray*}
S\left(r_{\mu}e^{\eta}\right) & \le & 2\left(a^{\mu}-a^{\mu-1}\right)\exp\left(2b^{\mu-1}\right)+2\eta\exp\left(2b^{\mu}\right)\\
 & = & 2\left(a-1\right)a^{\mu-1}\exp\left(2b^{\mu-1}\right)+2\eta\exp\left(2b^{\mu}\right).
\end{eqnarray*}

\end{proof}
We recall the following definitions
\begin{eqnarray*}
N_{\delta}\left(r\right) & = & \left\{ n\,:a_{n}r^{n}\ge\exp\left(-\delta n\right)\right\} ,\\
N\left(r\right) & = & N_{0}\left(r\right),\\
n\left(r\right) & = & \#N\left(r\right).
\end{eqnarray*}
It is clear from the construction that $n\left(r_{m}\right)=k_{m}$.
Moreover, for any $\delta\in\left[0,1\right]$ we have that $\#N_{\delta}\left(r\right)=n\left(r\right)$
for $r$ sufficiently large. Indeed if $r\in\left[r_{m-1},r_{m}\right],$
then it is enough to consider indices $j\in\left\{ k_{m}+1,k_{m+1}\right\} $,
and for them
\[
a_{j}r^{j}=a_{j}r_{m+1}^{j}\left(\frac{r}{r_{m+1}}\right)^{j}\le\left(\frac{r_{m}}{r_{m+1}}\right)^{j}<e^{-\delta j},
\]
where the last inequality holds (for $m$ sufficiently large) by our
choice of the sequence $r_{m}$. A similar argument shows that $\#N_{\delta}\left(re^{\eta}\right)=n\left(r\right)$
for any $\eta\in\left(0,1\right)$ and any $\delta\in\left[0,1\right]$.

In order to improve the lower bound estimate we need a `corrected'
version of Lemma \ref{lem:upp_bnd_M(r)}, which will hold for every
sufficiently large $r\in\left[r_{m},r_{m}e^{\eta}\right]$ (for our
specific function $f$). For the convenience of the reader we reprove
the lemma with the necessary changes. We recall our notation
\[
M\left(r\right)=\max_{z\in\disk r}\left|f\left(z\right)\right|.
\]

\begin{lem}
\textup{\label{lem:upp_bnd_M(r)_spec_case}}Let $\eta\in\left(0,1\right)$
and let $m\in\bbn$ be sufficiently large. For any $r\in\left[r_{m},r_{m}e^{\eta}\right]$\textup{
\[
\pr{M\left(r\right)\ge e^{3S\left(r\right)}}\le C\cdot\exp\left(-\exp\left(S\left(r\right)\right)\right)\le e^{-S^{2}\left(r\right)}.
\]
}\end{lem}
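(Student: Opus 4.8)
The plan is to transcribe the proof of Lemma~\ref{lem:upp_bnd_M(r)} almost verbatim, making exactly one change: because we are now working at the ``bad'' radii $r\in[r_{m},r_{m}e^{\eta}]$, where the inequalities~(\ref{eq:m(r)_normal}) need not hold, we cannot use the radius-dependent localization parameter $\delta=m^{-\eta}(r)$, so instead we run the whole argument with the fixed value $\delta=\tfrac12$. Every step in the proof of Lemma~\ref{lem:upp_bnd_M(r)} that invoked~(\ref{eq:m(r)_normal}) or Lemma~\ref{lem:S(r)_low_bnd} will be replaced by three elementary observations about our concrete $f$, valid for $r\in[r_{m},r_{m}e^{\eta}]$ once $m$ is large. \emph{(a)} The block structure gives $\#N_{1/2}(r)=n(r)$; this is precisely the computation recorded right after the definition of the construction (checked there, in the form $\#N_{\delta}(re^{\eta})=n(r)$, for every $\delta\in[0,1]$ and for the intervals in question). \emph{(b)} Setting $\widetilde N(r)=N_{1/2}(r)\cup\{n<S^{2}(r)\}$: since $S(r)/2=\sum_{n\in N(r)}nb_{n}(r)$ is a sum of non-negative terms, each $n\in N(r)$ satisfies $a_{n}r^{n}=e^{nb_{n}(r)}\le e^{S(r)/2}$, while $a_{n}r^{n}<1$ for every other index of $\widetilde N(r)$; hence $a_{n}r^{n}\le e^{S(r)}$ throughout $\widetilde N(r)$. \emph{(c)} By Claim~\ref{clm:bnds_for_S(r)} and the monotonicity of $S$, $S(r)\ge S(r_{m})\gtrsim a^{m-1}\exp(2b^{m-1})$, and since $k_{m}=[\exp(b^{m})]$ this forces $\log k_{m}=o(S(r_{m}))$ and $S^{2}(r_{m})=o(e^{S(r_{m})})$, so that $\#\widetilde N(r)\le k_{m}+S^{2}(r)\le\tfrac12 e^{S(r)}$ and $S(r)\ge3$ for $m$ large.

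Granting these, I would define the event $\Omega_{r}=\mbox{(i)}\cap\mbox{(ii)}$ exactly as in Lemma~\ref{lem:upp_bnd_M(r)} but with $\delta=\tfrac12$: namely $|\xi_{n}|\le(a_{n}r^{n})^{-1}e^{2S(r)}$ for $n\in\widetilde N(r)$, and $|\xi_{n}|\le e^{n/4}$ for $n\notin\widetilde N(r)$. On $\Omega_{r}$, splitting the Taylor series at $\widetilde N(r)$ and using (a), (b), (c) together with the fact that $n\notin N_{1/2}(r)$ forces $a_{n}r^{n}<e^{-n/2}$, one obtains
\[
|f(z)|\le\#\widetilde N(r)\,e^{2S(r)}+\sum_{n\ge S^{2}(r)}e^{n/4}e^{-n/2}\le\tfrac12 e^{3S(r)}+Ce^{-S^{2}(r)/4}\le e^{3S(r)},
\]
so $M(r)\le e^{3S(r)}$ on $\Omega_{r}$. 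For the complement, the Gaussian tail estimate gives, for $n\in\widetilde N(r)$, $\pr{|\xi_{n}|\ge(a_{n}r^{n})^{-1}e^{2S(r)}}=\exp(-(a_{n}r^{n})^{-2}e^{4S(r)})\le\exp(-e^{2S(r)})$ by (b), so a union bound yields $\pr{\mbox{(i)}^{\mathrm c}}\le(k_{m}+S^{2}(r))\exp(-e^{2S(r)})$; and $\pr{\mbox{(ii)}^{\mathrm c}}\le\sum_{n\ge S^{2}(r)}\exp(-e^{n/2})\le C\exp(-e^{S^{2}(r)/2})$. Since $S^{2}(r)\ge S(r)$, both quantities are $\le C\exp(-e^{S(r)})$, which in turn is $\le e^{-S^{2}(r)}$ once $m$ is large; this is exactly the asserted bound.

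The step I expect to require the most care is observation (c): one has to be sure the lower bound for $S(r_{m})$ supplied by Claim~\ref{clm:bnds_for_S(r)} is genuinely larger than $\log k_{m}$, $S^{2}(r_{m})$ and the cardinality $\#\widetilde N(r)$, uniformly over $r\in[r_{m},r_{m}e^{\eta}]$ and all large $m$. This boils down to the comparison $\exp(2b^{m-1})\gg b^{m}$ as $m\to\infty$, which holds for every $b>1$; so, reassuringly, no additional constraint on the parameters $a,b$ of the construction is needed beyond those already imposed. Everything else in the argument is a mechanical copy of the proof of Lemma~\ref{lem:upp_bnd_M(r)}, with $\delta$ frozen at $\tfrac12$ and the two $\excpset$-dependent inputs replaced by (a)--(c).
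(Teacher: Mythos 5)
Your proposal is correct and takes essentially the same approach as the paper: both adapt the proof of Lemma~\ref{lem:upp_bnd_M(r)} by replacing the exceptional-set-dependent parameter $\delta=m^{-\eta}(r)$ with an $r$-uniform tail set, and both invoke Claim~\ref{clm:bnds_for_S(r)} to check that $S(r)$ dominates $\log k_m$ on the intervals $[r_m,r_me^\eta]$. The only cosmetic difference is your choice $\widetilde N(r)=N_{1/2}(r)\cup\{n<S^2(r)\}$ with tail cutoff $e^{n/4}$, where the paper uses $\widetilde N(r)=\{n<S^{b/2}(r)\}$ with cutoff $e^{n/2}$; both reduce to the same comparison because $N_{1/2}(r)=\{0,\dots,k_m\}$ and $S^{b/2}(r)\ge k_m$ for the constructed $f$.
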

\begin{proof}
We set $\Nalt r=\left\{ n<S^{b/2}\left(r\right)\right\} $. Notice
that, by Claim \ref{clm:bnds_for_S(r)} and by the previous discussion
we have that $S^{b/2}\left(r\right)\ge S^{b/2}\left(r_{m}\right)\ge N_{1}\left(r_{m}e^{\eta}\right)=n\left(r_{m}\right)=k_{m}$
for every $\eta\in\left[0,1\right)$. The rest of the proof is very
similar to that of Lemma \ref{lem:upp_bnd_M(r)}. We define the event
$\Omega_{r}$ as the intersection of the events $\mbox{{\rm \mbox{(i)}}}$
and $\mbox{(ii)}$, where
\[
\begin{array}{ll}
\mbox{{\rm \mbox{(i)}}}: & {\displaystyle \bigcap_{n\in\Nalt r}}\mbox{{\rm \mbox{(i)}}}_{n},\\
\mbox{{\rm \mbox{(ii)}}}: & {\displaystyle \bigcap_{n\in\left(\Nalt r\right)^{\mathrm{c}}}}\mbox{{\rm \mbox{(ii)}}}_{n},
\end{array}
\]
and
\[
\begin{array}{ll}
\mbox{{\rm \mbox{(i)}}}_{n}: & |\xi_{n}|\le\left(a_{n}r^{n}\right)^{-1}e^{2S\left(r\right)},\\
\mbox{{\rm \mbox{(ii)}}}_{n}: & |\xi_{n}|\le\exp\left(\frac{n}{2}\right).
\end{array}
\]
We have the following estimate for $M\left(r\right)$:
\begin{eqnarray*}
|f(z)| & \le & \sum_{n\in\Nalt r}|\xi_{n}|a_{n}r^{n}+\sum_{n\in\left(\widetilde{N}\left(r\right)\right)^{\mathrm{c}}}|\xi_{n}|a_{n}r^{n}\\
 & \le & S^{b/2}\left(r\right)\cdot e^{2S\left(r\right)}+\sum_{n\ge S^{b/2}\left(r\right)}e^{\frac{n}{2}}\cdot e^{-n}\\
 & \le & S^{b/2}\left(r\right)\cdot e^{2S\left(r\right)}+\O\left(1\right)\\
 & \le & e^{3S\left(r\right)},
\end{eqnarray*}
provided that $r$ is sufficiently large.

Now we estimate the probability of the complement of $\Omega_{r}$.
We have:
\begin{eqnarray*}
\pr{|\xi_{n}|\ge\frac{e^{2S\left(r\right)}}{a_{n}r^{n}}} & = & \exp\left(-\frac{e^{4S\left(r\right)}}{\left(a_{n}r^{n}\right)^{2}}\right)\le\exp\left(-e^{2S\left(r\right)}\right),\\
\pr{|\xi_{n}|\ge e^{\frac{n}{2}}} & = & \exp\left(-\exp\left(n\right)\right).
\end{eqnarray*}
By the union bound,
\begin{eqnarray*}
\pr{\mbox{{\rm \mbox{(i)}}}^{\mathrm{c}}} & \le & S^{b/2}\left(r\right)\cdot\exp\left(-\exp\left(2S\left(r\right)\right)\right),
\end{eqnarray*}
and
\[
\pr{\mbox{{\rm \mbox{(ii)}}}^{\mathrm{c}}}\le\sum_{n\ge S^{b/2}\left(r\right)}\exp\left(-\exp\left(n\right)\right)\le C\cdot\exp\left(-\exp\left(S^{b/2}\left(r\right)\right)\right),
\]
since the first term in the sum above dominates the others. So overall
we have
\[
\pr{M\left(r\right)\ge e^{3S\left(r\right)}}\le C\cdot\exp\left(-\exp\left(S^{b/2}\left(r\right)\right)\right)\le\exp\left(-S^{2}\left(r\right)\right),
\]
again for $r$ large enough.
\end{proof}
To make the reading of this proof easier, we quote here the statements
of some of the lemmas that are used in the proof of the lower bound.
The proofs can be found in Section \ref{sec:bnds_for_GAFs}.
\begin{lem}
\label{lem:low_bnd_M(r)_spec_case}Let $\rho>0$ be sufficiently large.
Then
\[
\pr{M\left(\rho\right)\le\exp\left(-S\left(\rho\right)\right)}\le\exp\left(-S\left(\rho\right)\cdot n\left(\rho\right)\right).
\]

\end{lem}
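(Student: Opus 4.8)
The plan is to observe that this statement is word-for-word Lemma~\ref{lem:low_bnd_M(r)}, and that the proof given for that lemma never used the hypothesis $\rho\notin\excpset$ nor any regularity of the sequence $\{a_n\}$ — it rests only on Cauchy's inequalities together with the Gaussian small-ball bound~\eqref{eq:Gaus_prob_small}. So I would simply reproduce that argument here; indeed, this is precisely why the lemma can be invoked in Section~\ref{sec:hole_prob_nec_exc_set} without the restriction $\rho\notin\excpset$.

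First I would apply Cauchy's estimate to the Taylor coefficient of $f$ of index $n$: for every $n\ge0$,
\[
|\xi_n|\,a_n\rho^n\le M(\rho).
\]
Hence on the event $\bigl\{M(\rho)\le e^{-S(\rho)}\bigr\}$ we have $|\xi_n|\,a_n\rho^n\le e^{-S(\rho)}$ for all $n$, and in particular for every $n\in N(\rho)$, where $a_n\rho^n\ge1$ by the very definition of $N(\rho)$, this forces
\[
|\xi_n|\le (a_n\rho^n)^{-1}e^{-S(\rho)}\le e^{-S(\rho)}\le1
\]
once $\rho$ is large enough that $S(\rho)\ge0$ (which holds for $\rho$ large by Lemma~\ref{lem:S(r)_low_bnd}, since then $n(\rho)\ge2$ and $N(\rho)$ is finite because $\tfrac1n\log a_n\to-\infty$).

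Next I would bound the probability of each of these constraints. By the small-ball estimate~\eqref{eq:Gaus_prob_small} for a standard complex Gaussian, for every $n\in N(\rho)$,
\[
\pr{|\xi_n|\le (a_n\rho^n)^{-1}e^{-S(\rho)}}\le (a_n\rho^n)^{-2}e^{-2S(\rho)}\le e^{-2S(\rho)}.
\]
Since the $\xi_n$ are independent, intersecting these events over the finite set $N(\rho)$ yields
\[
\pr{M(\rho)\le e^{-S(\rho)}}\le\prod_{n\in N(\rho)}e^{-2S(\rho)}=\exp\bigl(-2S(\rho)\,n(\rho)\bigr)\le\exp\bigl(-S(\rho)\,n(\rho)\bigr),
\]
which is the assertion. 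There is no genuine obstacle here: the only points to check are the two trivial ones just noted — that $a_n\rho^n\ge1$ on $N(\rho)$ and that $N(\rho)$ is finite so the product over it is legitimate — after which the conclusion is immediate.
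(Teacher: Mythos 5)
Your proof is correct and is exactly the paper's own argument for Lemma~\ref{lem:low_bnd_M(r)}, which the paper itself states carries over to this setting without modification; you have merely spelled out the two easy justifications (finiteness of $N(\rho)$ and the applicability of the Gaussian small-ball bound) that the paper leaves implicit.
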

Note the proof of this lemma do not require any modification. The
next two lemmas requires the use of Lemma \ref{lem:upp_bnd_M(r)_spec_case}
and Lemma \ref{lem:low_bnd_M(r)_spec_case} instead of the original
lemmas used. The rest of the proofs is the same.
\begin{lem}
\label{lem:approx_log_int_spec_case}Let $r\in\left[r_{m},r_{m}e^{\eta}\right]$
and $0<\rho<r$. Outside a set of probability at most
\[
2\cdot\exp\left(-S(\rho)\cdot n\left(\rho\right)\right),
\]
we have
\[
\intsy_{\cir r}\left|\log|f|\right|\d m\le C\left(1-\frac{\rho}{r}\right)^{-2}\cdot S\left(r\right).
\]

\end{lem}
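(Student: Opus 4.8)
The plan is to run the proof of Lemma~\ref{lem:approx_log_int} essentially verbatim, the only change being that the two generic growth lemmas get replaced by the versions adapted to the present function~$f$: Lemma~\ref{lem:low_bnd_M(r)_spec_case} in place of Lemma~\ref{lem:low_bnd_M(r)}, and Lemma~\ref{lem:upp_bnd_M(r)_spec_case} in place of Lemma~\ref{lem:upp_bnd_M(r)}. Fix $m$ large, $r\in[r_m,r_m e^{\eta}]$, and $\rho$ with $0<\rho<r$; recall that we are in the situation $f(z)\ne0$ in $\disk r$, so $\log|f|$ is harmonic there. First I would invoke Lemma~\ref{lem:low_bnd_M(r)_spec_case} at radius~$\rho$: outside an event of probability at most $\exp\bigl(-S(\rho)n(\rho)\bigr)$ there is a point $a\in\cir{\rho}$ with $\log|f(a)|\ge-S(\rho)$. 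Writing $P(z,a)$ for the Poisson kernel of $\disk r$ ($|z|=r$, $|a|=\rho$), harmonicity of $\log|f|$ gives
\[
-S(\rho)\le\log|f(a)|=\intsy_{\cir r}P(z,a)\log|f(z)|\,\dd m(z),
\]
and hence, splitting $\log|f|=\log^{+}|f|-\log^{-}|f|$,
\[
\intsy_{\cir r}P(z,a)\log^{-}|f|\,\dd m(z)\le\intsy_{\cir r}P(z,a)\log^{+}|f|\,\dd m(z)+S(\rho).
\]

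Next I would use the elementary two-sided bound $\tfrac{r-\rho}{2r}\le P(z,a)\le\tfrac{2r}{r-\rho}$, valid for $|z|=r$, $|a|=\rho$, together with the crucial new input: since $r\in[r_m,r_m e^{\eta}]$, Lemma~\ref{lem:upp_bnd_M(r)_spec_case} applies \emph{without} an exceptional set, so outside an event of probability at most $e^{-S^{2}(r)}$ one has $M(r)\le e^{3S(r)}$, whence $\intsy_{\cir r}\log^{+}|f|\,\dd m(z)\le3S(r)$. Inserting the Poisson-kernel bounds into the last display and using $S(\rho)\le S(r)$ (the function $S$ is nondecreasing), we get
\[
\intsy_{\cir r}\log^{-}|f|\,\dd m(z)\le\frac{2r}{r-\rho}\Bigl(\frac{2r}{r-\rho}\cdot3S(r)+S(\rho)\Bigr)\le\frac{Cr^{2}}{(r-\rho)^{2}}\,S(r),
\]
and adding back $\intsy_{\cir r}\log^{+}|f|\,\dd m(z)\le3S(r)$ yields $\intsy_{\cir r}\bigl|\log|f|\bigr|\,\dd m(z)\le C\bigl(1-\tfrac{\rho}{r}\bigr)^{-2}S(r)$, the assertion of the lemma.

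It remains to do the probability bookkeeping: the only events discarded are the one from Lemma~\ref{lem:low_bnd_M(r)_spec_case} (probability $\le\exp(-S(\rho)n(\rho))$) and the one from Lemma~\ref{lem:upp_bnd_M(r)_spec_case} (probability $\le e^{-S^{2}(r)}$). By the parameter choices of the construction, $S(r)$ grows much faster than $n(r)\ge n(\rho)$ as $r\to\infty$, so $S^{2}(r)\ge S(\rho)n(\rho)$ for $r$ large and the total probability is at most $2\exp(-S(\rho)n(\rho))$, as required. The argument presents no real obstacle beyond the point already settled when the two auxiliary lemmas were reproved, namely that confining $r$ to the window $[r_m,r_m e^{\eta}]$ removes the exceptional set from the $M(r)$-estimate; granting that, the computation is identical to the one in Lemma~\ref{lem:approx_log_int}.
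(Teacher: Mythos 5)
Your argument is the paper's own: the paper proves this lemma by running the proof of Lemma \ref{lem:approx_log_int} verbatim with Lemma \ref{lem:low_bnd_M(r)_spec_case} and Lemma \ref{lem:upp_bnd_M(r)_spec_case} substituted for the original growth lemmas, and your Poisson-kernel computation reproduces that proof correctly, including the two-sided kernel bound and the $\log^{+}$/$\log^{-}$ split.

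One side remark in your probability bookkeeping is wrong, however, and worth fixing. You justify $S^{2}(r)\ge S(\rho)\,n(\rho)$ by saying that ``$S(r)$ grows much faster than $n(r)\ge n(\rho)$''; in this construction the opposite is true on the window $r\in[r_m,r_m e^{\eta}]$: there $n(r)=k_m\approx\exp(b^{m})$ while $S(r)\approx C\,a^{m}\exp(2b^{m-1})$, so $n(r)\gg S(r)$ --- indeed this imbalance (many coefficients of modulus comparable to $1$ that contribute nothing to $S(r)$) is the entire point of the example, and it is why Lemma \ref{lem:S(r)_low_bnd} is unavailable here. The inequality $S^{2}(r)\ge S(\rho)\,n(\rho)$ does hold, but for a different reason: in the relevant range $\rho<r_m$ (as in the application, where $\rho=re^{-\delta_m}<r_m$) one has $n(\rho)\le k_{m-1}+1\approx\exp(b^{m-1})$, and by Claim \ref{clm:bnds_for_S(r)} $S(r)\ge c\,a^{m-1}\exp(2b^{m-1})$, so $S^{2}(r)\gtrsim\exp(4b^{m-1})\ge S(\rho)\,n(\rho)\lesssim\exp(3b^{m-1})$ for $m$ large. (For $\rho\in[r_m,r)$ this comparison genuinely fails, so the stated probability bound should be read with $\rho$ below $r_m$; this caveat is inherited from the paper's formulation and does not affect the use made of the lemma.)
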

\begin{lem}
\label{lem:upp_bnd_log_deriv_spec_case}Let $r\in\left[r_{m},r_{m}e^{\eta}\right]$
and $0<\rho<r$. Then
\[
\left|\frac{d\log\left|f\left(\rho e^{i\phi}\right)\right|}{d\phi}\right|\le C\left(1-\frac{\rho}{r}\right)^{-5}\cdot S\left(r\right)
\]
outside a set of probability at most
\[
2\cdot\exp\left(-S(\rho)\cdot n\left(\rho\right)\right).
\]

\end{lem}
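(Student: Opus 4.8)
The plan is to repeat, essentially verbatim, the argument already used for Lemma~\ref{lem:upp_bnd_log_deriv} in Section~\ref{sec:bnds_for_GAFs}, the only substantive change being that the appeal there to Lemma~\ref{lem:approx_log_int} is replaced by an appeal to its counterpart Lemma~\ref{lem:approx_log_int_spec_case} (which in turn rests on Lemma~\ref{lem:upp_bnd_M(r)_spec_case} and Lemma~\ref{lem:low_bnd_M(r)_spec_case} in place of Lemma~\ref{lem:upp_bnd_M(r)} and Lemma~\ref{lem:low_bnd_M(r)}). This substitution is exactly what removes the restriction $r\notin\excpset$ present in the original chain and makes the conclusion hold for every sufficiently large $r\in[r_{m},r_{m}e^{\eta}]$.

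Concretely, I keep the standing hypothesis of this part of the argument, namely $f(z)\ne0$ in $\disk r$, so that $\log|f|$ is harmonic there. First I would write the Poisson representation
\[
\log\left|f\left(\rho e^{i\phi}\right)\right|=\intsy_{0}^{2\pi}\frac{r^{2}-\rho^{2}}{\left|re^{i\theta}-\rho e^{i\phi}\right|^{2}}\,\log\left|f\left(re^{i\theta}\right)\right|\,\frac{\dd\theta}{2\pi},
\]
differentiate under the integral sign in $\phi$, and use $\left|re^{i\theta}-\rho e^{i\phi}\right|\ge r-\rho$ to bound the differentiated kernel by $C(r+\rho)(r-\rho)^{-3}$. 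This yields, uniformly in $\phi$,
\[
\left|\frac{d\log\left|f\left(\rho e^{i\phi}\right)\right|}{d\phi}\right|\le\frac{C\left(r+\rho\right)}{\left(r-\rho\right)^{3}}\,\intsy_{0}^{2\pi}\left|\log\left|f\left(re^{i\theta}\right)\right|\right|\,\frac{\dd\theta}{2\pi}.
\]

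It then remains to control the average of $\left|\log|f|\right|$ over $\cir r$. Here I would invoke Lemma~\ref{lem:approx_log_int_spec_case}: for $r\in[r_{m},r_{m}e^{\eta}]$ and $0<\rho<r$, outside an event of probability at most $2\exp\bigl(-S(\rho)\,n(\rho)\bigr)$ one has $\intsy_{\cir r}\left|\log|f|\right|\d{m}\le C\bigl(1-\tfrac{\rho}{r}\bigr)^{-2}S(r)$. Substituting this, and using $(r+\rho)(r-\rho)^{-3}\lesssim\bigl(1-\tfrac{\rho}{r}\bigr)^{-3}$ (recall $r$ is large and $\rho<r$, so $(r+\rho)/r^{3}\le2$), gives the asserted bound $C\bigl(1-\tfrac{\rho}{r}\bigr)^{-5}S(r)$ on the same event. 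No further probabilistic loss is incurred: the small exceptional events produced inside the proof of Lemma~\ref{lem:approx_log_int_spec_case} by Lemma~\ref{lem:upp_bnd_M(r)_spec_case} have probability at most $\exp(-S^{2}(r))$, which by Lemma~\ref{lem:S(r)_low_bnd} (whence $n(\rho)\le n(r)\lesssim\sqrt{S(r)}$, so $S(\rho)\,n(\rho)\lesssim S(r)^{3/2}$) is negligible against $\exp\bigl(-S(\rho)\,n(\rho)\bigr)$ and is absorbed into the constant $2$.

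The only point requiring care — and it is already dealt with, as the genuine content lies in the inputs rather than in this deduction — is the validity of Lemma~\ref{lem:approx_log_int_spec_case} (and behind it Lemma~\ref{lem:upp_bnd_M(r)_spec_case}) for \emph{all} large $r\in[r_{m},r_{m}e^{\eta}]$ instead of merely $r\notin\excpset$. This relies on the explicit block structure of the function constructed in this section: there $S(r)\ge S(r_{m})$ grows fast enough in $m$ (Claim~\ref{clm:bnds_for_S(r)}) that the role previously played, off $\excpset$, by the growth inequalities \eqref{eq:m(r)_normal} is now played by the deterministic comparisons among $S(r)$, $S(r_{m})$ and $n(r_{m})=k_{m}$. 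Given those special-case lemmas, the present statement is purely mechanical, so I do not expect any real obstacle in carrying it out.
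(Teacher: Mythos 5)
Your proposal is correct and follows the same route as the paper: the paper itself states that the proof of Lemma~\ref{lem:upp_bnd_log_deriv_spec_case} is identical to that of Lemma~\ref{lem:upp_bnd_log_deriv} after substituting Lemma~\ref{lem:upp_bnd_M(r)_spec_case} and Lemma~\ref{lem:low_bnd_M(r)_spec_case} (and hence Lemma~\ref{lem:approx_log_int_spec_case}) for the original inputs, and your write-out of the Poisson-kernel differentiation followed by the invocation of Lemma~\ref{lem:approx_log_int_spec_case} is exactly that argument. One small remark: the exceptional probability $2\exp(-S(\rho)\,n(\rho))$ is already built into the statement of Lemma~\ref{lem:approx_log_int_spec_case}, so the separate tally you perform at the end (comparing $\exp(-S^{2}(r))$ against $\exp(-S(\rho)n(\rho))$ via Lemma~\ref{lem:S(r)_low_bnd}) belongs to the proof of that lemma rather than this one and is not needed here.
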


\subsection{Proof of the lower bound for $f$}

We repeat step by step the general proof of the lower bound. The main
difference being the better lower bound estimate for the determinant
of the covariance matrix. Let $r\in\left[r_{m},r_{m}e^{\eta}\right]$,
where $\eta=\eta_{m}$. In the original proof, we got the following
upper bound for the probability of the hole event

\begin{eqnarray*}
\pr H & \le & \pr{L^{\mathrm{c}}}+\frac{CS\left(r\right)}{\gamma^{5}}\cdot\left(\pr{A\cap B\cap C}+\pr{A^{\mathrm{c}}}+\pr{B^{\mathrm{c}}}\right).
\end{eqnarray*}
Here $\rho=r\left(1-\gamma\right)=re^{-\delta}$, and the following
events are used
\begin{itemize}
\item $A$ - $\log\left|f\left(0\right)\right|\le\log S\left(r\right)$,
\item $B$ - ${\displaystyle M\left(r\right)=\max_{z\in\disk r}\left|f\left(z\right)\right|}\le e^{3S\left(r\right)}$,
\item $C$ - $\frac{1}{n}\sum\log\left|f\left(z_{j}e^{i\alpha}\right)\right|\le\log\left|f\left(0\right)\right|+1$, 
\item $L$ - $\left|\frac{d\log\left|f\left(\rho e^{i\phi}\right)\right|}{d\phi}\right|\le C\cdot\left(1-\frac{\rho}{r}\right)^{-5}\cdot S\left(r\right)$.
\end{itemize}
By (\ref{eq:Gaus_prob_large}), Lemma \ref{lem:upp_bnd_M(r)_spec_case}
and Lemma \ref{lem:upp_bnd_log_deriv_spec_case} we have the following
bounds
\begin{eqnarray*}
\pr{A^{\mathrm{c}}} & \le & e^{-S^{2}\left(r\right)},\\
\pr{B^{\mathrm{c}}} & \le & e^{-cS^{2}\left(r\right)},\\
\pr{L^{\mathrm{c}}} & \le & 2e^{-S\left(\rho\right)\cdot n\left(\rho\right)}.
\end{eqnarray*}
In addition, like in the original proof we have
\[
\pr{A\cap B\cap C}\le\frac{\volcn{\Omega}}{\pi^{n}\det\Sigma}.
\]
Here $\Omega=\Omega_{r}$ is the following set (or event)
\[
\Omega=\left\{ \left(\zeta_{1},\ldots,\zeta_{n}\right)\,\mid\,\frac{1}{n}\sum_{j=1}^{n}\log\left|\zeta_{j}\right|\le\log S\left(r\right)+1,\,\left|\zeta_{j}\right|\le e^{3S(r)},\,1\le j\le n\right\} ,
\]
and $\Sigma$ is the covariance matrix of the Gaussian vector $\left(f(e^{i\alpha}z_{1}),\ldots,f(e^{i\alpha}z_{n})\right)$.
Unlike the original proof, we choose the points $z_{j}$ to be equally
distributed on $\cir{\rho}$, we also choose $n$ to be smaller than
$n\left(r\right)=k_{m}$. We set $n=\alpha k_{m}$ with some $\alpha=\alpha_{m}\in\left(0,1\right)$
to be chosen later.

The upper bound for the volume of $\Omega$ is unchanged, but we need
to use the precise estimate (\ref{eq:precise_upp_bnd_integral}) that
was proved in Lemma \ref{eq:precise_upp_bnd_integral}, that is
\[
\frac{\volcn{\Omega}}{\pi^{n}}\le\exp\left(2n\log S\left(r\right)+n\log n+Cn\right).
\]
It remains to show that the lower bound for the determinant can be
significantly improved beyond the original estimate $\log\left(\det\Sigma\right)\ge S(\rho)$.

\subsubsection{Some facts about circulant matrices}

A matrix $C$ of the following form
\[
C=\left(\begin{array}{ccccc}
c_{0} & c_{1} & \ldots & c_{n-2} & c_{n-1}\\
c_{n-1} & c_{0} &  & c_{n-3} & c_{n-2}\\
\vdots &  & \ddots &  & \vdots\\
c_{2} & c_{3} &  & c_{0} & c_{1}\\
c_{1} & c_{2} & \cdots & c_{n-1} & c_{0}
\end{array}\right)
\]
is called \textit{circulant}\index{circulant matrix} (it is a special
kind of a Toeplitz matrix). It is known that the eigenvalues of $C$
are given by
\[
\lambda_{j}\left(C\right)=c_{0}+c_{1}\omega_{j}+\ldots+c_{n-1}\omega_{j}^{n-1},
\]
where $\omega_{j}=\exp\left(2\pi ij/n\right)$, $j\in\left\{ 0,\ldots,n-1\right\} $
are the $n$th roots of unity.

Let $f\left(z\right)=\sum_{n\ge0}\xi_{n}a_{n}z^{n}$ be a GAF, with
$\left\{ a_{n}\right\} \in\bbr^{+}$ and let $\left\{ z_{j}\right\} $
be $N$ equidistributed points on the circle $\cir r$, say $z_{j}=re^{2\pi i\cdot j/N}$,
$j\in\left\{ 0,\ldots,N-1\right\} $. We denote by $\Sigma$ the covariance
matrix of the complex Gaussian vector $\left(f\left(z_{0}\right),\ldots,f\left(z_{N-1}\right)\right)$,
thus 
\[
\Sigma_{jk}=\Cov{f\left(z_{j}\right),f\left(z_{k}\right)}=\Ex\left\{ f\left(z_{j}\right)\overline{f\left(z_{k}\right)}\right\} =\sum_{n\ge0}a_{n}^{2}r^{2n}e^{2\pi i\left(j-k\right)n/N}.
\]
Then we have
\begin{lem}
\label{lem:eigenvals_of_cov_matrix}The eigenvalues of the covariance
matrix $\Sigma$ are
\[
\lambda_{k}\left(\Sigma\right)=N\cdot\sum_{l\ge0}a_{k+lN}^{2}r^{2\left(k+lN\right)},\quad k\in\left\{ 0,\ldots,N-1\right\} .
\]
\end{lem}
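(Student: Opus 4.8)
The plan is to recognize that $\Sigma$ is a circulant matrix and to diagonalize it explicitly in the discrete Fourier basis; the statement will then fall out of a direct computation. First I would observe that, since $z_{j}=re^{2\pi\mathrm{i}j/N}$,
\[
\Sigma_{jk}=\Ex\left\{ f(z_{j})\overline{f(z_{k})}\right\} =\sum_{n\ge0}a_{n}^{2}r^{2n}e^{2\pi\mathrm{i}(j-k)n/N}
\]
depends on $j$ and $k$ only through $j-k\bmod N$ (because $e^{2\pi\mathrm{i}Nn/N}=1$). Thus $\Sigma$ is circulant, and although one could simply invoke the eigenvalue formula for circulant matrices quoted above, it is just as short to exhibit the eigenvectors directly, which I would do to keep the argument self-contained.

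Next I would fix, for each $k\in\{0,\dots,N-1\}$, the vector $v^{(k)}\in\bbc^{N}$ with components $v^{(k)}_{j}=e^{2\pi\mathrm{i}jk/N}$, and compute $\Sigma v^{(k)}$. Plugging in the series for $\Sigma_{jk}$ and interchanging the (finite inner, absolutely convergent outer) sums gives
\[
\bigl(\Sigma v^{(k)}\bigr)_{j}=\sum_{n\ge0}a_{n}^{2}r^{2n}e^{2\pi\mathrm{i}jn/N}\sum_{l=0}^{N-1}e^{2\pi\mathrm{i}l(k-n)/N}.
\]
The inner geometric sum equals $N$ when $n\equiv k\pmod N$ and $0$ otherwise. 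Writing the surviving indices as $n=k+lN$ with $l\ge0$ (every such $n\ge0$ is of this form since $0\le k\le N-1$), and using $e^{2\pi\mathrm{i}j(k+lN)/N}=e^{2\pi\mathrm{i}jk/N}$, I obtain
\[
\bigl(\Sigma v^{(k)}\bigr)_{j}=N\Bigl(\sum_{l\ge0}a_{k+lN}^{2}r^{2(k+lN)}\Bigr)e^{2\pi\mathrm{i}jk/N},
\]
i.e. $\Sigma v^{(k)}=\lambda_{k}v^{(k)}$ with $\lambda_{k}=N\sum_{l\ge0}a_{k+lN}^{2}r^{2(k+lN)}$.

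Finally I would note that $\{v^{(0)},\dots,v^{(N-1)}\}$ are linearly independent — they are the columns of the nonsingular discrete Fourier (Vandermonde) matrix with the distinct nodes $e^{2\pi\mathrm{i}k/N}$ — so they form a basis of $\bbc^{N}$ consisting of eigenvectors of $\Sigma$, and the $\lambda_{k}$ above are exactly all $N$ eigenvalues of $\Sigma$, counted with multiplicity. I do not expect any real obstacle here: the only point deserving a word of care is the convergence of $\sum_{n}a_{n}^{2}r^{2n}$ (which holds because $\sum a_{n}z^{n}$ is entire), which justifies the interchange of summation; the rest is the standard diagonalization of circulant matrices by the Fourier basis.
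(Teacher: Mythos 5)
Your proof is correct and follows essentially the same route as the paper: both rest on observing that $\Sigma$ is circulant and then applying the orthogonality of the $N$th roots of unity to pick out the indices $n\equiv k\pmod N$. The only cosmetic difference is that the paper quotes the standard eigenvalue formula for circulant matrices and sums against the roots of unity, whereas you re-derive that formula by exhibiting the discrete Fourier vectors as eigenvectors — the underlying computation is identical.
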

\begin{proof}
We note that since the $z_{j}$s are equidistributed on $\cir r$
it follows that the matrix $\Sigma$ is circulant. Indeed $\Sigma_{jk}=C_{k-j\,\mbox{mod }N}$,
where
\[
C_{m}=\sum_{n\ge0}a_{n}^{2}r^{2n}e^{-2\pi imn/N},\quad m\in\left\{ 0,\ldots,N-1\right\} .
\]
Therefore for $l\in\left\{ 0,\ldots,N-1\right\} $
\begin{eqnarray*}
\lambda_{l}\left(\Sigma\right) & = & \sum_{m=0}^{N-1}C_{m}\omega_{l}^{m}=\sum_{m=0}^{N-1}\left[\sum_{n\ge0}a_{n}^{2}r^{2n}e^{-2\pi imn/N}\right]e^{2\pi ilm/N}\\
 & = & \sum_{n\ge0}a_{n}^{2}r^{2n}\sum_{m=0}^{N-1}\exp\left(\frac{2\pi i\left(l-n\right)}{N}\cdot m\right).
\end{eqnarray*}
Since 
\[
\sum_{m=0}^{N-1}\exp\left(\frac{2\pi i\left(l-n\right)}{N}\cdot m\right)=\begin{cases}
N & ,\,\mbox{if }l=n\,\mbox{mod }N\\
0 & ,\,\mbox{otherwise,}
\end{cases}
\]
we get the result.
\end{proof}

\subsubsection{Lower bound for the determinant of $\Sigma$ and conditions on $n$}

We set $\rho=re^{-\delta_{m}}$. Now we can use Lemma \ref{lem:eigenvals_of_cov_matrix}
to improve our estimate for the determinant of $\Sigma$.\index{covariance matrix!determinant estimates}
\begin{claim}
We have
\[
\det\Sigma\ge\exp\left(S\left(\rho\right)\right)\exp\left(\alpha k_{m}\left(\log k_{m}-2\right)\right).
\]
\end{claim}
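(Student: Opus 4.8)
The plan is to read $\det\Sigma$ off the circulant structure. With the $n=\alpha k_m$ points $z_j$ chosen equidistributed on $\cir{\rho}$, Lemma~\ref{lem:eigenvals_of_cov_matrix} gives $\lambda_k(\Sigma)=n\sum_{l\ge0}a_{k+ln}^{2}\rho^{2(k+ln)}$ for $k\in\{0,\dots,n-1\}$, so that
\[
\det\Sigma=\prod_{k=0}^{n-1}\lambda_k(\Sigma)=n^{n}\prod_{k=0}^{n-1}\Bigl(\sum_{l\ge0}a_{k+ln}^{2}\rho^{2(k+ln)}\Bigr).
\]
Since $n^{n}=(\alpha k_m)^{\alpha k_m}=\exp\bigl(\alpha k_m(\log k_m+\log\alpha)\bigr)$ and $\log\alpha-\tfrac{\alpha}{2}\ge-2$ for, say, $\alpha=\tfrac12$, it suffices to bound the product of the eigenvalue sums from below by $\exp\bigl(S(\rho)\bigr)\exp\bigl(-\tfrac{\alpha^{2}}{2}k_m\bigr)$; the two exponentials then combine with $n^{n}$ into $\exp\bigl(S(\rho)+\alpha k_m(\log k_m+\log\alpha-\tfrac{\alpha}{2})\bigr)\ge\exp\bigl(S(\rho)\bigr)\exp\bigl(\alpha k_m(\log k_m-2)\bigr)$.

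First I would use the special form of the coefficients to pin down $N(\rho)$. Because $b_j(\rho)=\log(\rho/r_{m'})$ is constant on each block $m'$ and strictly decreasing in $m'$, the set $N(\rho)=\{j:b_j(\rho)\ge0\}$ is an initial segment $\{0,1,\dots,n(\rho)-1\}$; and since $\rho=re^{-\delta_m}$ with $\eta=\eta_m$ taken small enough that $\rho\le r_m$, the whole $m$-th block sits at or below threshold, so $n(\rho)-1=k_{m-1}$. In particular $n(\rho)\le n=\alpha k_m$ (because $k_{m-1}\ll k_m$), so \emph{each residue class modulo $n$ contains at most one index of $N(\rho)$}. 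Keeping in the $k$-th eigenvalue sum only the term $l=0$, the classes $k\in\{0,\dots,n(\rho)-1\}$ then reproduce $\prod_{j\in N(\rho)}a_j^{2}\rho^{2j}=\exp\bigl(S(\rho)\bigr)$ exactly, while the remaining classes $k\in\{n(\rho),\dots,n-1\}$ consist of indices in the $m$-th block, where $b_k(\rho)=-\delta_m$, hence $a_k^{2}\rho^{2k}=e^{-2\delta_m k}$ and
\[
\prod_{k=n(\rho)}^{n-1}a_k^{2}\rho^{2k}\ge\exp\Bigl(-2\delta_m\sum_{k=0}^{n-1}k\Bigr)\ge\exp(-\delta_m n^{2})=\exp\Bigl(-\tfrac{\alpha^{2}}{2}k_m\Bigr),
\]
using $\delta_m=1/(2k_m)$ and $n=\alpha k_m$. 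This is exactly the bound needed in the first paragraph, and it closes the clean case.

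The delicate point is the calibration of $\eta_m$ (and of $a,b,\alpha$) that makes this collision-free picture available. If $\rho>r_m$, the $m$-th block enters $N(\rho)$ while $n=\alpha k_m<k_m$, so residue classes acquire two indices of $N(\rho)$ and the crude ``keep $l=0$'' bound discards one factor per class. The remedy is to keep instead, in a two-index class $\{k,k+n\}$, the term with the larger index, and to exploit that $a_j^{2}\rho^{2j}$ grows geometrically in $j$ within a block (and drops by a huge factor between blocks), so that the discarded mass telescopes and, provided $\eta_m\le C/k_m$, costs at most $O(\alpha k_m)$ in the exponent — again absorbable by $n^{n}$. I expect this collision analysis, together with checking that $\sum_m r_m(e^{\eta_m}-1)$ still diverges (which holds once $a>b$), to be the main obstacle; the eigenvalue identity and the elementary inequalities above are routine once the parameters are fixed.
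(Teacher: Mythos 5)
Your proposal shares the framework of the paper — the circulant eigenvalue identity, the split of the indices into $N(\rho)$ and the $m$-th block — but the central step has a genuine gap. You keep only the $l=0$ term of each eigenvalue sum, so for $k$ in the $m$-th block you only get $\lambda_k(\Sigma)\ge n\,a_k^2\rho^{2k}\ge n/e=\alpha k_m/e$. The paper keeps the entire sum: for $k$ in the block, every $l$ with $k+ln\le k_m$ has $a_{k+ln}^2\rho^{2(k+ln)}\ge 1/e$, and there are about $1/\alpha$ such $l$'s, so $\lambda_k(\Sigma)\ge k_m/e^2$. That is larger than your bound by a factor $\approx 1/(e\alpha)$, and accumulated over the $\approx n=\alpha k_m$ eigenvalues in the block the discrepancy is of size $\exp(-\alpha k_m\log\alpha)$ — exactly the piece your estimate cannot supply. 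You notice the issue but dismiss it with ``$\log\alpha-\tfrac{\alpha}{2}\ge -2$ for, say, $\alpha=\tfrac12$''; however $\alpha$ is not free: the later optimization forces $\alpha\le(S^2(r)k_m^{1/2})^{-1}$, which is super-exponentially small in $m$, so $\log\alpha\ll -2$ and your inequality
\[
\alpha k_m\bigl(\log\alpha+\log k_m\bigr)-\tfrac{\alpha^2}{2}k_m\ \ge\ \alpha k_m\bigl(\log k_m-2\bigr)
\]
fails badly. In short, discarding all $l\ge 1$ loses the gain that comes precisely from the many near-degenerate coefficients $a_j^2\rho^{2j}\approx 1$ in the $m$-th block; that pile-up is the whole point of the construction and of this lower bound.

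The ``collision'' discussion in your second paragraph is not where the difficulty lies. Since $\eta_m=o(\delta_m)$ (as the paper arranges), $\rho=re^{-\delta_m}<r_m$, so $N(\rho)=\{0,\dots,k_{m-1}\}$ is an initial segment and each residue class modulo $n$ contains at most one index of $N(\rho)$. No collisions occur. The refinement you need is not how to resolve a collision, but how to cash in, for each block-$m$ residue class, the $\Theta(1/\alpha)$ terms of comparable size that the lemma's eigenvalue formula collects — which is what upgrades the per-eigenvalue factor from $n$ to $k_m$ and closes the claim.
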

\begin{proof}
By Lemma \ref{lem:eigenvals_of_cov_matrix} the eigenvalues of $\Sigma$
are given by
\[
\lambda_{k}\left(\Sigma\right)=n\cdot\sum_{l=0}^{\infty}a_{k+ln}^{2}\rho^{2\left(k+ln\right)},\quad k\in\left\{ 0,\ldots,n-1\right\} .
\]
We trivially have for $k\in\left\{ 0,\ldots,k_{m-1}\right\} $
\[
\lambda_{k}\left(\Sigma\right)\ge n\cdot a_{k}^{2}\rho^{2k}.
\]
For $j\in\left\{ k_{m-1}+1,\ldots,k_{m}\right\} $ we have
\[
a_{j}^{2}\rho^{2j}=a_{j}^{2}\left(re^{-\delta_{m}}\right)^{2j}\ge a_{j}^{2}r_{m}^{2j}\cdot e^{-2\delta_{m}j}=e^{-2\delta_{m}j},
\]
thus by our choice $\delta_{m}=\frac{1}{2k_{m}}$, we have $a_{j}^{2}\rho^{2j}\ge\frac{1}{e}$.
Thus, for $k\in\left\{ k_{m-1}+1,\ldots,n\right\} $ we have 
\[
\lambda_{k}\left(\Sigma\right)\ge\frac{n}{e}\sum_{l=0}^{\infty}\indf_{\left\{ k+ln\le k_{m}\right\} }\ge\frac{n}{e}\cdot\left(\frac{k_{m}}{n}-1\right)\ge\frac{k_{m}}{e^{2}},
\]
therefore,
\begin{eqnarray*}
\det\Sigma & \ge & \prod_{k\in\left\{ 0,\ldots,k_{m-1}\right\} }\left(n\cdot a_{k}^{2}\rho^{2k}\right)\cdot\prod_{k=k_{m-1}+1}^{n}\frac{k_{m}}{e^{2}}\\
 & = & \exp\left(S\left(\rho\right)\right)\cdot\exp\left(k_{m-1}\log n+\left(\log k_{m}-2\right)\left(n-k_{m-1}\right)\right),
\end{eqnarray*}
After setting $n=\alpha k_{m}$ we get
\begin{eqnarray*}
\det\Sigma & \ge & \exp\left(S\left(\rho\right)\right)\cdot\exp\left(k_{m-1}\log\left(\alpha k_{m}\right)+\left(\log k_{m}-2\right)\left(\alpha k_{m}-k_{m-1}\right)\right)\\
 & \ge & \exp\left(S\left(\rho\right)\right)\exp\left(\alpha k_{m}\left(\log k_{m}-2\right)\right).
\end{eqnarray*}

\end{proof}
In order to improve on the original estimate we want to choose $k_{m}$
such that
\[
\alpha k_{m}\log k_{m}\ge\epsym_{1}S\left(r\right)\ge\epsym_{1}S\left(\rho\right),
\]
for some numerical constant $\epsym_{1}>0$. This gives the condition
\[
\alpha\ge\frac{\epsym_{1}S\left(r\right)}{k_{m}\log k_{m}}.
\]
A non-trivial bound for the hole probability, we imply an upper bound
for $\alpha$. We have 
\begin{eqnarray*}
\frac{\volcn{\Omega}}{\pi^{n}\det\Sigma} & \le & \frac{\exp\left(2n\log S\left(r\right)+n\log n+Cn\right)}{\exp\left(S\left(\rho\right)+\alpha k_{m}\left(\log k_{m}-2\right)\right)}\\
 & \le & \exp\left(\alpha k_{m}\left(2\log S\left(r\right)+\log\left(\alpha\right)+C\right)-S\left(\rho\right)\right).
\end{eqnarray*}
The natural condition on $\alpha$ is therefore
\[
2\log S\left(r\right)+\log\left(\alpha\right)+C\le-\epsym_{2}\log k_{m},
\]
with some numerical constant $\epsym_{2}>0$. Decreasing $\epsym_{2}$
by a little, we require
\[
\log\left(\frac{1}{\alpha}\right)\ge2\log S\left(r\right)+\epsym_{2}\log k_{m},
\]
which we can write as
\[
\alpha\le\frac{1}{S^{2}\left(r\right)k_{m}^{\epsym_{2}}}\le1.
\]
Combining the two conditions on $\alpha$, we get
\[
\frac{\epsym_{1}S\left(r\right)}{k_{m}\log k_{m}}\le\frac{1}{S^{2}\left(r\right)k_{m}^{\epsym_{2}}}\Rightarrow\epsym_{1}S^{3}\left(r\right)\le k_{m}^{1-\epsym_{2}}\log k_{m}.
\]

\subsubsection{Choosing the values of the parameters and finishing the proof}

We now choose the parameters $a,b,\epsym_{1},\epsym_{2},\eta$ in
an appropriate way. We have
\[
k_{m}^{1-\epsym_{2}}\log k_{m}\ge\frac{1}{2}\exp\left(\left(1-\epsym_{2}\right)b^{m}\right)b^{m},
\]
and
\[
S\left(r\right)\le2\left(a-1\right)a^{m-1}\exp\left(2b^{m-1}\right)+2\eta\exp\left(2b^{m}\right).
\]
We first choose $\eta=\exp\left(-\left(2-\tfrac{2}{b}\right)b^{m}\right)$
by balancing, which leads to the bound
\[
S\left(r\right)\le2a^{m}\exp\left(2b^{m-1}\right),\quad r\in\left[r_{m},r_{m}e^{\eta}\right].
\]
Choosing $\epsym_{1}=2$ and $\epsym_{2}=\frac{1}{2}$ we are left
to select $a,b$ such that
\[
16a^{3m}\exp\left(\frac{6}{b}\cdot b^{m}\right)\le\frac{b^{m}}{2}\exp\left(\frac{1}{2}b^{m}\right).
\]
It is clear that any $b>12$ will be sufficient (for $m$ sufficiently
large), so let us choose $b=20$. We are left with some freedom to
chose the value of $a$, in order to get an exceptional set of infinite
Lebesgue measure we will require
\[
\eta=\exp\left(-\tfrac{19}{10}\cdot20^{m}\right)\ge\frac{1}{r_{m}}=\exp\left(-a^{m}\right),
\]
so $a=21$ will do. Notice that by this choice $\eta=o\left(\delta_{m}\right)$
and thus $\rho<r_{m}$.
\begin{rem*}
We can check the validity of these choices, by noticing that
\begin{eqnarray*}
n & = & \alpha k_{m}\ge\frac{\epsym_{1}S\left(r\right)}{\log k_{m}}\ge20^{-m}\cdot\left(20^{m}\cdot\exp\left(2\cdot20^{m-1}\right)\right)\\
 & = & \exp\left(2\cdot20^{m-1}\right)\ge\exp\left(20^{m-1}\right)\ge k_{m-1},
\end{eqnarray*}
(here we used Claim \ref{clm:bnds_for_S(r)}).
\end{rem*}
Now, lets us write $r=r_{m}e^{\rho}$, with $\rho\in\left[0,\eta\right]$.
By the first two bounds in Claim \ref{clm:bnds_for_S(r)} we have
\begin{eqnarray*}
S\left(\rho\right) & = & S\left(re^{-\delta_{m}}\right)=S\left(r_{m}e^{\rho-\delta_{m}}\right)\ge S\left(r_{m}\right)-2\left(\rho-\delta_{m}\right)\exp\left(2\cdot20^{m-1}\right)\\
 & \ge & S\left(r\right)-2\left(\rho-\delta_{m}\right)\exp\left(2\cdot20^{m-1}\right)-2\eta\exp\left(2\cdot20^{m}\right)\\
 & \ge & S\left(r\right)-4\eta\exp\left(2\cdot20^{m}\right)\ge S\left(r\right)-4\exp\left(20^{m-1}\right)\\
 & \ge & S\left(r\right)-\sqrt{S\left(r\right)}.
\end{eqnarray*}
Wrapping things up
\begin{eqnarray*}
\frac{\volcn{\Omega}}{\pi^{n}\det\Sigma} & \le & \exp\left(-\epsym_{2}\alpha k_{m}\log k_{m}-S\left(\rho\right)\right)\le\exp\left(-\epsym_{1}\epsym_{2}S\left(r\right)-S\left(\rho\right)\right)\\
 & \le & \exp\left(-2S\left(r\right)+\sqrt{S\left(r\right)}\right).
\end{eqnarray*}
Now, we use the fact that $\gamma\ge2\delta_{m}=\frac{1}{k_{m}}\ge\exp\left(-20^{m}\right)\ge\frac{1}{S\left(r\right)^{10}}$,
to conclude that
\[
\frac{CS\left(r\right)}{\gamma^{5}}\le S\left(r\right)^{100},
\]
and so finally 
\begin{eqnarray*}
\pr H & \le & \exp\left(-S\left(\rho\right)\cdot n\left(\rho\right)\right)+S\left(r\right)^{100}\cdot\exp\left(-2S\left(r\right)+C\sqrt{S\left(r\right)}\right)\\
 & \le & \exp\left(-2S\left(r\right)+C\sqrt{S\left(r\right)}\right).
\end{eqnarray*}
We remind that this upper bound holds for any $r\in\left[r_{m},r_{m}e^{\eta}\right]$. 
\begin{rem*}
The exceptional set is of infinite Lebesgue measure. This follows
from the fact that the following series
\[
\sum_{m=1}^{\infty}r_{m}\left(e^{\eta}-1\right)\ge\sum_{m=1}^{\infty}\exp\left(21^{m}-\tfrac{19}{10}\cdot20^{m}\right)
\]
are divergent.\end{rem*}

\section{(Counter-) Example for non-Gaussian random variables - Proof of Theorem
\ref{thm:No_Hole_Prob_for_Non_Gaus} \label{sec:No_Hole_for_Non_Gaus}}

Let $K\subset\bbc$ be a compact set such that $0\notin K$. Let $g\left(z\right)=\sum_{n\ge0}\phi_{n}\frac{z^{n}}{\sqrt{n!}}$
be an entire function, with a sequence of coefficients $\phi_{n}\in K$.
We want to prove that there exists an $r_{0}=r_{0}(K)<\infty$ so
that $g(z)$ must vanish somewhere in the disk $\left\{ |z|\le r_{0}\right\} $.

Suppose that the theorem is false, that is, there exists a sequence
of entire functions
\[
g_{k}(z)=\sum_{n=0}^{\infty}\phi_{n,k}\frac{z^{n}}{\sqrt{n!}},\qquad\phi_{n,k}\in K,
\]
and a sequence $r_{k}\to\infty$, such that $g_{k}$ does not vanish
in $\disk{r_{k}}$. Since $K$ is a compact set, we can find a subsequence,
also denoted by $\left\{ g_{k}\right\} $, such that $\phi_{n,k}\to\phi_{n}$
for each $n\in\bbn$. It is easy to see that the sequence $\left\{ g_{k}\right\} $
converges locally uniformly to a limiting function $f$. Since $0\notin K$,
the limiting function $f$ is not identically zero. Now, using Hurwitz's
theorem (see \cite[pg. 178]{Ahl}), $f$ does not vanish in any disk
$\disk{r_{k}}$; i.e. is does not vanish in the whole complex plane.

By known formulas expressing the order and type of entire functions
in terms of its Taylor coefficients (see, for instance \cite[pg. 6]{Le2}),
$f$ has order $2$ and type $\frac{1}{2}$. Since it does not vanish
on $\bbc$, by Hadamard's theorem, $f(z)=\exp\left(\alpha z^{2}+\beta z+\gamma\right),$
with complex constants $\alpha,\beta,\gamma$, $|\alpha|=\frac{1}{2}$.

We want to prove that we cannot get a function $f$ of this form,
using coefficients from the set $K$. We will use the asymptotics
of the coefficients of $f$ to prove this. Denoting the Taylor coefficients
of $f(z)$ by $b_{n}$, it is sufficient to show that the product
\[
|b_{n}|\cdot\sqrt{n!}
\]
is not bounded between any two positive constants. 

We first study the asymptotics of function of the form \eqref{eq:non_Gaus_f_def}.
Using Stirling's approximation we get
\begin{multline}
\sqrt{n!}=\left(\sqrt{2\pi n}\left(\frac{n}{e}\right)^{n}\left(1+\O\left(\frac{1}{n}\right)\right)\right)^{1/2}\\
=\left(2\pi n\right)^{1/4}\left(\frac{n}{e}\right)^{n/2}\left(1+\O\left(\frac{1}{n}\right)\right).\label{eq:sqrt_n_fact_asymp}
\end{multline}
The asymptotics of $f$ are not as simple. Using rotation and scaling,
we can assume $\alpha=\frac{1}{2}$ and $\gamma=1$; moreover, it
is easy to see that $\beta$ should not be zero. Therefore the problem
is reduced to the study of the asymptotics of
\begin{equation}
f(z)=\exp\left(\frac{1}{2}\cdot z^{2}+\beta\cdot z\right)=\sum_{n=0}^{\infty}b_{n}\left(\beta\right)\cdot z^{n},\label{eq:limit_func_form}
\end{equation}
with $\beta\ne0$, possibly a complex number. A standard application
of the saddle point method%
\footnote{For the details see the version of the paper \cite{Ni1} found in
the arXiv.%
} shows that
\begin{equation}
b_{n-1}(\beta)=C_{\beta}\cdot\left(\frac{e}{n}\right)^{\frac{n}{2}}\cdot\left(e^{\beta\sqrt{n}}+\left(-1\right)^{n}e^{-\beta\sqrt{n}}\right)\cdot\left(1+\O\left(\frac{1}{\sqrt{n}}\right)\right),\label{eq:limit_func_coeff_asymp}
\end{equation}
where $C_{\beta}$ is some constant. We can see that this is not the
same rate of decay as in \eqref{eq:sqrt_n_fact_asymp} for $n\to\infty$.
We arrive at a contradiction, which completes the proof of Theorem
\ref{thm:No_Hole_Prob_for_Non_Gaus}.

\section{Some open problems\index{open problems}}

One can ask if the error term in Theorem \ref{thm:hole_prob_GAFs}
is optimal for a regular sequence of coefficients $\left\{ a_{n}\right\} $.
This could already be interesting for the GEF. It is not clear if
the lower bound for $p_{H,K}\left(f;r\right)$ requires an exceptional
set. Another possibility is to try to find a `smoothed' version of
the function $S\left(r\right)$, that will give rise to a result with
no exceptional set $\excpset$.

A natural question in the case of the GEF is to study the hole probability
in more general domains. Let $K\subset\bbc$ be some connected compact
set, with non-empty interior, and denote by $rK$ the homothety by
$r$ of $K$. We propose the following conjecture%
\footnote{This problem is well-defined since the distribution of the zero set
of the GEF is invariant with respect to the plane isometries.%
}\index{hole probability!general domains}
\begin{conjecture}
For large values of $r$,
\[
p_{H,K}\left(f;r\right)=\log^{-}\pr{f\left(z\right)\ne0\mbox{ in }rK}=S\left(c\left(K\right)\cdot r\right)\left(1+o\left(1\right)\right),
\]
where $c\left(K\right)$ is the capacity (transfinite diameter) of
$K$.
\end{conjecture}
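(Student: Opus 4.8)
The plan is to prove the conjecture in the two-sided form
\[
S(c(K)r)\,(1+o(1))\ \le\ p_{H,K}(f;r)\ \le\ S(c(K)r)\,(1+o(1))
\]
by transporting the proofs of Propositions \ref{prp:p_H_upp_bnd} and \ref{prp:p_H_low_bnd} from the disk $\disk r$ to the domain $rK$, with every Euclidean quantity replaced by the conformal quantity attached to $rK$: its logarithmic capacity $\mathrm{cap}(rK)=c(K)r$ (which by definition equals the transfinite diameter), its equilibrium measure $\mu_{rK}$, and the exterior Riemann map $\phi=\phi_{rK}$ of $\mathbb{C}\setminus rK$ onto $\{|w|>1\}^{-1}$, normalized so that $\phi(w)=c(K)r\cdot w+O(1)$ near $\infty$. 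By the conformal invariance of the zero set of the GEF we may translate $K$ so that $0\in\mathrm{int}(K)$; I would first prove everything under the assumption that $\partial K$ is an analytic Jordan curve (so that $\mathbb{C}\setminus K$ is connected, $\phi$ extends analytically across $|w|=1$, and the normalized Faber polynomials $\Phi_n$ of $rK$, defined by $\Phi_n(\phi(w))=w^n+O(1/w)$, satisfy $\|\Phi_n\|_{rK}\le C_K$ uniformly in $n$), and then remove the regularity assumption by a monotone exhaustion of a general $K$ by such domains together with the continuity of $\mathrm{cap}$.

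\textbf{Lower bound for $p_{H,K}$ (a hole is unlikely).} This runs parallel to Section \ref{sec:Main-Thm-Lower-Bound}. If $f(z)\ne0$ in $rK$ then $\log|f|$ is harmonic on $rK\supset\rho K$ with $\rho=r(1-\gamma)$. Sampling points $z_j^{(\alpha)}=\phi_{\rho K}(e^{\mathrm{i}(\theta_j+\alpha)})\in\partial(\rho K)$ obtained by rotating preimages on the unit circle stay on $\partial(\rho K)$, and averaging over $\alpha$ replaces the mean value property by
\[
\frac{1}{2\pi}\int_0^{2\pi}\frac1n\sum_{j}\log\bigl|f(z_j^{(\alpha)})\bigr|\,\mathrm{d}\alpha=\int_{\partial(\rho K)}\log|f|\,\mathrm{d}\mu_{\rho K},
\]
since $\phi_{\rho K}$ pushes $\mathrm{d}\alpha/2\pi$ forward to the equilibrium measure. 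The covariance matrix $\Sigma_{ij}=\sum_k a_k^2(z_i\bar z_j)^k$ is then bounded below by a \emph{capacity version} of the generalized Vandermonde lemma: expanding $\phi_{\rho K}(w)^{j}=(c(K)\rho)^{j}w^{j}+(\text{lower powers of }w)$ and applying Cauchy--Binet to the Gram matrix of the monomials $\{z^{j}\}_{j\in N}$, $N=N(c(K)\rho)$, with respect to $\mu_{\rho K}$, the relevant principal submatrix is triangular with diagonal $(c(K)\rho)^{j}$, so one may choose the angles $\theta_j$ with $\bigl|\det(z_i^{j})_{i\in[n],\,j\in N}\bigr|\ge(c(K)\rho)^{\sum_{j\in N}j}$; hence $\det\Sigma\ge\prod_{j\in N}a_j^2\cdot(c(K)\rho)^{2\sum_{j\in N}j}=\exp\bigl(S(c(K)\rho)\bigr)$, the exact analogue of Corollary \ref{cor:low_bnd_det_sigma} with $\rho$ replaced by $\mathrm{cap}(\rho K)$. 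The bounds on $M_r(f)$, on the angular log-derivative along $\partial(\rho K)$, and the volume count of the Gaussian exceptional box are imported almost verbatim from Section \ref{sec:bnds_for_GAFs}, the harmonic estimates now being performed in the ring domain between $\partial(\rho K)$ and $\partial(rK)$ via $\phi$ at a cost of $(1-\rho/r)^{-C}$ factors. Choosing $\gamma$ as in Section \ref{sec:Main-Thm-Lower-Bound} and using Lemma \ref{lem:S(r)_growth} together with $S(c(K)\rho)=S(c(K)r)(1+o(1))$ gives $\pr{\text{hole in }rK}\le\exp\bigl(-S(c(K)r)+Cn(r)\log S(r)\bigr)$.

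\textbf{Upper bound for $p_{H,K}$ (a hole is not too unlikely).} Here the device of Proposition \ref{prp:p_H_upp_bnd} of making the constant term dominate only yields $p_{H,K}(f;r)\le S(R_K r)+\text{l.o.t.}$ with $R_K=\max_K|z|\ge c(K)$, which is too weak unless $K$ is a disk; instead I would work in the normalized Faber expansion $f=\sum_n\alpha_n\Phi_n$ of $f$ relative to $rK$. Since $\phi(w)=c(K)r\cdot w+O(1)$, one has $\alpha_n=a_n\xi_n(c(K)r)^n+(\text{contribution of the }a_m\xi_m,\ m>n)$, the latter being small because the $a_m$ decay super-geometrically, while $\alpha_0=\xi_0+(\text{small tail})$ and $\Phi_0\equiv1$. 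Define the event that (i) $|\xi_0|\ge C\,m(r)^{1/4}$, (ii) $|a_n\xi_n|(c(K)r)^n\le \bigl(\sqrt{m(r)}\bigr)^{-1}$ for $n\in N(c(K)r)$, and (iii), (iv) the analogous smallness of the far tail as in Lemma \ref{lem:Low-Bnd-Est-For-Func}; on this event $\sum_{n\ge1}|\alpha_n|\,\|\Phi_n\|_{rK}\le C_K\bigl(n(c(K)r)/\sqrt{m(r)}+o(1)\bigr)<\tfrac12|\alpha_0|$, so $|f|\ge|\alpha_0|-\sum_{n\ge1}|\alpha_n|\|\Phi_n\|_{rK}>0$ on $rK$, and the probability of the event is $\ge\exp\bigl(-S(c(K)r)-C\sqrt{m(r)}\log m(r)\bigr)$ by the Gaussian estimates of Section \ref{sub:Gauss-props}, exactly as in Lemma \ref{lem:Prob-of-Low-Bnd-Event}. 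Combining the two bounds and using Lemma \ref{lem:S(r)_low_bnd} to absorb the $O(\sqrt{m(r)}\log m(r))$ and $O(n(r)\log S(r))$ errors into $o(S(c(K)r))$ gives the conjecture for analytic $K$, and hence, after the exhaustion argument, in general.

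\textbf{The main obstacle.} The delicate point is the mean-value step in the lower bound. Unlike on a circle, the conformal average of $\log|f|$ over $\partial(\rho K)$ is taken against the equilibrium measure $\mu_{\rho K}$, i.e.\ harmonic measure at $\infty$, which is \emph{not} the harmonic measure of $\rho K$ at the origin, so it does not equal $\log|f(0)|$; the event $A$ of Section \ref{sec:Main-Thm-Lower-Bound} must therefore be replaced by the requirement that $\int_{\partial(\rho K)}\log|f|\,\mathrm{d}\mu_{\rho K}$ be of size only $O(\log S(r))$, which is essentially a statement about the leading Faber coefficient of $f$ on $\rho K$ and requires the same Faber/conformal analysis as the upper bound applied on $\rho K$. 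Showing that this error, and the boundary-regularity errors in $\|\Phi_n\|$, are genuinely $o(S(c(K)r))$ uniformly as $r\to\infty$ off an exceptional set of finite logarithmic measure is the principal technical difficulty; for the GEF itself, where the coefficients are regular and $m(r)\asymp S(r)\asymp r^{2}$, no exceptional set is needed and all error terms are of order $\sqrt{S}\log S$, matching the quality of Theorem \ref{thm:Hole_Prob_spec}.
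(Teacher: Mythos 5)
First, note that the statement you are proving is presented in the paper only as a \emph{conjecture}: the author explicitly writes that the existing method ``heavily uses the Taylor expansion, which works primarily in the case of the disk'' and that ``some new ideas will be required.'' So there is no proof in the paper to compare against, and your proposal must be judged on whether it closes the gap the paper identifies. It does not. The decisive problem is the one you yourself flag at the end: in the lower bound for $p_{H,K}$, the whole mechanism of Section~\ref{sec:Main-Thm-Lower-Bound} rests on the identity $\log|f(0)|=\intsy_{0}^{2\pi}\log|f(\rho e^{i\alpha})|\,\frac{\dd\alpha}{2\pi}$, i.e.\ on the fact that for the disk the rotation-invariant average on $\partial(\rho\bbd)$ is harmonic measure at an \emph{interior} point, so the averaged sum $\frac1n\sum_j\log|f(z_je^{i\alpha})|$ can be pinned to the independent, easily controlled quantity $\log|f(0)|$ (event $A$). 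For a general $K$ the push-forward of $\dd\alpha/2\pi$ under the exterior map is the equilibrium measure $\mu_{\rho K}$, which is harmonic measure at $\infty$, not at any interior point, and $\intsy_{\partial(\rho K)}\log|f|\,\dd\mu_{\rho K}$ is not tied to $|f(0)|$ at all; on the hole event it is typically of order $S$ rather than $O(\log S)$. Replacing event $A$ by ``this equilibrium average is $O(\log S(r))$'' is not a harmless modification: you would then have to show that this new event still has probability close to $1$ (or at least not exponentially small on the scale $S$), and that is precisely the new estimate the conjecture needs. Your text acknowledges this as ``the principal technical difficulty'' and offers no argument for it, so the lower bound is not established.

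The upper bound (exhibiting a hole event of probability $\exp(-S(c(K)r)(1+o(1)))$) also has a genuine gap. Your event (i)--(iv) constrains the independent coefficients $\xi_n$ only at the scale $|a_n\xi_n|(c(K)r)^n\le m(r)^{-1/2}$, but the Faber coefficients $\alpha_n$ of $f$ relative to $rK$ are linear combinations $\alpha_n=\sum_{m\ge n}a_m\xi_m d_{m,n}$ whose weights $d_{m,n}$ are of size comparable to $\|z^m\|_{rK}\asymp(R_Kr)^m$ with $R_K=\max_K|z|>c(K)$ (unless $K$ is a disk). On your event the terms $a_m|\xi_m|(R_Kr)^m$ can be as large as $(R_K/c(K))^m m(r)^{-1/2}$, which grows geometrically in $m$, so the claimed bound $\sum_{n\ge1}|\alpha_n|\,\|\Phi_n\|_{rK}<\tfrac12|\alpha_0|$ does not follow; to control $|f|$ uniformly on $rK$ by this route you are forced back to suppressing the coefficients at radius $R_Kr$, i.e.\ to the cost $S(R_Kr)$, exactly the loss you set out to avoid. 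In short, both halves of the argument fail at the points where the geometry of $K$ genuinely differs from that of a disk, which is why the paper leaves the statement as an open problem rather than a theorem; the parts of your sketch that do go through (the determinant/capacity computation via equidistributed preimages, the volume count, the regularity reductions) are the parts that were already implicit in the paper's heuristic for proposing the conjecture.
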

The reason for proposing this conjecture is that in the case of the
plane invariant model, the evaluation of the determinant in Corollary
\ref{cor:low_bnd_det_sigma} leads to the Vandermonde determinant
\[
\left|\begin{array}{cccc}
1 & z_{1} & \ldots & z_{1}^{n-1}\\
\vdots & \vdots & \vdots & \vdots\\
1 & z_{n} & \ldots & z_{n}^{n-1}
\end{array}\right|^{2}=\prod_{1\le i\ne j\le n}\left|z_{i}-z_{j}\right|^{2}.
\]
Taking the maximum of this expression over the possible positions
of the $z_{j}$'s we can approximate the (appropriate power of) the
capacity $c\left(K\right)$. However, as the current method heavily
uses the Taylor expansion, which works primarily in the case of the
disk, some new ideas will be required.

Another very natural question is the asymptotics of the hole probability
for disconnected domains. The most basic case is when our domain is
the union of two disjoint disks. Notice that there are three parameters
for this problem, the scale, the radius of one of the disk, and the
distance between the center of the disks. In particular, it is not
clear if the event that $f\left(z\right)\ne0$ in one disk is `almost
independent' from the event that $f\left(z\right)\ne0$ in the other
disk, if they are sufficiently far from each other.

The third type of questions for the GEF is the question of hole probability
for `thin domains', that is, domains which are scaled only in one
direction. For example, it is not clear what are the asymptotics of
the hole probability in a rectangular domain with sides $r$ and $1$,
as $r$ tends to infinity. For an upper bound estimate that might
be sharp see \cite[Theorem 4.1]{GKP}. Another interesting question
is the asymptotics of the conditional hole probability, that is\index{hole probability!conditional}
\begin{problem}
What is the probability that there are no zeros in the disk $\disk R$,
conditioned on the event that there are no zeros in $\disk r$, for
$R>r$?
\end{problem}
Still another question is to consider non-Gaussian random variables.
We are interested in characterizing the asymptotics of the hole probability
in terms of properties of the distribution function of the random
variables. This can be viewed as a more accurate version of some results
of Offord \cite{Of2} (see also \cite[sect. 7.1]{BKPV}).

It should be mentioned that there are some partial results related
to the asymptotics of the hole probability for GAFs in the unit disk
(\cite{BNPS}). However, it is still an open problem to find the analog
of Theorem \ref{thm:hole_prob_GAFs} for general GAFs in the unit
disk.

It would be interesting to generalize Theorem \ref{thm:hole_prob_GAFs}
to large deviations of the number of zeros (for the GEF, see the papers
\cite{NSV1,ST3}).
\newpage{}\thispagestyle{empty}$\,$\markright{References}\newpage{}

\newpage{}\thispagestyle{empty}$\,$$\,$\newpage{}\markright{Index}\printindex{}
\end{document}